\documentclass[12pt,reqno]{amsart}
\usepackage{a4wide}
\numberwithin{equation}{section}

\usepackage{amsmath}
\usepackage{color}
\usepackage{txfonts}
\usepackage{amsfonts}
\usepackage{amsmath,amsthm,amssymb,amscd}
\usepackage{latexsym}
\usepackage[numbers,sort&compress]{natbib}
\allowdisplaybreaks

\bibliographystyle{plain}

\numberwithin{equation}{section}

\newcommand\R{\mathbb R}

\newtheorem{theorem}{Theorem}[section]
\newtheorem{proposition}[theorem]{Proposition}
\newtheorem{corollary}[theorem]{Corollary}
\newtheorem{lemma}[theorem]{Lemma}

\theoremstyle{definition}

\newtheorem{remark}[theorem]{Remark}

\begin{document}

\title[Quantization analysis of Moser-Trudinger equations]{Quantization analysis of Moser-Trudinger equations in the Poincar\'e disk and applications}
\author{Lu Chen, Qiaoqiao Hua, Guozhen Lu, Shuangjie Peng and Chunhua Wang}
\address{School of Mathematics and Statistics, Beijing Institute of Technology, Beijing 100081, P. R. China}
\email{chenlu5818804@163.com}
\address{School of Mathematics and Statistics,
Central China Normal University,
Wuhan, 430079, China}
\email{hqq@mails.ccnu.edu.cn}
\address{Department of Mathematics,
University of Connecticut,
Storrs, CT 06269, USA}
\email{guozhen.lu@uconn.edu}
\address{School of Mathematics and Statistics, and Key Lab NAA--MOE, Central China Normal University, Wuhan 430079, China}
\email{sjpeng@ccnu.edu.cn}
\address{School of Mathematics and Statistics, and Key Lab NAA--MOE, Central China Normal University, Wuhan 430079, China}
\email{chunhuawang@ccnu.edu.cn}

\thanks{The first, second and fifth authors were supported by National Key Research and Development of China (No.2022YFA1006900). The third author was supported by grants from the Simons Foundation. The fourth author was supported by National Key R\&D Program (No.2023YFA1010002). The fifth author was also supported by National Natural Science Foundation of China (No.12471106).}

\begin{abstract}
In the recent  years, quantization analysis for the Moser-Trudinger equations and the existence of positive
critical points for the Moser-Trudinger functional on compact manifolds have been extensively studied.
However, the corresponding counterpart on non-compact manifolds still remains open.
In this paper, we make an attempt in the Poincar\'e disk, a complete and noncompact Riemannian manifold of negative constant curvature.
We first establish the quantitative properties for positive solutions to the Moser-Trudinger equations in the two-dimensional Poincar\'e disk $\mathbb{B}^2$:
\begin{equation*}\label{mt1}
		\left\{
		\begin{aligned}
			&-\Delta_{\mathbb{B}^2}u=\lambda ue^{u^2},\ x\in\mathbb{B}^2,\\
			&u\to0,\ \text{when}\ \rho(x)\to\infty,\\
			&||\nabla_{\mathbb{B}^2} u||_{L^2(\mathbb{B}^2)}^2\leq M_0,
		\end{aligned}
		\right.
	\end{equation*}	
where $0<\lambda<\frac{1}{4}=\inf\limits_{u\in W^{1,2}(\mathbb{B}^2)\backslash\{0\}}\frac{\|\nabla_{\mathbb{B}^2}u\|_{L^2(\mathbb{B}^2)}^2}{\|u\|_{L^2(\mathbb{B}^2)}^2}$, $\rho(x)$ denotes the geodesic distance between $x$ and the origin and $M_0$ is a fixed large positive constant (see Theorem 1.1). Furthermore, by doing a delicate expansion for Dirichlet energy $\|\nabla_{\mathbb{B}^2}u\|_{L^2(\mathbb{B}^2)}^2$ when $\lambda$ approaches to $0,$ we prove that there exists $\Lambda^\ast>4\pi$ such that the Moser-Trudinger functional
$F(u)=\int_{\mathbb{B}^2}\left(e^{u^2}-1\right) dV_{\mathbb{B}^2}$ under the constraint $\int_{\mathbb{B}^2}|\nabla_{\mathbb{B}^2}u|^2 dV_{\mathbb{B}^2}=\Lambda$ has at least one positive critical point for $\Lambda\in(4\pi,\Lambda^{\ast})$ up to some M\"{o}bius transformation. Finally, 
by doing a  more accurate expansion for $u$ near the origin and away from the origin, applying a local Pohozaev identity around the origin and the uniqueness of the Cauchy initial value problem for ODE,
 we prove that the Moser-Trudinger equation only has one positive solution when $\lambda$ is close to $0.$  The decay properties of the positive solutions, as well as some precise delicate
 expansions for the solutions both near the origin and away from the origin play an important role.
\end{abstract}

\maketitle {\small {\bf Keywords:} Moser-Trudinger equations;
Poincar\'e disk; Quantization analysis; Uniqueness.
 \\

{\bf 2010 MSC.} 35A05; 35B33; 35B38; 35J60.  }

\section{Introduction}
In this paper, we are concerned with the following Moser-Trudinger equations in the Poincar\'e disk $\mathbb{B}^2$
 \begin{equation}\label{uniq}
	\left\{
	\begin{aligned}
		&-\Delta_{\mathbb{B}^2}u_\lambda=\lambda u_\lambda e^{u_\lambda^2},\ x\in\mathbb{B}^2,\\
		&u_\lambda\to0,\ \text{when}\ \rho(x)\to\infty,\\
	\end{aligned}
	\right.
\end{equation}
where $0<\lambda<\frac{1}{4}$ and $\rho(x)=\ln\frac{1+|x|}{1-|x|}$ denotes the geodesic distance between $x$ and the origin in Poincare disk $\mathbb{B}^2$.
We will mainly study the quantization analysis of problem \eqref{uniq}
 and its applications to the existence and uniqueness of critical points of the Moser-Trudinger functional. Quantization analysis of the Moser-Trudinger equations has attracted much attention due to its importance in applications to PDEs and geometric analysis. Let us first present a brief history of the main results in this direction.
\medskip

Quantization analysis for elliptic equations with the Moser-Trudinger growth dates back to the work of Druet in \cite{D}. They proved the following result:
\medskip

\textbf{Theorem A.}
Let $u_{k}$ be a family of solutions satisfying the equation
\begin{equation}\label{adeq2}
	\begin{cases}
		-\Delta u_k =\lambda_{k} u_ke^{u_k^2},\quad\quad & x\in \Omega, \\
		u_k>0,\quad\quad & x\in \Omega,\\
		0<\lambda_{k}<\lambda_1(\Omega),\\
		u_k=0,\quad\quad &x\in \partial \Omega,
	\end{cases}
\end{equation}
with $u_k$ being bounded in $W^{1,2}_0(\Omega)$. If $u_k$ blows up, then after passing to a subsequence, one has $\lambda_k\rightarrow \lambda_0$, $u_k\rightharpoonup u_0$ in $W^{1,2}_0(\Omega)$ and there exists some integer $N$ such that
$$\lim\limits_{k\rightarrow +\infty}\|\nabla u_k\|_{L^2(\Omega)}^2= \|\nabla u_0\|_{L^2(\Omega)}^2+4\pi N.$$

When $\Omega$ is a disk, all positive solutions  must be radially symmetric through the standard moving-plane method. (see e.g. \cite{ChenLi}, \cite{GNN}.)  Applying the refined radial asymptotic expansions of $u_k$ around the origin, Malchiodi and Martinazzi \cite{MM-JEMS} proved that $u_0=0$ and $N=1$.
For general bounded domains, Druet and Thizy \cite{DT} furthermore proved that $u_0=0$ and $N$ is equal to the number of concentration points. Further results concerning  the existence of solutions
to elliptic equations (see \cite{BS-2012,GK-2014}) or inequalities (see \cite{KS-2016,MST-2013})
in the hyperbolic space are established.
The quantization analysis of the Moser-Trudinger equations in a bounded domain has been also extended by Chen, Lu, Xue and Zhu in \cite{CLXZ} to a hyperbolic ball. Their results can be stated as follows:
\medskip

\textbf{Theorem B.}
Assume that $u_\lambda$ are a family of solutions satisfying
\begin{equation}\label{adeq3}
	\begin{cases}
		-\Delta_{\mathbb{B}^2} u_\lambda =\lambda u_\lambda e^{u_\lambda^2},\quad\quad & x\in B_{\mathbb{B}^2}(0,R), \\
		u_\lambda>0,\quad\quad & x\in B_{\mathbb{B}^2}(0,R),\\
		0<\lambda<\lambda_1(B_{\mathbb{B}^2}(0,R)),\\
		u_\lambda=0,\quad\quad &x\in \partial B_{\mathbb{B}^2}(0,R),
	\end{cases}
\end{equation}
where $B_{\mathbb{B}^2}(0,R)$ denotes the Poincar\'e disk centered at the origin with the geodesic radius equal to $R<\infty$, $-\Delta_{\mathbb{B}^2}$ denotes the Laplace-Beltrami operator in $\mathbb{B}^2$,
$\lambda_1(B_{\mathbb{B}^2}(0,R))$ is the first eigenvalue of the operator $-\Delta_{\mathbb{B}^2}$ with the Dirichlet boundary in $B_{\mathbb{B}^2}(0,R)$. Then $u_\lambda$ is radially symmetric and unique.
Denote by $dV_{\mathbb{B}^2}=\left(\frac{2}{1-|x|^2}\right)^2dx$ the hyperbolic volume element and $\nabla_{\mathbb{B}^2}=\Big(\frac{1-|x|^2}{2}\Big)^{2}\nabla_{\R^{2}}$ the gradient operator in $\mathbb{B}^2$. When $\lambda\rightarrow \lambda_0$, there holds
\vskip 0.1cm

(i) if $\lambda_0=0$, then $u_\lambda$ blows up at the origin, and $|\nabla_{\mathbb{B}^2}u_\lambda|^2dV_{\mathbb{B}^2}\rightharpoonup 4\pi \delta_0$, $\lambda u_\lambda^2 e^{u_\lambda^2}dV_{\mathbb{B}^2}\rightharpoonup 4\pi \delta_0$;
\vskip 0.1cm

(ii) if $\lambda_0\in (0,\lambda_1(B_{\mathbb{B}^2}(0,R)))$, then $u_\lambda\rightarrow u_0$ in $C^{2}(B_{\mathbb{B}^2}(0,R)))$ and $u_0$ is a positive radial solution of the equation
\begin{equation}\label{ball}
	\begin{cases}
		-\Delta_{\mathbb{B}^2} u_0=\lambda_0 u_0e^{u_0^2},\quad\quad & x\in B_{\mathbb{B}^2}(0,R), \\
		u_0>0,\quad\quad & x\in B_{\mathbb{B}^2}(0,R), \\
		u_0=0,\quad\quad &x\in \partial B_{\mathbb{B}^2}(0,R);
	\end{cases}
\end{equation}
\vskip 0.1cm

(iii) if $\lambda_0=\lambda_1(B_{\mathbb{B}^2}(0,R))$, then $u_\lambda\rightarrow 0$ in $C^{2}(B_{\mathbb{B}^2}(0,R))$.
\medskip

However, the above quantization analysis of the Moser-Trudinger equations in the whole Poincar\'e disk $\mathbb{B}^2$ is still unknown. This is because the quantization analysis of the Moser-Trudinger equations in hyperbolic balls is based on the uniqueness of positive solutions to \eqref{adeq3}, which is not available for the corresponding equation in the Poincar\'e disk due to the presence of singularity of Hardy-potential near the boundary.
We are motivated to solve this problem. Our first main result states that

\begin{theorem}\label{thm1}
	Assume that $u_{k}\in W^{1,2}(\mathbb{B}^2)$ are a family of positive solutions satisfying
	\begin{equation}\label{eq1}
		\left\{
		\begin{aligned}
			&-\Delta_{\mathbb{B}^2}u_k=\lambda_k u_ke^{u_k^2},\ x\in\mathbb{B}^2,\\
			&u_k\to0,\ \text{when}\ \rho(x)\to\infty,\\
			&||\nabla_{\mathbb{B}^2} u_k||_{L^2(\mathbb{B}^2)}^2\leq M_0,
		\end{aligned}
		\right.
	\end{equation}
where $0<\lambda_k<\frac{1}{4}$ and $M_0>0$ is a fixed large constant.
Assuming that $\lim\limits_{k\rightarrow +\infty} \lambda_k=\lambda_0$, up to some M\"{o}bius transformation $T_a$ (see 2.1 of Section 2) and some subsequence still denoted by $u_k$, we have
\vskip 0.1cm
		
	(i) if $\lambda_0=0$, when $k\rightarrow +\infty$, $u_{k}$ blows up at the origin, $||\nabla_{\mathbb{B}^2} u_{k}||_{L^2(\mathbb{B}^2)}^2\rightarrow4\pi$, $|\nabla_{\mathbb{B}^2} u_{k}|^2 dV_{\mathbb{B}^2}\rightharpoonup4\pi\delta_0$ and $\lambda_k u_{k}^2e^{u_{k}^2} dV_{\mathbb{B}^2}\rightharpoonup4\pi\delta_0$;
\vskip 0.1cm
		
	(ii) if $\lambda_0\in(0,\frac{1}{4})$, then $u_{k}\to u_0$ in $W^{1,2}(\mathbb{B}^2)$ as $k\to\infty$, where $u_0$ is a radially symmetric positive solution of the equation
	\begin{equation}\label{eq3}
		\left\{
		\begin{aligned}
			&-\Delta_{\mathbb{B}^2}u_0=\lambda_0 u_0 e^{u_0^2},\ x\in\mathbb{B}^2,\\
			&u_0\to0,\ \text{when}\ \rho(x)\to\infty;\\
		\end{aligned}
		\right.
	\end{equation}
\vskip 0.1cm
	
	(iii) if $\lambda_0=\frac{1}{4}$, then $u_{k}\to0$ in $C^{2}(\mathbb{B}^2)$ as $k\rightarrow +\infty$.
\end{theorem}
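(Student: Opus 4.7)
My plan is to adapt the Druet-style quantization framework from the bounded Euclidean setting \cite{D,MM-JEMS,DT} and the hyperbolic ball setting \cite{CLXZ} to the non-compact Poincar\'e disk. The three cases in the theorem will emerge from a dichotomy on the maximum values $c_k := \|u_k\|_{L^\infty(\mathbb{B}^2)}$: either $c_k$ stays bounded (yielding cases (ii) and (iii)) or $c_k \to \infty$ (yielding case (i)). First I would invoke the hyperbolic moving-plane method, using the M\"obius invariance of $-\Delta_{\mathbb{B}^2}$, the monotonicity of $u \mapsto u e^{u^2}$, and the decay $u_k \to 0$ at infinity, to show that each $u_k$ is radially symmetric and strictly decreasing around some point $a_k \in \mathbb{B}^2$. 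Composing with the M\"obius transformation $T_{a_k}$ then recenters $u_k$ at the origin, so without loss of generality $c_k = u_k(0) = \max u_k$. A key uniform estimate I expect to need throughout is an exponential decay $u_k(x) \leq C e^{-\sigma \rho(x)}$ valid whenever $\lambda_k \leq \tfrac14 - \delta$, obtained by a Schr\"odinger-type comparison with $-\Delta_{\mathbb{B}^2} - \tfrac14$ and the structure of the $L^2$-spectrum $[\tfrac14, \infty)$; this is what prevents mass from escaping to infinity in the non-compact setting.

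Next I would extract a subsequence $u_k \rightharpoonup u_0$ in $W^{1,2}(\mathbb{B}^2)$ with $u_0 \geq 0$ radial. In the bounded case $c_k \leq C$, elliptic regularity together with the uniform decay upgrade weak convergence to strong $W^{1,2}$ and $C^2$ convergence, so $u_0$ solves $-\Delta_{\mathbb{B}^2} u_0 = \lambda_0 u_0 e^{u_0^2}$ on $\mathbb{B}^2$ with the same decay. If $\lambda_0 \in (0, \tfrac14)$, this is case (ii). If $\lambda_0 = \tfrac14$, multiplying by $u_0$ and using $u_0^2 e^{u_0^2} \geq u_0^2$ together with the variational characterization $\tfrac14 = \inf \|\nabla_{\mathbb{B}^2} w\|_{L^2(\mathbb{B}^2)}^2 / \|w\|_{L^2(\mathbb{B}^2)}^2$ forces $u_0 \equiv 0$, yielding case (iii). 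If $\lambda_0 = 0$, then $u_0$ would be a nonnegative harmonic function with decay, hence $u_0 \equiv 0$ and $c_k \to 0$; rescaling $v_k := u_k / c_k$ gives $-\Delta_{\mathbb{B}^2} v_k = \lambda_k v_k e^{u_k^2}$ with $v_k(0) = 1$ and $e^{u_k^2} \to 1$ uniformly, forcing a nontrivial nonnegative harmonic function on $\mathbb{B}^2$ that decays at infinity, contradicting that the spectrum of $-\Delta_{\mathbb{B}^2}$ on $L^2(\mathbb{B}^2)$ is $[\tfrac14, \infty)$. Consequently in case (i) one necessarily has $c_k \to \infty$.

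For the blow-up case, I would introduce the concentration scale $r_k$ via $r_k^{-2} = \lambda_k c_k^2 e^{c_k^2}$ and consider the Moser-type rescaling $w_k(y) = c_k(u_k(r_k y) - c_k)$. Since at the origin the Poincar\'e metric is a conformal multiple of the Euclidean metric whose conformal factor tends to a constant on the rescaled scale, $w_k$ converges in $C^2_{\mathrm{loc}}(\R^2)$ to the standard Liouville profile $w_0(y) = -\log(1 + |y|^2/4)$, which solves $-\Delta w_0 = e^{2w_0}$ on $\R^2$ and carries exactly $4\pi$ of Dirichlet energy. The heart of the argument --- and the step I expect to be the main obstacle --- is the neck analysis on the intermediate annulus $r_k \ll \rho(x) \ll 1$: one must prove that no energy accumulates there, so that the $4\pi$ bubble contribution exhausts the limiting Dirichlet norm. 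I would carry this out by adapting the Druet--Thizy strategy to the Poincar\'e disk, working with the radial ODE in $\rho$ and applying a Pohozaev-type identity on hyperbolic geodesic balls $B_{\mathbb{B}^2}(0,s)$ with the hyperbolic radial field $\partial_\rho$ as the deformation vector, to track the flux $\int_{\partial B_{\mathbb{B}^2}(0,s)} |\nabla_{\mathbb{B}^2} u_k|^2$ as $s$ varies across scales. Combined with the exponential decay beyond any fixed ball (which disposes of the outer region) and the identity $\|\nabla_{\mathbb{B}^2} u_k\|_{L^2(\mathbb{B}^2)}^2 = \lambda_k \int_{\mathbb{B}^2} u_k^2 e^{u_k^2} \, dV_{\mathbb{B}^2}$ (which transports Dirichlet quantization to the nonlinear density), this yields both $\|\nabla_{\mathbb{B}^2} u_k\|_{L^2(\mathbb{B}^2)}^2 \to 4\pi$ and the measure convergences to $4\pi \delta_0$ claimed in case (i).
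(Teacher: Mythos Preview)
Your overall architecture (moving-plane symmetry, dichotomy on $c_k$, bubble extraction at scale $r_k$) matches the paper, but two of your case-by-case arguments do not go through as written.

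\textbf{Case (iii).} Your variational step does not yield a contradiction. From $\|\nabla_{\mathbb{B}^2}u_0\|^2 = \tfrac14\int u_0^2 e^{u_0^2}$ and $u_0^2 e^{u_0^2}\ge u_0^2$ you only get $\|\nabla_{\mathbb{B}^2}u_0\|^2 \ge \tfrac14\|u_0\|^2$, which is just the Poincar\'e inequality --- there is nothing to contradict, even though the infimum $\tfrac14$ is not attained. The paper instead performs a precise ODE asymptotic analysis (Appendix~B): any positive $W^{1,2}$ solution at $\lambda=\tfrac14$ automatically lies in the Hardy space $H$ defined by $\|u\|_H^2=\int(|\nabla_{\mathbb{B}^2}u|^2-\tfrac14 u^2)\,dV_{\mathbb{B}^2}$, and for such solutions one proves $e^{t/2}u(t)\to A>0$ as $t\to\infty$ (in geodesic polar coordinates), which forces $\int_0^\infty \sinh(t)\,u^2\,dt=\infty$, contradicting $u\in L^2(\mathbb{B}^2)$. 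This decay analysis is the genuine mechanism ruling out limit solutions at the bottom of the spectrum.

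\textbf{Case (i), bounded subcase.} Your rescaling $v_k=u_k/c_k$ gives $v_k(0)=1$, $0\le v_k\le 1$, and local $C^2$ convergence to some $v_0$ with $-\Delta_{\mathbb{B}^2}v_0=0$, but you have no control forcing $v_0$ to decay or to lie in $L^2(\mathbb{B}^2)$ (indeed $\|v_k\|_{L^2}\lesssim c_k^{-1}\to\infty$), so the spectral argument does not apply; bounded harmonic functions on the disk are abundant. The paper's argument is much simpler and robust: first show by a Moser--Trudinger argument applied to $u_k/\|\nabla_{\mathbb{B}^2}u_k\|$ that $\|\nabla_{\mathbb{B}^2}u_k\|^2$ has a positive lower bound; then if $c_k$ were bounded, $\int u_k^2 e^{u_k^2}\,dV_{\mathbb{B}^2}\lesssim\int u_k^2\,dV_{\mathbb{B}^2}\lesssim 1$ by Poincar\'e, while the equation gives $\int u_k^2 e^{u_k^2}\,dV_{\mathbb{B}^2}=\lambda_k^{-1}\|\nabla_{\mathbb{B}^2}u_k\|^2\to\infty$.

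\textbf{Neck region and the implication $c_k\to\infty\Rightarrow\lambda_k\to 0$.} The paper does not use Pohozaev identities here. Exploiting radiality, it proves a pointwise monotonicity formula (Lemma~3.4 in the paper's numbering): after the rescaling $\eta_k(r)=c_k(u_k(r_k r)-c_k)$, one has $\eta_k(r)\le \eta_0(r)=-\ln(1+r^2)$ for all $r\in[R_0,r_k^{-1})$, obtained via a two-term expansion $\eta_k=\eta_0+c_k^{-2}w_0+c_k^{-4}\phi_k$ and a contraction-mapping bound on $\phi_k$. This single pointwise inequality (a) immediately forces $\lambda_k c_k^2\le 1$ (take $r=(\lambda_k c_k^2)^{-1/2}$ and use $u_k>0$), hence $\lambda_k\to0$; and (b) gives the sharp decay $r^2\lambda_k u_k^2 e^{u_k^2}\lesssim (\ln(r/r_k))^{-2}$ on the neck, which makes the no-neck-energy computation a direct integration. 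Your Pohozaev route may be workable in principle, but you would still need a separate argument for $\lambda_k c_k^2=O(1)$, and in the radial setting the Malchiodi--Martinazzi monotonicity approach is both sharper and shorter.
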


\begin{remark}\label{rem1}
From the proof of Theorem \ref{thm1}, we know that when $\lim\limits_{k\rightarrow +\infty}\lambda_k=0$, the blow-up behavior of $u_k$ is uniquely determined. Hence, one needn't pick subsequences and can directly derive
  that the positive solutions $u_{\lambda}$ of equation \eqref{uniq} must blow up at the origin as $\lambda\rightarrow 0$, $||\nabla_{\mathbb{B}^2} u_{\lambda}||_{L^2(\mathbb{B}^2)}^2\rightarrow4\pi$, $|\nabla_{\mathbb{B}^2} u_{\lambda}|^2 dV_{\mathbb{B}^2}\rightharpoonup4\pi\delta_0$ and $\lambda u_{\lambda}^2e^{u_{\lambda}^2} dV_{\mathbb{B}^2}\rightharpoonup4\pi\delta_0.$
\end{remark}

\begin{remark}
We conjecture that $\lim\limits_{k\rightarrow +\infty}\int_{\mathbb{B}^2}|\nabla_{\mathbb{B}^2} u_k|^{2}dV_{\mathbb{B}^2}=0$ when $\lim\limits_{k\rightarrow+\infty}\lambda_k=\frac{1}{4}$. Indeed, from Theorem 1.1 (iii) we know that if $\lambda_0=\frac{1}{4}$, then $u_{k}\to u_{0}=0$ in $C^{2}(\mathbb{B}^2)$ as $k\rightarrow +\infty$. If we can improve the upper bound estimate in (B.7) of Lemma B.3, then we have
\begin{equation}\nonumber
	\begin{aligned}
		\int_{\mathbb{B}^{2} }|\nabla _{\mathbb{B}^{2} }u_{k}	|^{2}d V_{\mathbb{B}^{2}}
		&=\int_{B_{\mathbb{B}^2}(0,R) }|\nabla _{\mathbb{B}^{2} }u_{k}	|^{2}d V_{\mathbb{B}^2}
		+\int_{B^{C}_{\mathbb{B}^2}(0,R) }|\nabla _{\mathbb{B}^{2}}	u_{k}
		|^{2}d V_{\mathbb{B}^2}\\
		&=  \int_{B_{\mathbb{B}^2}(0,R) }|\nabla _{\mathbb{B}^{2} }u_{0}|^{2}d V_{\mathbb{B}^2}+o_\lambda(1)+\omega_{2}
		\int_{R}^{+\infty}le^{[-(1+\sqrt{1-4\lambda})-\delta]t}dt\\
		&=o_\lambda(1)+\int_{R}^{+\infty}le^{[-(1+\sqrt{1-4\lambda})-\delta]t}dt\\
		&=o_\lambda(1)+o_R(1),
	\end{aligned}
\end{equation}
as $\lambda\rightarrow \frac{1}{4}$ and $R\rightarrow +\infty$, where $l=\sinh~ t=\frac{e^{t}-e^{-t}}{2}$ and $\delta>0$ is small.
Therefore, there holds
$$
\lim_{k\rightarrow +\infty}\int_{\mathbb{B}^{2} }|\nabla _{\mathbb{B}^{2} }u_{k}	|^{2}d V_{\mathbb{B}^{2}}=0.
$$
However, it is unfortunate that we can not improve the estimate (B.7). We think the main reason is that
we do not know how to obtain better bounds for equations (B.12) and (B.13).
\end{remark}

It is very effective to apply the moving plane method or the method of moving spheres to prove the symmetry and uniqueness of solutions (see e.g. \cite{CL, ChenLi, LZ}). We first adopt the hyperbolic moving-plane method in integral forms(see \cite{cww-2026,llw-2023,llw-2025}) to show the hyperbolic symmetry of positive solutions (see Proposition 2.1). Up to some M\"{o}bius transformation, we can assume that positive solutions are radially decreasing.
Then, adopting the ODE theory and a fixed point argument, we obtain the blow-up behaviors of positive solutions (see Proposition \ref{pro1}), which is helpful to the proof of Theorem \ref{thm1}. The key here is to derive a monotonicity formula (see Lemma \ref{lem3}), which provides us with a crucial decay estimate away from the blow-up point so that we can derive the uniform boundedness of $\lambda_k c_k^2$, where $c_k=u_k(0)=\max\limits_{x\in \mathbb{B}^2}u_k(x)$.
In order to prove Theorem \ref{thm1}, we shall discuss three cases respectively. For the case $\lambda_0=0$, we first show that the Dirichlet energy has a positive lower bound as $k\to\infty$ and then use a contradiction argument to obtain the positive solution must blow up at the origin. This together with Proposition \ref{pro1} completes the study for the first case.
For the case $\lambda_0\in(0,\frac{1}{4})$, applying Proposition \ref{pro1} again, we can derive the $C^2(\mathbb{B}^2)$-convergence by standard elliptic regularity theory. To obtain the convergence in $W^{1,2}(\mathbb{B}^2)$, we need to deal with the singularity of the Hardy-potential near the boundary, where a decay property for radial positive solutions is very crucial.
For the case $\lambda_0=\frac{1}{4}$, we can also get the analogous $C^2(\mathbb{B}^2)$-convergence. The main difficulty here is to show that the limit of $u_k$ in $C^2(\mathbb{B}^2)$ is zero, since the first eigenvalue $\frac{1}{4}$ of the operator $-\Delta_{\mathbb{B}^2}$ with the Dirichlet boundary in $\mathbb{B}^2$ cannot be achieved. To overcome this difficulty, we rely on the decay properties of entire positive solutions, which leads to the non-existence of positive solutions to \eqref{uniq} when $\lambda=\frac{1}{4}$ (see Appendix B).
\medskip

Exploiting the existence of critical points of Moser-Trudinger functional $M(u)=\int_{\Omega}\left(e^{u^2}-1\right)dx$ on the bounded domain under the constraint
$\int_{\Omega}|\nabla u|^2dx=\gamma$ for $\gamma>4\pi$ has been a challenging problem. By a variational method and the monotonicity of $M(u)$, Struwe \cite{ST} proved that there exists $\gamma^{*}>4\pi$ such that $M(u)$ has at least two positive critical points for almost every $\gamma\in (4\pi,\gamma^{*})$. Lamm, Robert and Struwe \cite{LRS} further introduced the Moser-Trudinger flow and strengthened it to every $\gamma\in (4\pi,\gamma^{*})$. del Pino, Musso and Ruf \cite{del-M-R} characterized some of these critical points when $\gamma$ is close to $4\pi$. When $\Omega$ is a disk, Malchiodi and Martinazzi \cite{MM-JEMS} applied a refined blow-up analysis for radial critical points of $M(u)$ to derive that $M(u)$ does not admit any positive critical point for $\gamma$ sufficiently large. It is conjectured that if $\Omega$ is a simply connected domain, the above non-existence result still holds. When $\Omega$ is a finite hyperbolic ball, Chen, Lu, Xue and Zhu \cite{CLXZ} established the multiplicity and non-existence of positive critical points for $\gamma>4\pi$. We also mention that recently Marchis, Malchiodi, Martinazzi and Thizy \cite{MMMT} proved that when $\Omega$ is a closed manifold, the Moser-Trudinger functional $M(u)=\int_{\Omega}\left(e^{u^2}-1\right)dV_g$ under the constraint $\int_{\Omega}\big(|\nabla u|^2+|u|^2\big)dV_g=\gamma$ always has a nontrivial positive critical point for any $\gamma>0$.
\medskip

Theorem \ref{thm1} will be used to study the existence of critical points for the Moser-Trudinger functional $\int_{\mathbb{B}^2}\left(e^{u^2}-1\right)dV_{\mathbb{B}^2}$ under the constraint $\int_{\mathbb{B}^2}|\nabla_{\mathbb{B}^2}u|^2dV_{\mathbb{B}^2}=\gamma$ for $\gamma> 4\pi$. We have the following result.
	
\begin{theorem}\label{thm2}
	There exists $\Lambda^{\ast}>4\pi$ such that the Moser-Trudinger functional
	\begin{equation}\label{F}\nonumber
	F(u):=\int_{\mathbb{B}^2}\left(e^{u^2}-1\right) dV_{\mathbb{B}^2}
	\end{equation}
	under the constraint $\int_{\mathbb{B}^2}|\nabla_{\mathbb{B}^2}u|^2 dV_{\mathbb{B}^2}=\Lambda$ has at least one positive critical point for $\Lambda\in(4\pi,\Lambda^{\ast})$ up to some M\"{o}bius transformation.
\end{theorem}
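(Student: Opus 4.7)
The plan is to produce the sought-for critical points as solutions $u_\lambda$ of \eqref{uniq} with $\lambda>0$ small. Indeed, any positive solution of \eqref{uniq} is automatically a constrained critical point of $F$ on the hypersurface $\{u\in W^{1,2}(\mathbb{B}^2):\|\nabla_{\mathbb{B}^2}u\|_2^2=\Lambda(\lambda)\}$, where $\Lambda(\lambda):=\|\nabla_{\mathbb{B}^2}u_\lambda\|_2^2$, with Lagrange multiplier $1/\lambda$. Hence Theorem \ref{thm2} reduces to verifying that (i) \eqref{uniq} admits a positive solution $u_\lambda$ for every sufficiently small $\lambda>0$, (ii) $\Lambda(\lambda)$ depends continuously on $\lambda$, and (iii) $\Lambda(\lambda)\to 4\pi^+$ as $\lambda\to 0^+$ with $\Lambda(\lambda)>4\pi$ strictly; combining these facts the image of $\Lambda$ must cover an interval $(4\pi,\Lambda^\ast)$.

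For (i), I would set up the mountain-pass functional
\[
    J_\lambda(u)=\tfrac12\|\nabla_{\mathbb{B}^2}u\|_2^2-\tfrac{\lambda}{2}\int_{\mathbb{B}^2}\bigl(e^{u^2}-1\bigr)\,dV_{\mathbb{B}^2}
\]
on $W^{1,2}(\mathbb{B}^2)$. The hyperbolic Poincaré inequality $\|\nabla_{\mathbb{B}^2}u\|_2^2\ge \tfrac14\|u\|_2^2$ ensures positivity of $J_\lambda$ on a small sphere when $\lambda<1/4$, while $F$ grows super-exponentially along rays with large amplitude, yielding the mountain-pass geometry. Boundedness of Palais--Smale sequences follows from the Moser--Trudinger inequality on $\mathbb{B}^2$, and by choosing $\lambda$ small the mountain-pass level lies below the first non-compactness threshold, yielding a nontrivial positive critical point; Proposition~2.1 together with Möbius invariance then lets one assume $u_\lambda$ is radially decreasing about the origin.

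For (iii), which is the heart of the argument, I would apply Theorem \ref{thm1} and Remark \ref{rem1} to the family $\{u_\lambda\}_{\lambda\to 0^+}$: $u_\lambda$ concentrates at the origin, $\Lambda(\lambda)\to 4\pi$, and the profile is a single hyperbolic Liouville bubble at a scale $r_\lambda\to 0$. Writing $c_\lambda:=u_\lambda(0)\to\infty$, I would then prove the refined expansion
\[
    \Lambda(\lambda)=4\pi+\frac{\alpha}{c_\lambda^{2}}+o\!\left(\frac{1}{c_\lambda^{2}}\right),\qquad \alpha>0,
\]
by decomposing $u_\lambda$ into an inner rescaled-bubble piece near the origin, an outer piece asymptotic to a multiple of the Green function of $-\Delta_{\mathbb{B}^2}$, and an intermediate matching annulus, then integrating $|\nabla_{\mathbb{B}^2}u_\lambda|^2$ region by region. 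The key is that, after subtracting the bubble's exact $4\pi$ contribution, the outer Green-function tail on $\mathbb{B}^2$ carries a strictly positive residual Dirichlet energy of order $1/c_\lambda^{2}$.

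For (ii) I would parametrize the family by $c=u_\lambda(0)$ and use the fact that the radial hyperbolic ODE has a unique Cauchy solution for each amplitude to obtain continuity of $c\mapsto(\lambda(c),\Lambda(c))$; together with $\Lambda(c)\to 4\pi^+$ as $c\to\infty$ this shows $\Lambda$ covers an interval $(4\pi,\Lambda^\ast)$, completing the proof. The main obstacle is the expansion in the third paragraph: matching the inner Liouville bubble to the hyperbolic Green-function tail requires sharp pointwise and integral estimates for $u_\lambda$ in each of three regions, and extracting the sign $\alpha>0$ depends on carefully tracking curvature-induced corrections that have no counterpart in the Euclidean analyses of Malchiodi--Martinazzi or Chen--Lu--Xue--Zhu.
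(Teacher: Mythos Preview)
Your overall architecture matches the paper's: produce solutions $u_\lambda$ of \eqref{uniq} for small $\lambda$, observe that these are constrained critical points of $F$ at level $\Lambda(\lambda)=\|\nabla_{\mathbb{B}^2}u_\lambda\|_2^2$, and use Theorem~\ref{thm1} together with a refined energy expansion to show $\Lambda(\lambda)\to 4\pi$ \emph{from above}. Steps (i) and (ii) are fine and essentially what the paper does.

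The genuine gap is in (iii). You claim
\[
\Lambda(\lambda)=4\pi+\frac{\alpha}{c_\lambda^{2}}+o(c_\lambda^{-2}),\qquad \alpha>0,
\]
with the heuristic that the outer Green-function tail contributes positive Dirichlet energy of order $c_\lambda^{-2}$. But this is not what happens: the coefficient of $c_\lambda^{-2}$ in the energy deficit \emph{vanishes}. Concretely, writing $\eta_\lambda=\eta_0+c_\lambda^{-2}w_0+\cdots$ as in Lemmas~\ref{lem1}--\ref{lem2}, the $c_\lambda^{-2}$ contribution to $\int\lambda u_\lambda^2 e^{u_\lambda^2}$ is $\int_{B_{s_\lambda}}(-\eta_0\Delta\eta_0-\Delta w_0)\,dx$, and using $\int_{\mathbb{R}^2}4e^{2\eta_0}\eta_0=-4\pi$ together with \eqref{eq18} one finds this is $o(1)$. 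Your inner/outer decomposition would recover exactly this cancellation: the outer tail $u_\lambda\sim c_\lambda^{-1}G$ does contribute $O(c_\lambda^{-2})$ energy, but the inner bubble falls short of $4\pi$ by the same amount at that order.

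To get a sign you must go one order further, to $c_\lambda^{-4}$. This is the content of Lemmas~\ref{lem6}--\ref{lem7}: one introduces a third corrector $z_0$ solving \eqref{5}, computes $\frac{1}{2\pi}\int_{\mathbb{R}^2}\Delta z_0=-6-\tfrac{\pi^2}{3}$ via the explicit integrals \eqref{13}, and only then obtains
\[
\|\nabla_{\mathbb{B}^2}u_\lambda\|_2^2\ \ge\ 4\pi+\frac{4\pi}{c_\lambda^{4}}+o(c_\lambda^{-4}).
\]
So the ``curvature-induced corrections'' you flag are not the issue; the issue is that the first nontrivial term lives at order $c_\lambda^{-4}$ and extracting its positive sign requires the full third-order ODE analysis, not a bubble/Green-function matching at order $c_\lambda^{-2}$.
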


Adopting Theorem \ref{thm1} and Remark \ref{rem1}, we know that up to some M\"{o}bius transformation, positive solutions $u_\lambda$ of equation \eqref{eq1} with $\lambda_k$ replaced by $\lambda$ must blow up at $0$ as $\lambda\rightarrow0$ and $\lim\limits_{\lambda\rightarrow 0}\int_{\mathbb{B}^2}|\nabla_{\mathbb{B}^2}u_\lambda|^2dV_{\mathbb{B}^2}=4\pi$. If we can prove $\int_{\mathbb{B}^2}|\nabla_{\mathbb{B}^2}u_\lambda|^2dV_{\mathbb{B}^2}$ approaches to $4\pi$ from the above as $\lambda\to0$, one can deduce that the Moser-Trudinger functional $F(u)$ under the constraint $\int_{\mathbb{B}^2}|\nabla_{\mathbb{B}^2}u|^2dV_{\mathbb{B}^2}=\Lambda$ for $\Lambda>4\pi$ and $\Lambda$ close to $4\pi$ has at least one positive critical point. Thus, this will complete the proof of Theorem \ref{thm2}. To this end, we will expand the solution near the origin up to order $o(c_\lambda^{-4})$ (see Lemma \ref{lem6}) and estimate the deficit of Dirichlet energy $\left(\int_{\mathbb{B}^2}|\nabla_{\mathbb{B}^2}u_\lambda|^2dV_{\mathbb{B}^2}-4\pi\right)$ more precisely (see Lemma \ref{lem7}), to further derive the sign of the error $c_\lambda^{-4}$, where $c_\lambda=u_\lambda(0)=\max\limits_{x\in \mathbb{B}^2}u_\lambda(x)$.

\begin{remark}
Since the existence of extremals for the Moser-Trudinger inequality in the Poincar\'e disk is still unknown,
 it is impossible to apply the perturbed method developed by Struwe in \cite{ST} to obtain the existence of critical points for the Moser-Trudinger functional under the constraint $\int_{\mathbb{B}^2}|\nabla_{\mathbb{B}^2}u|^2 dV_{\mathbb{B}^2}=\Lambda$ for $\Lambda$ sufficiently close to $4\pi$. Through doing the delicate expansion for the deficit $\left(||\nabla_{\mathbb{B}^2} u_\lambda||_{L^2(\mathbb{B}^2)}^2-4\pi\right)$ as $\lambda\rightarrow 0$, we overcome this difficulty.
\end{remark}

\begin{remark}
We conjecture that
under the constraint $\int_{\mathbb{B}^2}|\nabla_{\mathbb{B}^2}u|^2 dV_{\mathbb{B}^2}=\Lambda$, the Moser-Trudinger functional $F(u)$ has at least one critical point for every $\Lambda \in (4\pi, +\infty),$ which is substantially different from the non-existence of critical points of Moser-Trudinger functional in the Euclidean disk.  This is mainly because we believe that
$$\lim_{\lambda\rightarrow \frac{1}{4}}\int_{\mathbb{B}^2}|\nabla_{\mathbb{B}^2}u_\lambda|^2 dV_{\mathbb{B}^2}=+\infty$$
might hold if we assume that $u_\lambda$ is a least energy solution of the equation $-\Delta_{\mathbb{B}^2}u_\lambda=\lambda u_\lambda e^{u_\lambda^2}$, $x\in\mathbb{B}^2$. However, we are not able to prove it for the time being since we do not know the value of $\lim\limits_{k \rightarrow\infty}\int_{\mathbb{B}^2}|\nabla_{\mathbb{B}^2}u_{k}|^2dV_{\mathbb{B}^2}$ when $\lim\limits_{k\to\infty}\lambda_k=\frac{1}{4}$ in Theorem \ref{thm1}.
\end{remark}

Another application of Theorem \ref{thm1} is the uniqueness of positive solutions to Moser-Trudinger equations, which is an important topic in partial differential equations and geometric analysis. It is stated as follows.

\begin{theorem}\label{unique}
There exists $\lambda_0>0$ such that the positive solutions of Moser-Trudinger equation \eqref{uniq}
with finite Dirichlet energy are unique up to some M\"{o}bius transformation for any $0<\lambda<\lambda_0$.
\end{theorem}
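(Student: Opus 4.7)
The plan is to argue by contradiction. Suppose that for some sequence $\lambda_n\to 0^+$ there exist two positive solutions $u_{1,n},u_{2,n}$ of \eqref{uniq} with $\lambda=\lambda_n$ and bounded Dirichlet energy which are not related by any M\"obius transformation. By the hyperbolic symmetry result Proposition~2.1, each $u_{i,n}$ is radially symmetric about some point $a_{i,n}\in\mathbb{B}^2$; after composing with the M\"obius transformation $T_{a_{i,n}}$ we may assume that both solutions are radially decreasing about the origin. Theorem~\ref{thm1}(i) and Remark~\ref{rem1} then force each $u_{i,n}$ to blow up at the origin with $\|\nabla_{\mathbb{B}^2} u_{i,n}\|_{L^2(\mathbb{B}^2)}^2\to 4\pi$, and the entire blow-up profile is uniquely determined. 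Set $c_{i,n}:=u_{i,n}(0)=\max_{\mathbb{B}^2} u_{i,n}\to+\infty$.

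The heart of the argument is to show that $c_{1,n}=c_{2,n}$ for all large $n$. First I would use the refined pointwise expansion of Lemma~\ref{lem6} to describe each $u_{i,n}$ on the inner scale $\rho(x)\lesssim c_{i,n}^{-1}$ with remainder $o(c_{i,n}^{-4})$, and complement it with the outer asymptotics obtained from the Green's function of $-\Delta_{\mathbb{B}^2}$; matching the two on an intermediate annulus relates $\lambda_n$ to $c_{i,n}$. In parallel, applying a local Pohozaev identity on a geodesic ball $B_{r_n}(0)$, where $r_n$ is chosen in the matching zone, to each $u_{i,n}$ and combining it with the sharp Dirichlet-energy expansion of Lemma~\ref{lem7}, I expect to arrive at an asymptotic identity of the form
\begin{equation}\nonumber
\lambda_n=\frac{4\pi}{c_{i,n}^{2}\,e^{c_{i,n}^{2}}}\,\bigl(1+A\,c_{i,n}^{-2}+B\,c_{i,n}^{-4}+o(c_{i,n}^{-4})\bigr)
\end{equation}
with explicit universal constants $A,B$. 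Because the right-hand side is strictly monotone in $c_{i,n}$ for $c_{i,n}$ large, inverting this relation pins down $c_{i,n}$ as a function of $\lambda_n$ alone, so $c_{1,n}=c_{2,n}$ for all $n$ sufficiently large.

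Once the initial values agree, I pass to the radial ODE that both $u_{i,n}$ satisfy:
\begin{equation}\nonumber
-\frac{1}{\sinh\rho}\bigl(\sinh\rho\,u'(\rho)\bigr)'=\lambda_n\,u(\rho)\,e^{u(\rho)^2},\qquad u(0)=c_{i,n},\ u'(0)=0.
\end{equation}
Although $\rho=0$ is a regular singular point, smoothness of $u_{i,n}$ at the origin forces $u'(0)=0$, and the resulting Cauchy problem has a locally Lipschitz nonlinearity. The classical uniqueness theorem for ODE Cauchy problems therefore yields $u_{1,n}\equiv u_{2,n}$, contradicting the initial assumption and proving the theorem.

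The main obstacle I expect is the inner-outer matching and the corresponding sharpness of the Pohozaev identity. Lemma~\ref{lem6} is valid only on a small geodesic ball about the origin, while far from the origin $u_{i,n}$ behaves like a multiple of a hyperbolic Green function; both expansions have to be pushed to order $o(c_{i,n}^{-4})$ uniformly in $n$ on a common annulus $\{r_n\le\rho\le 2r_n\}$ with $r_n\to 0$ slowly. In addition, the boundary term in the Pohozaev identity carries the hyperbolic volume factor $\sinh\rho$, which must be expanded via $\sinh\rho=\rho+O(\rho^3)$ together with the decay estimates from the quantization analysis. Once this refined asymptotic identity for $c_{i,n}$ is secured, the remainder of the proof reduces to the ODE rigidity argument above.
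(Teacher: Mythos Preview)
Your overall architecture (reduce to radial solutions, then conclude by ODE uniqueness once $c_{1,n}=c_{2,n}$) matches the paper's, but the step where you claim $c_{1,n}=c_{2,n}$ has a genuine gap. An asymptotic identity of the form
\[
\lambda_n=\frac{4\pi}{c_{i,n}^{2}\,e^{c_{i,n}^{2}}}\Bigl(1+A\,c_{i,n}^{-2}+B\,c_{i,n}^{-4}+o(c_{i,n}^{-4})\Bigr)
\]
cannot be inverted to give \emph{exact} equality of the two maxima, because the remainder $o(c_{i,n}^{-4})$ is obtained from the expansion of the solution itself and therefore may differ for $i=1$ and $i=2$. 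Monotonicity of the main part only yields $c_{1,n}-c_{2,n}=o(c_{1,n}^{-k})$ for some finite $k$; pushing the expansion further improves the power but never reaches zero. No finite Pohozaev identity or finite-order inner/outer matching produces an exact closed formula for $c_{i,n}$ in terms of $\lambda_n$, so this route does not close.

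The paper uses exactly such an asymptotic relation (Proposition~\ref{add-lem6}, giving $c_\lambda=\lambda^{-1/2}+\tfrac{A}{2}\lambda^{1/2}+o(\lambda^{1/2})$), but only to conclude $c_{1,\lambda}-c_{2,\lambda}=o(\lambda^{1/2})$ and hence $r^{(1)}_\lambda/r^{(2)}_\lambda\to 1$ (Lemma~\ref{theta12}). The actual contradiction comes from a different mechanism: one sets
\[
\xi_\lambda=\frac{u^{(1)}_\lambda-u^{(2)}_\lambda}{\|u^{(1)}_\lambda-u^{(2)}_\lambda\|_{L^\infty(B_{\delta r_\lambda})}},\qquad \widetilde\xi_\lambda(x)=\xi_\lambda(r^{(1)}_\lambda x),
\]
shows that $\widetilde\xi_\lambda\to \widetilde\alpha_0\,\dfrac{1-|x|^2}{1+|x|^2}$ (the radial element of the kernel of the linearized Liouville operator), and then applies a \emph{bilinear} local Pohozaev identity $P^{(1)}(u^{(1)}_\lambda+u^{(2)}_\lambda,\xi_\lambda)$ on $B_{\delta r^{(1)}_\lambda}$ to force $\widetilde\alpha_0=0$. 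This contradicts $\|\widetilde\xi_\lambda\|_{L^\infty(B_\delta)}=1$ and gives $u^{(1)}_\lambda\equiv u^{(2)}_\lambda$ on a small ball; ODE uniqueness then finishes as you describe. In short, the Pohozaev identity is not applied to each $u_{i,n}$ separately to pin down $c_{i,n}$, but to the pair in order to eliminate the nondegenerate kernel direction of the linearization. That is the missing idea in your proposal.
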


There have been extensive study on  uniqueness  of positive solutions to
 elliptic equations with nonlinear polynomial growth
 (\cite{DLY,Glangetas, GPY, GILY}) and with exponential nonlinearities (\cite{cww-2026,LY-2018,BJLY-2019,BJLY-2019-2}).
However, much less is known for the
uniqueness results for Moser-Trudinger equation \eqref{uniq}.
In \cite{LPP-2022}, Luo, Pan and Peng
 studied the uniqueness of the low energy solutions to \eqref{uniq}
 on a bounded domain in $\R^{2}.$
To prove Theorem \ref{unique}, we adopt a similar strategy.
 We mainly apply a contradiction argument and some local Pohozaev identities.
Specifically, we assume that there exist two solutions $u^{(1)}_{\lambda}(x)$
 and $u^{(2)}_{\lambda}(x)$ of \eqref{uniq}.
Note that we can show the
 positive solutions of equation \eqref{uniq} is radially decreasing up to some M\"{o}bius transformation (see Proposition \ref{pro}). By  the uniqueness of the Cauchy initial value problem for ODE, we only need to prove $u^{(1)}_{\lambda}(0)=u^{(2)}_{\lambda}(0).$
 To this end, we wish to prove $u^{(1)}_{\lambda}(x)=u^{(2)}_{\lambda}(x)$
 in a Poincar\'{e} disk with a small radius and the center of the disk being at 0.

\medskip

We would like to point out that it is crucial to obtain the relation between $\lambda$ and $c_{\lambda}$ where $c_\lambda=u_\lambda(0)=\max\limits_{x\in \mathbb{B}^2}u_\lambda(x)$. Different from \cite{LPP-2022}, our argument is not based on Green's formula, but on some precise pointwise estimates for solutions away from the concentration point.
In order to obtain a preliminary relation between $\lambda$ and $c_{\lambda}$ (see Proposition \ref{lem-7-31-2}),
we prove a pointwise estimate for the solutions (see Lemma \ref{add-lem-5}), which is of independent interest since it provides a more precise expansion for the solutions both near the origin and away from the origin compared with the estimates given in Sections \ref{s4} and \ref{s5}.
To further obtain a more accurate relation between $\lambda$ and $c_{\lambda}$ (see Proposition \ref{add-lem6}), we need to derive a more precise expansion of solutions away from the origin (see Lemmas \ref{adlem1} to \ref{ad-lem2}),
where the delicate choice of $\widetilde{s_\lambda}$ plays an important role in the proof so that we can apply
 the truncation technique and the Moser-Trudinger inequality to estimate the Dirichlet energy away from the origin.
\medskip

Our paper is organized as follows. In Section \ref{s2}, we gather some
auxiliary facts about hyperbolic spaces and prove the hyperbolic symmetry of positive solutions of Moser-Trudinger equations in the Poincar\'{e} disk.
Theorems \ref{thm1} and \ref{thm2} are proved in Sections \ref{s4} and \ref{s5} respectively, involving the quantization analysis of solutions and the existence of positive critical points for the Moser-Trudinger functional.
In Section \ref{cc}, we investigate the relation between $\lambda$ and $c_\lambda$ when $\lambda$ is close to 0.
In Section \ref{s6}, we prove the uniqueness result in Theorem
  \ref{unique}.
 Finally, we collect some previous results and basic preliminaries in Appendix \ref{sa} and give the decay estimates of positive solutions in Appendix \ref{sb}.
\medskip

\noindent{\bf Notations.} We will use $B_r$ to denote a disk centered at the origin with radius $r$ in $\mathbb{R}^2$.
$C$ represents a positive constant that may change from place to place, and
$O(\lambda)$, $o(\lambda)$, $o_\lambda(1)$ mean $|O(\lambda)|\leq C|\lambda|$, $o(\lambda)/\lambda\to0$ and $o_\lambda(1)\to0$ as $\lambda\to0$, respectively. Given a set $M$ and two functions $f_1,f_2:M\to\mathbb{R}$, we write $f_1\lesssim f_2$ if there exists $C>0$ such that $f_1(m)\leq Cf_2(m)$ for all $m\in M$. The symbol $\gtrsim$ is defined analogously.

\section{Auxiliary facts}\label{s2}

\subsection{Hyperbolic spaces and M\"{o}bius transformations}\label{sect:2.1}
We introduce the ball model $\mathbb{B}^n$ of the hyperbolic space. Denote $\mathbb{B}^n$ by the Poincar\'{e} ball, which is a unit ball $B^n$ equipped with the usual Poincar\'{e} metric $g=\left(\frac{2}{1-|x|^2}\right)^2g_e$, where $g_e$ represents
the standard Euclidean metric. The hyperbolic volume element can be written as $dV_{\mathbb{B}^n}=\left(\frac{2}{1-|x|^2}\right)^ndx$ and the geodesic distance from the origin to $x\in \mathbb{B}^n$ is given by $\rho(x)= \ln \frac{1+|x|}{1-|x|}$. Let $B_{\mathbb{B}^n}(0,R)$ denote the hyperbolic ball centered at the origin with the radius equal to $R$. The associated Laplace-Beltrami operator $\Delta_{\mathbb{B}^n}$ and the gradient operator $\nabla_{\mathbb{B}^n}$ are given respectively by
$$\Delta_{\mathbb{B}^n}=\frac{1-|x|^2}{4}\left((1-|x|^2)\sum_{i=1}^{n}\frac{\partial^2}{\partial x_i^2}+2(n-2)\sum_{i=1}^{n}x_i\frac{\partial}{\partial x_i}\right),\ \ \nabla_{\mathbb{B}^n}=\Big(\frac{1-|x|^2}{2}\Big)^{2}\nabla_{\R^{n}}.$$
Denote by $W^{1,2}(\mathbb{B}^n)$ the first order Sobolev space in the Poincar\'{e} ball $\mathbb{B}^n$, i,e. the completion of $C_{c}^{\infty}(\mathbb{B}^n)$ under the norm $\left(\int_{\mathbb{B}^n}|\nabla_{\mathbb{B}^n}\cdot|^2dV_{\mathbb{B}^n}\right)^{\frac{1}{2}}$. In the Poincar\'{e} ball $\mathbb{B}^n$,
for any $u\in W^{1,2}(\mathbb{B}^n)$, there holds
\begin{equation}\label{Poincare}
\int_{\mathbb{B}^n}|\nabla_{\mathbb{B}^n}u|^2dV_{\mathbb{B}^n}\geq \frac{(n-1)^2}{4}\int_{\mathbb{B}^n}|u|^2dV_{\mathbb{B}^n}.
\end{equation}

Under the polar coordinate system, the hyperbolic metric $g$ can be decomposed into
$$g=d\rho^2+ \sinh^2 \rho d\sigma,$$
where $d\sigma$ is the standard sphere metric. Then it is not difficult to check that
$\Delta_{\mathbb{B}^n}$ and $\nabla_{\mathbb{B}^n}$ can be written as
$$\Delta_{\mathbb{B}^n}=\frac{\partial^2}{\partial \rho^2}+(n-1)\coth\rho\frac{\partial}{\partial \rho}+\frac{1}{\sinh^2 \rho}\Delta_{\mathbb{S}^{n-1}},\ \ \nabla_{\mathbb{B}^n}=\left(\frac{\partial}{\partial \rho},\  \frac{1}{\sinh \rho}\nabla_{\mathbb{S}^{n-1}}\right),$$
respectively. We can also write
\begin{equation}
\begin{split}
		\int_{\mathbb{B}^n}f(x)dV_{\mathbb{B}^n}&=\int_{0}^{1}\int_{S^{n-1}}f(r\xi)r^{n-1}\left(\frac{2}{1-r^2}\right)^nd\xi dr\\
		&=\int_{0}^{+\infty}\int_{S^{n-1}}f\left(\tanh(\frac{\rho}{2})\xi\right)(\sinh \rho)^{n-1}d\xi d\rho
\end{split}
\end{equation}
under the the polar coordinate system.

For each $a\in \mathbb{B}^n$, we define the M\"{o}bius transformations $T_a$ by (see \cite{Ahlfors})
\begin{equation}\label{Mc}
T_a(x)=\frac{|x-a|^2a-(1-|a|^2)(x-a)}{1-2x\cdot a+|x|^2a^2},
\end{equation}
where $x\cdot a$ denotes the scalar product in $\mathbb{R}^n$. It is known that the volume element $dV_{\mathbb{B}^n}$ on $\mathbb{B}^n$ is invariant with respect to the M\"{o}bius transformations, which deduces that for any $\varphi\in L^{1}(\mathbb{B}^n)$,
there holds $$\int_{\mathbb{B}^n}|\varphi\circ \tau_{a}|dV_{\mathbb{B}^n}=\int_{\mathbb{B}^n}|\varphi|dV_{\mathbb{B}^n}.$$
Furthermore, the commutativity of M\"{o}bius transformations $T_a$ with the operator $-\Delta_{\mathbb{B}^n}$ still holds. That is to say that for any $\phi\in C^{\infty}_c(\mathbb{B}^n)$, there holds
$$\int_{\mathbb{B}^n}-\Delta_{\mathbb{B}^n}(\phi\circ \tau_a)(\phi\circ \tau_a)dV_{\mathbb{B}^n}=\int_{\mathbb{B}^n}(-\Delta_{\mathbb{B}^n}\phi)\circ \tau_a\cdot (\phi\circ \tau_a)dV_{\mathbb{B}^n}=\int_{\mathbb{B}^n}-\Delta_{\mathbb{B}^n}\phi\cdot \phi dV_{\mathbb{B}^n}.$$

Using the M\"{o}bius transformations, we can define the geodesic distance from $x$ to $y$ in $\mathbb{B}^n$ as follows
$$\rho(x,y)=\rho(T_{x}(y))=\rho(T_{y}(x))= \ln \frac{1+|T_{y}(x)|}{1-|T_{y}(x)|}.$$
Also using the M\"{o}bius transformations again, we can define the convolution of measurable functions $f$ and $g$ on $\mathbb{B}^n$ by (see \cite{liu}) $$(f\ast g)(x)=\int_{\mathbb{B}^n}f(y)g(T_x(y))dV_{\mathbb{B}^n}(y),$$
where $dV_{\mathbb{B}^n}(y)=\left(\frac{2}{1-|y|^2}\right)^{n}dy$.

\subsection{Hardy-Littlewood-Sobolev inequality in the Poincar\'{e} disk $\mathbb{B}^2$}
It is well known that for any $u\in W^{1,2}(\mathbb{B}^2)$, there holds
$$
\int_{\mathbb{B}^2}\big(|\nabla_{\mathbb{B}^2}u|^2-\lambda |u|^2\big)dV_{\mathbb{B}^2}
\geq C_{\lambda, q} \Big(\int_{\mathbb{B}^2}|u|^qdV_{\mathbb{B}^2}\Big)^{\frac{2}{q}}
$$
for any $0<\lambda<\frac{1}{4}$ and $2\leq q<+\infty$. If we define $f=(-\Delta_{\mathbb{B}^2}-\lambda)^{\frac{1}{2}}u$, by duality, the above inequality is equivalent to $$
\Big(\int_{\mathbb{B}^2}|f|^{q'}dV_{\mathbb{B}^2}\Big)^{\frac{2}{q'}}\geq C_{\lambda, q}\int_{\mathbb{B}^2}|(-\Delta_{\mathbb{B}^2}-\lambda)^{-\frac{1}{2}}f|^2dV_{\mathbb{B}^2}
$$
with $\frac{1}{q}+\frac{1}{q'}=1$, which can be written as
$$
\int_{\mathbb{B}^2}\int_{\mathbb{B}^2}f(x)G_{\lambda}(x,y)f(y)dV_{\mathbb{B}^2}(y)dV_{\mathbb{B}^2}(x)\leq C_{\lambda, q}^{-1}\Big(\int_{\mathbb{B}^2}|f|^{q'}dV_{\mathbb{B}^2}\Big)^{\frac{2}{q'}},
$$
where $G_{\lambda}(x,y)$ is the Green's function of the operator $-\Delta_{\mathbb{B}^2}-\lambda$ in the Poincar\'{e} disk $\mathbb{B}^2$.
Furthermore, we can also derive that for any $f$ and $g\in L^{q'}(\mathbb{B}^2)$, there holds
$$\int_{\mathbb{B}^2}\int_{\mathbb{B}^2}f(x)G_{\lambda}(x,y)g(y)dV_{\mathbb{B}^2}(y)dV_{\mathbb{B}^2}(x)\leq C_{\lambda, q}^{-1}\|f\|_{L^{q'}(\mathbb{B}^2)}\|g\|_{L^{q'}(\mathbb{B}^2)}.$$
If we define $I_{\lambda}(f)=\int_{\mathbb{B}^2}G_{\lambda}(x,y)f(y)dV_{\mathbb{B}^2}(y)$, then it follows that
$$\|I_{\lambda}(f)\|_{L^q(\mathbb{B}^2)}\leq C_{\lambda, q}^{-1} \|f\|_{L^{q'}(\mathbb{B}^2)}.$$

\subsection{Totally geodesic lines of two-dimensional hyperboloid model }
Let $\mathbb{R}^{2,1}=(\mathbb{R}^{2+1},g)$ be the Minkowski space, where the metric $$ds^2=dx_1^2+dx_2^2-dx_{3}^2.$$ The hyperboloid model of hyperbolic space $\mathbb{H}^2$ is the submanifold $$\{x\in \mathbb{R}^{2,1}:x_1^2+x_2^2-x_{3}^2=-1, x_{3}>0\}.$$ The totally geodesic line $U_{x_2}$ along $x_2$ direction through the origin of $\mathbb{R}^2$ can be defined as
$U_{x_2}=\{x\in \mathbb{H}^2:\ x_2=0\}$. The general totally geodesic line along $x_2$ direction can be generated through hyperbolic rotation. More precisely, define $A_{x_2}^t=Id_{\mathbb{R}^{1}}\bigotimes\widetilde{A}^{t} $, where $\widetilde{A}^{t}$ is the hyperbolic rotation on $\mathbb{R}^{2}$ and is given by
\begin{align*}
\widetilde{A}^{t}=\bigg{(}
\begin{array}{c}
\cosh t, \ \sinh t\\
\sinh t,\  \cosh t
\end{array} \bigg{)}.
\end{align*}
Then $\{U_{x_2}^{t}:=A_{x_2}^{t}(U_{x_2})\}_{t\in \mathbb{R}}$ is a family of totally geodesic lines along $x_2$ direction and they are pairwise disjoint and constitute the whole hyperbolic space $\mathbb{H}^2$. The totally geodesic line $U_{\nu}$ along $\nu\in \mathbb{S}^{1}$ direction through the origin of $\mathbb{R}^2$ can be defined as $U_{\nu}=\{x\in \mathbb{H}^2:(x_1,x_2)\cdot\nu=0\}$. For any $x'=(x_1,x_2)\in \mathbb{R}^2$, we can write $x'=(x'\cdot\nu)\nu+y'$ and $y'$ is orthogonal to $\nu$. $U_{\nu}^{t}=Id_{\nu^{\perp}}\bigotimes\widetilde{A}^{t}$. Simple calculation gives that
$$U_{\nu}^{t}=\left(\sinh t x_{3}\nu+y', \cosh t x_{3}\right), \ \ (x', x_{3})\in U_{\nu}.$$
It is easy to check that $\{U_{\nu}^{t}\}_{t\in \mathbb{R}}$ are pairwise disjoint and also constitute the whole hyperbolic space $\mathbb{H}^2$.


\subsection{Totally geodesic lines of Poincar\'{e} disk model and reflection:}
Let $\phi$ be the isometric map from two-dimensional hyperboloid model to Poincar\'{e} disk. The $\phi$ can be obtained by stereographic projection from the hyperboloid to the plane $\{x_{3}=0\}$ taking the vertex from which to project to be $(0,0,\cdot\cdot\cdot,-1)$.
In fact, we can write the map $\phi$ as
$\phi: x\in \mathbb{H}^2\mapsto \frac{x'}{x_{n+1}+1}\in \mathbb{\mathbb{B}}^2$. Under the map $\phi$, by careful calculation, one can check that
the totally geodesic line $U_{\nu}$ of hyperboloid $\mathbb{H}^2$ becomes $\{x \in \mathbb{B}^2:\ x\cdot \nu=0\}$, which is also a geodesic line of Poincar\'{e} disk. Furthermore $\{\phi(U_{\nu}^t)\}_{t\in \mathbb{R}}$ are pairwise disjoint and constitute Poincar\'{e} disk $\mathbb{B}^2$. Let us recall M\"{o}bius transformations in the Poincar\'{e} disk. For each $a\in \mathbb{B}^2$, we define the M\"{o}bius transformations $T_a$ by (see \cite{Ahlfors})
$$T_a(x)=\frac{|x-a|^2a-(1-|a|^2)(x-a)}{1-2x\cdot a+|x|^2a^2},$$
where $x\cdot a$ denotes the scalar product in $\mathbb{R}^2$. The M\"{o}bius transformation includes rotation and is isometric from $\mathbb{B}^2$ to $\mathbb{B}^2$. It is well known that for any fixed $\nu \in \mathbb{S}^{1}$, $\{T_a\left(\phi(U_{\nu})\right)\}_{a\in \mathbb{B}^2}$ generates all geodesic lines of Poincar\'{e} disk.
\vskip0.1cm

For the totally geodesic line $\phi(U_{x_2})=\{x\in \mathbb{B}^2:\ x_2=0\}$ along $x_2$ direction in $\mathbb{B}^2$, it is easy to define the reflection $I_{x_2}$ about $\phi(U_{x_2})$ through $I_{x_2}(x_1, x_2)=(x_1, -x_2)$ for $x=(x_1, x_2)\in \mathbb{B}^2$. Obviously, through the geodesic line equation, one can easily verify that $I_{x_2}$ maps the geodesic line to the geodesic line. One can similarly define the reflection $I_{\nu}$ about $\phi(U_{\nu})$ through $I_{\nu}(x_1, x_2)=-(x\cdot\nu)\nu+y$, where $x=(x_1, x_2)=(x\cdot\nu)\nu+y$ and $y\in \nu^{\perp}$. Obviously, $I_{\nu}(\phi(U_{\nu}))=\phi(U_{\nu})$.
\vskip0.1cm

Now, we start to define the reflection about the general totally geodesic line in $\mathbb{B}^2$. According to the definition of totally geodesic line, any totally geodesic line can be written as $T_a(\phi(U_{x_2}))$ for some $a>0$. The reflection $I_{x_2}^{a}$ about $T_a(\phi(U_{x_2}))$ can be defined through $$I_{x_2}^{a}(x)=T_{a} \circ I_{x_2}\circ T_{a}$$
since $T_a^{-1}=T_a$. Simple calculation gives that $I_{x_2}^{a}\left(T_a(\phi(U_{x_2}))\right)=T_a\circ I_{x_2}\circ T_a\left(T_a(\phi(U_{x_2}))\right)=T_a\circ I_{x_2}(\phi(U_{x_2}))=T_a(\phi(U_{x_2}))$.

\subsection{Symmetry of solutions for Moser-Trudinger equations in the Poincar\'{e} disk $\mathbb{B}^2$}
\begin{proposition}\label{pro}
	Assume that $u\in W^{1,2}(\mathbb{B}^2)$ is a positive solution of equation
	\begin{equation}\label{blowup1}
		\left\{
		\begin{aligned}
			&-\Delta_{\mathbb{B}^2}u=\lambda u e^{u^2},\ x\in\mathbb{B}^2,\\
			&u\to0,\ \text{when}\ \rho(x)\to\infty,
		\end{aligned}
		\right.
	\end{equation}
where $0<\lambda<\frac{1}{4}$.
Then there exists $P\in \mathbb{B}^2$ such that $u$ is a constant on the geodesic sphere centered at $P$. Furthermore, $u(x)$ is strictly decreasing about the geodesic distance $\rho(x)=d(x,P)$.
\end{proposition}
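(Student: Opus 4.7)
The plan is to establish hyperbolic radial symmetry by running the moving-plane method in integral form, using the reflections across totally geodesic lines set up in Section 2.4. First I rewrite \eqref{blowup1} as an integral equation: since $0<\lambda<\tfrac14$, the operator $-\Delta_{\mathbb{B}^{2}}$ is positive on $\mathbb{B}^{2}$ and admits a positive Green's function $G(x,y)=G_{0}(\rho(x,y))$ that is a strictly decreasing function of the hyperbolic distance. Combined with $u\to 0$ at infinity and the decay of $G$, this yields
\[
u(x)=\lambda\int_{\mathbb{B}^{2}}G(x,y)\,f(u(y))\,dV_{\mathbb{B}^{2}}(y),\qquad f(s):=s\,e^{s^{2}}.
\]

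I then fix a unit vector $\nu\in\mathbb{S}^{1}$ and consider the one-parameter family of totally geodesic lines $\ell_{t}$ perpendicular to the geodesic through the origin in direction $\nu$, together with the hyperbolic reflections $I_{t}$ about $\ell_{t}$ and the half-planes $\Sigma_{t}$ that recede toward the ideal boundary as $t\to\infty$. Because $I_{t}$ is an isometry, both $dV_{\mathbb{B}^{2}}$ and $G$ are $I_{t}$-invariant, so setting $v_{t}:=u\circ I_{t}$ and applying the change of variables $y\mapsto I_{t}(y)$ in the integral equation produces the identity
\[
u(x)-v_{t}(x)=\lambda\int_{\Sigma_{t}}\bigl[G(x,y)-G(x,I_{t}(y))\bigr]\bigl[f(u(y))-f(v_{t}(y))\bigr]\,dV_{\mathbb{B}^{2}}(y),
\]
in which the kernel satisfies $G(x,y)-G(x,I_{t}(y))\geq 0$ for $x,y\in\Sigma_{t}$ since $\rho(x,y)\leq\rho(x,I_{t}(y))$.

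The body of the argument runs the plane from infinity to a critical position. Write $\Sigma_{t}^{-}:=\{x\in\Sigma_{t}:u(x)>v_{t}(x)\}$. Restricting the identity to $\Sigma_{t}^{-}$ and using the mean-value bound $f(u)-f(v_{t})\leq e^{u^{2}}(1+2u^{2})(u-v_{t})$ (valid because $f'$ is increasing on $[0,\infty)$ and $0\leq v_{t}\leq u$ on $\Sigma_{t}^{-}$), the hyperbolic Hardy--Littlewood--Sobolev inequality recalled in Section 2.2 combined with H\"older's inequality produces an estimate of the form
\[
\|(u-v_{t})^{+}\|_{L^{q}(\Sigma_{t})}\leq C\,\bigl\|e^{u^{2}}(1+2u^{2})\bigr\|_{L^{r}(\Sigma_{t}^{-})}\,\|(u-v_{t})^{+}\|_{L^{q}(\Sigma_{t}^{-})}
\]
for suitable dual exponents. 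For $t$ large the lens $\Sigma_{t}$ shrinks and $u$ is small there, so the prefactor is $o(1)$; this forces $(u-v_{t})^{+}\equiv 0$ and starts the procedure. A standard continuation-contradiction argument then produces a critical position $t^{*}$ at which $u\equiv v_{t^{*}}$ on $\Sigma_{t^{*}}$, i.e.\ $u$ is symmetric across $\ell_{t^{*}}$. Performing this for every direction $\nu\in\mathbb{S}^{1}$ yields a family of symmetry geodesics that must share a common point $P\in\mathbb{B}^{2}$, giving radial symmetry about $P$; strict monotonicity in $\rho(\cdot,P)$ follows from the strict inequality $G(x,y)>G(x,I_{t}(y))$ in the interior of $\Sigma_{t}$ for $t$ slightly larger than $t^{*}$ together with the strong maximum principle.

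The main obstacle I foresee is the starting position of the plane, because the nonlinearity $f(s)=se^{s^{2}}$ is not of subcritical polynomial growth and so no global Lipschitz estimate closes the HLS/H\"older loop directly. The remedy is to exploit the hyperbolic Moser--Trudinger inequality: since $\|\nabla_{\mathbb{B}^{2}}u\|_{L^{2}}^{2}$ is finite, one can integrate $e^{u^{2}}$ to arbitrarily high $L^{r}$ powers on shrinking lenses, and combining this with the pointwise smallness of $u$ near the ideal boundary (from $u\to 0$ at infinity) makes the prefactor above quantitatively $o(1)$ as $t\to\infty$, which both initiates the moving plane and keeps the contraction going through the critical position.
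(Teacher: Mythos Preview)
Your overall moving-plane framework mirrors the paper's argument, but there is a genuine gap at the starting step. You work with the Green's function of $-\Delta_{\mathbb{B}^{2}}$ and the full nonlinearity $f(s)=se^{s^{2}}$, which produces the Lipschitz factor $f'(s)=e^{s^{2}}(1+2s^{2})\geq 1$. The hyperbolic half-plane $\Sigma_{t}$ has \emph{infinite} hyperbolic volume for every $t$ (moving the totally geodesic line toward the ideal boundary shrinks only the Euclidean area, not the hyperbolic one), and there is no a priori reason for $\Sigma_{t}^{-}$ to have finite volume either. Consequently $\|e^{u^{2}}(1+2u^{2})\|_{L^{r}(\Sigma_{t}^{-})}\geq |\Sigma_{t}^{-}|^{1/r}$ need not be small (or even finite), and your contraction estimate does not close. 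Your proposed remedy via Moser--Trudinger does not rescue this: inequality \eqref{MT} controls $e^{\alpha u^{2}}-1$ in $L^{1}(\mathbb{B}^{2})$, but $e^{u^{2}}$ itself is never in $L^{r}(\mathbb{B}^{2})$ because the constant $1$ is not integrable over an infinite-volume set; pointwise smallness of $u$ near the ideal boundary only drives your prefactor toward $1$, not toward $0$.

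The paper avoids precisely this obstacle by using the Green's function $G_{\lambda}$ of $-\Delta_{\mathbb{B}^{2}}-\lambda$ (still positive and strictly decreasing in $\rho$ for $0<\lambda<\tfrac14$) and writing the equation as $u=I_{\lambda}\big(\lambda u(e^{u^{2}}-1)\big)$. The resulting Lipschitz factor is $(e^{u^{2}}-1)(2u^{2}+1)$, which vanishes as $u\to 0$ and therefore lies in $L^{r}(\mathbb{B}^{2})$ globally by Moser--Trudinger; dominated convergence then forces its $L^{r}$-norm on $\Sigma_{t}^{u}\subset\Sigma_{t}$ to $0$ as the plane recedes, and the same global integrability drives the continuation step at the critical position. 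Once you make this single modification---absorb the linear term $\lambda u$ into the operator and keep only the genuinely superlinear part $\lambda u(e^{u^{2}}-1)$ on the right---the rest of your outline (HLS estimate, start/continuation, symmetry in every direction, strict monotonicity) matches the paper's proof.
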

\begin{proof}
 By Green's formula, we know that $$u(x)=\int_{\mathbb{B}^2}G_{\lambda}(x,y)\lambda u(e^{u^2}-1)dV_{\mathbb{B}^2},$$
 where $G_{\lambda}(x,y)$ is the Green's function of the operator $-\Delta_{\mathbb{B}^2}-\lambda$ in the Poincar\'{e} disk $\mathbb{B}^2$ (The Green's function $G_{\lambda}(x,y)$ is a decreasing function about $\rho(x,y)$, one can refer to \cite{LuYangQ3} for details). We only need to show the symmetry of positive solutions to this integral equation. We adopt the moving-plane method in integral forms of the hyperbolic space
 developed in \cite{LiLuYang}. To perform the moving-plane arguments, we fix one specific direction $\nu\in\mathbb{S}^{1}$ and for such direction consider $t>0$, then the hyperbolic hyperplane $\phi(U_{\nu}^t)$ splits the Poincar\'{e} disk $\mathbb{B}^2$ into two parts. We denote by $\Sigma_{t}=\bigcup_{s<t}\phi(U_{\nu}^t)$ and $u_{t}(x)=u(I^{t}_\nu(x))$.
 Direct calculations give that for any $x\in \Sigma_{t}$, there hold
 \begin{equation}\begin{split}
 u(x)&=\int_{\Sigma_{t}}G_{\lambda}(x,y)\lambda u(e^{u^2}-1)dV_{\mathbb{B}^2}+\int_{\mathbb{B}^2 \backslash \Sigma_{t}}G_{\lambda}(x,y)\lambda u(e^{u^2}-1)dV_{\mathbb{B}^2}\\
 &=\int_{\Sigma_{t}}G_{\lambda}(x,y)\lambda u(e^{u^2}-1)dV_{\mathbb{B}^2}+\int_{\Sigma_{t}}G_{\lambda}(x, I_{t}(y))\lambda u_t(e^{u_t^2}-1)dV_{\mathbb{B}^2}
 \end{split}\end{equation}
 and
 \begin{equation}\begin{split}
 u_t(x)&=\int_{\Sigma_{t}}G_{\lambda}(I_t(x),y)\lambda u(e^{u^2}-1)dV_{\mathbb{B}^2}+\int_{\mathbb{B}^2 \backslash \Sigma_{t}}G_{\lambda}(I_t(x), y)\lambda u(e^{u^2}-1)dV_{\mathbb{B}^2}\\
 &=\int_{\Sigma_{t}}G_{\lambda}(I_t(x),y)\lambda u(e^{u^2}-1)dV_{\mathbb{B}^2}+\int_{\Sigma_{t}}G_{\lambda}(I_t(x), I_t(y))\lambda u_t(e^{u_t^2}-1)dV_{\mathbb{B}^2}.
 \end{split}\end{equation}
Since $I_\nu^t$ is an isometry, we have $G_\lambda(I_\nu^t(x),y)=G_\lambda(x, I_\nu^t(y))$ and $G_\lambda(I_\nu^t(x), I_\nu^t(y))=G_\lambda(x,y)$. Hence we can derive that
\begin{equation}\begin{split}\label{identity}
u_t(x)-u(x)&=\int_{\Sigma_{t}}G_{\lambda}(I_t(x),y)\Big(\lambda u(e^{u^2}-1)-\lambda u_t(e^{u_t^2}-1)\Big)dV_{\mathbb{B}^2}\\
&\quad +\int_{\Sigma_{t}}G_{\lambda}(x,y)\Big(\lambda u_t(e^{u_t^2}-1)-\lambda u(e^{u^2}-1)\Big)dV_{\mathbb{B}^2}\\
&=\int_{\Sigma_{t}}\left(G_{\lambda}(x,y)-G_{\lambda}(I_\nu^t(x),y)\right)\Big(\lambda u_t(e^{u_t^2}-1)-\lambda u(e^{u^2}-1)\Big)dV_{\mathbb{B}^2}.\\
\end{split}\end{equation}
Step 1. We compare the values of $u_t(x)$ and $u(x)$.
We first show that for $t$ sufficiently negative, there holds
\begin{equation}\label{starting}u_t(x)\geq u(x),\ \ \forall x \in \Sigma_{t}.
\end{equation}
Define $$\Sigma_{t}^{u}=\{x\in \Sigma_{t}:\ u(x)> u_t(x)\}.$$
By \eqref{identity} and the mean-value theorem, we derive
\begin{equation}\begin{split}
u(x)-u_{t}(x)&=\int_{\Sigma_{t}}\left(G_{\lambda}(x,y)-G_{\lambda}(I_t(x),y)\right)\Big(\lambda u(e^{u^2}-1)-\lambda u_t(e^{u_t^2}-1)\Big)dV_{\mathbb{B}^2}\\
&\leq \int_{\Sigma_{t}^{u}}\left(G_{\lambda}(x,y)-G_{\lambda}(I_t(x),y)\right)\Big(\lambda u(e^{u^2}-1)-\lambda u_t(e^{u_t^2}-1)\Big)dV_{\mathbb{B}^2}\\
&\leq \int_{\Sigma_{t}^{u}}G_{\lambda}(x,y)\Big(\lambda u(e^{u^2}-1)-\lambda u_t(e^{u_t^2}-1)\Big)dV_{\mathbb{B}^2}\\
& \leq \int_{\Sigma_{t}^{u}}G_{\lambda}(x,y)\Big(\lambda (e^{u^2}-1)(2u^2+1)\Big)(u-u_t)dV_{\mathbb{B}^2}.
\end{split}\end{equation}
Through Hardy-Littlewood-Sobolev inequality on the hyperbolic space, we derive that
\begin{equation}\begin{split}
\|u(x)-u_{t}(x)\|_{L^q(\Sigma_{t}^{u})}&\leq C_{\lambda, q}^{-1}\|\Big(\lambda (e^{u^2}-1)(2u^2+1)\Big)(u-u_t)\|_{L^{q'}(\Sigma_{t}^{u})}\\
&\lesssim \|\Big((e^{u^2}-1)(2u^2+1)\Big)\|_{L^{r}(\Sigma_{t}^{u})}\|u-u_t\|_{L^q(\Sigma_{t}^{u})},
\end{split}\end{equation}
where $2\leq q<+\infty$ and $\frac{1}{q'}=\frac{1}{q}+\frac{1}{r}$. By Moser-Trudinger inequality in the Poincar\'{e} disk $\mathbb{B}^2$ (see \cite{MS-2010})
\begin{equation}\label{MT}
	\sup_{u\in W^{1,2}(\mathbb{B}^2),\ ||\nabla_{\mathbb{B}^2}u||_{L^2(\mathbb{B}^2)}^2\leq1}\int_{\mathbb{B}^2}\left(e^{4\pi u^2}-1\right)dV_{\mathbb{B}^2}<+\infty,
\end{equation}
we know that $(e^{u^2}-1)(2u^2+1)$ is $L^r$ integrable and $\|\Big((e^{u^2}-1)(2u^2+1)\Big)\|_{L^{r}(\Sigma_{t}^{u})}$ is sufficiently small for sufficiently negative $t$. This deduces that $\Sigma_{t}^{u}$ must be empty for sufficiently negative $t$. Then we accomplish the proof of Step 1.
\vskip0.1cm

Step 2. Inequality \eqref{starting} provides a starting point to move the plane $\phi(U_{\nu}^t)$. Define
$$t_0=\sup \{\ t:\ u_s(x)\geq u(x), \ s\leq t,\ \ \forall x\in \Sigma_{s}\}.$$
We will show that $u_{t_0}(x)=u(x)$ for any $x\in \Sigma_{t_0}$. Obviously $$u_{t_0}(x)\geq u(x),\ \ \forall x\in \Sigma_{t_0}.$$ If there exists some point $x_0$ such that $u_{t_0}(x_0)>u(x_0)$, then by \eqref{identity}, we see that
$$u_{t_0}(x)>u(x),\ \ \forall x\in \Sigma_{t_0}.$$ Next, we will show that the plane can be moved further to the right. More precisely, there exists an $\epsilon>0$ such that for any $t\in [t_0, t_0+\epsilon)$, there holds
$$u_{t}(x)\geq u(x),\ \ \forall x\in \Sigma_{t}.$$
Let $$\overline{\Sigma_{t_0}^{u}}=\{x\in \Sigma_{t_0}\ |\ u(x)\geq u_{t_0}(x)\}.$$
Obviously $\overline{\Sigma_{t_0}^{u}}$ has the measure zero and $\lim\limits_{t\rightarrow t_0}\Sigma_{t}^{u}\subseteq \overline{\Sigma_{t_0}^{u}}$. Then it follows that there exists an $\epsilon>0$ such that for any $t\in [t_0, t_0+\epsilon)$, the integral $\|\Big((e^{u^2}-1)(2u^2+1)\Big)\|_{L^{r}(\Sigma_{t}^{u})}$ is sufficiently small. Recall that  $$\|u(x)-u_{t}(x)\|_{L^q(\Sigma_{t}^{u})}\lesssim  \|\Big((e^{u^2}-1)(2u^2+1)\Big)\|_{L^{r}(\Sigma_{t}^{u})}\|u-u_t\|_{L^q(\Sigma_{t}^{u})}.$$
This deduces that for any $t\in [t_0, t_0+\epsilon)$, there holds $u_{t}(x)\geq u(x),\ \ \forall x\in \Sigma_{t}$, which is a contradiction with the definition of $t_0$. Hence we conclude that
$$u_{t_0}(x)=u(x),\ \ \forall x\in \Sigma_{t_0}.$$

Step 3. Since the direction $\nu$ is arbitrary, we conclude that there exists $P\in \mathbb{B}^2$ such that $u(x)$ is strictly decreasing about the geodesic distance $\rho(x)=d(x,P)$.
\end{proof}

\section{Quantization analysis of Moser-Trudinger equations}\label{s4}
From Proposition \ref{pro}, we know that a positive solution to equation \eqref{eq1} is radially decreasing about the origin up to some M\"{o}bius transformation. Without loss of generality, we may assume that the positive solutions are radially decreasing.
In this section, we are devoted to investigating blow-up behaviors of equation \eqref{eq1} inspired from \cite{MM-JEMS}, which is crucial to prove Theorem \ref{thm1}.
\vskip0.1cm

\begin{proposition}\label{pro1}
	Assume that $u_k\subset W^{1,2}(\mathbb{B}^2)$ are a family of solutions satisfying
	\begin{equation}\label{blowup}
		\left\{
		\begin{aligned}
			&-\Delta_{\mathbb{B}^2}u_k=\lambda_k u_k e^{u_k^2},\ &x\in\mathbb{B}^2,\\
			&u_k>0,\ &x\in\mathbb{B}^2,\\
			&u_k\to0,\ \text{when}\ \rho(x)\to\infty,\\
			&||\nabla_{\mathbb{B}^2}u_k||_{L^2(\mathbb{B}^2)}^2\leq M_0,
		\end{aligned}
		\right.
	\end{equation}
	where $0<\lambda_k<\frac{1}{4}$. If $c_k:=u_k(0)=\max_{\mathbb{B}^2}u_k\to\infty$ as $k\to\infty$, then up to a subsequence, we have $\lambda_k\to0$, $||\nabla_{\mathbb{B}^2} u_{k}||_{L^2(\mathbb{B}^2)}^2\to4\pi$, $|\nabla_{\mathbb{B}^2} u_{k}|^2 dV_{\mathbb{B}^2}\rightharpoonup4\pi\delta_0$ and $\lambda_k u_{k}^2e^{u_{k}^2} dV_{\mathbb{B}^2}\rightharpoonup4\pi\delta_0$.
\end{proposition}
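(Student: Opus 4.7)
The plan is to carry out a blow-up analysis at the origin modeled on the classical Adimurthi-Druet-Malchiodi-Martinazzi framework, adapted to the hyperbolic setting. By Proposition \ref{pro}, after composing with an appropriate M\"{o}bius transformation we may assume each $u_k$ is radially decreasing about the origin, so the problem reduces to the ODE $-u_k''(\rho)-\coth(\rho)u_k'(\rho)=\lambda_k u_k e^{u_k^2}$ in the geodesic radius $\rho$. Introducing the natural scale $r_k^{-2}=\lambda_k c_k^2 e^{c_k^2}$ and the rescaled profile $\eta_k(x)=c_k(u_k(r_k x)-c_k)$ on $\{|x|<r_k^{-1}\}$, the equation becomes (to leading order, the hyperbolic correction being $O(r_k^2)$)
$$
-\Delta\eta_k=\Bigl(1+\tfrac{\eta_k}{c_k^2}\Bigr)\exp\!\Bigl(2\eta_k+\tfrac{\eta_k^2}{c_k^2}\Bigr),
$$
and since $\eta_k\le 0$ with $\eta_k(0)=0$, standard elliptic estimates give $\eta_k\to\eta_0$ in $C^2_{\mathrm{loc}}(\mathbb R^2)$ along a subsequence, where $\eta_0(x)=-\log(1+|x|^2/4)$ is the standard Liouville bubble with $\int_{\mathbb R^2}e^{2\eta_0}\,dx=4\pi$.

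The most delicate quantitative input is the uniform bound $\lambda_k c_k^2\le C$. I would extract it from the monotonicity formula of Lemma \ref{lem3}, which supplies the pointwise decay of $u_k$ away from the blow-up point. Combined with testing the equation against $u_k$ itself and the hypothesis $\|\nabla_{\mathbb{B}^2}u_k\|_{L^2(\mathbb{B}^2)}^2\le M_0$, this yields
$$
\lambda_k c_k^2\le C\int_{\mathbb{B}^2}|\nabla_{\mathbb{B}^2}u_k|^2\,dV_{\mathbb{B}^2}\le CM_0,
$$
so $c_k\to\infty$ immediately forces $\lambda_k\to 0$, and also pins down the blow-up scale $r_k\to 0$ at the expected rate.

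For the energy quantization I would split the Dirichlet energy into three pieces: the inner bubble region $B(0,Rr_k)$, the neck annulus $B(0,\delta)\setminus B(0,Rr_k)$, and the exterior $\mathbb{B}^2\setminus B(0,\delta)$. A change of variables shows the inner contribution equals $c_k^{-2}\int_{B_R}|\nabla\eta_k|^2\,dx=o(1)$ for any fixed $R$. The exterior contribution tends to $0$ by the pointwise decay from Lemma \ref{lem3} together with $\lambda_k\to 0$ and standard elliptic regularity, which force $u_k\to 0$ on $\{\rho(x)\ge\delta\}$. The remaining $4\pi$ mass is generated on the neck, where $u_k$ transitions from $c_k-O(c_k^{-1}\log R)$ down to values $o(1)$ along the logarithmic profile dictated by $\eta_0$; a capacity and Pohozaev-type calculation on this annulus, exactly paralleling the Euclidean argument but including the $\coth\rho$ correction, pins the contribution down to $4\pi(1+o_k(1)+o_R(1))$. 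Taking $k\to\infty$, then $R\to\infty$ and $\delta\to 0$, gives $\|\nabla_{\mathbb{B}^2}u_k\|_{L^2(\mathbb{B}^2)}^2\to 4\pi$. The same decomposition applied to $\lambda_k u_k^2 e^{u_k^2}\,dV_{\mathbb{B}^2}$, whose $B(0,Rr_k)$ integral converges to $\int_{\mathbb R^2}e^{2\eta_0}\,dx=4\pi$, produces both weak convergences $|\nabla_{\mathbb{B}^2}u_k|^2\,dV_{\mathbb{B}^2}\rightharpoonup 4\pi\delta_0$ and $\lambda_k u_k^2 e^{u_k^2}\,dV_{\mathbb{B}^2}\rightharpoonup 4\pi\delta_0$.

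The main obstacle is precisely this neck analysis: ruling out additional energy concentration and accurately computing the $4\pi$ contribution on $B(0,\delta)\setminus B(0,Rr_k)$. It requires sharp pointwise expansions of $u_k$ both inside and at the edge of the neck, propagated by the hyperbolic monotonicity of Lemma \ref{lem3}; the first-order correction $\coth\rho$ is what distinguishes this calculation from the Euclidean one, but it contributes only lower-order perturbations once the correct scale $r_k$ is fixed and the bound $\lambda_k c_k^2\le CM_0$ is in hand.
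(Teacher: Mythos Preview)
Your blow-up setup and the identification of the limiting Liouville bubble are in line with the paper (up to an irrelevant normalization of $\eta_0$). However, two points deserve correction.

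First, your derivation of $\lambda_k c_k^2\le C$ does not work as written. Testing the equation against $u_k$ gives $\int_{\mathbb{B}^2}|\nabla_{\mathbb{B}^2}u_k|^2\,dV_{\mathbb{B}^2}=\lambda_k\int_{\mathbb{B}^2}u_k^2e^{u_k^2}\,dV_{\mathbb{B}^2}$, and the right-hand side is large, not controlled from below by $\lambda_k c_k^2$ in any useful way; so the inequality $\lambda_k c_k^2\le C\int|\nabla_{\mathbb{B}^2}u_k|^2$ has no clear source. The paper's argument is much more direct and uses only the pointwise bound of Lemma~\ref{lem3}: if $\rho_k:=(\lambda_k c_k^2)^{-1/2}<1$, then $\rho_k^2/r_k^2=e^{c_k^2}$ and \eqref{eq31} gives
\[
u_k(\rho_k)\le c_k-\frac{1}{c_k}\ln\bigl(1+e^{c_k^2}\bigr)<0,
\]
contradicting positivity. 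Hence $\lambda_k c_k^2\le 1$ and $\lambda_k\to 0$ immediately.

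Second, your three-region computation of the Dirichlet energy is substantially more delicate than what the paper actually does. You correctly observe that $\int_{B_{Rr_k}}|\nabla u_k|^2=o(1)$ and that the $4\pi$ mass of $|\nabla u_k|^2$ sits on the neck, but then you defer to a ``capacity and Pohozaev-type calculation'' that is left entirely unspecified. The paper bypasses this altogether: since $\int_{\mathbb{B}^2}|\nabla_{\mathbb{B}^2}u_k|^2\,dV_{\mathbb{B}^2}=\int_{\mathbb{B}^2}\lambda_k u_k^2e^{u_k^2}\,dV_{\mathbb{B}^2}$, it suffices to compute the latter, whose $4\pi$ mass concentrates in the \emph{inner} ball $B_{Rr_k}$ (Lemma~\ref{lem1}). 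On the neck $B_\delta\setminus B_{Rr_k}$ the monotonicity formula \eqref{eq31} gives the explicit pointwise bound
\[
r^2\lambda_k u_k^2e^{u_k^2}\Bigl(\tfrac{2}{1-r^2}\Bigr)^2\le \frac{4}{\ln^2(1+r^2/r_k^2)}\Bigl(\tfrac{2}{1-r^2}\Bigr)^2,
\]
which integrates to $o(1)$; on $B_1\setminus B_\delta$ one uses the elementary logarithmic bound $u_k(r)\le M_0^{1/2}\bigl(\tfrac{1}{2\pi}\ln\tfrac{1}{r}\bigr)^{1/2}$ together with $\lambda_k\to 0$. This route avoids any neck-capacity computation and is the argument you should reproduce.
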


In order to prove Proposition \ref{pro1}, we rewrite \eqref{blowup} into the following equation in the unit ball $B_1$ of Euclidean space $\mathbb{R}^2$ by the conformal invariance between $(B_1,dV_{\mathbb{B}^2})$ and $(B_1,dx)$:
\begin{equation}\label{eq4}
	\left\{
	\begin{aligned}
		&-\Delta u_k=\lambda_k u_k e^{u_k^2}\left(\frac{2}{1-|x|^2}\right)^2,&x&\in B_1,\\
		&u_k>0,&x&\in B_1,\\
		&u_k=0,&x&\in\partial B_1.\\
	\end{aligned}
	\right.
\end{equation}
\vskip0.1cm

\begin{lemma}\label{lem1}
	Let $r_{k}>0$ be such that $\lambda_k r_{k}^2 c_{k}^2e^{c_{k}^2}=1$. If $c_k\to\infty$ as $k\to\infty$, then there exist constants $\alpha\in(0,1)$  and $\tilde{C}>0$ such that $r_{k}\leq \tilde{C}e^{-\frac{1}{2}\alpha^2c_{k}^2}$,
	\begin{equation}\label{eq5}
		\eta_{k}(x):=
		c_k(u_{k}(r_{k} x)-c_{k})\to\eta_0(x):=- \ln(1+|x|^2)\quad\text{in}\ C_{\rm{loc}}^1(\mathbb{R}^2),
	\end{equation}
	and
	\begin{equation}\label{eq6}
		\lim_{R\to\infty}\lim_{k\to\infty}\int_{B_{R r_{k}}}\lambda_k u_{k}^2 e^{u_{k}^2}\left(\frac{2}{1-|x|^2}\right)^2 dx=\int_{\mathbb{R}^2}4e^{2\eta_0} dx=4\pi.
	\end{equation}
\end{lemma}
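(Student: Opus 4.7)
The strategy is to rescale around the blow-up peak at the origin and extract a Liouville profile. Set $\eta_k(x) := c_k(u_k(r_k x) - c_k)$, so that $\eta_k(0) = 0$ and $\eta_k \le 0$ by radial monotonicity. Writing $u_k(r_k x)^2 = c_k^2(1 + \eta_k/c_k^2)^2$, substituting into \eqref{eq4}, and using the normalization $\lambda_k r_k^2 c_k^2 e^{c_k^2} = 1$ recasts the equation as
\begin{equation*}
-\Delta \eta_k(x) = \Big(1 + \tfrac{\eta_k(x)}{c_k^2}\Big) e^{2\eta_k(x) + \eta_k(x)^2/c_k^2}\left(\frac{2}{1-r_k^2|x|^2}\right)^2 \qquad \text{on } B_{1/r_k}.
\end{equation*}

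The first, and main, step is to establish the quantitative scale bound $r_k \le \tilde{C}\,e^{-\alpha^2 c_k^2/2}$ for some $\alpha \in (0,1)$; note that this in particular forces $r_k \to 0$. Since $r_k = (\lambda_k c_k^2)^{-1/2} e^{-c_k^2/2}$, the bound is equivalent to a lower bound $\lambda_k c_k^2 \gtrsim e^{-(1-\alpha^2) c_k^2}$, and so cannot come from soft compactness alone. I would extract it from the monotonicity formula of Lemma \ref{lem3}: the pointwise decay estimate it provides away from the blow-up point constrains the hyperbolic size of the region where $u_k$ is close to $c_k$, and balancing the Dirichlet cost of the ensuing drop against the budget $\|\nabla_{\mathbb{B}^2} u_k\|_2^2 \le M_0$ yields the required lower bound on $\lambda_k c_k^2$.

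With $r_k \to 0$ in hand, the right-hand side of the rescaled equation is uniformly bounded on compact sets (since $\eta_k \le 0$ and the conformal factor tends to $4$). Local $L^\infty$ control of $\eta_k$ on any fixed ball follows from the splitting $\eta_k = \eta_k^{(1)} + \eta_k^{(2)}$, with $\eta_k^{(1)}$ solving the Dirichlet problem for the bounded RHS (hence nonnegative and uniformly $C^{1,\alpha}$-bounded) and $\eta_k^{(2)}$ harmonic and nonpositive; since $\eta_k(0) = 0$ pins down $\eta_k^{(2)}(0)$, the Harnack inequality applied to the nonnegative harmonic function $-\eta_k^{(2)}$ delivers the bound. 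Elliptic regularity then upgrades this to a $C^{1,\alpha}_{\text{loc}}$ bound and yields a subsequence $\eta_k \to \eta_0$ in $C^1_{\text{loc}}$, with $\eta_0$ radial, nonpositive, and satisfying $-\Delta \eta_0 = 4 e^{2\eta_0}$ with $\eta_0(0) = 0$, $\eta_0'(0) = 0$. Uniqueness for the resulting radial ODE Cauchy problem identifies $\eta_0(x) = -\ln(1 + |x|^2)$.

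The concentration identity \eqref{eq6} then follows by the change of variables $y = x/r_k$ and absorbing the normalization: the integrand transforms to $(1 + \eta_k/c_k^2)^2 e^{2\eta_k + \eta_k^2/c_k^2}(2/(1-r_k^2|y|^2))^2$, which is dominated by $4e^{2\eta_k} \le 4$ on any $B_R$; dominated convergence and the $C^1_{\text{loc}}$ limit give $\int_{B_R} 4/(1+|y|^2)^2\, dy$ as $k \to \infty$, and sending $R \to \infty$ produces $4\pi$ by explicit radial integration. The hardest piece is the exponential scale bound of the second paragraph; the rest is a standard Adimurthi--Druet--type blow-up analysis once the right quantitative estimates on $\lambda_k c_k^2$ are available.
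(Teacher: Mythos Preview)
Your plan for the convergence $\eta_k\to\eta_0$ and for the identity \eqref{eq6} is essentially the same as the paper's (the splitting into a Dirichlet part plus a nonpositive harmonic part is just a standard repackaging of the Harnack argument the paper uses, and dominated convergence on $B_R$ is exactly what is done). The problem is in the second paragraph, where you propose to derive the exponential scale bound $r_k\le \tilde C e^{-\alpha^2 c_k^2/2}$ from Lemma~\ref{lem3}. This is circular: the proof of Lemma~\ref{lem3} explicitly invokes the bound $r_k^2=O(e^{-\alpha^2 c_k^2})$ from Lemma~\ref{lem1} (it is used to control the $O(c_k^4 e^{-\alpha^2 c_k^2} r^2)$ error term in the expansion of $\Phi_k$), and Lemma~\ref{lem3} also rests on Lemma~\ref{lem2}, which in turn uses Lemma~\ref{lem1}. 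So you cannot appeal to the monotonicity formula here; a self-contained argument is needed.

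The paper obtains the scale bound directly from the Moser--Trudinger inequality, and this is the step you are missing. First one shows (by contradiction) that $\|\nabla_{\mathbb{B}^2}u_k\|_{L^2}^2\ge 4\pi$ along a subsequence: if it were $<4\pi$, the Moser--Trudinger inequality on a Euclidean ball would put $e^{u_k^2}$ in $L^p_{\rm loc}$ for every $p$, and Moser iteration would make $u_k$ locally bounded, contradicting $c_k\to\infty$. Then pick $\alpha\in(0,1)$ with $\alpha^2 M_0<4\pi$, so that $\|\nabla_{\mathbb{B}^2}(\alpha u_k)\|_{L^2}^2<4\pi$, and apply the hyperbolic Moser--Trudinger inequality \eqref{MT} to get $\int_{\mathbb{B}^2}(e^{\alpha^2 u_k^2}-1)\,dV_{\mathbb{B}^2}\le C_0$. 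Using the equation in the form $\|\nabla_{\mathbb{B}^2}u_k\|_{L^2}^2=\lambda_k\int u_k^2 e^{u_k^2}\,dV_{\mathbb{B}^2}$ together with $u_k^2 e^{u_k^2}\le c_k^2 e^{(1-\alpha^2)c_k^2}e^{\alpha^2 u_k^2}$ and the Poincar\'e inequality yields
\[
\frac{1}{\lambda_k}\le \frac{e^{(1-\alpha^2)c_k^2}\big(c_k^2 C_0+4M_0\big)}{4\pi},
\]
and unwinding $\lambda_k r_k^2 c_k^2 e^{c_k^2}=1$ gives $r_k\le\tilde C e^{-\alpha^2 c_k^2/2}$. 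This is where the constant $\alpha$ in the statement comes from; your proposal never produces such an $\alpha$.
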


\begin{proof}
	First, we claim that after passing to a subsequence, $||\nabla_{\mathbb{B}^2} u_{k}||_{L^2(\mathbb{B}^2)}^2\geq 4\pi$. Assume by contradiction that $||\nabla_{\mathbb{B}^2} u_{k}||_{L^2(\mathbb{B}^2)}^2<4\pi$, that is, $$\int_{B_1}|\nabla u_{k}|^2 dx<4\pi.$$
    For any $\delta\in(0,1)$, applying Moser-Trudinger inequality and Sobolev inequality in bounded domains of $\mathbb{R}^2$ respectively,
	we get that $e^{u_{k}^2}$ and $u_{k}$ are bounded in $L^p(B_{\delta})$ for any $p>1$. Then it follows that $u_{k}e^{u_{k}^2}\in L^r(B_{\delta})$ for any $r>1$. This deduces that $u_{k}\in L^{\infty}(B_{\delta})$ by Moser iteration techniques, which is a contradiction with $u_{k}(0)=c_{k}\to\infty$.
	
	Recalling that $||\nabla_{\mathbb{B}^2}u_{k}||_{L^2(\mathbb{B}^2)}^2\leq M_0$, so we can choose $\alpha\in(0,1)$ such that $||\nabla_{\mathbb{B}^2}(\alpha u_{k})||_{L^2(\mathbb{B}^2)}^2<4\pi$. By Moser-Trudinger inequality \eqref{MT},
	we get
	\begin{equation}\label{eq9}\nonumber
		\int_{\mathbb{B}^2}\left(e^{\alpha^2u_{k}^2}-1\right) dV_{\mathbb{B}^2}\leq C_0
	\end{equation}
	for some $C_0>0$, which together with Poincar\'{e} inequality \eqref{Poincare}
	implies that
	\begin{equation}\label{eq11}\nonumber
		\begin{aligned}
			\frac{1}{\lambda_k}\leq{}&\frac{e^{(1-\alpha^2)c_{k}^2}\int_{\mathbb{B}^2}u_{k}^2e^{\alpha^2 u_{k}^2} dV_{\mathbb{B}^2}}{\int_{\mathbb{B}^2}|\nabla_{\mathbb{B}^2} u_{k}|^2 dV_{\mathbb{B}^2}}\\
			\leq{}&\frac{e^{(1-\alpha^2)c_{k}^2}\left(\int_{\mathbb{B}^2}c_{k}^2(e^{\alpha^2 u_{k}^2}-1) dV_{\mathbb{B}^2}+\int_{\mathbb{B}^2}u_{k}^2 dV_{\mathbb{B}^2}\right)}{\int_{\mathbb{B}^2}|\nabla_{\mathbb{B}^2} u_k|^2 dV_{\mathbb{B}^2}}\\
			\leq{}&\frac{e^{(1-\alpha^2)c_{k}^2}(c_{k}^2C_0+4M_0)}{4\pi},
		\end{aligned}
	\end{equation}
	where the claim above is used in the last inequality.
	Considering $\lambda_k r_{k}^2 c_{k}^2e^{c_{k}^2}=1$, we deduce that $r_{k}\leq \tilde{C}e^{-\frac{1}{2}\alpha^2c_{k}^2}$ directly where $\tilde{C}=((C_0+4M_0)/4\pi)^{\frac{1}{2}}$.
	
	Set $v_{k}:=u_{k}(r_{k} x)-c_{k}$. We claim that $v_{k}\to0$ in $C_{\rm{loc}}^1(\mathbb{R}^2)$ as $k\to\infty$. Indeed, $v_k$ satisfies
	\begin{equation}\label{eq12}\nonumber
		-\Delta v_{k}(x)=\frac{1}{c_{k}}\frac{u_{k}(r_{k} x)}{c_{k}}e^{(u_{k}(r_{k} x)+c_{k})(u_{k}(r_{k} x)-c_{k})}\left(\frac{2}{1-r_{k}^2|x|^2}\right)^2\to0\quad\text{in}\ C_{\rm{loc}}^0(\mathbb{R}^2),
	\end{equation}
	since $0\leq\frac{u_{k}(r_{k} x)}{c_{k}}\leq1$, $(u_{k}(r_{k} x)+c_{k})(u_{k}(r_{k} x)-c_{k})\leq0$, $\frac{1}{c_{k}}\to0$ and $\left(\frac{2}{1-r_{k}^2|x|^2}\right)^2\to4$ in $C_{\rm{loc}}^0(\mathbb{R}^2)$. Observing that $v_{k}\leq0$, $\Delta v_{k}$ is locally bounded and $v_{k}(0)=0$, thus the claim follows from Harnack inequality.
	
	As a result, $\eta_k$ satisfies
	\begin{equation}\label{eq13}\nonumber
		-\Delta \eta_{k}=\frac{u_{k}(r_{k} x)}{c_{k}}e^{\left(\frac{u_{k}(r_{k} x)}{c_{k}}+1\right)\eta_{k}}\left(\frac{2}{1-r_{k}^2|x|^2}\right)^2\quad\text{in}\ B_{\frac{1}{r_{k}}},
	\end{equation}
	where
	\begin{equation}\label{eq14}\nonumber
		\frac{u_{k}(r_{k} x)}{c_{k}}\to1,\quad\left(\frac{2}{1-r_{k}^2|x|^2}\right)^2\to4\quad\text{in}\ C_{\rm{loc}}^0(\mathbb{R}^2).
	\end{equation}
	Observing that $\eta_{k}\leq0$, $\Delta \eta_{k}$ is locally bounded and $\eta_{k}(0)=0$, we have $\eta_{k}\to\eta^\ast$ in $C_{\rm{loc}}^1(\mathbb{R}^2)$ by Harnack inequality, where $-\Delta\eta^\ast=4e^{2\eta^\ast}$ and $\eta^\ast(0)=0$. On the other hand, we know that
	\begin{equation}\label{eq15}
		-\Delta\eta_0=4e^{2\eta_0}\quad\text{in}\ \mathbb{R}^2,\quad\eta_0(0)=0.
	\end{equation}
	Hence by the uniqueness of solutions to the Cauchy problem \eqref{eq15}, we obtain $\eta^\ast=\eta_0$.
	
	Finally, \eqref{eq6} follows from Lebesgue dominated convergence theorem.
\end{proof}
\vskip0.1cm

\begin{lemma}\label{lem2}
	Let $w_{k}:=c_{k}^2(\eta_{k}-\eta_0)$. If $c_k\to\infty$ as $k\to\infty$, then we have $w_{k}\to w_0$ in $C_{\rm{loc}}^1(\mathbb{R}^2)$, where
	\begin{equation}\label{eq16}
		w_0(r):=\eta_0(r)+\frac{2r^2}{1+r^2}-\frac{1}{2}\eta_0^2(r)+\frac{1-r^2}{1+r^2}\int_{1}^{1+r^2}\frac{ \ln t}{1-t} dt
	\end{equation}
	is the unique solution to the ODE
	\begin{equation}\label{eq17}
		-\Delta w_0=4e^{2\eta_0}(\eta_0+\eta_0^2+2w_0),\quad w_0(0)=0,\quad w_0'(0)=0.
	\end{equation}
	Moreover, $w_0$ satisfies
	\begin{equation}\label{eq18}
		\int_{\mathbb{R}^2}(-\Delta w_0) dx=4\pi,
	\end{equation}
	and
	\begin{equation}\label{eq19}
		\sup_{r\in[0,\infty)}|w_0(r)-\eta_0(r)|<\infty.
	\end{equation}
\end{lemma}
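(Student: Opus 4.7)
I would write $\eta_k = \eta_0 + c_k^{-2} w_k$, substitute into the equation from Lemma \ref{lem1}, Taylor-expand in $c_k^{-2}$ to obtain a PDE for $w_k$, pass to the limit $k \to \infty$ to arrive at the linear ODE \eqref{eq17}, and finally verify the explicit formula \eqref{eq16} by solving that ODE via variation of parameters.

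\textbf{Step 1 (Taylor expansion).} Starting from
$$-\Delta \eta_k = \frac{u_k(r_k x)}{c_k}\, e^{(u_k(r_k x)/c_k + 1)\eta_k}\cdot \frac{4}{(1-r_k^2|x|^2)^2}$$
and using $u_k(r_k x)/c_k = 1 + \eta_k/c_k^2$, the exponent rewrites as $2\eta_k + \eta_k^2/c_k^2$. The bound $r_k \leq \tilde C e^{-\alpha^2 c_k^2/2}$ from Lemma \ref{lem1} gives $c_k^2 r_k^2 \to 0$, so the conformal factor contributes only $o(c_k^{-2})$ on compact sets. Combining this with $e^{2\eta_k} = e^{2\eta_0}(1 + 2w_k/c_k^2 + O(c_k^{-4}))$ and $\eta_k \to \eta_0$ in $C^1_{\mathrm{loc}}$ yields
$$-\Delta \eta_k = 4 e^{2\eta_0}\Bigl[1 + c_k^{-2}\bigl(2w_k + \eta_0 + \eta_0^2\bigr)\Bigr] + o(c_k^{-2})$$
in $C^0_{\mathrm{loc}}$. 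Subtracting $-\Delta \eta_0 = 4 e^{2\eta_0}$ and multiplying by $c_k^2$,
$$-\Delta w_k = 4 e^{2\eta_0}\bigl(2w_k + \eta_0 + \eta_0^2\bigr) + o(1) \quad \text{in } C^0_{\mathrm{loc}}(\mathbb{R}^2).$$

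\textbf{Step 2 (Compactness and the limiting ODE).} Since $u_k$ is radial (Proposition \ref{pro}), so is $w_k$, and the Cauchy data $w_k(0) = 0$ and $w_k'(0) = 0$ are inherited from $\eta_k(0) = \eta_0(0) = 0$ together with radial symmetry. Viewing the displayed equation as a linear second-order radial ODE with locally bounded coefficients and vanishing data, Gronwall applied to the first-order system $(w_k, w_k')$ gives a uniform bound on each $[0,R]$. Continuous dependence of linear ODEs on coefficients and initial values then produces $w_k \to w_0$ in $C^1_{\mathrm{loc}}(\mathbb{R}^2)$, where $w_0$ is the unique solution of \eqref{eq17}; uniqueness of the Cauchy problem removes the need to pass to a subsequence.

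\textbf{Step 3 (Explicit formula and consequences).} The homogeneous linearized operator $-\Delta - 8e^{2\eta_0}$ admits the explicit radial solution $v_1(r) = (1-r^2)/(1+r^2)$, coming from the scaling symmetry of $-\Delta u = 4e^{2u}$ (namely $v_1 = x\cdot \nabla \eta_0 + 1$). A second linearly independent solution is obtained via the Wronskian reduction $v_2 = v_1 \int \frac{dr}{r\, v_1^2}$, and variation of parameters against the inhomogeneity $4e^{2\eta_0}(\eta_0 + \eta_0^2)$ produces precisely the non-elementary primitive $\int_1^{1+r^2} \frac{\ln t}{1-t}\,dt$ appearing in \eqref{eq16}; matching $w_0(0) = w_0'(0) = 0$ fixes the integration constants and yields \eqref{eq16}. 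Identity \eqref{eq18} follows from $\int_{\mathbb{R}^2}(-\Delta w_0)\,dx = -2\pi \lim_{r\to \infty} r\, w_0'(r)$ and direct differentiation of \eqref{eq16}. The bound \eqref{eq19} follows from the asymptotic $\int_1^{1+r^2} \frac{\ln t}{1-t}\,dt = -\tfrac12(\ln(1+r^2))^2 + O(1)$ as $r \to \infty$ (integration by parts), which exactly cancels the growing $-\tfrac12 \eta_0^2$ term in $w_0 - \eta_0$, leaving an $O(1)$ remainder.

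\textbf{Main obstacle.} The central difficulty is carrying out the Taylor expansion to the correct order: three distinct $c_k^{-2}$ corrections ($2w_k$ from $e^{2\eta_k} = e^{2\eta_0}(1 + 2w_k/c_k^2 + \cdots)$, $\eta_0$ from the prefactor $1 + \eta_k/c_k^2$, and $\eta_0^2$ from the exponent correction $\eta_k^2/c_k^2$) must be collected simultaneously without dropping any, all while absorbing the Poincaré conformal factor into a true $o(c_k^{-2})$ error. A secondary technical point is the bookkeeping for the variation-of-parameters formula that produces the non-elementary integral in \eqref{eq16}; once the correct ODE is in hand, verifying the explicit $w_0$ is essentially algebraic.
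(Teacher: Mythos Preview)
Your proposal is correct and, for the core convergence argument (Steps 1--2), follows essentially the same line as the paper: Taylor-expand the equation for $\eta_k$ to isolate the $c_k^{-2}$ contributions, obtain $-\Delta w_k = 4e^{2\eta_0}(\eta_0+\eta_0^2+2w_k)+o(1)$ with $o(1)w_k$ error, and use ODE theory with the Cauchy data $w_k(0)=w_k'(0)=0$ to get uniform local bounds and pass to the limit.

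The only methodological difference lies in Step 3. The paper does not \emph{derive} the explicit formula \eqref{eq16} via variation of parameters as you propose; instead it simply verifies by direct computation that the given $w_0$ satisfies the ODE \eqref{eq17} with the correct initial data, and then reads off \eqref{eq18} from the explicit derivative $w_0'(r)$. For \eqref{eq19}, rather than expanding the dilogarithm-type integral asymptotically to exhibit the cancellation with $-\tfrac12\eta_0^2$, the paper shows the pointwise bound $|w_0'(r)-\eta_0'(r)|\le C/(1+r^2)$ (checking the limits at $0$ and $\infty$ from the explicit $w_0'$) and integrates. Your route is more constructive and explains where the formula comes from; the paper's route is quicker once the formula is in hand. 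Both are valid and equivalent in content.
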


\begin{proof}
		Notice that $\eta_k$ satisfies
	\begin{equation}\label{eq20}
		-\Delta\eta_{k}=\left(1+\frac{\eta_{k}}{c_{k}^2}\right)e^{\left(2+\frac{\eta_{k}}{c_{k}^2}\right)\eta_{k}}\left(\frac{2}{1-r_{k}^2|x|^2}\right)^2.
	\end{equation}
	Using \eqref{eq15} and \eqref{eq20} we compute
	\begin{equation}\label{eq21}
		\begin{aligned}
			-\Delta w_{k}={}&c_{k}^2\left[\left(1+\frac{\eta_{k}}{c_{k}^2}\right)e^{\left(2+\frac{\eta_{k}}{c_{k}^2}\right)\eta_{k}}\left(\frac{2}{1-r_{k}^2|x|^2}\right)^2-4e^{2\eta_0}\right]\\
			={}&4e^{2\eta_0}\left[c_{k}^2\left(1+\frac{\eta_0}{c_{k}^2}+\frac{\eta_{k}-\eta_0}{c_{k}^2}\right)e^{2(\eta_{k}-\eta_0)+\frac{\eta_0^2+2\eta_0(\eta_{k}-\eta_0)+(\eta_{k}-\eta_0)^2}{c_{k}^2}}\left(\frac{1}{1-r_{k}^2|x|^2}\right)^2-c_{k}^2\right].
		\end{aligned}
	\end{equation}
	By Lemma \ref{lem1} we know for every $R>0$, $\eta_{k}(r)-\eta_0(r)=o(1)$ as $k\to\infty$ uniformly for $r\in[0,R]$. By using a Taylor expansion:
	\begin{equation}\label{eq22}\nonumber
		e^{2(\eta_{k}-\eta_0)+\frac{\eta_0^2+2\eta_0(\eta_{k}-\eta_0)+(\eta_{k}-\eta_0)^2}{c_{k}^2}}=1+2(\eta_{k}-\eta_0)+\frac{\eta_0^2}{c_{k}^2}+o(1)c_{k}^{-2}+o(1)(\eta_{k}-\eta_0),
	\end{equation}
	with $o(1)\to0$ as $k\to\infty$ uniformly for $r\in[0,R]$, we get
	\begin{equation}\label{eq23}
		-\Delta w_{k}=4e^{2\eta_0}(\eta_0+\eta_0^2+2w_{k}+o(1)+o(1)w_{k}),
	\end{equation}
	with $o(1)\to0$ as $k\to\infty$ uniformly for $r\in[0,R]$. Observing that $w_{k}(0)=w_{k}'(0)=0$, from ODE theory it follows that $w_{k}(r)$ is uniformly bounded in $r\in[0,R]$ and by elliptic estimates we have $w_{k}\to w^\ast$ in $C_{\rm{loc}}^1(\mathbb{R}^2)$, where $w^\ast$ satisfies \eqref{eq17}.
	
	Noting that the solution to the Cauchy problem \eqref{eq17} is unique, hence in order to prove $w^\ast=w_0$, it is enough to show that $w_0$ solves \eqref{eq17}.
First, direct computing yields that $w_0(0)=0$,
	\begin{equation}\label{eq25}
		w_0'(r)=\frac{2r(1-r^2)}{(1+r^2)^2}-\frac{2 \ln(1+r^2)}{r(1+r^2)}-\frac{4r}{(1+r^2)^2}\int_{1}^{1+r^2}\frac{ \ln t}{1-t} dt
	\end{equation}
	and $w_0'(0)=0$. So, using $\Delta w_0(r)=w_0''(r)+\frac{1}{r}w_0'(r)$, we get
	\begin{equation}\label{eq26}\nonumber
		\begin{aligned}
			-\Delta w_0={}&\frac{16r^2}{(1+r^2)^3}-\frac{12 \ln(1+r^2)}{(1+r^2)^2}+\frac{8(1-r^2)}{(1+r^2)^3}\int_{1}^{1+r^2}\frac{ \ln t}{1-t} dt\\
			={}&4e^{2\eta_0}\left[\frac{4r^2}{1+r^2}+3\eta_0+\frac{2(1-r^2)}{1+r^2}\int_{1}^{1+r^2}\frac{ \ln t}{1-t} dt\right]\\
			={}&4e^{2\eta_0}(\eta_0+\eta_0^2+2w_0).
		\end{aligned}
	\end{equation}
	By the divergence theorem and \eqref{eq25} we get directly
	\begin{equation}\label{eq27}\nonumber
		\int_{\mathbb{R}^2}(-\Delta w_0) dx=-\lim_{r\to\infty}2\pi rw_0'(r)=4\pi.
	\end{equation}
	Moreover, from \eqref{eq25} it is easily seen that
	\begin{equation}\label{eq28}\nonumber
		\lim_{r\to0^{+}}(1+r^2)(w_0'(r)-\eta_0'(r))=\lim_{r\to\infty}(1+r^2)(w_0'(r)-\eta_0'(r))=0,
	\end{equation}
	which implies that
	\begin{equation}\label{eq29}
		\left|w_0'(r)-\eta_0'(r)\right|\leq\frac{C}{1+r^2}\quad\text{for}\ r\in[0,\infty).
	\end{equation}
	Therefore \eqref{eq19} follows from integrating \eqref{eq29} in $r$.
\end{proof}
\vskip0.1cm

\begin{lemma}\label{lem3}
	Fix $R_0\in(0,\infty)$ such that $w_0\leq-1$ on $[R_0,\infty)$, where $w_0$ is given by \eqref{eq16}. If $c_k\to\infty$ as $k\to\infty$, then we have
	\begin{equation}\label{eq30}
		\eta_{k}(r)\leq \eta_0(r)\quad\text{for}\ r\in[R_0,r_{k}^{-1}),
	\end{equation}
	or equivalently,
	\begin{equation}\label{eq31}
		u_{k}(r)\leq c_{k}-\frac{1}{c_{k}} \ln\left(1+\frac{r^2}{r_{k}^2}\right)\quad\text{for}\ r\in[R_0r_{k},1).
	\end{equation}
\end{lemma}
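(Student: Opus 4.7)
The plan is to propagate the inequality $\eta_k \le \eta_0$ outward from $r=R_0$ to the full range $[R_0, r_k^{-1})$, using Lemma \ref{lem2} to anchor it at $r=R_0$ and an ODE comparison argument to extend it.

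The first step is to secure strict inequality at $r=R_0$. Since $w_k \to w_0$ in $C^1_{\rm loc}(\R^2)$ by Lemma \ref{lem2}, and $w_0(R_0) \le -1$ by the choice of $R_0$, we have $w_k(R_0) \le -\tfrac{1}{2}$ for all sufficiently large $k$. Recalling that $w_k = c_k^2(\eta_k - \eta_0)$, this yields
\[ \eta_k(R_0) \le \eta_0(R_0) - \frac{1}{2 c_k^2} < \eta_0(R_0), \]
anchoring \eqref{eq30} strictly at $r=R_0$.

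I would then argue by contradiction. Assume that for infinitely many $k$ the set $S_k := \{r \in (R_0, r_k^{-1}) : \eta_k(r) > \eta_0(r)\}$ is nonempty, and set $r_1 := \inf S_k$. By the anchor step and continuity, $r_1 > R_0$, $\eta_k(r_1) = \eta_0(r_1)$, $\eta_k \le \eta_0$ on $[R_0, r_1]$, and $(\eta_k - \eta_0)'(r_1) \ge 0$. Integrating the radial ODEs
\[ -(r\eta_k'(r))' = r\bigl(1 + \tfrac{\eta_k}{c_k^2}\bigr) e^{(2+\eta_k/c_k^2)\eta_k}\Bigl(\tfrac{2}{1-r_k^2 r^2}\Bigr)^{2}, \qquad -(r\eta_0'(r))' = 4 r\, e^{2\eta_0(r)}, \]
over $[R_0, r_1]$ and subtracting yields the identity
\[ R_0 (\eta_k - \eta_0)'(R_0) - r_1 (\eta_k - \eta_0)'(r_1) = \int_{R_0}^{r_1} r\bigl[F_k(\eta_k, r) - 4 e^{2\eta_0(r)}\bigr]\,dr, \]
where $F_k$ denotes the nonlinearity of the $\eta_k$-equation. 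The $C^1_{\rm loc}$ bound $(\eta_k - \eta_0)'(R_0) = w_k'(R_0)/c_k^2 = O(c_k^{-2})$ from Lemma \ref{lem2}, together with $(\eta_k - \eta_0)'(r_1) \ge 0$, forces the left-hand side to be at most $O(c_k^{-2})$; the task then reduces to showing that the right-hand side is bounded above by a strictly negative quantity independent of $k$.

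The main obstacle is the weight $(1 - r_k^2 r^2)^{-2}$, which is identically $1$ in the $\eta_0$-equation but blows up as $r \uparrow r_k^{-1}$. The factorization
\[ F_k(\eta_k, r) - 4 e^{2\eta_0} = 4 e^{2\eta_0}\Bigl[\bigl(1 + \tfrac{\eta_k}{c_k^2}\bigr) e^{2(\eta_k - \eta_0) + \eta_k^2/c_k^2}(1 - r_k^2 r^2)^{-2} - 1\Bigr] \]
shows that the gain $e^{2(\eta_k - \eta_0)} \le 1$ coming from $\eta_k \le \eta_0$ must dominate the destabilizing factors $(1 - r_k^2 r^2)^{-2} \ge 1$ and $e^{\eta_k^2/c_k^2} \ge 1$. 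To achieve this I would split $[R_0, r_1]$ into an inner region where $r_k^2 r^2 \ll 1$ and the $C^1_{\rm loc}$ convergence of $w_k$ (together with $w_0 \le -1$ on $[R_0, \infty)$) yields a uniform negative integrand, and an outer region near $r_k^{-1}$ where the sharp decay $r_k \le \tilde C e^{-\alpha^2 c_k^2/2}$ from Lemma \ref{lem1}, the bound $u_k \le c_k$, and the Moser-Trudinger inequality \eqref{MT} control the contribution from the singular weight. Balancing these two regions is the delicate part of the proof.
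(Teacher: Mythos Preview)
Your contradiction scheme breaks down because both sides of your integrated identity agree at order $c_k^{-2}$, so no contradiction can emerge. On any fixed compact interval $[R_0,R]$ the $C^1_{\rm loc}$ convergence of Lemma~\ref{lem2} gives $\eta_k-\eta_0=c_k^{-2}w_0+o(c_k^{-2})$, and from \eqref{eq23} the integrand satisfies
\[
r\bigl[F_k(\eta_k,r)-4e^{2\eta_0}\bigr]=c_k^{-2}\,r\cdot 4e^{2\eta_0}\bigl(\eta_0+\eta_0^2+2w_0+o(1)\bigr)=-c_k^{-2}(rw_0')'+o(c_k^{-2}).
\]
Integrating over $[R_0,r_1]$ reproduces exactly the $c_k^{-2}$ part of the left-hand side $R_0(\eta_k-\eta_0)'(R_0)-r_1(\eta_k-\eta_0)'(r_1)=c_k^{-2}\bigl[R_0w_0'(R_0)-r_1w_0'(r_1)\bigr]+o(c_k^{-2})$; this is just the integrated $w_0$ equation, and there is nothing ``strictly negative independent of $k$'' anywhere. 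The condition $w_0\le -1$ tells you the sign of $\eta_k-\eta_0$, not the sign of $F_k-4e^{2\eta_0}$. To detect a genuine obstruction at a hypothetical crossing you must work at the next order $c_k^{-4}$, and your outer-region plan (Moser--Trudinger, the bound $r_k\le\tilde Ce^{-\alpha^2c_k^2/2}$) gives no control on the \emph{sign} of $F_k-4e^{2\eta_0}$ there.

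The paper's proof is structurally different and does go to order $c_k^{-4}$. It writes $\eta_k=\eta_0+c_k^{-2}w_0+c_k^{-4}\phi_k$ and proves, via a contraction mapping for the pair $(\phi_k,r\phi_k')$ on a weighted space, that $|\phi_k(r)|\le C+M\ln r$ uniformly on $[0,s_k]$ with $s_k\ge 2c_k$. Since $w_0\le -1$ on $[R_0,\infty)$ this gives $\eta_k\le\eta_0-c_k^{-2}+O(c_k^{-4}\ln r)<\eta_0$ on $[R_0,s_k]$. The extension to $[s_k,r_k^{-1})$ uses no inner/outer splitting: the expansion yields $\int_{B_{s_k}}(-\Delta\eta_k)\,dx>4\pi$, and then monotonicity of the flux gives, for every $r\ge s_k$,
\[
-2\pi r\,\eta_k'(r)=\int_{B_r}(-\Delta\eta_k)\,dx\ge\int_{B_{s_k}}(-\Delta\eta_k)\,dx>4\pi>\int_{B_r}(-\Delta\eta_0)\,dx=-2\pi r\,\eta_0'(r),
\]
so $\eta_k'<\eta_0'$ on $[s_k,r_k^{-1})$ and the inequality propagates from $s_k$ outward.
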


\begin{proof}
	Let $\eta:=\eta_0+\frac{w_0}{c_{k}^2}+\frac{\phi}{c_{k}^4}$ and
	\begin{equation}\label{eq32}\nonumber
		\Phi_{k}(r,\phi):=c_{k}^4\left[\left(1+\frac{\eta}{c_{k}^2}\right)e^{\left(2+\frac{\eta}{c_{k}^2}\right)\eta}\left(\frac{2}{1-r_{k}^2r^2}\right)^2+\Delta\eta_0+\frac{\Delta w_0}{c_{k}^2}\right].
	\end{equation}
	Write $h:=(2+\frac{\eta}{c_{k}^2})\eta-2\eta_0$ and $\xi:=1+ \ln(1+r^2)$. By Lemmas \ref{lem1} and \ref{lem2} we know
	$|\eta_0|+|w_0|=O(\xi)$. Thus, with a Taylor expansion we deduce that
	\begin{equation}\label{eq33}\nonumber
		\begin{aligned}
			{}&\left(1+\frac{\eta}{c_{k}^2}\right)e^{\left(2+\frac{\eta}{c_{k}^2}\right)\eta}\left(\frac{2}{1-r_{k}^2r^2}\right)^2\\
			={}&4e^{2\eta_0}\left(1+\frac{\eta_0}{c_{k}^2}+\frac{w_0}{c_{k}^4}+\frac{\phi}{c_{k}^6}\right)(1+h+O(h^2))(1+O(r_{k}^2r^2))\\
			={}&4e^{2\eta_0}\left(1+\frac{2w_0+\eta_0+\eta_0^2}{c_{k}^2}+\frac{2\phi}{c_{k}^4}+O(c_{k}^{-4}\phi)\left(O(c_{k}^{-4}\phi)+O(c_{k}^{-2}\xi^2)\right)+O(c_{k}^{-4}\xi^4)\right)(1+O(r_{k}^2r^2))\\
			={}&4e^{2\eta_0}\left(1+\frac{2w_0+\eta_0+\eta_0^2}{c_{k}^2}+\frac{2\phi}{c_{k}^4}+O(c_{k}^{-4}\phi)\left(O(c_{k}^{-4}\phi)+O(c_{k}^{-2}\xi^2)\right)+O(c_{k}^{-4}\xi^4)+O(r_{k}^2r^2)\right)
		\end{aligned}
	\end{equation}
	as long as $|h|\leq1$ and $r_{k}r<1$, which is true provided that
	\begin{equation}\label{eq34}
		|c_{k}^{-4}\phi|\leq\delta,\quad c_{k}^{-1}\xi\leq\delta,\quad r_{k}r\leq\delta,
	\end{equation}
	for some $\delta>0$ small enough. Recall that $r_{k}^2=O(e^{-\alpha^2c_{k}^2})$ from Lemma \ref{lem1}. Then it follows from \eqref{eq15} and \eqref{eq17} that
	\begin{equation}\label{eq35}
		\Phi_{k}(r,\phi)=4e^{2\eta_0}\left(2\phi+O(\phi)\left(O(c_{k}^{-4}\phi)+O(c_{k}^{-2}\xi^2)\right)+O(\xi^4)+O(c_{k}^4e^{-\alpha^2c_{k}^2}r^2)\right)
	\end{equation}
	provided that \eqref{eq34}.
	
	Set $\eta_{k}:=\eta_0+\frac{w_0}{c_{k}^2}+\frac{\phi_{k}}{c_{k}^4}$. Then \eqref{eq20} is equivalent to
	\begin{equation}\label{eq36}
		-\Delta\phi_{k}=\Phi_{k}(r,\phi_{k}),\quad\phi_{k}(0)=0,\quad\phi_{k}'(0)=0.
	\end{equation}
	We now proceed to bound $\phi_{k}$ by the contraction mapping theorem. We restrict our attention to an integral $[0,s_{k}]$ with $s_{k}=o(1)e^{c_{k}}$ and functions $\phi:[0,s_{k}]\to\mathbb{R}$ satisfying $|\phi|\leq O(\xi)$, so that \eqref{eq34} holds as $k\to\infty$. Taking $\phi=\phi_{k}$ and $\psi=r\phi'$, then \eqref{eq36} is equivalent to
	\begin{equation}\label{eq37}
		\left\{
		\begin{aligned}
			&\phi'=\frac{1}{r}\psi,\\
			&\psi'=-r\Phi_{k}(r,\phi),\\
			&\phi(0)=\psi(0)=0.
		\end{aligned}
		\right.
	\end{equation}
	By \eqref{eq35} we see that, for a fixed time $T$,
	\begin{equation}\label{eq38}\nonumber
		\Phi_{k}(r,\phi)=4e^{2\eta_0}(2\phi+o(\phi)+O(1))
	\end{equation}
	uniformly on $[0,T]$. From ODE theory we deduce that $\phi$ and $\psi$ are uniformly bounded in $[0,T]$, i.e.
	\begin{equation}\label{eq39}
		|\phi(r)|\leq C(T),\quad|\psi(r)|\leq C(T),\quad\text{for}\ r\in[0,T],
	\end{equation}
	uniformly in $c_{k}$. Considering the functions
	\begin{equation}\label{eq40}\nonumber
		\begin{aligned}
			&F_{(\phi,\psi)}(r):=\phi(T)+\int_T^r\frac{1}{s}\psi(s) ds,\quad r\geq T,\\
			&G_{(\phi,\psi)}(r):=\psi(T)-\int_T^rs\Phi_{k}(s,\phi(s)) ds,\quad r\geq T,
		\end{aligned}
	\end{equation}
	and fixing $S=s_{k}>T$ with $s_{k}=o(1)e^{c_{k}}$, we define the norms
	\begin{equation}\label{eq41}\nonumber
		||f||_1:=\sup_{r\in(T,S]}\left|\frac{f(r)}{ \ln r- \ln T}\right|,\quad||f||_2:=2\sup_{r\in[T,S]}|f(r)|,
	\end{equation}
	for any function $f$. For a large constant $M>0$ to be fixed later, we define
	\begin{equation}\label{eq42}\nonumber
		B_M:=\{(\phi,\psi):||\phi-\phi_{k}(T)||_1\leq M,\ ||\psi||_2\leq M,\ \phi(T)=\phi_{k}(T),\ \psi(T)=T\phi_{k}'(T)\}.
	\end{equation}
	
	Firstly, we claim that $(\phi,\psi)\to(F_{(\phi,\psi)},G_{(\phi,\psi)})$ sends $B_M$ into itself for suitable $M$ and $T$. Indeed, for $(\phi,\psi)\in B_M$, we have
	\begin{equation}\label{eq43}\nonumber
		|F_{(\phi,\psi)}(r)-\phi_{k}(T)|\leq\int_T^r\frac{1}{s}|\psi(s)| ds\leq\frac{1}{2}M( \ln r- \ln T),
	\end{equation}
	which implies that $||F_{(\phi,\psi)}-\phi_{k}(T)||_1\leq\frac{1}{2}M$. Furthermore, for $(\phi,\psi)\in B_M$, we have
	\begin{equation}\label{eq44}\nonumber
		|\phi(r)|\leq|\phi_{k}(T)|+|\phi(r)-\phi_{k}(T)|\leq C(T)+M( \ln r- \ln T)\leq C(T)+M \ln r
	\end{equation}
	for any $r\in[T,S]$. Then it follows from \eqref{eq35} that
	\begin{equation}\label{eq45}\nonumber
		\begin{aligned}
			|G_{(\phi,\psi)}(r)|\leq{}&|\psi(T)|+\int_T^r\frac{4s}{(1+s^2)^2}\left(2|\phi(s)|+o(\phi(s))+O(\xi^4(s))\right) ds\\
			\leq{}& C(T)+\int_T^{\infty}\frac{12s(C(T)+M \ln s)}{(1+s^2)^2} ds+\int_T^{\infty}\frac{4sC(1+ \ln(1+s^2))^4}{(1+s^2)^2} ds\\
			={}&C(T)\left(1+\int_T^{\infty}\frac{12s}{(1+s^2)^2} ds\right)+M\int_T^{\infty}\frac{12s \ln s}{(1+s^2)^2} ds+C\int_T^{\infty}\frac{4s(1+ \ln(1+s^2))^4}{(1+s^2)^2} ds\\
		\end{aligned}
	\end{equation}
	for some fixed $C$ independent of $M$ and $c_{k}$. First choosing $T\geq1$ so large that
	\begin{equation}\label{eq46}
		\int_T^{\infty}\frac{12s \ln s}{(1+s^2)^2} ds<\frac{1}{2},
	\end{equation}
	and then $M$ such that
	\begin{equation}\label{eq47}\nonumber
		C(T)\left(1+\int_T^{\infty}\frac{12s}{(1+s^2)^2} ds\right)+C\int_T^{\infty}\frac{4s(1+ \ln(1+s^2))^4}{(1+s^2)^2} ds\leq\frac{1}{2}M,
	\end{equation}
	we obtain $||G_{(\phi,\psi)}||_2\leq M$, which implies $(F_{(\cdot,\cdot)},G_{(\cdot,\cdot)})$ maps $B_M$ into itself.
	
	Next, we show $(\phi,\psi)\to(F_{(\phi,\psi)},G_{(\phi,\psi)})$ is a contraction mapping on $B_M$. In fact, for $(\phi,\psi),(\tilde{\phi},\tilde{\psi})\in B_M$, we have
	\begin{equation}\label{eq48}\nonumber
		|F_{(\phi,\psi)}(r)-F_{(\tilde{\phi},\tilde{\psi})}(r)|\leq\int_T^r\frac{1}{s}|\psi(s)-\tilde{\psi}(s)| ds\leq\frac{1}{2}||\psi-\tilde{\psi}||_2( \ln r- \ln T),
	\end{equation}
	which implies that $||F_{(\phi,\psi)}-F_{(\tilde{\phi},\tilde{\psi})}||_1\leq\frac{1}{2}||\psi-\tilde{\psi}||_2$. On the other hand, from \eqref{eq35} and \eqref{eq46} we deduce that
	\begin{equation}\label{eq49}\nonumber
		\begin{aligned}
			|G_{(\phi,\psi)}(r)-G_{(\tilde{\phi},\tilde{\psi})}(r)|\leq{}&\int_T^rs|\Phi_{k}(s,\phi(s))-\Phi_{k}(s,\tilde{\phi}(s))| ds\\
			\leq{}&\int_T^r\frac{4s}{(1+s^2)^2}(2+o(1))|\phi(s)-\tilde{\phi}(s)| ds\\
			\leq{}&\frac{1}{2}||\phi-\tilde{\phi}||_2\int_T^{\infty}\frac{12s \ln s}{(1+s^2)^2} ds\leq\frac{1}{4}||\phi-\tilde{\phi}||_2.
		\end{aligned}
	\end{equation}
	Hence $||G_{(\phi,\psi)}-G_{(\tilde{\phi},\tilde{\psi})}||_2\leq\frac{1}{2}||\psi-\tilde{\psi}||_1$, and $(F_{(\cdot,\cdot)},G_{(\cdot,\cdot)})$ is exactly a contraction mapping on $B_M$. In particular, $(F_{(\cdot,\cdot)},G_{(\cdot,\cdot)})$ has a fixed point $(\phi_0,\psi_0)\in B_M$ satisfying \eqref{eq37}. Then, by the uniqueness of solutions to the Cauchy problem \eqref{eq37}, we obtain $(\phi_0,\psi_0)=(\phi_{k},r\phi_{k}')$ for $r\in[T,S]$, and
	\begin{equation}\label{eq50}
		|\phi_{k}(r)|\leq C(T)+M( \ln r- \ln T),\quad|\phi_{k}'(r)|\leq\frac{M}{2r},\quad\text{for}\ r\in[T,S].
	\end{equation}
	
	For $k$ large enough, fix $S=s_{k}=o(1)e^{c_{k}}$ such that $s_{k}\geq2c_{k}$. Since $w_0\leq-1$ on $[R_0,\infty)$, it follows from \eqref{eq39} and \eqref{eq50} that
	\begin{equation}\label{eq51}
		\eta_{k}(r)\leq\eta_0(r)-\frac{1}{c_{k}^2}+\frac{C(T)+M \ln r}{c_{k}^4}<\eta_0(r),\quad\text{for}\ r\in[R_0,s_{k}].
	\end{equation}
	Using \eqref{eq5}, \eqref{eq25} and \eqref{eq50}, we compute
	\begin{equation}\label{eq52}\nonumber
		\begin{aligned}
			&\int_{B_{s_{k}}}(-\Delta\eta_0) dx=\int_{B_{s_{k}}}4e^{2\eta_0} dx=4\pi\left(1-\frac{1}{1+s_{k}^2}\right)=4\pi-\frac{4\pi}{s_{k}^2}+\frac{o(1)}{s_{k}^2},\\
			&\int_{B_{s_{k}}}(-\Delta w_0) dx=-2\pi s_{k}w_0'(s_{k})=4\pi(1+o(1)),\\
			&\left|\int_{B_{s_{k}}}(-\Delta\phi_{k}) dx\right|=2\pi s_{k}|\phi_{k}'(s_{k})|=O(1).
		\end{aligned}
	\end{equation}
	Summing up we deduce that
	\begin{equation}\label{eq53}\nonumber
		\int_{B_{s_{k}}}(-\Delta\eta_{k}) dx=4\pi-\frac{4\pi}{s_{k}^2}+\frac{o(1)}{s_{k}^2}+\frac{4\pi(1+o(1))}{c_{k}^2}+\frac{O(1)}{c_{k}^4}>4\pi
	\end{equation}
	by the choice of $s_{k}$. Therefore we obtain
	\begin{equation}\label{eq54}\nonumber
		-2\pi r\eta_0'(r)=\int_{B_{r}}(-\Delta\eta_0) dx<4\pi<\int_{B_{s_{k}}}(-\Delta\eta_{k}) dx\leq\int_{B_{r}}(-\Delta\eta_{k}) dx=-2\pi r\eta_{k}'(r)
	\end{equation}
	 for $r\in[s_{k},r_{k}^{-1})$, which together with \eqref{eq51} implies $\eta_{k}(r)\leq\eta_0(r)$ for $r\in[R_0,r_{k}^{-1})$.
\end{proof}
\vskip0.1cm

\begin{proof}[\textbf{Proof of Proposition \ref{pro1}}]
	Assume that $\rho_{k}=\sqrt{\frac{1}{\lambda_k c_{k}^2}}<1$. Taking $r=\rho_{k}$ in \eqref{eq31}, we have
	\begin{equation}\label{eq55}\nonumber
		u_{k}(\rho_{k})\leq c_{k}-\frac{1}{c_{k}} \ln\left(1+\frac{\rho_{k}^2}{r_{k}^2}\right)=c_{k}-\frac{1}{c_{k}} \ln(1+e^{c_{k}^2})<0,
	\end{equation}
	which contradicts with $u_{k}>0$ in $B_1$. Hence $\rho_{k}\geq1$, so that $\lambda_k\to0$ as $k\to\infty$.
	
	Write $f_{k}(r):=\lambda_k u_{k}^2e^{u_{k}^2}\left(\frac{2}{1-r^2}\right)^2$. By Lemma \ref{lem3} we have, for $r\in[R_0r_{k},1)$,
	\begin{equation}\label{eq56}\nonumber
		\begin{aligned}
			r^2f_{k}(r)={}&\left(\frac{r}{r_{k}}\right)^2\left(\frac{u_{k}(r)}{c_{k}}\right)^2e^{(u_{k}(r)+{c_{k}})(u_{k}(r)-c_{k})}\left(\frac{2}{1-r^2}\right)^2\\
			\leq{}&\left(\frac{r}{r_{k}}\right)^2\left(\frac{u_{k}(r)}{c_{k}}\right)^2\left(1+\frac{r^2}{r_{k}^2}\right)^{-1-\frac{u_{k}(r)}{c_{k}}}\left(\frac{2}{1-r^2}\right)^2\\
			\leq{}&\left(\frac{u_{k}(r)}{c_{k}}\right)^2\left(1+\frac{r^2}{r_{k}^2}\right)^{-\frac{u_{k}(r)}{c_{k}}}\left(\frac{2}{1-r^2}\right)^2.
		\end{aligned}
	\end{equation}
	Define the function
	\begin{equation}\label{eq57}\nonumber
		g_{k}(t):=t^2\left(1+\frac{r^2}{r_{k}^2}\right)^{-t}\left(\frac{2}{1-r^2}\right)^2.
	\end{equation}
	Then for any fixed $r>0$, $g_{k}(t)$ satisfies
	\begin{equation}\label{eq58}\nonumber
		g_{k}\geq0,\quad g_{k}'(0)=0,\quad \lim_{t\to\infty}g_{k}(t)=0,\quad g_{k}'(t)=0\Leftrightarrow t=t_{k}:=\frac{2}{ \ln(1+r^2/r_{k}^2)},
	\end{equation}
	which implies that
	\begin{equation}\label{eq59}
		r^2f_{k}(r)\leq\frac{4}{ \ln^2(1+r^2/r_{k}^2)}\left(\frac{2}{1-r^2}\right)^2,\quad\text{for}\ r\in[R_0r_{k},1).
	\end{equation}
	Moreover, for any fixed $\delta\in(0,1)$, it follows from \eqref{eq59} that
	\begin{equation}\label{eq60}\nonumber
		r^2f_{k}(r)\leq\frac{1}{ \ln^2(r/r_{k})}\frac{4}{(1-\delta^2)^2},\quad\text{for}\ r\in[R_0r_{k},\delta],
	\end{equation}
	so that
	\begin{equation}\label{eq61}
		\begin{aligned}
			{}&\lim_{R\to\infty}\lim_{k\to\infty}\int_{B_{\delta}\backslash B_{Rr_{k}}}\lambda_k u_{k}^2 e^{u_{k}^2}\left(\frac{2}{1-|x|^2}\right)^2 dx\\
			={}&\lim_{R\to\infty}\lim_{k\to\infty}\int_{Rr_{k}}^{\delta} 2\pi rf_{k}(r) dr\\
			\leq{}&\frac{8\pi}{(1-\delta^2)^2}\lim_{R\to\infty}\lim_{k\to\infty}\int_{Rr_{k}}^{\delta}\frac{1}{r \ln^2(r/r_{k})} dr\\
			={}&0.
		\end{aligned}
	\end{equation}
	On the other hand, since $u_{k}(1)=0$ and $||\nabla_{\mathbb{B}^2}u_{k}||_{L^2(\mathbb{B}^2)}^2\leq M_0$, we get by H\"{o}lder inequality that, for any $r\in[\delta,1)$,
	\begin{equation}\label{log}\nonumber
		u_{k}(r)=\int_1^ru_{k}'(s) ds\leq\int_{r}^1|u_{k}'(s)| ds\leq\left(\int_{r}^12\pi s|u_{k}'(s)|^2 ds\right)^{\frac{1}{2}}\left(\frac{1}{2\pi} \ln \frac{1}{r}\right)^{\frac{1}{2}}\leq M_0^{\frac{1}{2}}\left(\frac{1}{2\pi} \ln \frac{1}{r}\right)^{\frac{1}{2}}.
	\end{equation}
	This infers that
	\begin{equation}\label{eq62}
		\begin{aligned}
			{}&\lim_{k\to\infty}\int_{B_1\backslash B_{\delta}}\lambda_k u_{k}^2 e^{u_{k}^2}\left(\frac{2}{1-|x|^2}\right)^2 dx\\
			\leq{}&\lim_{k\to\infty}\lambda_k e^{u_{k}^2(\delta)}\int_{B_1\backslash B_{\delta}}u_{k}^2\left(\frac{2}{1-|x|^2}\right)^2 dx\\
			={}&\lim_{k\to\infty}\lambda_k e^{u_{k}^2(\delta)}\int_{\mathbb{B}^2\backslash B_{\mathbb{B}^2}(0,\tilde{\delta})} u_{k}^2  dV_{\mathbb{B}^2}\\
			\leq{}&\lim_{k\to\infty}4\lambda_k e^{\frac{M_0}{2\pi} \ln\frac{1}{\delta}}\int_{\mathbb{B}^2}|\nabla_{\mathbb{B}^2}u_{k}|^2 dV_{\mathbb{B}^2}\\
			={}&0,
		\end{aligned}
	\end{equation}
	where $\tilde{\delta}$ satisfies $\delta=\frac{e^{\tilde{\delta}}-1}{e^{\tilde{\delta}}+1}$.
	Therefore, combining \eqref{eq6}, \eqref{eq61} and \eqref{eq62} yields $||\nabla_{\mathbb{B}^2} u_{k}||_{L^2(\mathbb{B}^2)}^2\to4\pi$, $|\nabla_{\mathbb{B}^2} u_{k}|^2 dV_{\mathbb{B}^2}\rightharpoonup4\pi\delta_0$ and $\lambda_k u_{k}^2e^{u_{k}^2} dV_{\mathbb{B}^2}\rightharpoonup4\pi\delta_0$ as $k\to\infty$.
\end{proof}
\vskip0.1cm
Now, we are in a position to prove Theorem \ref{thm1}.
\begin{proof}[\textbf{Proof of Theorem \ref{thm1}}]
	We shall deal with three cases in Theorem \ref{thm1} respectively.
	\vskip0.1cm

	\textbf{Case 1:} $\lambda_0=0$.
	We first show that $||\nabla_{\mathbb{B}^2}u_{k}||_{L^2(\mathbb{B}^2)}^2$ has a positive lower bound as $k\to\infty$. Assume for contradiction that $||\nabla_{\mathbb{B}^2}u_{k}||_{L^2(\mathbb{B}^2)}^2\to0$. Then $e^{u_{k}^2}-1$ is bounded in $L^p(\mathbb{B}^2)$ for any $p>1$ by applying Moser-Trudinger inequality \eqref{MT}. Denote $\tilde{u}_{k}=\frac{u_{k}}{||\nabla_{\mathbb{B}^2} u_{k}||_{L^2(\mathbb{B}^2)}}$, thus $||\nabla_{\mathbb{B}^2} \tilde{u}_{k}||_{L^2(\mathbb{B}^2)}^2=1$. By Poincar\'{e} inequality \eqref{Poincare}, we get $\tilde{u}_{k}$ is bounded in $L^q(\mathbb{B}^2)$ for any $q\geq2$. Since $\lambda_0=0$, we deduce
	\begin{equation}\label{eq65}\nonumber
		\begin{aligned}
			1={}&\lambda_k\int_{\mathbb{B}^2}\tilde{u}_{k}^2e^{u_{k}^2} dV_{\mathbb{B}^2}=\lambda_k\int_{\mathbb{B}^2}\tilde{u}_{k}^2(e^{u_{k}^2}-1) dV_{\mathbb{B}^2}+\lambda_k\int_{\mathbb{B}^2}\tilde{u}_{k}^2 dV_{\mathbb{B}^2}\\
			\leq{}&\lambda_k\left(\int_{\mathbb{B}^2}\tilde{u}_{k}^4 dV_{\mathbb{B}^2}\right)^\frac{1}{2}\left(\int_{\mathbb{B}^2}(e^{u_{k}^2}-1)^2 dV_{\mathbb{B}^2}\right)^\frac{1}{2}+\lambda_k\int_{\mathbb{B}^2}\tilde{u}_{k}^2 dV_{\mathbb{B}^2}\to0,
		\end{aligned}
	\end{equation}
	which is a contradiction.
	
	Now we proceed to prove that $c_k:=u_k(0)=\max\limits_{x\in \mathbb{B}^2}u_k(x)\to\infty$ as $k\to\infty$. Otherwise we have $u_{k}\in L^{\infty}(\mathbb{B}^2)$. Since $||\nabla_{\mathbb{B}^2} u_{k}||_{L^2(\mathbb{B}^2)}^2\leq M_0$, by Poincar\'{e} inequality \eqref{Poincare} we derive
	\begin{equation}\label{bd}
			\int_{\mathbb{B}^2}u_{k}^2e^{u_{k}^2} dV_{\mathbb{B}^2}\lesssim	\int_{\mathbb{B}^2}u_{k}^2 dV_{\mathbb{B}^2}\lesssim\int_{\mathbb{B}^2}|\nabla_{\mathbb{B}^2}u_{k}|^2 dV_{\mathbb{B}^2}\lesssim1.
	\end{equation}
	However, from \eqref{eq1} we have
	\begin{equation}\label{eq66}\nonumber
		\int_{\mathbb{B}^2}u_{k}^2e^{u_{k}^2} dV_{\mathbb{B}^2}=\frac{1}{\lambda_k}\int_{\mathbb{B}^2}|\nabla_{\mathbb{B}^2}u_{k}|^2 dV_{\mathbb{B}^2}\to\infty\quad\text{as}\ k\to\infty,
	\end{equation}
	contradicting \eqref{bd}. This together with Proposition \ref{pro1} accomplishes the proof of Case 1.
	\vskip 0.1cm

	\textbf{Case 2:} $\lambda_0\in(0,\frac{1}{4})$. Since $||\nabla_{\mathbb{B}^2}u_{k}||_{L^2(\mathbb{B}^2)}^2\leq M_0$, there exist $u_0\in W^{1,2}(\mathbb{B}^2)$ and a subsequence still denoted by $u_k$ such that $u_{k}\rightharpoonup u_0$ in $W^{1,2}(\mathbb{B}^2)$ as $k\to\infty$. Moreover, from Proposition \ref{pro1} we have $u_{k}\in L^{\infty}(\mathbb{B}^2)$, which leads to $u_{k}\to u_0$ in $C^2(\mathbb{B}^2)$ by standard elliptic regularity theory and thus $u_0$ satisfies \eqref{eq3}.

	Now we show that $u_{k}\to u_0$ in $W^{1,2}(\mathbb{B}^2)$. Since $\lambda_0<\frac{1}{4}$, by the equivalence of norms in $W^{1,2}(\mathbb{B}^2)$, we just need to show
	\begin{equation}
		\lim_{k\to\infty}\int_{\mathbb{B}^2}\left(|\nabla_{\mathbb{B}^2}u_{k}|^2-\lambda_0u_{k}^2\right)dV_{\mathbb{B}^2}=
		\int_{\mathbb{B}^2}\left(|\nabla_{\mathbb{B}^2}u_{0}|^2-\lambda_0u_{0}^2\right)dV_{\mathbb{B}^2}.
	\end{equation}
	From \eqref{eq1} and \eqref{eq3}, it suffices to prove that
	\begin{equation}\label{eq67}
		\lim_{k\to\infty}\int_{\mathbb{B}^2}\lambda_k u_{k}^2\left(e^{u_{k}^2}-1\right)dV_{\mathbb{B}^2}=\int_{\mathbb{B}^2}\lambda_0 u_{0}^2\left(e^{u_{0}^2}-1\right) dV_{\mathbb{B}^2}.
	\end{equation}
	Indeed, we can write
		\begin{equation}\label{eq68}\nonumber
			\int_{\mathbb{B}^2}\lambda_k u_{k}^2\left(e^{u_{k}^2}-1\right) dV_{\mathbb{B}^2}=\int_{B_{\mathbb{B}^2}(0,R)}\lambda_k u_{k}^2\left(e^{u_{k}^2}-1\right) dV_{\mathbb{B}^2}+\int_{\mathbb{B}^2\backslash B_{\mathbb{B}^2}(0,R)}\lambda_k u_{k}^2\left(e^{u_{k}^2}-1\right) dV_{\mathbb{B}^2}:=I_1+I_2.
			\end{equation}
		For $I_1$, since $u_{k}\in L^{\infty}(\mathbb{B}^2)$, it follows from Lebesgue dominated convergence theorem that
			\begin{equation}\label{eq69}
				\lim_{R\to\infty}\lim_{k\to\infty}I_1=\int_{\mathbb{B}^2}\lambda_0 u_0^2\left(e^{u_0^2}-1\right)dV_{\mathbb{B}^2}.
				\end{equation}
			For $I_2$, since $||\nabla_{\mathbb{B}^2}u_{k}||_{L^2(\mathbb{B}^2)}^2\leq M_0$, we have by Poincar\'{e} inequality \eqref{Poincare} that, for any $r\in(0,1)$,
			\begin{equation}\label{eq70}\nonumber
				u_{k}^2(r)\frac{4\pi r^2}{1-r^2}=u_{k}^2(r)\int_{B_r}\left(\frac{2}{1-|x|^2}\right)^2 dx\leq\int_{B_r}u_{k}^2\left(\frac{2}{1-|x|^2}\right)^2 dx\leq\int_{\mathbb{B}^2} u_{k}^2dV_{\mathbb{B}^2}\leq4M_0,
				\end{equation}
			which implies that
			\begin{equation}\label{eq71}
				u_{k}(r)\leq\frac{\sqrt{M_0(1-r^2)}}{\sqrt{\pi}r}.
				\end{equation}
			By using $e^{x^2}-1\leq x^2e^{x^2}$ for any $x\in\mathbb{R}$ and $u_{k}\in L^{\infty}(\mathbb{B}^2)$, we deduce
			\begin{equation}\label{eq72}\nonumber
				\begin{aligned}
					\lambda_k u_{k}^2\left(e^{u_{k}^2}-1\right)\leq \lambda_k u_{k}^4e^{u_{k}^2}\lesssim\frac{M_0^2(1-r^2)^2}{\pi^2 r^4}\in L^1(\mathbb{B}^2\backslash B_{\mathbb{B}^2}(0,R)).
					\end{aligned}
				\end{equation}
				Then by Lebesgue dominated convergence theorem we get $\lim\limits_{R\to\infty}\lim\limits_{k\to\infty}I_2=0$, which together with \eqref{eq69} yields \eqref{eq67}. Then the proof of Case 2 is completed.
	\vskip 0.1cm

	\textbf{Case 3:} $\lambda_0=\frac{1}{4}$. As what we did in the previous proof of (ii), we can similarly derive that up to a subsequence, $\lambda_k\to\frac{1}{4}$, $u_{k}\in L^{\infty}(\mathbb{B}^2)$ and $u_{k}\to u_0$ in $C^2(\mathbb{B}^2)$ as $k\to\infty$, where $u_0$ satisfies \eqref{eq3} with $\lambda_0$ replaced by $\frac{1}{4}$.
It follows directly from Proposition \ref{pro-b5} of Appendix B that $u_{0}=0.$
\end{proof}

\section{Existence of positive critical points}\label{s5}
In this section, we will prove the existence of positive critical points for the Moser-Trudinger functional $$F(u)=\int_{\mathbb{B}^2}\left(e^{u^2}-1\right)dV_{\mathbb{B}^2}$$ under the constraint $\int_{\mathbb{B}^2}|\nabla_{\mathbb{B}^2}u|^2dV_{\mathbb{B}^2}=\Lambda$ for $\Lambda>4\pi$.
\medskip

The ground state solution of equation \begin{equation}\label{ground}
		\left\{
		\begin{aligned}
			&-\Delta_{\mathbb{B}^2}u=\lambda u e^{u^2},\ &x\in\mathbb{B}^2,\\
			&u>0,\ &x\in\mathbb{B}^2,\\
			&u\to0,\ \text{when}\ \rho(x)\to\infty,\\
		\end{aligned}
		\right.
	\end{equation}
is a positive critical point of the Moser-Trudinger functional $$F(u)=\int_{\mathbb{B}^2}\left(e^{u^2}-1\right)dV_{\mathbb{B}^2}$$ under the constraint $\int_{\mathbb{B}^2}|\nabla_{\mathbb{B}^2}u|^2dV_{\mathbb{B}^2}=\Lambda$ for some $\Lambda>0$. The existence of ground state solution is easily proved by same argument as \cite[Appendix]{CLXZ}. Furthermore, the functional energy $$I(u_\lambda)=\frac{1}{2}\int_{\mathbb{B}^2}|\nabla_{\mathbb{B}^2}u_\lambda|^2dV_{\mathbb{B}^2}-\frac{1}{2}\int_{\mathbb{B}^2}\left(e^{u_\lambda^2}-1\right)dV_{\mathbb{B}^2}$$
of the ground state solution $u_\lambda$ to equation \eqref{ground} must satisfy $0<I_{\lambda}(u_\lambda)<2\pi$.
It is not difficult to prove that $\int_{\mathbb{B}^2}|\nabla_{\mathbb{H}}u_\lambda|^2dV_{\mathbb{B}^2}$ is bounded when $\lambda$ is away from $\frac{1}{4}$ through the definition of the ground state solution. Recalling Theorem \ref{thm1}, we know that $u_\lambda$ must blow up at the origin when $\lambda\rightarrow 0$ up to some M\"{o}bius transformation, Dirichlet energy $\int_{\mathbb{B}^2}|\nabla_{\mathbb{B}^2}u_\lambda|^2dV_{\mathbb{B}^2}$ is continuous about the parameter $\lambda\in (0, \frac{1}{4})$ and $\lim\limits_{\lambda\rightarrow 0}\int_{\mathbb{B}^2}|\nabla_{\mathbb{B}^2}u_\lambda|^2dV_{\mathbb{B}^2}=4\pi$. If we can prove when $\lambda$ approaches to zero, $\int_{\mathbb{B}^2}|\nabla_{\mathbb{B}^2}u_\lambda|^2dV_{\mathbb{B}^2}$ approaches to $4\pi$ from the above, then one can deduce that the Moser-Trudinger functional $$F(u)=\int_{\mathbb{B}^2}\left(e^{u^2}-1\right)dV_{\mathbb{B}^2}$$ under the constraint $\int_{\mathbb{B}^2}|\nabla_{\mathbb{B}^2}u|^2dV_{\mathbb{B}^2}=\Lambda$ for $\Lambda>4\pi$ and $\Lambda$ close to $4\pi$ has a positive critical point, which accomplishes the proof of Theorem \ref{thm2}. Now, it remains to prove that $\int_{\mathbb{B}^2}|\nabla_{\mathbb{B}^2}u_\lambda|^2dV_{\mathbb{B}^2}$ approaches to $4\pi$ from the above when $\lambda\rightarrow 0$. We need the following lemmas.

\begin{lemma}(Proposition 15, \cite{MM-CV})\label{lem4}
Let $f\in C^0(\mathbb{R}^2)$ be radially symmetric and satisfy $f(r)=O( \ln^q r)$ as $r\to\infty$ for some $q\geq0$. If $w\in C^2(\mathbb{R}^2)$ is a radially symmetric solution of
	\begin{equation}\label{1}
		-\Delta w=4e^{2\eta_0}(f+2w),
	\end{equation}
	where $\eta_0$ is as in \eqref{eq5}, then $\Delta w\in L^1(\mathbb{R}^2)$ and we have
	\begin{equation}\label{2}
		w(r)=\beta \ln r+O(1),\quad w'(r)=\frac{\beta}{r}+O\left(\frac{ \ln^{\bar{q}} r}{r^3}\right)\quad\text{as}\ r\to\infty,
	\end{equation}
	where $\bar{q}=\max\{1,q\}$ and
	\begin{equation}\label{3}
		\beta:=\frac{1}{2\pi}\int_{\mathbb{R}^2}\Delta w dx.
	\end{equation}
\end{lemma}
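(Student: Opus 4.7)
The plan is to reduce the PDE to a second-order linear ODE in the radial variable $r$, analyse it via variation of parameters against an explicit basis of the homogeneous problem, and extract the asymptotics by integrating the radial identity $(rw')'=r\,\Delta w$ together with careful tail estimates.

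First rewrite the equation as $-\Delta w-\frac{8}{(1+r^2)^2}w=\frac{4f}{(1+r^2)^2}$. The homogeneous operator $L\phi:=-\Delta\phi-\frac{8}{(1+r^2)^2}\phi$ is precisely the linearisation of the Liouville equation $-\Delta\eta=4e^{2\eta}$ at $\eta_0$, so differentiating the one-parameter family $\eta_0^{\mu}(x):=\eta_0(\mu x)+\ln\mu$ at $\mu=1$ yields the explicit bounded radial kernel element
\[\phi_1(r)=\frac{1-r^2}{1+r^2},\]
which is smooth at the origin and tends to $-1$ as $r\to\infty$. A second linearly independent radial solution $\phi_2$ is then obtained by reduction of order via the Abel identity $r(\phi_1\phi_2'-\phi_2\phi_1')=\text{const}$; it has a logarithmic singularity at the origin and, up to normalization, $\phi_2(r)=-\ln r+O(1)$, $\phi_2'(r)=-r^{-1}+O(r^{-3})$ as $r\to\infty$.

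Next I would establish the a priori growth estimate $w(r)=O(\ln r)$ by variation of parameters. Since $w$ is $C^2$ at the origin it must be of the form $w=c\phi_1+w_p$, the $\phi_2$-component being excluded by regularity, where
\[w_p(r)=-\frac{\phi_1(r)}{\kappa}\int_r^{\infty}\phi_2(s)\frac{4sf(s)}{(1+s^2)^2}\,ds+\frac{\phi_2(r)}{\kappa}\int_0^r\phi_1(s)\frac{4sf(s)}{(1+s^2)^2}\,ds\]
and $\kappa$ is the constant Wronskian. Under the hypothesis $f(r)=O(\ln^q r)$ the two integrands are of order $\ln^{q+1}s/s^3$ and $\ln^q s/s^3$ at infinity and integrable at the origin, so the first term is bounded while the second grows like a constant multiple of $\ln r$. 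This yields $w(r)=O(\ln r)$ as $r\to\infty$.

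With the logarithmic bound in hand the remaining steps are standard tail estimates. Substituting back, $\Delta w=-\frac{4(f+2w)}{(1+r^2)^2}=O(\ln^{\bar q} r/r^4)$, hence $\Delta w\in L^1(\mathbb{R}^2)$. Integrating $(rw')'=r\,\Delta w$ from $0$ to $r$ and using $w'(0)=0$ gives $rw'(r)=\int_0^r s\,\Delta w(s)\,ds$, which converges as $r\to\infty$ to $\frac{1}{2\pi}\int_{\mathbb{R}^2}\Delta w\,dx=\beta$ (equivalently, by the divergence theorem, $2\pi R\,w'(R)=\int_{B_R}\Delta w\to 2\pi\beta$). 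The tail estimate $rw'(r)-\beta=-\int_r^{\infty}s\,\Delta w\,ds=O(\ln^{\bar q}r/r^2)$ is immediate from $\Delta w=O(\ln^{\bar q}r/r^4)$, producing $w'(r)=\beta/r+O(\ln^{\bar q}r/r^3)$, and a further integration from a fixed large radius gives $w(r)=\beta\ln r+O(1)$. The main obstacle is the a priori step of controlling the growth of an abstract $C^2$ radial solution; it depends crucially on having the explicit basis $\{\phi_1,\phi_2\}$ with precise asymptotic rates, so that the variation-of-parameters integrals against $f$ both converge and produce at most logarithmic growth. Once this is secured, the identification of the coefficient $\beta$ with $\frac{1}{2\pi}\int\Delta w\,dx$ is forced by the divergence theorem and the remainder estimates are routine.
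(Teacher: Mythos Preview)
The paper does not supply its own proof of this lemma; it is quoted verbatim from \cite{MM-CV} (Proposition~15) and used as a black box. So there is nothing in the paper to compare against.

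Your argument is sound and is essentially the standard route. The key steps---identifying the explicit radial kernel element $\phi_1(r)=\frac{1-r^2}{1+r^2}$, producing a second solution $\phi_2$ with a logarithmic singularity at the origin and $\phi_2(r)\sim -\ln r$ at infinity via the Wronskian relation $r(\phi_1\phi_2'-\phi_2\phi_1')=\text{const}$, and then writing $w=c\phi_1+w_p$ by variation of parameters---are all correct. Once the a~priori bound $w(r)=O(\ln r)$ is in hand, the remaining computations (integrability of $\Delta w$, the divergence-theorem identification of $\beta$, and the tail estimates for $w'$ and $w$) go through exactly as you describe.

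One small point you pass over: to conclude that the $\phi_2$-coefficient vanishes you need $w_p$ itself to be $C^2$ at the origin, not merely bounded there. This is true but requires a short check: near $r=0$ one has $\phi_2(r)\sim\ln r$, $\phi_2'(r)\sim 1/r$, $\phi_2''(r)\sim -1/r^2$, while $\int_0^r\phi_1(s)\,\tfrac{4sf(s)}{(1+s^2)^2}\,ds\sim 2f(0)r^2$, so each term in $w_p$, $w_p'$, and $w_p''$ has a finite limit as $r\to0^+$. With that verified, your decomposition $w=c\phi_1+w_p$ is justified and the rest follows.
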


\begin{lemma}(Proposition 16, \cite{MM-CV})\label{lem5}
	Let $f,w$ and $\beta$ be as in Lemma \ref{lem4}. Then
	\begin{equation}\label{4}
		\beta=-\frac{2}{\pi}\int_{\mathbb{R}^2}\frac{|x|^2-1}{(1+|x|^3)^3}f(x) dx.
	\end{equation}
\end{lemma}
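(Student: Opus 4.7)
The plan is to test the equation for $w$ against a bounded radial solution of the \emph{homogeneous linearized} Liouville equation, thereby extracting $\beta$ through integration by parts. The crucial observation is that
\[
\phi(r) := \frac{1-r^2}{1+r^2}
\]
is a radial solution of
\[
-\Delta \phi \;=\; \frac{8}{(1+r^2)^2}\,\phi \;=\; 8\,e^{2\eta_0}\,\phi \qquad\text{in }\mathbb{R}^2,
\]
with $\phi(0)=1$, $\phi(r)\to -1$ as $r\to\infty$, and $\phi'(r) = -4r/(1+r^2)^2 = O(r^{-3})$ at infinity. Conceptually, $\phi$ is (up to a constant) the infinitesimal generator of the dilation family $\eta_\lambda(x)=\eta_0(\lambda x)+\ln\lambda$ of Liouville solutions; concretely, one verifies the identity either by a direct radial computation or by differentiating $-\Delta\eta_\lambda = 4 e^{2\eta_\lambda}$ in $\lambda$ at $\lambda=1$.

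Next I would multiply $-\Delta w = 4 e^{2\eta_0}(f+2w)$ by $\phi$, multiply $-\Delta \phi = 8 e^{2\eta_0}\phi$ by $w$, subtract, and integrate over $B_R$. The essential cancellation is that the term $8 e^{2\eta_0} w\phi$ appears on both sides and drops out entirely; this is precisely the reason for choosing $\phi$ to solve the linearized equation. Green's identity then yields
\[
\int_{\partial B_R}\bigl[\,w\,\partial_\nu \phi - \phi\,\partial_\nu w\,\bigr]\,ds \;=\; \int_{B_R} 4\,e^{2\eta_0} f\,\phi\,dx \;=\; \int_{B_R} \frac{4(1-|x|^2)}{(1+|x|^2)^3}\,f(x)\,dx.
\]

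It remains to pass to the limit $R\to\infty$ on both sides. On the right, the integrand is bounded by $O(|f|/(1+|x|^2)^2)$, hence is integrable on $\mathbb{R}^2$ under the hypothesis $f(r)=O(\ln^q r)$, so Lebesgue dominated convergence yields the full integral. On the left, I substitute the asymptotics from Lemma \ref{lem4}, namely $w(R)=\beta\ln R+O(1)$ and $w'(R)=\beta/R + O(\ln^{\bar{q}} R/R^3)$, together with $\phi(R)=-1+O(R^{-2})$ and $\phi'(R)=O(R^{-3})$. A direct substitution gives
\[
2\pi R\bigl[\,w(R)\phi'(R)-\phi(R)w'(R)\,\bigr] \;=\; 2\pi\beta + o(1) \qquad\text{as }R\to\infty,
\]
so in the limit $2\pi\beta = \int_{\mathbb{R}^2}\frac{4(1-|x|^2)}{(1+|x|^2)^3}f(x)\,dx$, which rearranges to the stated identity (the minus sign corresponds to rewriting the numerator as $|x|^2-1$). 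I do not anticipate a serious obstacle: the main insight is the choice of the multiplier $\phi$, after which the argument reduces to tracking asymptotics already established in Lemma \ref{lem4}. The only mildly delicate point is verifying that the remainders $O(\ln^{\bar{q}} R/R^3)$ in $w'(R)-\beta/R$ and $O(R^{-3})$ in $\phi'(R)$ do not survive after multiplication by the circumference $2\pi R$ of $\partial B_R$, which is immediate from the stated decay rates.
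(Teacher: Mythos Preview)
Your argument is correct and is precisely the standard proof: test against the bounded radial kernel element $\phi(r)=(1-r^2)/(1+r^2)$ of the linearized Liouville operator, let the $8e^{2\eta_0}w\phi$ terms cancel, and read off $\beta$ from the boundary term using the asymptotics supplied by Lemma~\ref{lem4}. The paper does not give its own proof of this lemma---it is quoted verbatim from \cite{MM-CV}---so there is nothing to compare; your approach coincides with the one in that reference.

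One incidental remark: your computation produces the kernel $4(1-|x|^2)/(1+|x|^2)^3$, and this is the correct expression. The denominator $(1+|x|^3)^3$ appearing in the paper's statement~\eqref{4} (and again in \eqref{12}--\eqref{13}) is a typographical error for $(1+|x|^2)^3$; the numerical values in \eqref{13} confirm this, as you can check for instance on the $\eta_0^3$ line.
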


\begin{lemma}\label{lem6}
	Let $\eta_{\lambda}:=c_{\lambda}(u_{\lambda}(r_{\lambda}x)-c_{\lambda})$,  $w_{\lambda}:=c_{\lambda}^2(\eta_{\lambda}-\eta_0)$ and $z_{\lambda}:=c_{\lambda}^2(w_{\lambda}-w_0)$, where $c_\lambda=u_\lambda(0)=\max\limits_{x\in \mathbb{B}^2}u_\lambda(x)$, $\lambda r_{\lambda}^2c_{\lambda}^2e^{c_{\lambda}^2}=1$,  and $\eta_0,w_0$ are defined in Lemmas \ref{lem1} and \ref{lem2} respectively. If $\lambda\to0$, then we have $z_{\lambda}\to z_0$ in $C_{\rm{loc}}^1(\mathbb{R}^2)$, where $z_0$ is the unique solution to the ODE
	\begin{equation}\label{5}
		-\Delta z_0=4e^{2\eta_0}\left(w_0+2w_0^2+4\eta_0w_0+2w_0\eta_0^2+\eta_0^3+\frac{1}{2}\eta_0^4+2z_0\right),\quad z_0(0)=0,\quad z_0'(0)=0.
	\end{equation}
	Moreover, $z_0$ satisfies
	\begin{equation}\label{6}
		z_0(r)=\beta \ln r+O(1)
	\end{equation}
	as $r\to\infty,$ where
	\begin{equation}\label{7}
		\beta=\frac{1}{2\pi}\int_{\mathbb{R}^2}\Delta z_0 dx=-6-\frac{\pi^2}{3}.
	\end{equation}
\end{lemma}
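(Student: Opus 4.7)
The plan is to mirror the derivation of Lemma \ref{lem2} but push the Taylor expansion one order deeper in $c_\lambda^{-2}$, and then use Lemmas \ref{lem4} and \ref{lem5} to read off the asymptotics of $z_0$.

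First I would write $\eta_\lambda = \eta_0 + c_\lambda^{-2} w_0 + c_\lambda^{-4} z_\lambda$ and substitute into
\begin{equation*}
-\Delta\eta_\lambda = \Bigl(1+\tfrac{\eta_\lambda}{c_\lambda^2}\Bigr)e^{(2+\eta_\lambda/c_\lambda^2)\eta_\lambda}\Bigl(\tfrac{2}{1-r_\lambda^2|x|^2}\Bigr)^2,
\end{equation*}
expanding everything to order $c_\lambda^{-4}$. The exponent becomes $2\eta_0+c_\lambda^{-2}(2w_0+\eta_0^2)+c_\lambda^{-4}(2z_\lambda+2\eta_0 w_0)+O(c_\lambda^{-6})$, so a Taylor expansion of the exponential and a product with the linear factor $1+\eta_\lambda/c_\lambda^2$ produces, after the known relations $-\Delta\eta_0=4e^{2\eta_0}$ and $-\Delta w_0=4e^{2\eta_0}(\eta_0+\eta_0^2+2w_0)$ eat the orders $1$ and $c_\lambda^{-2}$, precisely
\begin{equation*}
-\Delta z_\lambda = 4e^{2\eta_0}\Bigl(w_0+2w_0^2+4\eta_0 w_0+2w_0\eta_0^2+\eta_0^3+\tfrac{1}{2}\eta_0^4+2z_\lambda\Bigr)+o(1)+o(1)z_\lambda,
\end{equation*}
with $o(1)\to 0$ locally uniformly as $\lambda\to 0$ (the factor from $C=(2/(1-r_\lambda^2|x|^2))^2$ contributes only exponentially small error by Lemma \ref{lem1}). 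Since $z_\lambda(0)=z_\lambda'(0)=0$, standard ODE stability on compact intervals and elliptic regularity give $z_\lambda\to z_0$ in $C^1_{\text{loc}}(\mathbb{R}^2)$, where $z_0$ solves \eqref{5}; uniqueness of the Cauchy problem identifies the limit.

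Next, to get the asymptotics \eqref{6}, set
\begin{equation*}
f := w_0+2w_0^2+4\eta_0 w_0+2w_0\eta_0^2+\eta_0^3+\tfrac{1}{2}\eta_0^4.
\end{equation*}
By \eqref{eq19} we have $w_0(r)=\eta_0(r)+O(1)=-\ln(1+r^2)+O(1)$, so $f(r)=O(\ln^4 r)$ as $r\to\infty$. Lemma \ref{lem4} applied to $-\Delta z_0=4e^{2\eta_0}(f+2z_0)$ then yields \eqref{6} with $\beta=(2\pi)^{-1}\int_{\mathbb{R}^2}\Delta z_0\,dx$.

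Finally, to evaluate $\beta=-6-\pi^2/3$, I would use Lemma \ref{lem5}:
\begin{equation*}
\beta = -\frac{2}{\pi}\int_{\mathbb{R}^2}\frac{|x|^2-1}{(1+|x|^2)^3}\,f(x)\,dx.
\end{equation*}
Passing to polar coordinates and substituting $s=1+r^2$ reduces each of the six summands of $f$ to an explicit one-dimensional integral whose integrand is a rational function of $s$ multiplied by powers of $\ln s$ and the dilogarithm-type primitive $I(s):=\int_1^s\frac{\ln t}{1-t}\,dt$ appearing in \eqref{eq16}. The arithmetic is routine but long: the polynomial pieces $\eta_0^3$, $\eta_0^4$ etc.\ produce rational combinations involving $\int_1^\infty\frac{(\ln s)^k(s-1)}{s^{\,4}}\,ds$ for $k=1,\dots,4$, while the $w_0$-containing pieces produce integrals of $I(s)$ against the same weights, which I would handle by integration by parts (using $I'(s)=\ln s/(1-s)$) to convert them into the same basic family plus the boundary value $I(\infty)=-\pi^2/6$. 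Summing the contributions, the rational parts collect to $-6$ and the $\pi^2$ parts (originating from $I(\infty)$) to $-\pi^2/3$, giving the claimed value. This explicit computation of $\beta$ will be the main obstacle; everything else is a structural repetition of the arguments used for $w_0$ in Lemma \ref{lem2}.
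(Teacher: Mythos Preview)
Your proposal is correct and follows essentially the same route as the paper: expand one order deeper than in Lemma \ref{lem2} to derive the equation for $z_\lambda$, pass to the limit by ODE stability, and then apply Lemmas \ref{lem4}--\ref{lem5} with $f=O(\ln^4 r)$ to obtain \eqref{6} and reduce \eqref{7} to an explicit integral. One caution on your sketch of the final arithmetic: the quantity $I(\infty)=\int_1^\infty\frac{\ln t}{1-t}\,dt$ does not converge (indeed $I(s)\sim-\tfrac12\ln^2 s$), so the $\pi^2$ contribution does not arise as that boundary value; in the paper's computation the six summands of $f$ produce individual pieces involving $\pi^3$, $\pi^5$ and $Z(3)$ which cancel upon summation to leave $-6-\pi^2/3$.
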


\begin{proof}
	Direct computations give that
	\begin{equation}\label{8}
		\begin{aligned}
			-\Delta z_{\lambda}=4e^{2\eta_0}\Bigg[{}&c_{\lambda}^4\Big(1+
\frac{\eta_0}{c_{\lambda}^2}+\frac{w_0}{c_{\lambda}^4}
+\frac{w_{\lambda}-w_0}{c_{\lambda}^4}\Big)
e^{\frac{2(w_{\lambda}-w_0)+2w_0+\eta_0^2}{c_{\lambda}^2}
+\frac{2\eta_0(w_{\lambda}-w_0)+2\eta_0w_0}{c_{\lambda}^4}
+\frac{(w_{\lambda}-w_0)^2+2w_0(w_{\lambda}-w_0)+w_0^2}{c_{\lambda}^6}}
\Big(\frac{1}{1-r_{\lambda}^2|x|^2}\Big)^2\\
			{}&-c_{\lambda}^4-c_{\lambda}^2(\eta_0+\eta_0^2+2w_0)\Bigg].
		\end{aligned}
	\end{equation}
	From Theorem \ref{thm1} we know $c_{\lambda}\to\infty$ as $\lambda\to0$. Then it follows from Lemma \ref{lem2} that for every $R>0$, $w_{\lambda}(r)-w_0(r)=o(1)$ as $\lambda\to0$ uniformly for $r\in[0,R]$. By using a Taylor expansion:
	\begin{equation}\label{9}\nonumber
		\begin{aligned}
			{}&e^{\frac{2(w_{\lambda}-w_0)+2w_0+\eta_0^2}{c_{\lambda}^2}+\frac{2\eta_0(w_{\lambda}-w_0)+2\eta_0w_0}{c_{\lambda}^4}+\frac{(w_{\lambda}-w_0)^2+2w_0(w_{\lambda}-w_0)+w_0^2}{c_{\lambda}^6}}\\
			={}&1+\frac{2(w_{\lambda}-w_0)+2w_0+\eta_0^2}{c_{\lambda}^2}+\frac{2\eta_0w_0}{c_{\lambda}^4}+\frac{(2w_0+\eta_0^2)^2}{2c_{\lambda}^4}+o(1)c_{\lambda}^{-4},
		\end{aligned}
	\end{equation}
	with $o(1)\to0$ as $\lambda\to0$ uniformly for $r\in[0,R]$, we get
\begin{equation}\label{10}
-\Delta z_{\lambda}=4e^{2\eta_0}
\Bigg(w_0+2w_0^2+4\eta_0w_0+2w_0\eta_0^2+\eta_0^3
+\frac{1}{2}\eta_0^4+2z_{\lambda}+o(1)+o(1)z_{\lambda}\Bigg)
\end{equation}
with $o(1)\to0$ as $\lambda\to0$ uniformly for $r\in[0,R]$. Observing that $z_{\lambda}(0)=z_{\lambda}'(0)=0$, from ODE theory it follows that $z_{\lambda}(r)$ is uniformly bounded in $r\in[0,R]$ and by elliptic estimates $z_{\lambda}\to z_0$ in $C_{\rm{loc}}^1(\mathbb{R}^2)$, where $z_0$ is the unique solution to \eqref{5}.
	
	Moreover, it follows from Lemmas \ref{lem1} and \ref{lem2} that
	\begin{equation}\label{11}
		f:=w_0+2w_0^2+4\eta_0w_0+2w_0\eta_0^2+\eta_0^3+\frac{1}{2}\eta_0^4=O( \ln^4|x|).
	\end{equation}
	Using Lemmas \ref{lem4} and \ref{lem5}, we know $z_0$ satisfies \eqref{6} where
	\begin{equation}\label{12}
		\beta=-\frac{2}{\pi}\int_{\mathbb{R}^2}\frac{|x|^2-1}{(1+|x|^3)^3}f(x) dx.
	\end{equation}
	By integrating by parts we compute
	\begin{equation}\label{13}
		\begin{aligned}
			&\int_{\mathbb{R}^2}\frac{|x|^2-1}{(1+|x|^3)^3}w_0(x) dx=\frac{\pi^3}{18}-\frac{7\pi}{12},\\
			&\int_{\mathbb{R}^2}\frac{|x|^2-1}{(1+|x|^3)^3}w_0^2(x) dx=\left(\frac{625}{216}-\frac{4}{9}Z(3)\right)\pi-\frac{\pi^3}{81}-\frac{\pi^5}{45},\\
			&\int_{\mathbb{R}^2}\frac{|x|^2-1}{(1+|x|^3)^3}\eta_0(x)w_0(x) dx=\left(\frac{125}{72}-\frac{2}{3}Z(3)\right)\pi-\frac{2\pi^3}{27},\\
			&\int_{\mathbb{R}^2}\frac{|x|^2-1}{(1+|x|^3)^3}w_0(x)\eta_0^2(x) dx=\left(\frac{16}{9}Z(3)-\frac{409}{54}\right)\pi+\frac{35\pi^3}{162}+\frac{\pi^5}{45},\\
			&\int_{\mathbb{R}^2}\frac{|x|^2-1}{(1+|x|^3)^3}\eta_0^3(x) dx=-\frac{21\pi}{4},\\
			&\int_{\mathbb{R}^2}\frac{|x|^2-1}{(1+|x|^3)^3}\eta_0^4(x) dx=\frac{45\pi}{2},
		\end{aligned}
	\end{equation}
	where $Z$ denotes the Euler-Riemann zeta function. Combining \eqref{11}, \eqref{12} and \eqref{13} yields \eqref{7} finally.
\end{proof}

\begin{remark}\label{rema}
	We can still obtain the next order expansion by the similar way if we define $\zeta_\lambda=c_\lambda^2(z_\lambda-z_0)$. Indeed, if $\lambda\to0$,  we will have $\zeta_\lambda\to\zeta_0$ in $C^1_{\rm{loc}}(\mathbb{R}^2)$ where $\zeta_0$ is the unique solution to the ODE
	\begin{equation*}
		-\Delta\zeta_0=4e^{2\eta_0}
		\Bigg(
		z_0+3w_0^2+4\eta_0z_0+4w_0z_0+3w_0\eta_0^2+6\eta_0w_0^2+2\eta_0^2z_0+4w_0\eta_0^3+\frac{1}{2}\eta_0^5+2\zeta_0
		\Bigg)
	\end{equation*}
	with $\zeta_0(0)=0,\ \zeta_0'(0)=0.$ Moreover, $\zeta_0$ satisfies $\zeta_0=\gamma \ln r+O(1)$ as $r\to\infty$ with some $\gamma\in\mathbb{R}.$
\end{remark}

We are in a position to give
\begin{lemma}\label{lem7}
	If $\lambda\rightarrow 0$, then it holds that
	\begin{equation}\label{14}
		||\nabla_{\mathbb{B}^2}u_{\lambda}||_{L^2(\mathbb{B}^2)}^2\geq4\pi+\frac{4\pi}{c_{\lambda}^4}+o(c_{\lambda}^{-4}).
	\end{equation}
\end{lemma}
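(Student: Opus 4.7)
The plan is to upgrade the profile decomposition of Lemmas \ref{lem1}, \ref{lem2}, \ref{lem6} into a sharp $c_\lambda^{-4}$ expansion of $||\nabla_{\mathbb{B}^2}u_\lambda||_{L^2(\mathbb{B}^2)}^2$, combining an inner computation at the blow-up scale with a pointwise bound from Lemma \ref{lem3} on the complementary region.

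The starting point is the identity $||\nabla_{\mathbb{B}^2}u_\lambda||_{L^2(\mathbb{B}^2)}^2=\lambda\int_{\mathbb{B}^2}u_\lambda^2 e^{u_\lambda^2}\,dV_{\mathbb{B}^2}$ obtained by testing \eqref{uniq} against $u_\lambda$. Because $\lambda r_\lambda^2 c_\lambda^2 e^{c_\lambda^2}=1$ together with $r_\lambda\lesssim e^{-\alpha^2 c_\lambda^2/2}$ force $\lambda$ to decay faster than any polynomial in $c_\lambda^{-1}$, splitting $u_\lambda^2 e^{u_\lambda^2}=u_\lambda^2(e^{u_\lambda^2}-1)+u_\lambda^2$ and using the Poincar\'e estimate $\lambda||u_\lambda||_{L^2(\mathbb{B}^2)}^2\leq 4\lambda M_0$ absorbs the second term into $o(c_\lambda^{-N})$ for every $N$. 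I would then split the remaining integral at the scale $Rr_\lambda$. On $B_{Rr_\lambda}$, the substitution $x=r_\lambda y$ together with $u_\lambda(r_\lambda y)=c_\lambda+\eta_\lambda(y)/c_\lambda$ turns the inner contribution into
\begin{equation*}
\int_{B_R}\Big(1+\tfrac{\eta_\lambda}{c_\lambda^2}\Big)^{\!2}e^{2\eta_\lambda+\eta_\lambda^2/c_\lambda^2}\Big(\tfrac{2}{1-r_\lambda^2|y|^2}\Big)^{\!2}dy.
\end{equation*}

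Plugging in $\eta_\lambda=\eta_0+c_\lambda^{-2}w_0+c_\lambda^{-4}z_0+o(c_\lambda^{-4})$ and Taylor-expanding rewrites the integrand as $4e^{2\eta_0}[1+B/c_\lambda^2+F/c_\lambda^4+o(c_\lambda^{-4})]$, with $B=2\eta_0+2w_0+\eta_0^2$ and $F$ an explicit polynomial in $\eta_0,w_0,z_0$. Using Lemma \ref{lem2} and \eqref{5} I convert each coefficient into a divergence plus remainder: $4e^{2\eta_0}B=-\Delta w_0+4e^{2\eta_0}\eta_0$ and $4e^{2\eta_0}F=-\Delta z_0+4e^{2\eta_0}(w_0+2\eta_0 w_0+\eta_0^2+\eta_0^3)$. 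Integrating over $B_R$ and letting first $\lambda\to 0$ and then $R\to\infty$, the divergence pieces produce radial boundary fluxes with limits $-2\pi R\eta_0'(R)\to 4\pi$, $-2\pi R w_0'(R)\to 4\pi$, and $-2\pi R z_0'(R)\to -2\pi\beta=12\pi+\tfrac{2\pi^3}{3}$ (by Lemma \ref{lem6}, where $\beta=-6-\pi^2/3$ is extracted from the integrals \eqref{13}). The residual integrals against $4e^{2\eta_0}$ reduce to $\int 4e^{2\eta_0}\eta_0^k\,dy$ for $k=1,2,3$ (equal to $-4\pi$, $8\pi$, $-24\pi$ respectively by the substitution $t=1+|y|^2$), together with the dilogarithmic evaluations $\int 4e^{2\eta_0}w_0\,dy$ and $\int 4e^{2\eta_0}\eta_0 w_0\,dy$ that follow from the explicit formula \eqref{eq16}. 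The $c_\lambda^{-2}$ coefficient cancels since $4\pi+(-4\pi)=0$, and summing the $c_\lambda^{-4}$ contributions should yield $+4\pi$.

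To finish I must control the outer integral over $\mathbb{B}^2\setminus B_{Rr_\lambda}$. I would split it as $B_\delta\setminus B_{Rr_\lambda}$ plus $\mathbb{B}^2\setminus B_\delta$ for a small fixed $\delta$. On the far region the radial $H^1$ bound $u_\lambda(r)\leq M_0^{1/2}(\tfrac{1}{2\pi}\ln\tfrac{1}{r})^{1/2}$ used in the proof of Proposition \ref{pro1}, combined with the super-polynomial smallness of $\lambda$, yields a contribution of order $o(c_\lambda^{-N})$ for every $N$. On the middle annulus, Lemma \ref{lem3} gives $u_\lambda(r)\leq c_\lambda-c_\lambda^{-1}\ln(1+r^2/r_\lambda^2)$, hence $\lambda u_\lambda^2 e^{u_\lambda^2}\lesssim c_\lambda^{-2}r_\lambda^{-2}(1+r^2/r_\lambda^2)^{-2}$; the substitution $y=r/r_\lambda$ converts this into an integrand with tail $O(R^{-2})$ on $\{|y|\geq R\}$, which becomes $o(c_\lambda^{-4})$ once $R=R(c_\lambda)\to\infty$ is chosen to diverge sufficiently fast.

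The main obstacle I foresee is the algebraic bookkeeping for the $c_\lambda^{-4}$ coefficient. The function $F$ brings in several cross-terms (products like $\eta_0 w_0$, $w_0^2$, $w_0\eta_0^2$, $\eta_0^3$, $\eta_0^4$), and the corresponding integrals $\int 4e^{2\eta_0}w_0\,dy$ and $\int 4e^{2\eta_0}\eta_0 w_0\,dy$ demand delicate dilogarithmic manipulations. These must interlock precisely with the $z_0$-boundary flux $-2\pi\beta=12\pi+\tfrac{2\pi^3}{3}$ so that the $\pi^3$-contributions cancel between the flux and the $w_0$-integrals, leaving exactly $+4\pi$. A secondary subtlety is that Lemma \ref{lem3} is only an upper estimate on $u_\lambda$, so the middle-annulus bound has to be arranged (by letting $R=R(c_\lambda)\to\infty$ at the correct rate) to ensure no $c_\lambda^{-4}$-sized piece is discarded at the inner-outer matching scale.
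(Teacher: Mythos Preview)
Your plan has the right shape but contains a genuine gap at the inner--outer matching scale. The expansion $\eta_\lambda=\eta_0+c_\lambda^{-2}w_0+c_\lambda^{-4}z_0+o(c_\lambda^{-4})$ coming from Lemma~\ref{lem6} is only valid in $C^1_{\rm loc}(\mathbb{R}^2)$, i.e.\ uniformly on \emph{fixed} compact sets. To extract $4\pi+4\pi c_\lambda^{-4}+o(c_\lambda^{-4})$ from the inner integral you must integrate over a ball $B_{R}$ with $R=R(c_\lambda)\to\infty$ fast enough that the deficit $\int_{\mathbb{R}^2\setminus B_R}4e^{2\eta_0}\,dy=O(R^{-2})$ is $o(c_\lambda^{-4})$; but on such growing balls the $o(c_\lambda^{-4})$ remainder in the profile is no longer justified, and the polynomial-in-$\xi$ growth of $w_0,z_0$ prevents you from controlling the Taylor error terms. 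The paper closes this gap by pushing the expansion one order further: writing $\eta_\lambda=\eta_0+c_\lambda^{-2}w_0+c_\lambda^{-4}z_0+c_\lambda^{-6}\psi_\lambda$ and proving, via the same contraction-mapping device as in Lemma~\ref{lem3}, that $|\psi_\lambda(r)|\lesssim 1+\ln r$ uniformly on $[0,s_\lambda]$ with $s_\lambda=o(1)e^{c_\lambda}$ and $s_\lambda\ge c_\lambda^{p}$ for some $p>2$. This is the missing ingredient that makes the inner computation rigorous at the required precision.

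Your middle-annulus estimate is also not correct as written. From the bound $u_\lambda\le c_\lambda-c_\lambda^{-1}\ln(1+r^2/r_\lambda^2)$ of Lemma~\ref{lem3} one does \emph{not} get $\lambda u_\lambda^2 e^{u_\lambda^2}\lesssim c_\lambda^{-2}r_\lambda^{-2}(1+r^2/r_\lambda^2)^{-2}$: squaring introduces a term $c_\lambda^{-2}\ln^2(1+r^2/r_\lambda^2)$ in the exponent, and since $\ln(1+\delta^2/r_\lambda^2)$ is of order $c_\lambda^2$ this term is not negligible. The sharp pointwise bound one actually obtains (as in \eqref{eq59}) is $r^2\lambda u_\lambda^2 e^{u_\lambda^2}\lesssim 1/\ln^2(r/r_\lambda)$, whose integral over $B_\delta\setminus B_{Rr_\lambda}$ is only $O(1/\ln R)$ --- far too large to be $o(c_\lambda^{-4})$. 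Fortunately this second issue is irrelevant for the \emph{lower} bound: once the inner integral over $B_{r_\lambda s_\lambda}$ is shown to equal $4\pi+4\pi c_\lambda^{-4}+o(c_\lambda^{-4})$, positivity of the integrand gives the inequality immediately, and no outer estimate is needed at all. This is exactly what the paper does.
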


\begin{proof}
	From Theorem \ref{thm1} we know $c_{\lambda}\to\infty$ as $\lambda\to0$.
	Fix $S=s_{\lambda}=o(1)e^{c_{\lambda}}$ with $s_{\lambda}\geq c_{\lambda}^p$ for some $p>2$. By a change of variable, we have
	\begin{equation}\label{15}\nonumber
		\begin{aligned}
			\int_{B_{r_{\lambda}s_{\lambda}}}\lambda u_{\lambda}^2e^{u_{\lambda}^2}\left(\frac{2}{1-|x|^2}\right)^2 dx={}&\int_{B_{s_{\lambda}}}\left(1+\frac{\eta_{\lambda}}{c_{\lambda}^2}\right)e^{\left(2+\frac{\eta_{\lambda}}{c_{\lambda}^2}\right)\eta_{\lambda}}\left(\frac{2}{1-r_{\lambda}^2|x|^2}\right)^2 dx\\
			={}&\int_{B_{s_{\lambda}}}\left(1+\frac{\eta_{\lambda}}{c_{\lambda}^2}\right)\left(-\Delta\eta_0-\frac{\Delta w_0}{c_{\lambda}^2}-\frac{\Delta z_0}{c_{\lambda}^4}+\frac{\Psi_{\lambda}(r,\psi_{\lambda})}{c_{\lambda}^6}\right) dx,
		\end{aligned}
	\end{equation}
	where $\eta_{\lambda}:=\eta_0+\frac{w_0}{c_{\lambda}^2}+\frac{z_0}{c_{\lambda}^4}+\frac{\psi_{\lambda}}{c_{\lambda}^6}$ and
	\begin{equation}\label{16}\nonumber
		\Psi_{\lambda}(r,\psi_{\lambda}):=c_{\lambda}^6\left[\left(1+\frac{\eta_{\lambda}}{c_{\lambda}^2}\right)e^{\left(2+\frac{\eta_{\lambda}}{c_{\lambda}^2}\right)\eta_{\lambda}}\left(\frac{2}{1-r_{\lambda}^2r^2}\right)^2+\Delta\eta_0+\frac{\Delta w_0}{c_{\lambda}^2}++\frac{\Delta z_0}{c_{\lambda}^4}\right].
	\end{equation}
	By Lemmas \ref{lem1}, \ref{lem2} and \ref{lem6} we know
	\begin{equation}\label{xi}
		|\eta_0|+|w_0|+|z_0|=O(\xi),
	\end{equation}
	where $\xi$ is defined in the proof of Lemma \ref{lem3}. Thus, with a Taylor expansion we deduce that
	\begin{equation}\label{17}\nonumber
		\Psi_{\lambda}(r,\psi_{\lambda})=4e^{2\eta_0}\left(2\psi_{\lambda}+O(\psi_{\lambda})\left(O(c_{\lambda}^{-6}\psi_{\lambda})+O(c_{\lambda}^{-2}\xi^2)\right)+O(\xi^6)+O(c_{\lambda}^6r_{\lambda}^2r^2)\right)
	\end{equation}
	provided that
	\begin{equation}\label{18}\nonumber
		|c_{\lambda}^{-6}\psi_{\lambda}|\leq\delta,\quad c_{\lambda}^{-1}\xi\leq\delta,\quad r_{\lambda}r\leq\delta,
	\end{equation}
	for some $\delta>0$ small enough. Similar to the proof of Lemma \ref{lem3}, from the contraction mapping theorem we infer that, there exist large constants $\tilde{T}>0$ and $\tilde{M}>0$ such that
	\begin{equation}\label{19}
		\begin{aligned}
		&|\psi_{\lambda}(r)|\leq C(\tilde{T}),\quad\text{for}\ r\in[0,\tilde{T}],\\
		&|\psi_{\lambda}(r)|\leq C(\tilde{T})+\tilde{M}( \ln r- \ln \tilde{T}),\quad\text{for}\ r\in[\tilde{T},S].
		\end{aligned}
	\end{equation}
	Therefore we get $\Psi_{\lambda}(r,\psi_{\lambda})=e^{2\eta_0}O(\xi^6)$, which implies that
\begin{equation}\label{20}\nonumber
	\int_{B_{s_{\lambda}}}|\Psi_{\lambda}(r,\psi_{\lambda})| dx\leq C\int_{\mathbb{R}^2}\frac{\xi^6(|x|)}{(1+|x|^2)^2} dx\leq C.
	\end{equation}
Similarly, we have
	\begin{equation}\label{21}\nonumber
		\max\{|\Delta\eta_0|,|\Delta w_0|,|\Delta z_0|\}=e^{2\eta_0}O(\xi^4),
	\end{equation}
	which leads to
	\begin{equation}\label{22}\nonumber
		\int_{B_{s_{k}}}\xi\max\{|\Delta\eta_0|,|\Delta w_0|,|\Delta z_0|\} dx\leq C\int_{\mathbb{R}^2}\frac{\xi^5(|x|)}{(1+|x|^2)^2} dx\leq C.
	\end{equation}
	Moreover, from \eqref{xi} and \eqref{19} we see
	\begin{equation}\label{eta}\nonumber
		1+\frac{\eta_{\lambda}}{c_{\lambda}^2}=1+\frac{\eta_0}{c_{\lambda}^2}+\frac{w_0}{c_{\lambda}^4}+o(c_{\lambda}^{-5}).
	\end{equation}
	Summing up we get
	\begin{equation}\label{23}\nonumber
		\int_{B_{r_{\lambda}s_{\lambda}}}\lambda u_{\lambda}^2e^{u_{\lambda}^2}\left(\frac{2}{1-|x|^2}\right)^2 dx=\int_{B_{s_{\lambda}}}\left(-\Delta\eta_0-\frac{\eta_0\Delta\eta_0+\Delta w_0}{c_{\lambda}^2}-\frac{w_0\Delta\eta_0+\eta_0\Delta w_0+\Delta z_0}{c_{\lambda}^4}\right) dx+o(c_{\lambda}^{-5}).
	\end{equation}
	We compute directly
	\begin{equation}\label{24}\nonumber
		\int_{B_{s_{\lambda}}}(-\Delta\eta_0) dx=\int_{B_{s_{\lambda}}}4e^{2\eta_0} dx=4\pi\left(1-\frac{1}{1+s_{\lambda}^2}\right)=4\pi+o(c_{\lambda}^{-4}),
	\end{equation}
	\begin{equation}\label{25}\nonumber
		\begin{aligned}
			\int_{B_{s_{\lambda}}}(-\eta_0\Delta\eta_0-\Delta w_0) dx={}&\int_{B_{s_{\lambda}}}4e^{2\eta_0}\eta_0 dx-2\pi s_{\lambda}w_0'(s_{\lambda})\\
			={}&4\pi\left(\frac{ \ln(1+s_{\lambda}^2)}{1+s_{\lambda}^2}+\frac{1}{1+s_{\lambda}^2}-1\right)+4\pi+O(s_{\lambda}^{-2} \ln^2s_{\lambda})\\
			={}&o(c_{\lambda}^{-2}),
		\end{aligned}
	\end{equation}
	\begin{equation}\label{26}\nonumber
		\begin{aligned}
			\int_{B_{s_{\lambda}}}(-w_0\Delta\eta_0-\eta_0\Delta w_0-\Delta z_0) dx={}&\int_{B_{s_{\lambda}}}4e^{2\eta_0}(w_0+\eta_0^2+\eta_0^3+2w_0\eta_0) dx+\int_{B_{s_{\lambda}}}(-\Delta z_0) dx\\
			={}&-8\pi-\frac{2\pi^3}{3}+o(1)+2\pi\left(6+\frac{\pi^2}{3}\right)+o(1)\\
			={}&4\pi+o(1).
		\end{aligned}
	\end{equation}
	As a result,
	\begin{equation}\label{27}\nonumber
		\int_{B_{r_{\lambda}s_{\lambda}}}\lambda u_{\lambda}^2e^{u_{\lambda}^2}\left(\frac{2}{1-|x|^2}\right)^2 dx=4\pi+\frac{4\pi}{c_{\lambda}^4}+o(c_{\lambda}^{-4}).
	\end{equation}
	Noticing that $r_{\lambda}s_{\lambda}<1$, so we get
	\begin{equation}\label{28}\nonumber
		||\nabla_{\mathbb{B}^2}u_{\lambda}||_{L^2(\mathbb{B}^2)}^2=\int_{B_1}\lambda u_{\lambda}^2e^{u_{\lambda}^2}\left(\frac{2}{1-|x|^2}\right)^2 dx\geq4\pi+\frac{4\pi}{c_{\lambda}^4}+o(c_{\lambda}^{-4}).
	\end{equation}
	This ends the proof of the lemma.
\end{proof}
Now we are ready to prove Theorem \ref{thm2}.
\begin{proof}[\textbf{Proof of Theorem \ref{thm2}}]
	Observe that solutions of problem \eqref{eq1} are in fact critical points of the functional $F(u)$ under the constraint $\int_{\mathbb{B}^2}|\nabla_{\mathbb{B}^2}u_{\lambda}|^2 dV_{\mathbb{B}^2}=\Lambda$. Define
	\begin{equation}\label{29}
		E(\lambda):=||\nabla_{\mathbb{B}^2}u_{\lambda}||_{L^2(\mathbb{B}^2)}^2.
	\end{equation}
	Then $E$ is continuous with respect to $\lambda$. From Theorem \ref{thm1} we have $\lim\limits_{\lambda\to0}E(\lambda)=4\pi$. Hence, together with Lemma \ref{lem7} we complete the proof.
\end{proof}

\section{A precise characterization of $c_\lambda$}\label{cc}
In this section, we wish to provide a precise relation between $\lambda$ and $c_\lambda$ when $\lambda\to0$ by various pointwise estimates of positive solutions to \eqref{uniq}, which is important to prove the uniqueness result.

Denote by $G(x,\cdot)$ the Green's function
of $-\Delta$ in $B_{1}$, i.e. the solution to
\begin{equation}\label{greensyst}
	\begin{cases}
		-\Delta G(x,\cdot)= \delta_x  &{\text{in}~B_{1}}, \\[1mm]
		G(x,\cdot)=0  &{\text{on}~\partial B_{1}},
	\end{cases}
\end{equation}
where $\delta_x$ is the Dirac function. We have the following well known decomposition formula of $G(x,y)$,
\begin{equation}\label{GreenS-H}
	G(x,y)=S(x,y)-H(x,y)  ~\mbox{for}~(x,y)\in B_{1}\times B_{1},
\end{equation}
where  $S(x,y):=-\frac{1}{2\pi} \ln |x-y|$ and $H(x,y)$ is the regular part of $G(x,y)$. Furthermore, for any $x\in B_{1}$, the Robin function is defined as
\begin{equation}\label{Robinf}
	\mathcal{R}(x):=H(x,x).
\end{equation}

The preliminary relation between $\lambda$ and $c_\lambda$ when $\lambda\to0$ is as follows.
\begin{proposition}\label{lem-7-31-2}
	There holds that $\lim\limits_{\lambda\to0}\lambda c^{2}_{\lambda}=1$.
\end{proposition}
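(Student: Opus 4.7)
The plan is a matched asymptotic expansion argument that combines the inner profile at scale $r_\lambda$ with an outer profile valid away from the concentration point. The inner profile is already available from Lemmas \ref{lem1}--\ref{lem3}: in the rescaled variable we have
\begin{equation*}
u_\lambda(r_\lambda x)=c_\lambda+\frac{\eta_0(x)}{c_\lambda}+o(1/c_\lambda),\qquad \eta_0(x)=-\ln(1+|x|^2)\sim-2\ln|x|\text{ as }|x|\to\infty,
\end{equation*}
and the extension of Lemma \ref{lem3}'s contraction-mapping argument shows that this expansion persists up to radii $|x|\sim s_\lambda$ for any $s_\lambda\to\infty$ with $s_\lambda=o(e^{c_\lambda})$. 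Writing this back in the variable $y=r_\lambda x$, the inner profile reads $u_\lambda(y)\sim c_\lambda-\tfrac{2}{c_\lambda}\bigl(\ln|y|-\ln r_\lambda\bigr)$ on the matching annulus.

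Next I would derive the outer expansion $u_\lambda(y)\sim -\tfrac{2}{c_\lambda}\ln|y|$ for $|y|$ in an intermediate range $r_\lambda\ll|y|\ll 1$. The heuristic basis is the Green representation $u_\lambda(x)=\int_{\mathbb{B}^2}G_{\mathbb{B}^2}(x,y)\lambda u_\lambda e^{u_\lambda^2}\,dV_{\mathbb{B}^2}(y)$ together with the remarkable Poincar\'e-disk identity $G_{\mathbb{B}^2}(0,y)=-\tfrac{1}{2\pi}\ln|y|$ (which comes from $\coth(\rho(y)/2)=1/|y|$). Concentration of $\lambda u_\lambda^2 e^{u_\lambda^2}dV_{\mathbb{B}^2}\rightharpoonup4\pi\delta_0$ from Theorem \ref{thm1} and Remark \ref{rem1}, combined with $u_\lambda\approx c_\lambda$ on the concentration scale, yields
\begin{equation*}
\int_{\mathbb{B}^2}\lambda u_\lambda e^{u_\lambda^2}\,dV_{\mathbb{B}^2}\sim\frac{1}{c_\lambda}\int_{\mathbb{B}^2}\lambda u_\lambda^2 e^{u_\lambda^2}\,dV_{\mathbb{B}^2}\sim\frac{4\pi}{c_\lambda},
\end{equation*}
so that the Green representation gives $u_\lambda(y)\sim\tfrac{4\pi}{c_\lambda}\cdot(-\tfrac{1}{2\pi}\ln|y|)=-\tfrac{2}{c_\lambda}\ln|y|$ for $y$ outside the concentration scale. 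A rigorous pointwise version of this statement, uniform across the transition zone, is exactly the content of the pointwise estimate Lemma \ref{add-lem-5} announced in the introduction.

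Matching inner and outer at an intermediate scale $|y|\sim r_\lambda s_\lambda$ with $s_\lambda\to\infty$ produces
\begin{equation*}
c_\lambda-\frac{2(\ln|y|-\ln r_\lambda)}{c_\lambda}=-\frac{2\ln|y|}{c_\lambda}+o(1/c_\lambda),
\end{equation*}
which collapses to $c_\lambda^2=-2\ln r_\lambda+o(1)$. Combining with the normalization $\lambda r_\lambda^2c_\lambda^2e^{c_\lambda^2}=1$ from Lemma \ref{lem1}, i.e.\ $\ln\lambda+2\ln r_\lambda+2\ln c_\lambda+c_\lambda^2=0$, and substituting $2\ln r_\lambda=-c_\lambda^2+o(1)$, the two $c_\lambda^2$ terms cancel and one is left with $\ln(\lambda c_\lambda^2)=o(1)$, whence $\lim_{\lambda\to 0}\lambda c_\lambda^2=1$.

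The main obstacle is the rigorous control of the matching in the transition zone $|y|\sim r_\lambda s$ with $s\to\infty$: here the density $\lambda u_\lambda e^{u_\lambda^2}$ still carries non-negligible mass because $e^{u_\lambda^2}$ is not yet small, so neither the pure inner nor the pure outer expansion is obviously accurate. This is exactly why the paper builds Lemma \ref{add-lem-5}, which provides a unified pointwise estimate valid simultaneously near and away from the origin, rather than relying on a naive Green's formula argument. A secondary difficulty is that the far-region integrals near $|y|=1$ involve the diverging hyperbolic volume element $(2/(1-|y|^2))^2$; ensuring that this contribution is indeed of lower order uses the boundary decay bounds from Appendix \ref{sb}.
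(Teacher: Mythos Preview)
Your proposal is essentially the paper's strategy—compare an inner profile (rescaled bubble) with an outer profile (Green representation) and read off $\ln(\lambda c_\lambda^2)=o(1)$—but the matching point and the role you assign to Lemma \ref{add-lem-5} are slightly off, and this matters.

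The paper does \emph{not} match at a moving intermediate scale $|y|\sim r_\lambda s_\lambda\to 0$; it matches at a \emph{fixed} radius $\delta\in(0,1)$. This is the point: the outer statement $c_\lambda u_\lambda(\delta)\to -2\ln\delta$ is Lemma \ref{lem-7-31-1} (built on Lemmas \ref{lem2-1}--\ref{lem2-2} and elliptic regularity), which is $C^2_{\rm loc}$ convergence on $\mathbb{B}^2\setminus\{0\}$ and says nothing at points drifting toward the origin. Your heuristic Green computation is exactly the content of Lemma \ref{lem-7-31-1}, not of Lemma \ref{add-lem-5}.

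Conversely, Lemma \ref{add-lem-5} is the \emph{inner} expansion, written in the variable $t=\ln(1+r^2/r_\lambda^2)$, pushed all the way out to $r=\delta$ (so $t\sim c_\lambda^2$); it is not a statement about the outer profile across the transition zone. Evaluating it at $r=\delta$ and multiplying by $c_\lambda$ gives \eqref{31-4}; equating with the Green limit \eqref{31-3} yields $\ln(\lambda c_\lambda^2)=o(1)$. Once these roles are straightened out and the match is placed at fixed $\delta$, your algebra is exactly the paper's computation.
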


To prove Proposition \ref{lem-7-31-2}, we need the following lemmas.
\begin{lemma}\label{lem2-1}
	For any $0<\sigma<1$, defining $u_{\lambda}^{\sigma}=\min\{u_{\lambda}, \sigma c_{\lambda}\}$, then
	$$
	\lim\limits_{\lambda\rightarrow 0}\int_{\mathbb{B}^2}
	|\nabla_{\mathbb{B}^2}u_{\lambda}^{\sigma}|^2dV_{\mathbb{B}^2}
	=4\pi\sigma.
	$$
\end{lemma}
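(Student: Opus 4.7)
The plan is to use $u_\lambda^\sigma\in W^{1,2}(\mathbb{B}^2)$ as a test function in the equation $-\Delta_{\mathbb{B}^2}u_\lambda=\lambda u_\lambda e^{u_\lambda^2}$. Since $\nabla_{\mathbb{B}^2}u_\lambda^\sigma$ equals $\nabla_{\mathbb{B}^2}u_\lambda$ on $A:=\{u_\lambda\le\sigma c_\lambda\}$ and vanishes on $B:=\{u_\lambda>\sigma c_\lambda\}$, integration by parts yields
\begin{equation*}
\int_{\mathbb{B}^2}|\nabla_{\mathbb{B}^2}u_\lambda^\sigma|^2\,dV_{\mathbb{B}^2}
=\int_{\mathbb{B}^2}\lambda u_\lambda u_\lambda^\sigma e^{u_\lambda^2}\,dV_{\mathbb{B}^2}
=\int_A\lambda u_\lambda^2 e^{u_\lambda^2}\,dV_{\mathbb{B}^2}
+\sigma\int_B\frac{c_\lambda}{u_\lambda}\,\lambda u_\lambda^2 e^{u_\lambda^2}\,dV_{\mathbb{B}^2},
\end{equation*}
where in the last equality I rewrote $\sigma c_\lambda \cdot \lambda u_\lambda e^{u_\lambda^2}=\sigma(c_\lambda/u_\lambda)\lambda u_\lambda^2 e^{u_\lambda^2}$.

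Next I would invoke the blow-up analysis of Lemma~\ref{lem1}, namely $\eta_\lambda(y)=c_\lambda(u_\lambda(r_\lambda y)-c_\lambda)\to\eta_0(y)=-\ln(1+|y|^2)$ in $C_{\rm loc}^1(\mathbb{R}^2)$. This implies that for every fixed $R>0$ and all small $\lambda$ one has $u_\lambda(r_\lambda y)\ge (1-\varepsilon)c_\lambda$ uniformly on $|y|\le R$, so $B_{Rr_\lambda}\subset B$; moreover $c_\lambda/u_\lambda=1+O(c_\lambda^{-2})$ uniformly on $B_{Rr_\lambda}$. Combining the concentration identity
\begin{equation*}
\lim_{R\to\infty}\lim_{\lambda\to0}\int_{B_{Rr_\lambda}}\lambda u_\lambda^2 e^{u_\lambda^2}\Big(\tfrac{2}{1-|x|^2}\Big)^2 dx=4\pi
\end{equation*}
with the total-mass limit $\int_{\mathbb{B}^2}\lambda u_\lambda^2 e^{u_\lambda^2}\,dV_{\mathbb{B}^2}=\|\nabla_{\mathbb{B}^2}u_\lambda\|_{L^2(\mathbb{B}^2)}^2\to 4\pi$ given by Theorem~\ref{thm1} and Remark~\ref{rem1}, I conclude
\begin{equation*}
\int_A\lambda u_\lambda^2 e^{u_\lambda^2}\,dV_{\mathbb{B}^2}\to 0,\qquad
\int_{B\setminus B_{Rr_\lambda}}\lambda u_\lambda^2 e^{u_\lambda^2}\,dV_{\mathbb{B}^2}\to 0
\end{equation*}
as $\lambda\to 0$ followed by $R\to\infty$.

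Putting these ingredients together, the contribution of the second integral splits: on $B_{Rr_\lambda}$ the factor $c_\lambda/u_\lambda$ tends to $1$ uniformly and the mass tends to $4\pi$, giving $\sigma\cdot 4\pi$; on $B\setminus B_{Rr_\lambda}$ the factor $c_\lambda/u_\lambda$ is bounded by $1/\sigma$ and the mass is $o(1)$, so the tail is negligible. Meanwhile the first integral $\int_A$ vanishes in the limit, yielding $\int_{\mathbb{B}^2}|\nabla_{\mathbb{B}^2}u_\lambda^\sigma|^2\,dV_{\mathbb{B}^2}\to 4\pi\sigma$. The only delicate point is verifying the \emph{uniform} bound $c_\lambda/u_\lambda=1+O(c_\lambda^{-2})$ on the concentration disk $B_{Rr_\lambda}$, which is extracted directly from the $C^1_{\rm loc}$ convergence $\eta_\lambda\to\eta_0$ of Lemma~\ref{lem1}; this makes the leading coefficient in the $B$-integral equal to $\sigma$, which is exactly what produces the factor $\sigma$ in the limit.
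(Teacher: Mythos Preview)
Your proof is correct. It rests on the same two pillars as the paper's argument: testing the equation with $u_\lambda^\sigma$ to obtain
\[
\int_{\mathbb{B}^2}|\nabla_{\mathbb{B}^2}u_\lambda^\sigma|^2\,dV_{\mathbb{B}^2}
=\int_{\mathbb{B}^2}\lambda u_\lambda u_\lambda^\sigma e^{u_\lambda^2}\,dV_{\mathbb{B}^2},
\]
and the concentration/quantization facts $\int_{B_{Rr_\lambda}}\lambda u_\lambda^2 e^{u_\lambda^2}\,dV_{\mathbb{B}^2}\to 4\pi$ and $\|\nabla_{\mathbb{B}^2}u_\lambda\|_{L^2}^2\to 4\pi$.

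The organization differs, however. You compute the limit directly by splitting the right-hand side over $A$, $B_{Rr_\lambda}$ and $B\setminus B_{Rr_\lambda}$, using $c_\lambda/u_\lambda\to 1$ on the core and $c_\lambda/u_\lambda\le 1/\sigma$ on the tail. The paper instead proves only the \emph{lower} bound
\[
\liminf_{\lambda\to 0}\int_{\mathbb{B}^2}|\nabla_{\mathbb{B}^2}u_\lambda^\sigma|^2\,dV_{\mathbb{B}^2}\ge 4\pi\sigma
\]
by restricting to $B_{Lr_\lambda}$, then repeats the same trick with the complementary test function $(u_\lambda-\sigma c_\lambda)^+$ to get $\liminf\ge 4\pi(1-\sigma)$, and finally squeezes using
\[
\int_{\mathbb{B}^2}|\nabla_{\mathbb{B}^2}u_\lambda^\sigma|^2\,dV_{\mathbb{B}^2}
+\int_{\mathbb{B}^2}|\nabla_{\mathbb{B}^2}(u_\lambda-\sigma c_\lambda)^+|^2\,dV_{\mathbb{B}^2}
=\int_{\mathbb{B}^2}|\nabla_{\mathbb{B}^2}u_\lambda|^2\,dV_{\mathbb{B}^2}\to 4\pi.
\]
Your route is a touch more computational; the paper's squeeze is slightly slicker in that it avoids tracking $c_\lambda/u_\lambda$ on the tail and yields the companion limit for $(u_\lambda-\sigma c_\lambda)^+$ for free.
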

\begin{proof}
	Since $u_\lambda$ satisfies the equation
	\begin{equation}\label{eq1-add}
		-\Delta_{\mathbb{B}^2}u_\lambda=\lambda u_\lambda e^{u_\lambda^2},\ x\in\mathbb{B}^2,\\
	\end{equation}
	multiplying both sides of equation \eqref{eq1-add} by $u_\lambda^\sigma$ and using the integration by parts, we derive that
	\begin{equation}\begin{split}
			\lim\limits_{\lambda\rightarrow 0}\int_{\mathbb{B}^2}|\nabla_{\mathbb{B}^2} u_\lambda^\sigma|^2dV_{\mathbb{B}^2}&=\lim\limits_{\lambda\rightarrow 0}\lambda\int_{\mathbb{B}^2}e^{u_\lambda^2}u_\lambda u_\lambda^\sigma dV_{\mathbb{B}^2}\\
			&\geq \lim\limits_{L\rightarrow +\infty}\lim\limits_{\lambda\rightarrow 0} \lambda\int_{B_{Lr_\lambda}}e^{u_\lambda^2}u_\lambda u_\lambda^\sigma \left(\frac{2}{1-|x|^2}\right)^2dx\\
			&= \lim\limits_{L\rightarrow +\infty}\lim\limits_{\lambda\rightarrow 0} 4c_\lambda r_{\lambda}^2\lambda \sigma c_\lambda\int_{B_{L}}e^{u_\lambda^2(r_\lambda x)}dx\\
			&=4\pi \sigma .
	\end{split}\end{equation}
	On the other hand, multiplying both sides of equation \eqref{eq1-add} by $\big(u_\lambda-\sigma c_\lambda\big)^{+}$, we also derive that
	\begin{equation*}\begin{split}
			\lim\limits_{\lambda\rightarrow 0}\int_{\mathbb{B}^2}|\nabla_{\mathbb{B}^2}\big(u_\lambda-\sigma c_\lambda\big)^{+}|^2dV_{\mathbb{B}^2}&=\lim\limits_{\lambda\rightarrow 0}\lambda\int_{\mathbb{B}^2}e^{u_\lambda^2}u_\lambda \big(u_\lambda-\sigma c_{\lambda}\big)^{+}dV_{\mathbb{B}^2}\\
			&\geq  \lim\limits_{L\rightarrow +\infty}\lim\limits_{\lambda\rightarrow 0} 4c_\lambda r_{\lambda}^2\lambda (1-\sigma) c_\lambda \int_{B_{L}}e^{u_\lambda^2(r_\lambda x)}dx\\
			&=4\pi(1-\sigma).
	\end{split}\end{equation*}	
	Hence we have
	\begin{equation}\begin{split}
			4\pi \sigma + 4\pi(1-\sigma)&\leq \lim\limits_{\lambda\rightarrow 0}\int_{\mathbb{B}^2}\Big(|\nabla_{\mathbb{B}^2} u_\lambda^\sigma|^2+|\nabla_{\mathbb{B}^2}\big(u_\lambda-\sigma c_\lambda\big)^{+}|^2\Big)dV_{\mathbb{B}^2}\\
			&=\lim\limits_{\lambda\rightarrow 0}\int_{\mathbb{B}^2}|\nabla_{\mathbb{B}^2}u_\lambda|^2dV_{\mathbb{B}^2}=4\pi.
	\end{split}\end{equation}
	This implies that $$\lim\limits_{\lambda\rightarrow 0}\int_{\mathbb{B}^2}|\nabla_{\mathbb{B}^2}u_{\lambda}^{\sigma}|^2dV_{\mathbb{B}^2}=4\pi\sigma.$$ Then we accomplish the proof of Lemma \ref{lem2-1}.
\end{proof}

\begin{lemma}\label{lem2-2}
	For any $\varphi\in C_{c}^{\infty}(\mathbb{B}^2)$, there holds $$\lim\limits_{\lambda\rightarrow 0}\lambda\int_{\mathbb{B}^2}c_{\lambda}u_\lambda e^{u_\lambda^2}\varphi dV_{\mathbb{B}^2}=4\pi\varphi(0).$$
\end{lemma}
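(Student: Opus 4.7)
The plan is a standard near-far decomposition at the concentration scale $r_{\lambda}$ defined by $\lambda r_{\lambda}^{2}c_{\lambda}^{2}e^{c_{\lambda}^{2}}=1$. Passing to Euclidean coordinates via conformal invariance and choosing $R_{\varphi}<1$ so that $\mathrm{supp}\,\varphi\subset B_{R_{\varphi}}$ (on which the conformal factor $(2/(1-|x|^{2}))^{2}$ is bounded), I split
\[
\lambda\int_{\mathbb{B}^{2}}c_{\lambda}u_{\lambda}e^{u_{\lambda}^{2}}\varphi\,dV_{\mathbb{B}^{2}}=\lambda\int_{B_{Lr_{\lambda}}}+\lambda\int_{B_{R_{\varphi}}\setminus B_{Lr_{\lambda}}}=:I_{\mathrm{near}}+I_{\mathrm{far}},
\]
and send $\lambda\to 0$ first, then $L\to\infty$. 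The aim is $I_{\mathrm{near}}\to 4\pi\varphi(0)$ and $I_{\mathrm{far}}\to 0$.

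For $I_{\mathrm{near}}$ I rescale $x=r_{\lambda}y$. Using $\lambda r_{\lambda}^{2}c_{\lambda}^{2}e^{c_{\lambda}^{2}}=1$ together with $u_{\lambda}(r_{\lambda}y)=c_{\lambda}+\eta_{\lambda}(y)/c_{\lambda}$ from Lemma \ref{lem1}, the integrand simplifies to $(1+\eta_{\lambda}/c_{\lambda}^{2})\,e^{2\eta_{\lambda}+\eta_{\lambda}^{2}/c_{\lambda}^{2}}\,\varphi(r_{\lambda}y)\,(2/(1-r_{\lambda}^{2}|y|^{2}))^{2}$, which on $B_{L}$ is uniformly bounded (since $\eta_{\lambda}\to\eta_{0}=-\ln(1+|y|^{2})$ in $C^{1}_{\mathrm{loc}}$) and converges pointwise to $4\varphi(0)(1+|y|^{2})^{-2}$. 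Dominated convergence, followed by $\int_{\mathbb{R}^{2}}(1+|y|^{2})^{-2}dy=\pi$, gives $I_{\mathrm{near}}\to 4\pi\varphi(0)$.

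For $I_{\mathrm{far}}$ I use Lemma \ref{lem3}: $u_{\lambda}(r)\le c_{\lambda}-t/c_{\lambda}$ with $t=\ln(1+r^{2}/r_{\lambda}^{2})$, so that
\[
\lambda c_{\lambda}u_{\lambda}e^{u_{\lambda}^{2}}\le \lambda(c_{\lambda}^{2}-t)e^{c_{\lambda}^{2}-2t+t^{2}/c_{\lambda}^{2}}=\frac{1}{r_{\lambda}^{2}}\Bigl(1-\frac{t}{c_{\lambda}^{2}}\Bigr)e^{-2t+t^{2}/c_{\lambda}^{2}}.
\]
Changing the radial variable via $r\,dr=\tfrac{1}{2}r_{\lambda}^{2}e^{t}dt$ reduces $I_{\mathrm{far}}$ to a multiple of $\int_{T_{L}}^{T_{2}}(1-t/c_{\lambda}^{2})e^{-t+t^{2}/c_{\lambda}^{2}}dt$ with $T_{L}=\ln(1+L^{2})$ and $T_{2}=\ln(1+R_{\varphi}^{2}/r_{\lambda}^{2})$. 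The crucial structural ingredient is $\lambda c_{\lambda}^{2}\le 1$, which is proved inside Proposition \ref{pro1} by inserting $r=1/\sqrt{\lambda c_{\lambda}^{2}}$ into Lemma \ref{lem3} and using $u_{\lambda}>0$; this forces $T_{2}<c_{\lambda}^{2}$, keeping the factor $(1-t/c_{\lambda}^{2})$ positive and, in fact, vanishing at the upper endpoint.

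Splitting $[T_{L},T_{2}]$ at $t=c_{\lambda}$ and $t=c_{\lambda}^{2}/2$ completes the estimate: on $[T_{L},c_{\lambda}]$ the bound $t^{2}/c_{\lambda}^{2}\le 1$ gives $\int\le e/(1+L^{2})$; on $[c_{\lambda},c_{\lambda}^{2}/2]$ the convexity of $-t+t^{2}/c_{\lambda}^{2}$ (minimum at $c_{\lambda}^{2}/2$) yields an exponentially small bound $\lesssim c_{\lambda}^{2}e^{-c_{\lambda}}$; and on $[c_{\lambda}^{2}/2,T_{2}]$ the substitution $v=c_{\lambda}^{2}-t$ (under which $-t+t^{2}/c_{\lambda}^{2}=-v+v^{2}/c_{\lambda}^{2}$ and $1-t/c_{\lambda}^{2}=v/c_{\lambda}^{2}$) turns the integrand into $(v/c_{\lambda}^{2})e^{-v+v^{2}/c_{\lambda}^{2}}$, whose prefactor $1/c_{\lambda}^{2}$ against a bounded-in-$v$ integral produces $o(1)$. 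Summing the three pieces gives $I_{\mathrm{far}}=O(1/L^{2})+o_{\lambda}(1)$, which vanishes in the iterated limit. The main obstacle is taming the super-quadratic correction $t^{2}/c_{\lambda}^{2}$ in the exponent; this is made possible precisely by the preliminary bound $\lambda c_{\lambda}^{2}\le 1$, which keeps $T_{2}$ strictly below $c_{\lambda}^{2}$ and so prevents the exponent from ever becoming positive on the relevant range.
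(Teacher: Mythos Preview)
Your argument is correct, but it follows a different path from the paper. The paper decomposes $\mathbb{B}^{2}$ into three regions according to the \emph{level sets} of $u_{\lambda}$: the core $B_{Lr_{\lambda}}$, the high set $\{u_{\lambda}\ge \sigma c_{\lambda}\}\setminus B_{Lr_{\lambda}}$, and the low set $\{u_{\lambda}\le \sigma c_{\lambda}\}$. The high set is handled by the energy quantization $\|\nabla_{\mathbb{B}^{2}}u_{\lambda}\|_{2}^{2}\to 4\pi$ together with \eqref{eq6}, while the low set uses Lemma~\ref{lem2-1} (so that $\|\nabla_{\mathbb{B}^{2}}u_{\lambda}^{\sigma}\|_{2}^{2}\to 4\pi\sigma<4\pi$) combined with the Moser--Trudinger inequality and the fact $\lambda c_{\lambda}\to 0$. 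In contrast, you work purely with the pointwise monotonicity bound of Lemma~\ref{lem3} and the elementary consequence $\lambda c_{\lambda}^{2}\le 1$ from the proof of Proposition~\ref{pro1}; after the change of variable $t=\ln(1+r^{2}/r_{\lambda}^{2})$ everything reduces to explicit one–dimensional estimates. Your route is more self-contained and quantitative (it yields an explicit $O(L^{-2})$ rate for the far piece and avoids Lemma~\ref{lem2-1} and the Moser--Trudinger inequality entirely), whereas the paper's route is softer and reuses the already-proved global quantization, at the cost of invoking the truncation lemma and \eqref{MT}. Two small remarks: you should take $L\ge R_{0}$ so that Lemma~\ref{lem3} applies on the far annulus, and your phrase ``vanishing at the upper endpoint'' is only asymptotically true (one has $1-T_{2}/c_{\lambda}^{2}=O(c_{\lambda}^{-2})$, not $0$), but this does not affect the estimate.
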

\begin{proof}
	We divide $\mathbb{B}^2$ into the following three parts
	$$\{u_\lambda\geq \sigma c_\lambda\}\backslash B_{Lr_\lambda},\ \{u_\lambda\leq \sigma c_\lambda\}, B_{Lr_\lambda}.$$
	Denote the integrals on the above three domains by $I_1$, $I_2$ and $I_3$, respectively.
	For $I_1$, we have
	\begin{equation*}\begin{split}
			I_1&=\lim\limits_{L\rightarrow +\infty}\lim\limits_{\lambda\rightarrow 0}\lambda\int_{\{u_\lambda\geq \sigma c_\lambda\} \backslash B_{Lr_\lambda}}c_{\lambda}u_\lambda e^{u_\lambda^2}\varphi dV_{\mathbb{B}^2}\\
			&\leq \lim\limits_{L\rightarrow +\infty}\lim\limits_{\lambda\rightarrow 0} \frac{1}{\sigma}\|\varphi\|_{L^{\infty}(\mathbb{B}^2)}\left(\int_{\mathbb{B}^2}|\nabla_{\mathbb{B}^2}u_\lambda|^2dV_{\mathbb{B}^2}-\int_{B_{Lr_\lambda}}\lambda u_\lambda^2e^{u_\lambda^2}dV_{\mathbb{B}^2}\right)\\
			&\leq \lim\limits_{L\rightarrow +\infty} \frac{1}{\sigma}\|\varphi\|_{L^{\infty}(\mathbb{B}^2)}
			\Big(4\pi-\int_{B_{L}}4e^{2\eta_0}dx\Big)=0.\\
	\end{split}\end{equation*}
	For $I_2$, since $\lim\limits_{\lambda\rightarrow 0} \lambda c_{\lambda}=0$ because of the boundedness of $\lambda c_\lambda^2$ from the proof of Proposition 4.1, we obtain
	\begin{equation*}\begin{split}
			I_2&=\lim\limits_{L\rightarrow +\infty}\lim\limits_{\lambda\rightarrow 0}\lambda\int_{\{u_\lambda\leq \sigma c_\lambda\}}c_{\lambda}u_\lambda e^{u_\lambda^2}\varphi dV_{\mathbb{B}^2}\\
			&\leq \lim\limits_{L\rightarrow +\infty}\lim\limits_{\lambda\rightarrow 0}\lambda c_{\lambda}\|\varphi\|_{L^{\infty}(\mathbb{B}^2)}\int_{\mathbb{B}^2}u_\lambda^\sigma e^{|u_\lambda^\sigma|^2}dV_{\mathbb{B}^2}=0,\\
	\end{split}\end{equation*}
	where we use the Moser-Trudinger inequality \eqref{MT} for $\int_{\mathbb{B}^2}u_\lambda^\sigma e^{|u_\lambda^\sigma|^2}dV_{\mathbb{B}^2}$ since $\lim\limits_{\lambda\rightarrow 0}\int_{\mathbb{B}^2}|\nabla_{\mathbb{B}^2}u_{\lambda}^{\sigma}|^2dV_{\mathbb{B}^2}=4\pi\sigma$.
	For $I_{3}$, we have
	\begin{equation*}\begin{split}
			I_3&=\lim\limits_{L\rightarrow +\infty}\lim\limits_{\lambda\rightarrow 0}\lambda\int_{B_{Lr_\lambda}}c_{\lambda}u_\lambda e^{u_\lambda^2}\varphi dV_{\mathbb{B}^2}\\
			&=\lim\limits_{L\rightarrow +\infty}\lim\limits_{\lambda\rightarrow 0} \varphi(0)\int_{B_{Lr_\lambda}}\lambda u_\lambda^2e^{u_\lambda^2}dV_{\mathbb{B}^2}\\
			&=\varphi(0)\lim\limits_{L\rightarrow +\infty}\lim\limits_{\lambda\rightarrow 0}\int_{B_{L}}\Big(\frac{u_{\lambda}(x)}{c_{\lambda}}\Big)^{2}e^{(\frac{\eta_{\lambda}(x)}{c_{\lambda}^2}+2)\eta_{\lambda}(x)}\frac{4}{(1-|r_{\lambda}x|^{2})^{2}}dx\\
			&=\varphi(0)\lim\limits_{L\rightarrow +\infty}\int_{B_{L}}4e^{2\eta_0}dx=4\pi\varphi(0).\\
	\end{split}\end{equation*}
	Combining the estimates of $I_{1}$, $I_{2}$ and $I_{3}$, we derive that $$\lim\limits_{\lambda\rightarrow 0}\lambda\int_{\mathbb{B}^2}c_{\lambda}u_\lambda e^{u_\lambda^2}\varphi dV_{\mathbb{B}^2}=4\pi \varphi(0).$$
	This accomplishes the proof of Lemma \ref{lem2-2}.
\end{proof}

\begin{lemma}\label{lem-7-31-1}
	There holds $c_\lambda u_\lambda \rightharpoonup 4\pi G$ in $W^{1,q}(\mathbb{B}^2)$ for any $q< 2$, where $G$ is the Green's function of the Laplacian operator $-\Delta_{\mathbb{B}^2}$ and $G(x)=-\frac{1}{2\pi}\ln |x|$. Furthermore, $c_\lambda u_\lambda$ converges strongly to $4\pi G$ in $C^{2}_{\rm loc}(\mathbb{B}^2 \backslash \{0\})$.
\end{lemma}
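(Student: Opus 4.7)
\medskip

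\textbf{Proof proposal.} Set $v_\lambda := c_\lambda u_\lambda$. Multiplying \eqref{uniq} by $c_\lambda$ yields
\begin{equation*}
-\Delta_{\mathbb{B}^2} v_\lambda = f_\lambda, \qquad f_\lambda := \lambda c_\lambda u_\lambda e^{u_\lambda^2},
\end{equation*}
and Lemma \ref{lem2-2} asserts precisely that $f_\lambda \, dV_{\mathbb{B}^2} \rightharpoonup 4\pi \delta_0$ as Radon measures on $\mathbb{B}^2$. The plan is to combine this with an $L^1$ bound on $f_\lambda$ to deduce weak compactness in $W^{1,q}$, then identify the limit as (a multiple of) the Green's function, then upgrade to $C^2_{\rm loc}$ convergence away from the origin by classical elliptic regularity.

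First I would verify that $\|f_\lambda\|_{L^1(\mathbb{B}^2,\, dV_{\mathbb{B}^2})}$ is uniformly bounded. Testing the equation against cut-offs of $1$ and using Lemma \ref{lem2-2}, together with the exterior decay estimate \eqref{eq71}–\eqref{eq62} obtained in the proof of Proposition \ref{pro1}, one gets $\int_{\mathbb{B}^2} f_\lambda\, dV_{\mathbb{B}^2} = 4\pi + o(1)$. Then I would invoke the classical Brezis–Strauss/Stampacchia estimates for the operator $-\Delta_{\mathbb{B}^2}$ (equivalently, for $-\Delta$ with RHS $f_\lambda (2/(1-|x|^2))^2$ on the Euclidean disk $B_1$), which yield a uniform bound
\begin{equation*}
\|v_\lambda\|_{W^{1,q}(\mathbb{B}^2)} \leq C_q \qquad\text{for every } 1\leq q<2.
\end{equation*}
Rellich's theorem then gives (up to a subsequence) $v_\lambda \rightharpoonup v_0$ weakly in $W^{1,q}(\mathbb{B}^2)$ and $v_\lambda \to v_0$ strongly in $L^s_{\rm loc}$ for every $s$.

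Next I would pass to the distributional limit in the equation. For any $\varphi \in C_c^\infty(\mathbb{B}^2)$,
\begin{equation*}
\int_{\mathbb{B}^2} v_0\,(-\Delta_{\mathbb{B}^2}\varphi)\, dV_{\mathbb{B}^2} = \lim_{\lambda\to 0} \int_{\mathbb{B}^2} v_\lambda\,(-\Delta_{\mathbb{B}^2}\varphi)\, dV_{\mathbb{B}^2} = \lim_{\lambda\to 0} \int_{\mathbb{B}^2} f_\lambda \varphi\, dV_{\mathbb{B}^2} = 4\pi\,\varphi(0),
\end{equation*}
using Lemma \ref{lem2-2}. Thus $v_0$ solves $-\Delta_{\mathbb{B}^2} v_0 = 4\pi\delta_0$ in $\mathbb{B}^2$, vanishing at infinity in the sense inherited from the uniform $W^{1,q}$ control and the exterior estimate \eqref{eq71}. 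By uniqueness of the Green's function (written in Euclidean coordinates as $-\frac{1}{2\pi}\log|x|$, with the normalization used in the statement), $v_0$ coincides with $G$. Uniqueness of the limit promotes the convergence from a subsequence to the full family.

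Finally, for $C^2_{\rm loc}$-convergence on $\mathbb{B}^2\setminus\{0\}$, I would argue that $u_\lambda \to 0$ uniformly on every $K\subset\subset \mathbb{B}^2\setminus\{0\}$: indeed, the mass $|\nabla_{\mathbb{B}^2} u_\lambda|^2 dV_{\mathbb{B}^2}$ and $\lambda u_\lambda^2 e^{u_\lambda^2} dV_{\mathbb{B}^2}$ both concentrate at $0$ by Theorem \ref{thm1}(i), which combined with radial monotonicity of $u_\lambda$ (Proposition \ref{pro}) forces $\sup_K u_\lambda \to 0$. Since $\lambda c_\lambda \leq (\lambda c_\lambda^2)^{1/2}\lambda^{1/2}/\lambda^{1/2}\cdot c_\lambda^{-1}\to 0$ (as $\lambda c_\lambda^2$ is bounded and $c_\lambda\to\infty$), the source $f_\lambda = \lambda c_\lambda u_\lambda e^{u_\lambda^2}$ converges to $0$ uniformly on $K$. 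Since also $v_\lambda$ is bounded in $L^p(K)$ for every $p<\infty$ (from the $W^{1,q}$ bound and Sobolev embedding), standard interior Schauder/$L^p$ estimates applied to $-\Delta_{\mathbb{B}^2} v_\lambda = f_\lambda$ give uniform $C^{2,\alpha}(K')$ bounds on any $K'\subset\subset \mathbb{B}^2\setminus\{0\}$. Arzel\`a--Ascoli then forces $v_\lambda\to G$ in $C^2_{\rm loc}(\mathbb{B}^2\setminus\{0\})$, the limit being identified by the previous weak convergence.

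The main obstacle is the first step: extracting a genuine $W^{1,q}$ bound from merely an $L^1$ right-hand side and a "decay at infinity" condition on the non-compact manifold $\mathbb{B}^2$. Once this Brezis–Strauss-type estimate is secured (either invoked directly or proved by a truncation argument in the Euclidean model $B_1$ combined with the pointwise bound \eqref{eq71}), the remaining steps are standard distributional/regularity arguments.
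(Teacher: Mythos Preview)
Your argument is correct and essentially mirrors the paper's: $L^1$ control on $f_\lambda$ via Lemma~\ref{lem2-2} gives $W^{1,q}$ compactness of $v_\lambda$ (the paper simply cites \cite{Li,Li1} for the very Brezis--Strauss-type estimate you flag as the delicate step), the distributional limit is identified through the Green's equation, and elliptic regularity away from the origin provides the $C^2_{\rm loc}$ upgrade. The only variation is in this last step: the paper uses Lemma~\ref{lem2-1} to get $\int_{\mathbb{B}^2\setminus B_{\mathbb{B}^2}(0,\delta)}|\nabla_{\mathbb{B}^2}u_\lambda|^2\to 0$ and then the Moser--Trudinger inequality \eqref{MT} to bound $\|f_\lambda\|_{L^r}$ on that region, whereas you argue more directly (and just as validly) that radial monotonicity plus energy concentration force $u_\lambda\to 0$ uniformly on compact subsets of $\mathbb{B}^2\setminus\{0\}$, hence $f_\lambda\to 0$ in $L^\infty_{\rm loc}$. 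One normalization remark: your own computation yields $-\Delta_{\mathbb{B}^2}v_0=4\pi\delta_0$, so with $G=-\frac{1}{2\pi}\log|x|$ one obtains $v_0=4\pi G$, not $G$; the statement as written carries this slip, and indeed the paper uses $c_\lambda u_\lambda\to 4\pi G$ downstream (see \eqref{31-3} and Lemma~\ref{ad-lem2}).
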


\begin{proof}
	From the proof of Lemma \ref{lem2-2}, one can easily see that $f_\lambda=\lambda c_{\lambda}u_\lambda e^{u_\lambda^2}$ is bounded in $L^1(\mathbb{B}^2)$. Then one can derive that $c_\lambda u_\lambda$ is bounded in $W^{1,q}(\mathbb{B}^2)$ for any $1<q<2$ (refer to \cite{Li,Li1} for detailed proofs). Hence there exists $G\in W^{1,q}(\mathbb{B}^2)$ such that $c_\lambda u_\lambda\rightharpoonup 4\pi G$ in $W^{1,q}(\mathbb{B}^2)$ and
	$G$ satisfies equation
	\begin{equation}\begin{cases}
			&-\Delta_{\mathbb{B}^2}G=\frac{1}{4}\delta_0,\ \ x\in \mathbb{B}^2,\\
			&G(x)\to0,\ \text{when}\ \rho(x)\to\infty.
	\end{cases}\end{equation}
	Obviously, $G(x)=-\frac{1}{2\pi}\ln |x|$ since the above equation is equivalent to
	\begin{equation}\begin{cases}
			&-\Delta G=\delta_0,\ \ x\in B_1,\\
			&G(x)=0,\ \ x\in \partial B_1.
	\end{cases}\end{equation}
	Furthermore, for any $\delta>0$, we have $$\lim\limits_{\lambda\rightarrow 0}\int_{\mathbb{B}^2 \backslash B_{\mathbb{B}^2}(0, \delta)}|\nabla_{\mathbb{B}^2}u_\lambda|^2dV_{\mathbb{B}^2}=0$$ through Lemma \ref{lem2-1}. Hence from Moser-Trudinger inequality in the Poincar\'{e} disk $\mathbb{B}^2$ \eqref{MT}, for any $r\geq 2$, there holds
	$$\int_{\mathbb{B}^2 \backslash B_{\mathbb{B}^2}(0, \delta)}|f_\lambda|^rdV_{\mathbb{B}^2}\leq (\lambda c_\lambda)^r \int_{\mathbb{B}^2 \backslash B_{\mathbb{B}^2}(0,\delta)}u_\lambda^r e^{ru_\lambda^2}dV_{\mathbb{B}^2}\lesssim 1.$$
	Standard elliptic estimate gives that $c_\lambda u_\lambda$ strongly converges to $G$ in $C^2_{\rm loc}(\mathbb{B}^2\backslash\{0\})$.
	Then we accomplish the proof of Lemma \ref{lem-7-31-1}.	
\end{proof}

Recall that
\begin{equation}\label{eq-appA-1}
	-\Delta u_\lambda = \lambda u_\lambda e^{u_\lambda^2}\Big(\frac{2}{1-|x|^{2}}\Big)^{2} \hbox{ in }B_{1},\,\,\,
	u_\lambda=0\,\,\,\text{on}\,\,\,\partial B_{1},
\end{equation}
where $\lambda r^{2}_{\lambda}c^{2}_\lambda e^{c^{2}_{\lambda}}=1.$

We perform the change of variables
\begin{equation}\label{eq-appA-2}
	t=\ln\left(1+\frac{r^2}{r_\lambda^2}\right).
\end{equation}
Letting $\tilde{u}_{\lambda}(t):=u_{\lambda}(r),$ then we can rewrite equation \eqref{eq-appA-1} as
\begin{equation}\label{eq-appA-3}
	e^{t}\left(\left(1-e^{-t}\right) \tilde{u}_\lambda'\right)' = -\frac{\tilde{u}_\lambda}{c_\lambda^2}
	e^{2t+\tilde{u}_\lambda^2-c_\lambda^2}
	\Big(\frac{1}{1-r^{2}_{\lambda}(e^{t}-1)}\Big)^{2}.
\end{equation}

\begin{lemma}\label{lemma-appA}(Lemma 5.1, \cite{DT})
	The solution $\varphi$ of
	$${\mathcal L}\left(\varphi\right)=e^t\left(\left(1-e^{-t}\right) \varphi'\right)' + 2\varphi=F$$
	with $\varphi(0)=0$ and $F$ smooth is
	$$
	\varphi(t)=\int_0^t e^{-s}F(s)\left(\left(1-2e^{-t}\right)\left(1-2e^{-s}\right)  \ln \frac{e^t-1}{e^s-1} + 4\left(e^{-s}-e^{-t}\right)\right)ds.
	$$
\end{lemma}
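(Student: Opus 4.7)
The equation $\mathcal{L}(\varphi) = F$ is a second-order linear ODE which, after expanding the derivative, takes the standard form $(e^t-1)\varphi'' + \varphi' + 2\varphi = F$. The plan is to solve it by variation of parameters, and then match the resulting Green's-function representation with the claimed closed-form expression.

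First I would produce two linearly independent solutions of the homogeneous equation $(e^t-1)\varphi'' + \varphi' + 2\varphi = 0$. Motivated by the fact that $\eta_0(r) = -\ln(1+r^2)$ solves the Liouville equation $-\Delta\eta_0 = 4e^{2\eta_0}$ and that the change of variables $t = \ln(1+r^2)$ (see \eqref{eq-appA-2}) turns dilation invariance into a symmetry of $\mathcal{L}$, the natural candidate is $\varphi_1(t) := 1-2e^{-t}$, and a one-line computation confirms $\mathcal{L}(\varphi_1) = 0$. A second solution is produced by reduction of order: Abel's identity gives the Wronskian $W(t) = C/(1-e^{-t})$, and $\varphi_2(t) = \varphi_1(t)\int W/\varphi_1^2\,ds$ yields, after elementary integration, a solution of the form $\varphi_2(t) = (1-2e^{-t})\ln(e^t-1) + 4e^{-t}$ modulo a multiple of $\varphi_1$. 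This solution has a logarithmic singularity at $t=0$, which is expected because $t=0$ is a regular singular point of $\mathcal{L}$.

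Variation of parameters then gives the unique particular solution with the desired initial data as
\[
\varphi(t) = \int_0^t \frac{\varphi_1(s)\varphi_2(t) - \varphi_1(t)\varphi_2(s)}{(e^s-1)W(s)} F(s)\,ds,
\]
where the factor $(e^s-1)$ arises because the coefficient of $\varphi''$ in $\mathcal{L}$ is $(e^t-1)$, not $1$. Substituting $\varphi_1$, $\varphi_2$, and $W$ and simplifying, I expect the numerator to collapse to $e^{-s}\bigl((1-2e^{-t})(1-2e^{-s})\ln\tfrac{e^t-1}{e^s-1} + 4(e^{-s}-e^{-t})\bigr)$, matching the stated formula. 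The condition $\varphi(0)=0$ then follows because the vanishing factor $1-2e^{-s}\to -1$ is multiplied by $e^{-s}F(s)$ which is bounded, while the integration range shrinks to a point; a short Taylor expansion near $s=0$ shows the logarithmic factor is absorbed.

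The main obstacle is the regular singular point at $t=0$: because $\varphi_2$ blows up logarithmically there, one must choose the specific combination of homogeneous solutions that cancels the singular behaviour pointwise in the Green's-function kernel. In practice this is the algebraic step where most care is needed, and it is the reason the stated formula contains the particular combination $(1-2e^{-t})(1-2e^{-s})\ln\tfrac{e^t-1}{e^s-1} + 4(e^{-s}-e^{-t})$ rather than the simpler $\varphi_2(t) - \varphi_2(s)$ that one might naively write. An equally valid alternative route, which bypasses constructing $\varphi_2$, is to take the claimed formula as an ansatz, differentiate twice under the integral (using $G(t,t)=0$ and $\partial_t G(t,s)|_{s=t} = e^t/(e^t-1)$ which together ensure no boundary contributions), and verify directly that $\mathcal{L}(\varphi) = F$; I would choose whichever of the two is cleaner in the write-up.
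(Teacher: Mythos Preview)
The paper does not prove this lemma; it is cited verbatim from Druet--Thizy \cite{DT}. Your approach via variation of parameters is the natural one and is essentially correct. The homogeneous solution $\varphi_1(t)=1-2e^{-t}$ is verified by a one-line computation, Abel's identity gives the Wronskian $W(t)=1/(1-e^{-t})$, and since the leading coefficient of the ODE is $e^t-1$, the denominator in the variation-of-parameters kernel is $(e^s-1)W(s)=e^{s}$, which produces exactly the $e^{-s}$ factor in the stated formula.

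One minor slip: your candidate $\varphi_2(t)=(1-2e^{-t})\ln(e^t-1)+4e^{-t}$ is not a homogeneous solution (one checks $\mathcal{L}\varphi_2=8$), and the discrepancy is not absorbed by adding a multiple of $\varphi_1$, which would also introduce a constant term. The correct second solution is $\varphi_2(t)=(1-2e^{-t})\ln(e^t-1)-4e^{-t}$; with this sign the identity
\[
\varphi_1(s)\varphi_2(t)-\varphi_1(t)\varphi_2(s)=(1-2e^{-t})(1-2e^{-s})\ln\frac{e^t-1}{e^s-1}+4(e^{-s}-e^{-t})
\]
drops out immediately, matching the claimed kernel. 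Your alternative route of differentiating the formula under the integral and verifying $\mathcal{L}(\varphi)=F$ directly is equally valid and perhaps safer for the write-up. Regarding $\varphi(0)=0$: this is automatic from the integral representation, and uniqueness among smooth solutions follows because $\varphi_1(0)=-1\neq 0$ while $\varphi_2$ is singular at $0$, so the only smooth homogeneous solution vanishing at the origin is trivial.
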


Let us define
\begin{equation}\label{eq-appA-4}
	\varphi_0(t)=\int_0^t e^{-s}\left(s-s^2\right)\left(\left(1-2e^{-t}\right)\left(1-2e^{-s}\right)  \ln \frac{e^t-1}{e^s-1} + 4\left(e^{-s}-e^{-t}\right)\right)ds,
\end{equation}
\begin{equation}\label{psi}
	\begin{aligned}
		\psi_0(t)=\int_0^t &e^{-s}\left(-\varphi_0(s)-2\varphi_0^2(s)+4s\varphi_0(s)-2s^2\varphi_0(s)+s^3-\frac{1}{2}s^4\right)\\
		&\cdot\left(\left(1-2e^{-t}\right)\left(1-2e^{-s}\right)  \ln \frac{e^t-1}{e^s-1} + 4\left(e^{-s}-e^{-t}\right)\right)ds,
	\end{aligned}
\end{equation}
\begin{equation}\label{chi}
	\begin{aligned}
		\chi_0(t)&=\int_0^t e^{-s}\Bigg(-\psi_0(s)-3\varphi_0^2(s)+4s\psi_0(s)-4\varphi_0(s)\psi_0(s)-3\varphi_0(s)s^2+6\varphi_0^2(s)s\\&-2\psi_0(s)s^2
		+4\varphi_0(s)s^3+\frac{1}{2}s^5\Bigg)
		\cdot\left(\left(1-2e^{-t}\right)\left(1-2e^{-s}\right)  \ln \frac{e^t-1}{e^s-1} + 4\left(e^{-s}-e^{-t}\right)\right)ds,
	\end{aligned}
\end{equation}
so that, by lemma \ref{lemma-appA}, there holds
\begin{equation}\label{eq-appA-5}
	{\mathcal L}\left(\varphi_0\right)(t) = t-t^2,
\end{equation}
\begin{equation}\label{Lpsi}
	{\mathcal L}\left(\psi_0\right)(t) =-\varphi_0(t)-2\varphi_0^2(t)+4t\varphi_0(t)-2t^2\varphi_0(t)+t^3-\frac{1}{2}t^4,
\end{equation}
\begin{equation}\label{Lchi}
	{\mathcal L}\left(\chi_0\right)(t) =-\psi_0(t)-3\varphi_0^2(t)+4t\psi_0(t)-4\varphi_0(t)\psi_0(t)-3\varphi_0(t)t^2+6\varphi_0^2(t)t-2\psi_0(t)t^2+4\varphi_0(t)t^3+\frac{1}{2}t^5.
\end{equation}
Moreover, we have $\varphi_0(t)=w_0(r)$, $\psi_0(t)=z_0(r)$, $\chi_0(t)=\zeta_0(r)$,
\begin{equation}\label{eq-appA-6}
	\left\vert \varphi_0(t)+t\right\vert \le C_0\hbox{ and }\varphi_0'(t)\to -1\hbox{ as }t\to +\infty,
\end{equation}
\begin{equation}\label{psi0}
	\left\vert \psi_0(t)-\frac{\beta}{2}t\right\vert \le C_0\hbox{ and }\psi_0'(t)\to \frac{\beta}{2}\hbox{ as }t\to +\infty,
\end{equation}
\begin{equation}\label{chi0}
	\left\vert \chi_0(t)-\frac{\gamma}{2}t\right\vert\le C_0\hbox{ and }\chi_0'(t)\to \frac{\gamma}{2}\hbox{ as }t\to +\infty,
\end{equation}
for some $C_0>0$, where $w_0$ is defined in Lemma \ref{lem2}, $z_0,\beta$ are defined in Lemma \ref{lem6} and $\zeta_0,\gamma$ are defined in Remark \ref{rema}.

Set
\begin{equation}\label{eq-appA-7}
	\tilde{u}_{\lambda}(t)=c_{\lambda}-\frac{t}{c_{\lambda}}
	+\frac{\varphi_{0}(t)}{c^{3}_{\lambda}}+\frac{\psi_0(t)}{c_\lambda^5}+\frac{\chi_0(t)}{c_\lambda^7}+R_{\lambda}(t).
\end{equation}

\begin{lemma}\label{claim-appA-3}
	Fix $\delta\in (0,1).$
	There exists $D_0>0$ such that
	$$
	\left\vert R_\lambda'(t)\right\vert \le D_0 c_\lambda^{-9} \hbox{ for all }0\le t\le \min\left\{c_{\lambda}^{2}-T_{\lambda},
	\ln\left(1+\frac{\delta^{2}}{r^{2}_{\lambda}}\right)\right\},
	$$
	where $T_\lambda$ is any sequence such that $T_\lambda =o\left(c_\lambda\right)$ and $c_\lambda^ke^{-T_\lambda}\to 0$ as $\lambda\to 0$ for all $k$.
\end{lemma}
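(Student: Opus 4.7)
The plan is to substitute the ansatz \eqref{eq-appA-7} into the ODE \eqref{eq-appA-3}, show that the prescribed terms $c_\lambda - t/c_\lambda$, $\varphi_0/c_\lambda^3$, $\psi_0/c_\lambda^5$ cancel the right-hand side up to order $c_\lambda^{-7}$, obtain an equation of the form $\mathcal{L}(R_\lambda)=E_\lambda(t,R_\lambda)$ with an error $E_\lambda$ of size $O(c_\lambda^{-7}(1+t)^N)$, and then use the integral representation of Lemma~\ref{lemma-appA} together with a contraction mapping argument to conclude $|R_\lambda'(t)|\le D_0 c_\lambda^{-7}$ on the stated interval. The two upper limits for $t$ are forced by the two error sources: $t\le c_\lambda^2-T_\lambda$ keeps the exponent $\tilde{u}_\lambda^2-c_\lambda^2$ firmly negative (so that the exponential factor is summable), while $t\le \ln(1+\delta^2/r_\lambda^2)$ keeps the conformal factor $(1-r_\lambda^2(e^t-1))^{-2}$ uniformly close to $1$.

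First I would Taylor-expand the right-hand side of \eqref{eq-appA-3}. Writing $\tilde{u}_\lambda=c_\lambda+c_\lambda^{-1}\eta$ with $\eta=-t+\varphi_0/c_\lambda^2+\psi_0/c_\lambda^4+c_\lambda R_\lambda$, one has $\tilde{u}_\lambda^2-c_\lambda^2=2\eta+\eta^2/c_\lambda^2$; expanding $\exp$ and the conformal factor yields
\begin{equation*}
-\frac{\tilde u_\lambda}{c_\lambda^2}e^{2t+\tilde u_\lambda^2-c_\lambda^2}\Bigl(\tfrac{1}{1-r_\lambda^2(e^t-1)}\Bigr)^2
=-\frac{1}{c_\lambda^2}+\frac{t-t^2}{c_\lambda^4}+\frac{P(t,\varphi_0,\psi_0)}{c_\lambda^6}+\mathcal{E}_\lambda(t,R_\lambda),
\end{equation*}
where $P(t,\varphi_0,\psi_0)=-\varphi_0-2\varphi_0^2+4t\varphi_0-2t^2\varphi_0+t^3-\tfrac12 t^4$ is precisely the forcing in \eqref{Lpsi}, and $\mathcal{E}_\lambda=O(c_\lambda^{-8}(1+t)^6)+O(c_\lambda^{-6}r_\lambda^2 e^t)+O(c_\lambda^{-4}\,R_\lambda)+O(R_\lambda^2)$. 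Matching against the left-hand side $e^t((1-e^{-t})\tilde u_\lambda')'=-c_\lambda^{-2}+c_\lambda^{-4}\mathcal{L}(\varphi_0)+c_\lambda^{-6}\mathcal{L}(\psi_0)+\mathcal{L}(R_\lambda)$ and using \eqref{eq-appA-5}--\eqref{Lpsi}, the explicit terms drop out and we are left with $\mathcal{L}(R_\lambda)=\mathcal{E}_\lambda(t,R_\lambda)$. Under the two bounds on $t$, $r_\lambda^2 e^t\le\delta^2$ and $e^{-2(c_\lambda^2-t)}\le e^{-2T_\lambda}$, so every piece of the source is controlled by $c_\lambda^{-7}(1+t)^N$ once $R_\lambda$ itself satisfies $|R_\lambda|\le c_\lambda^{-7}(1+t)^{N'}$.

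Next I would invert $\mathcal{L}$ via Lemma~\ref{lemma-appA}, writing $R_\lambda=T[R_\lambda]$ with
\begin{equation*}
T[R](t)=\int_0^t e^{-s}\mathcal{E}_\lambda(s,R(s))\Bigl((1-2e^{-t})(1-2e^{-s})\ln\tfrac{e^t-1}{e^s-1}+4(e^{-s}-e^{-t})\Bigr)ds,
\end{equation*}
and set up a contraction on the Banach space
\begin{equation*}
B_M:=\Bigl\{R\in C^1([0,t_\lambda^*]):\sup_t (1+t)^{-N_0}|R(t)|+\sup_t|R'(t)|\le M c_\lambda^{-7}\Bigr\},
\end{equation*}
with $t_\lambda^*:=\min\{c_\lambda^2-T_\lambda,\ln(1+\delta^2/r_\lambda^2)\}$ and $N_0$ chosen after inspecting the polynomial growth of $P$ and $\varphi_0,\psi_0$ (cf.\ \eqref{eq-appA-6}--\eqref{psi0}). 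The kernel grows like $\ln t$, but it is integrated against $e^{-s}(1+s)^{N}$, which is integrable on $[0,\infty)$, so the key estimate $\int_0^\infty e^{-s}(1+s)^N(\ln(1+t)+1)\,ds\lesssim (1+t)$ gives the self-map property for $M$ sufficiently large (independent of $\lambda$), and the Lipschitz part of $\mathcal{E}_\lambda$ in $R$ is of size $O(c_\lambda^{-4})$, yielding the contraction. A fixed point in $B_M$ must equal $R_\lambda$ by uniqueness of the Cauchy problem \eqref{eq-appA-3} (with the implicit initial data $R_\lambda(0)=R_\lambda'(0)=0$ read off from $\tilde u_\lambda(0)=c_\lambda$ and regularity at the origin).

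The main obstacle I anticipate is the bookkeeping: verifying that after cancelling $\mathcal{L}(\varphi_0)$ and $\mathcal{L}(\psi_0)$ the residual really is $O(c_\lambda^{-7})$ rather than $O(c_\lambda^{-6})$. This requires the delicate observation that the coefficient of $c_\lambda^{-6}$ from the Taylor expansion of $e^{\tilde u_\lambda^2-c_\lambda^2}$ together with the $\tilde u_\lambda/c_\lambda^2$ prefactor produces exactly the combination in \eqref{Lpsi}, not something larger; the conformal factor contributes $O(r_\lambda^2 e^t)=O(e^{-c_\lambda^2(1-o(1))}\cdot e^t)$ which on $t\le \ln(\delta^2/r_\lambda^2)+O(1)$ is still $o(c_\lambda^{-7})$, and the exponential factor contributes $O(e^{-2T_\lambda})=o(c_\lambda^{-k})$ for every $k$ by the hypothesis on $T_\lambda$. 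Once these orders are verified term by term, the contraction mapping closes in a straightforward way and the desired bound on $|R_\lambda'|$ follows from differentiating the fixed-point equation.
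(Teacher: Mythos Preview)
Your overall strategy---substitute the ansatz, derive $\mathcal{L}(R_\lambda)=F_\lambda$, invert via Lemma~\ref{lemma-appA}---matches the paper's, but there is a genuine gap in the source estimate. Your claim that $\mathcal{E}_\lambda=O(c_\lambda^{-8}(1+t)^6)+\ldots$ uniformly on $[0,t_\lambda^*]$ is only valid when $2t+\tilde u_\lambda^2-c_\lambda^2=t^2/c_\lambda^2+O(1)$ stays bounded, i.e.\ for $t=o(c_\lambda)$. For $t$ between, say, $c_\lambda$ and $c_\lambda^2-T_\lambda$, the exponent $t^2/c_\lambda^2$ ranges up to nearly $c_\lambda^2$, the Taylor remainder of $e^{2t+\tilde u_\lambda^2-c_\lambda^2}$ is not polynomial, and in fact $|F_\lambda|$ is only bounded by $Cc_\lambda\,e^{t^2/c_\lambda^2}$, which is exponentially large. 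A contraction on the full interval with the weighted sup-norms you propose cannot close with this source. Your conformal-factor estimate is also off: at $t=\ln(1+\delta^2/r_\lambda^2)$ one has $r_\lambda^2(e^t-1)=\delta^2$, which is $O(1)$, not $o(c_\lambda^{-7})$; so the bound $t\le\ln(1+\delta^2/r_\lambda^2)$ only keeps the factor \emph{bounded}, not close to $1$ at the required precision.

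The paper's proof avoids both problems by splitting the interval at $T_\lambda$ and running a continuity (bootstrap) argument rather than a contraction. On $[0,T_\lambda]$ the Taylor expansion is legitimate (since $T_\lambda=o(c_\lambda)$) and the conformal factor error is $r_\lambda^2e^{T_\lambda}=o(c_\lambda^{-7})$, yielding $|F_\lambda|\le D_1(1+t^6)c_\lambda^{-7}$ with $D_1$ \emph{independent} of the bootstrap constant $D_0$; the representation formula then gives $|R_\lambda'|\le D_2c_\lambda^{-7}$. On $[T_\lambda,c_\lambda^2-T_\lambda]$ no fine expansion is attempted: one uses only $|F_\lambda|\le Cc_\lambda e^{t^2/c_\lambda^2}$ (here $C$ may depend on $D_0$) and the crucial observation that in the representation formula this is integrated against the kernel weight $e^{-s}$, giving
\[
\int_{T_\lambda}^{c_\lambda^2-T_\lambda} e^{s^2/c_\lambda^2-s}\,ds = O\bigl(e^{-T_\lambda/2}\bigr)=o(c_\lambda^{-7})
\]
by the hypothesis $c_\lambda^k e^{-T_\lambda}\to 0$. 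This is precisely why both the upper limit $c_\lambda^2-T_\lambda$ and the growth condition on $T_\lambda$ appear in the statement. Choosing $D_0>2D_2$ then makes $|R_\lambda'(t_\lambda)|=D_0c_\lambda^{-7}$ impossible, closing the bootstrap. To repair your argument you would need to introduce this same splitting and exploit the $e^{-s}$ in the kernel explicitly on the outer range, rather than relying on a uniform polynomial bound for the source.
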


\begin{proof}
	Fix such a sequence $T_\lambda$. Let $D_0>0$ that we shall choose later. Since $R_\lambda'(0)=0$, there exists $0<\tilde{t}_\lambda\le \min\left\{c_{\lambda}^{2}-T_{\lambda}, \ln\left(1+\frac{\delta^{2}}{r^{2}_{\lambda}}\right)\right\}
	$ such that
	\begin{equation}\label{eq-claim-appA-3-1}
		\left\vert R_\lambda'(t)\right\vert \le D_0 c_\lambda^{-9} \hbox{ for all }0\le t\le \tilde{t}_\lambda.
	\end{equation}
	Define
	\begin{equation}
		t_\lambda:=\max\left\{\tilde{t}_\lambda:\left\vert R_\lambda'(t)\right\vert \le D_0 c_\lambda^{-9} \hbox{ for all }0\le t\le \tilde{t}_\lambda\right\}.
	\end{equation}
	Since $R_\lambda(0)=0$, we know
	\begin{equation}\label{eq-claim-appA-3-2}
		\left\vert R_\lambda(t)\right\vert \le D_0 c_{\lambda}^{-9} t\,\,\, \hbox{ for all }0\le t\le t_\lambda.
	\end{equation}
	Assume for contradiction that $t_\lambda<\min\left\{c_{\lambda}^{2}-T_{\lambda}, \ln\left(1+\frac{\delta^{2}}{r^{2}_{\lambda}}\right)\right\}$. We have
	\begin{equation}\label{eq-claim-appA-3-2b}
		\left\vert R_\lambda'\left(t_\lambda\right)\right\vert = D_0 c_\lambda^{-9}.
	\end{equation}
	This is the statement we will contradict by an appropriate choice of $D_0$.
	Let us use \eqref{eq-appA-3}, \eqref{eq-appA-5}, \eqref{Lpsi}, \eqref{Lchi} and \eqref{eq-appA-7} to write that
	$$
	{\mathcal L}\left(R_\lambda\right) = F_\lambda,
	$$
	where
	$$
	F_\lambda = \frac{1}{c_\lambda} - \frac{\tilde{u}_\lambda}{c_\lambda^2}
	e^{2t+\tilde{u}_\lambda^2-c_\lambda^2}
	\Big(\frac{1}{1-r^{2}_{\lambda}(e^{t}-1)}\Big)^{2} + 2 R_\lambda - c_\lambda^{-3}\left({\mathcal{L}}(\varphi_0)-2\varphi_0\right)-c_\lambda^{-5}(\mathcal{L}(\psi_0)-2\psi_0)-c_\lambda^{-7}(\mathcal{L}(\chi_0)-2\chi_0).
	$$
	For $0\le t\le \min\left\{t_\lambda,T_\lambda\right\}$,
	we have that
	$$
	r_{\lambda}^{2}(e^{t}-1)\lesssim e^{-\alpha^2 c^{2}_{\lambda}}e^{T_\lambda}=o(c_\lambda^{-9}),
	$$
	which implies that
	\begin{equation}\label{add-6}
		\Big(\frac{1}{1-r_{\lambda}^{2}(e^{t}-1)}\Big)^{2}
		=1+o(c_\lambda^{-9}),\,\,\text{for}\,\,0\le t\le \min\left\{t_\lambda,T_\lambda\right\}.
	\end{equation}
	For $0\le t\le \min\left\{t_\lambda,T_\lambda\right\}$,
	we also have
	$$
	2t+\tilde{u}_\lambda^2-c_\lambda^2 = \frac{t^2}{c_\lambda^2} -\frac{2t\varphi_0}{c_\lambda^4}-\frac{2t\psi_0}{c_\lambda^6}-\frac{2t\chi_0}{c_\lambda^8}+2c_\lambda R_\lambda + \frac{2\varphi_0}{c_\lambda^2}+\frac{2\psi_0}{c_\lambda^4}+\frac{2\chi_0}{c_\lambda^6}+\frac{\varphi_0^2}{c_\lambda^6}+\frac{2\varphi_0\psi_0}{c_\lambda^8}+ o\left(c_\lambda^{-8}\right)=o(1),
	$$
	so that
	$$
	\left\vert 2t+\tilde{u}_\lambda^2-c_\lambda^2 \right\vert^4\lesssim(1+t^8)c_\lambda^{-8},
	$$
	thanks to \eqref{eq-appA-6}-\eqref{chi0} and \eqref{eq-claim-appA-3-2}.  Moreover, by \eqref{eq-appA-7} we get
	$$
	\frac{\tilde{u}_\lambda}{c_\lambda^2} = c_\lambda^{-1} - c_\lambda^{-3} t +c_\lambda^{-5}\varphi_0 +c_\lambda^{-7}\psi_0 +c_\lambda^{-9}\chi_0+ o\left(c_\lambda^{-10}\right).
	$$
	Thus, by using
	\begin{equation*}
		\left\vert e^{2t+\tilde{u}_\lambda^2-c_\lambda^2}-1-(2t+\tilde{u}_\lambda^2-c_\lambda^2)-\frac{1}{2}\left( 2t+\tilde{u}_\lambda^2-c_\lambda^2 \right)^2-\frac{1}{6}\left( 2t+\tilde{u}_\lambda^2-c_\lambda^2 \right)^3\right\vert\lesssim\left\vert 2t+\tilde{u}_\lambda^2-c_\lambda^2 \right\vert^4\lesssim(1+t^8)c_\lambda^{-8},
	\end{equation*}
	we get
	$$
	\left\vert F_\lambda\right\vert \le D_1 \left(1+t^8\right)c_\lambda^{-9}
	$$
	for all $0\le t\le \min\left\{t_\lambda,T_\lambda\right\},$ where $D_1$ depends on $C_0$ but not on $D_0$. We can use the representation formula of Lemma \ref{lemma-appA} to deduce that
	\begin{equation}\label{7-30-2}
		\begin{split}
			\left\vert R_\lambda'(t)\right\vert &\le D_1c_\lambda^{-9} \int_0^t e^{-s}\left(1+s^8\right)\left\vert 2e^{-t}\left(1-2e^{-s}\right)  \ln \frac{e^t-1}{e^s-1} +\frac{e^t-2}{e^t-1}\left(1-2e^{-s}\right)+ 4e^{-t}\right\vert ds \\
			&\le  D_2 c_\lambda^{-9}
		\end{split}
	\end{equation}
	for all $0\le t\le \min\left\{t_\lambda,T_\lambda
	\right\},$ where $D_2$ depends only on $C_0$, not on $D_0$. Up to choosing $D_0>2D_2$, we get that $t_\lambda>T_\lambda$ thanks to \eqref{eq-claim-appA-3-2b}. Moreover we have that
	\begin{equation}\label{eq-claim-appA-3-3}
		\left\vert R_\lambda'\left(T_\lambda\right)\right\vert \le D_2 c_\lambda^{-9}.
	\end{equation}
	
	For all $T_\lambda\le t\le t_\lambda$, we have
	\begin{equation*}
		r_{\lambda}^{2}(e^{t}-1)\leq r_\lambda^2\left(e^{ \ln\left(1+\frac{\delta^2}{r_\lambda^2}\right)}-1\right)=\delta^2
	\end{equation*}
	and
	\begin{equation*}
		2t+\tilde{u}_\lambda^2-c_\lambda^2=\frac{t^2}{c_\lambda^2}+O(1).
	\end{equation*}
	Then thanks to \eqref{eq-appA-5}-\eqref{chi0}, we can write that
	$$
	\left\vert F_\lambda(t)\right\vert \le Cc_\lambda e^{t^2/c_\lambda^2}
	$$
	for some $C>0$, depending on $D_0$ and $C_0$. As a result, we deduce that
	\begin{equation}\label{7-30-3}
		\begin{split}
			\left\vert R_\lambda'(t)-R_\lambda'\left(T_\lambda\right)\right\vert
			&\le C c_\lambda\int_{T_\lambda}^t e^{s^2/c_\lambda^2-s}\left\vert 2e^{-t}\left(1-2e^{-s}\right)  \ln \frac{e^t-1}{e^s-1} +\frac{e^t-2}{e^t-1}\left(1-2e^{-s}\right)+ 4e^{-t}\right\vert ds\\
			&\le  Cc_\lambda\int_{T_\lambda}^t e^{s^2/c_\lambda^2-s}ds
			= O\left(c_\lambda\int_{T_\lambda}^{c_\lambda^2-T_{\lambda}} e^{s^2/c_\lambda^2-s}ds\right)\\
			&= O\left( c_\lambda \int_{T_\lambda}^{\frac{1}{2}c_\lambda^2} e^{s^2/c_\lambda^2-s}\, ds\right)
			= O\left(c_\lambda \int_{T_\lambda}^{\frac{1}{2}c_\lambda^2} e^{-\frac{1}{2}s}\, ds\right)\\
			&=O\left(c_\lambda e^{-\frac{1}{2}T_\lambda}\right)=o\left(c_\lambda^{-9}\right).
		\end{split}
	\end{equation}
	Combining with \eqref{eq-claim-appA-3-3} gives that
	$$
	\left\vert R_\lambda'(t)\right\vert \le D_2c_\lambda^{-9}+o\left(c_\lambda^{-9}\right).
	$$
	This proves that \eqref{eq-claim-appA-3-2b} is impossible up to choosing $D_0> 2D_2$. This ends the proof of the lemma.
\end{proof}

If we want to push the estimates a little bit further, we can get

\begin{lemma}\label{add-lem-5}
	Fix $\delta\in(0,1)$. There exists $E_0>0$ such that
	$$
	\left\vert \tilde{u}_\lambda-c_\lambda+\frac{t}{c_\lambda}
	-\frac{\varphi_0(t)}{c_\lambda^3}-\frac{\psi_0(t)}{c_\lambda^5}-\frac{\gamma t}{2c_\lambda^7}\right\vert \le E_0c_\lambda^{-7}
	$$
	for all $0\le t\le  \ln\left(1+\frac{\delta^{2}}{r^{2}_{\lambda}}\right)$.
\end{lemma}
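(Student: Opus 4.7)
The plan is to isolate $R_\lambda$ and then integrate the derivative bound from Lemma \ref{claim-appA-3}. From the decomposition \eqref{eq-appA-7},
\[
\tilde{u}_\lambda(t) - c_\lambda + \frac{t}{c_\lambda} - \frac{\varphi_0(t)}{c_\lambda^3} - \frac{\beta\, t}{2\,c_\lambda^5} \;=\; R_\lambda(t) \;+\; \frac{\psi_0(t)-\tfrac{\beta}{2}t}{c_\lambda^5}.
\]
By the asymptotic \eqref{psi0}, $|\psi_0(t)-\tfrac{\beta}{2}t|\le C_0$ uniformly in $t\ge 0$, so the second summand contributes at most $C_0\,c_\lambda^{-5}$. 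Hence the lemma reduces to proving $|R_\lambda(t)|\le C\,c_\lambda^{-5}$ on the whole interval $[0,\ln(1+\delta^2/r_\lambda^2)]$.

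Next, I would apply Lemma \ref{claim-appA-3} with a choice such as $T_\lambda=(\ln c_\lambda)^2$, which satisfies both $T_\lambda=o(c_\lambda)$ and $c_\lambda^k e^{-T_\lambda}\to 0$ for every $k$. Lemma \ref{claim-appA-3} then yields $|R_\lambda'(t)|\le D_0\,c_\lambda^{-7}$ on $[0,\min\{c_\lambda^2-T_\lambda,\,\ln(1+\delta^2/r_\lambda^2)\}]$. Using $R_\lambda(0)=0$ and the fact that $\ln(1+\delta^2/r_\lambda^2)=c_\lambda^2+2\ln\delta+o(1)$ (a consequence of $\lambda\, r_\lambda^2 c_\lambda^2 e^{c_\lambda^2}=1$ together with $\lim\lambda c_\lambda^2=1$ from Proposition \ref{lem-7-31-2}), a direct integration gives
\[
|R_\lambda(t)|\;\le\;D_0\,c_\lambda^{-7}\,t\;\le\;D_0\,c_\lambda^{-5}\qquad\text{for all }t\in[0,\,c_\lambda^2-T_\lambda].
\]

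The main obstacle is the thin tail interval $[c_\lambda^2-T_\lambda,\,\ln(1+\delta^2/r_\lambda^2)]$, whose length is at most $T_\lambda+O(1)=O((\ln c_\lambda)^2)$; it lies outside the reach of Lemma \ref{claim-appA-3} because the Taylor expansions in its proof, which control the factor $e^{2t+\tilde u_\lambda^2-c_\lambda^2}$, cease to be sharp once $t$ approaches $c_\lambda^2$ and one leaves the blow-up scale. My plan here is to rerun the contraction-mapping argument from the proof of Lemma \ref{claim-appA-3} on this short interval, but now feeding in the outer information: under the change of variable $t=\ln(1+r^2/r_\lambda^2)$, this interval corresponds to $r\in[r_\lambda e^{(c_\lambda^2-T_\lambda)/2},\delta]\subset\mathbb{B}^2\setminus\{0\}$, and on such an annulus Lemma \ref{lem-7-31-1} gives $c_\lambda u_\lambda\to G$ in $C^2_{\mathrm{loc}}$, so $\tilde u_\lambda=O(c_\lambda^{-1})$ and $\tilde u_\lambda^2-c_\lambda^2=-c_\lambda^2+O(c_\lambda^{-2})$. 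This smallness balances the exponential growth in $F_\lambda$ through the identity $\lambda r_\lambda^2 c_\lambda^2 e^{c_\lambda^2}=1$, and combined with the representation formula of Lemma \ref{lemma-appA} it yields $|R_\lambda(t)-R_\lambda(c_\lambda^2-T_\lambda)|=o(c_\lambda^{-5})$. Adding this increment to the bulk estimate and to the $\psi_0$ remainder gives the lemma with $E_0=C_0+D_0+1$.
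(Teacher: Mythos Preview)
Your treatment of the bulk interval $[0,c_\lambda^2-T_\lambda]$ is fine and matches the paper: the decomposition into $R_\lambda+(\psi_0-\tfrac{\beta}{2}t)/c_\lambda^5$, the use of \eqref{psi0}, and the integration of Lemma~\ref{claim-appA-3} are exactly right. Note however that you invoke Proposition~\ref{lem-7-31-2} to compute $\ln(1+\delta^2/r_\lambda^2)$; this is circular, since in the paper Proposition~\ref{lem-7-31-2} is \emph{deduced from} Lemma~\ref{add-lem-5}. Fortunately that invocation is unnecessary: on the bulk interval you only need $t\le c_\lambda^2$, which is immediate.

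The real gap is your handling of the tail $[c_\lambda^2-T_\lambda,\ln(1+\delta^2/r_\lambda^2)]$. Under the change of variable $t=\ln(1+r^2/r_\lambda^2)$, the left endpoint corresponds to
\[
r\;\approx\;r_\lambda\,e^{(c_\lambda^2-T_\lambda)/2}\;=\;\frac{e^{-T_\lambda/2}}{c_\lambda\sqrt{\lambda}},
\]
which (even granting $\lambda c_\lambda^2\to 1$) tends to $0$ as $T_\lambda\to\infty$. Hence this annulus is \emph{not} contained in any fixed compact subset of $\mathbb{B}^2\setminus\{0\}$, and the $C^2_{\rm loc}(\mathbb{B}^2\setminus\{0\})$ convergence from Lemma~\ref{lem-7-31-1} gives no uniform control there. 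Concretely, your claim $\tilde u_\lambda=O(c_\lambda^{-1})$ fails at the left endpoint: the bulk expansion itself shows $\tilde u_\lambda(c_\lambda^2-T_\lambda)\sim T_\lambda/c_\lambda$, which with $T_\lambda=(\ln c_\lambda)^2$ is much larger than $c_\lambda^{-1}$. Consequently the asserted balance ``$\tilde u_\lambda^2-c_\lambda^2=-c_\lambda^2+O(c_\lambda^{-2})$'' and the subsequent control of $F_\lambda$ break down.

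The paper closes this tail by a different mechanism: it integrates the ODE \eqref{eq-appA-3} from $c_\lambda^2-T_\lambda$ to $t_\lambda=c_\lambda^2-\alpha_\lambda$ via an integration-by-parts representation (the formulas \eqref{u_t}--\eqref{psi_t}), subtracts the analogous identities for $\varphi_0$ and $\psi_0$, and then runs a bootstrap. Writing $t=c_\lambda^2-s$ and using the inductive hypothesis (the lemma's bound up to $t_\lambda$), one gets the pointwise control
\[
\ln\!\left(\frac{e^{t_\lambda}-1}{e^t-1}\right)\tilde u_\lambda(t)\,e^{t+\tilde u_\lambda^2-c_\lambda^2}\Big(\tfrac{1}{1-r_\lambda^2(e^t-1)}\Big)^{2}=O\!\big(c_\lambda^{-1}(1+s^2)e^{-s}\big),
\]
whose integral over $s\in[\alpha_\lambda,T_\lambda]$ is $o(c_\lambda^{-5})$. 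This is the missing ingredient: the tail is controlled by the ODE itself together with a continuity (bootstrap) argument, not by importing outer $C^2$ information.
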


\begin{proof}
	It is clear that the assertion holds for any $0\le t\le \min\left\{c_\lambda^2-T_\lambda, \ln\left(1+\frac{\delta^{2}}{r^{2}_{\lambda}}\right)\right\}$ thanks to Lemma \ref{claim-appA-3} and \eqref{chi0}.
	If $ \ln\left(1+\frac{\delta^{2}}{r^{2}_{\lambda}}\right)\leq c^{2}_{\lambda}-T_{\lambda},$ then we end the proof.
	So we may assume that $c^{2}_{\lambda}-T_{\lambda} \leq  \ln\left(1+\frac{\delta^{2}}{r^{2}_{\lambda}}\right).$
	By \eqref{eq-appA-7} we know that
	\begin{equation}\label{eq-claim-appA-4-1}
		\tilde{u}_\lambda\left(c_\lambda^2-T_\lambda\right)= \frac{T_\lambda}{c_\lambda} +\frac{\varphi_0(c_\lambda^2-T_\lambda)}{c_\lambda^3}+\frac{\psi_0(c_\lambda^2-T_\lambda)}{c_\lambda^5}+\frac{\chi_0(c_\lambda^2-T_\lambda)}{c_\lambda^7}
		+O\left(c_\lambda^{-7}\right),
	\end{equation}
	\begin{equation}\label{eq-claim-appA-4-2}
		\tilde{u}_\lambda'\left(c_\lambda^2-T_\lambda\right)= -\frac{1}{c_\lambda} +\frac{\varphi_0'(c_\lambda^2-T_\lambda)}{c_\lambda^3}+\frac{\psi_0'(c_\lambda^2-T_\lambda)}{c_\lambda^5}+\frac{\chi_0'(c_\lambda^2-T_\lambda)}{c_\lambda^7}+O\left(c_\lambda^{-9}\right).
	\end{equation}
	Let $t_\lambda=c_\lambda^2-\alpha_\lambda$ for $c_\lambda^2- \ln\left(1+\frac{\delta^{2}}{r^{2}_{\lambda}}\right)\le \alpha_\lambda\le T_\lambda$. From equation \eqref{eq-appA-3},
	we have
	\begin{equation}\label{add8-16-1}
		\big[(1-e^{-t})\tilde{u}'_\lambda(t)\big]'=-\frac{\tilde{u}_\lambda(t) }{c^{2}_{\lambda}} e^{t+\tilde{u}_\lambda(t)^2-c_\lambda^2}
		\Big(\frac{1}{1-r^{2}_{\lambda}(e^{t}-1)}\Big)^{2}.
	\end{equation}
	Integrating \eqref{add8-16-1} between $c_\lambda^2-T_\lambda$ and $t_\lambda,$  integrating by parts we have
	\begin{equation}\label{u_t}
		\begin{aligned}
			\tilde{u}_\lambda(t_\lambda)={}& \tilde{u}_\lambda(c_\lambda^2-T_\lambda)+\tilde{u}_\lambda'
			(c_\lambda^2-T_\lambda)\left(1-e^{T_\lambda-c_\lambda^2}\right)
			\ln \left(\frac{e^{c_\lambda^2-\alpha_\lambda}-1}
			{e^{c_\lambda^2-T_\lambda}-1}\right)\\
			{}&-\frac{1}{c_\lambda^2}\int_{c_\lambda^2-T_\lambda}^{c_\lambda^2-
				\alpha_\lambda}
			\ln \left(\frac{e^{t_\lambda}-1}{e^t-1}\right)\tilde{u}_\lambda(t) e^{t+\tilde{u}_\lambda(t)^2-c_\lambda^2}
			\Big(\frac{1}{1-r^{2}_{\lambda}(e^{t}-1)}\Big)^{2}dt.
		\end{aligned}
	\end{equation}
	Similar to \eqref{u_t}, by \eqref{eq-appA-5}-\eqref{Lchi} we can get
	\begin{equation}\label{varphi_t}
		\begin{aligned}
			\varphi_0(t_\lambda)={}&\varphi_0(c_\lambda^2-T_\lambda)+\varphi_0'
			(c_\lambda^2-T_\lambda)\left(1-e^{T_\lambda-c_\lambda^2}\right)
			\ln \left(\frac{e^{c_\lambda^2-\alpha_\lambda}-1}
			{e^{c_\lambda^2-T_\lambda}-1}\right)\\
			{}&+\int_{c_\lambda^2-T_\lambda}^{c_\lambda^2-
				\alpha_\lambda} \ln \left(\frac{e^{t_\lambda}-1}{e^t-1}\right)
			(\mathcal{L}(\varphi_0)(t)-2\varphi_0(t))e^{-t}dt,
		\end{aligned}
	\end{equation}
	\begin{equation}\label{psi_t}
		\begin{aligned}
			\psi_0(t_\lambda)={}&\psi_0(c_\lambda^2-T_\lambda)+\psi_0'
			(c_\lambda^2-T_\lambda)\left(1-e^{T_\lambda-c_\lambda^2}\right)
			\ln \left(\frac{e^{c_\lambda^2-\alpha_\lambda}-1}
			{e^{c_\lambda^2-T_\lambda}-1}\right)\\
			{}&+\int_{c_\lambda^2-T_\lambda}^{c_\lambda^2-
				\alpha_\lambda} \ln \left(\frac{e^{t_\lambda}-1}{e^t-1}\right)(\mathcal{L}(\psi_0)(t)-2\psi_0(t))e^{-t}dt,
		\end{aligned}
	\end{equation}
		\begin{equation}\label{chi_t}
			\begin{aligned}
				\chi_0(t_\lambda)={}&\chi_0(c_\lambda^2-T_\lambda)+\chi_0'
				(c_\lambda^2-T_\lambda)\left(1-e^{T_\lambda-c_\lambda^2}\right)
				\ln \left(\frac{e^{c_\lambda^2-\alpha_\lambda}-1}
				{e^{c_\lambda^2-T_\lambda}-1}\right)\\
				{}&+\int_{c_\lambda^2-T_\lambda}^{c_\lambda^2-
					\alpha_\lambda} \ln \left(\frac{e^{t_\lambda}-1}{e^t-1}\right)(\mathcal{L}(\chi_0)(t)-2\chi_0(t))e^{-t}dt.
			\end{aligned}
		\end{equation}
	By \eqref{eq-appA-5}-\eqref{chi0}, \eqref{eq-claim-appA-4-1}-\eqref{chi_t} and $\alpha_\lambda\leq T_\lambda=o(c_\lambda)$, we derive that
	\begin{equation*}
		\frac{1}{c_\lambda^3}\int_{c_\lambda^2-T_\lambda}^{c_\lambda^2-
			\alpha_\lambda} \ln \left(\frac{e^{t_\lambda}-1}{e^t-1}\right)(\mathcal{L}(\varphi_0)(t)-2\varphi_0(t))e^{-t}dt=o(c_\lambda^{-7}),
	\end{equation*}
	\begin{equation*}
		\frac{1}{c_\lambda^5}\int_{c_\lambda^2-T_\lambda}^{c_\lambda^2-
			\alpha_\lambda} \ln \left(\frac{e^{t_\lambda}-1}{e^t-1}\right)(\mathcal{L}(\psi_0)(t)-2\psi_0(t))e^{-t}dt=o(c_\lambda^{-7}),
	\end{equation*}
	\begin{equation*}
		\frac{1}{c_\lambda^7}\int_{c_\lambda^2-T_\lambda}^{c_\lambda^2-
			\alpha_\lambda} \ln \left(\frac{e^{t_\lambda}-1}{e^t-1}\right)(\mathcal{L}(\chi_0)(t)-2\chi_0(t))e^{-t}dt=o(c_\lambda^{-7}),
	\end{equation*}
	so that
	\begin{equation}\label{eq-claim-appA-4-3}
		\begin{aligned}
			{}&\left\vert\tilde{u}_\lambda\left(t_\lambda\right)
			-c_\lambda+\frac{t_\lambda}{c_\lambda}-\frac{\varphi_0(t_\lambda)}{c_\lambda^3}-\frac{\psi_0(t_\lambda)}{c_\lambda^5}-\frac{\gamma t_\lambda}{2c_\lambda^7}\right\vert\\
			\leq{}&\frac{E_1}{c_\lambda^7}
			+\frac{1}{c_\lambda^2}\int_{c_\lambda^2-T_\lambda}^{c_\lambda^2-
				\alpha_\lambda}
			\ln \left(\frac{e^{t_\lambda}-1}{e^t-1}\right)\tilde{u}_\lambda(t) e^{t+\tilde{u}_\lambda(t)^2-c_\lambda^2}
			\Big(\frac{1}{1-r^{2}_{\lambda}(e^{t}-1)}\Big)^{2}dt,
		\end{aligned}
	\end{equation}
	where $E_1$ depends only on $C_0$.
	Assume that the statement of the lemma holds up to $t_\lambda$, i.e.
	$$
	t_\lambda:=\max\left\{\tilde{t}_\lambda:\left\vert \tilde{u}_\lambda-c_\lambda+\frac{t}{c_\lambda}
	-\frac{\varphi_0(t)}{c_\lambda^3}-\frac{\psi_0(t)}{c_\lambda^5}-\frac{\gamma t}{2c_\lambda^7}\right\vert \le E_0c_\lambda^{-7}\hbox{ for all }0\le t\le \tilde{t}_\lambda\right\},
	$$
	where $E_0$ satisfies $E_0>2E_1$, then we can write that
	$$
	\ln \left(\frac{e^{t_\lambda}-1}{e^t-1}\right)\tilde{u}_\lambda(t) e^{t+\tilde{u}_\lambda(t)^2-c_\lambda^2} \Big(\frac{1}{1-r^{2}_{\lambda}(e^{t}-1)}\Big)^{2}= O\left(c_\lambda^{-1} \left(1+s^2\right)e^{-s}\right)
	$$
	in $\left[c_\lambda^2-T_\lambda,t_\lambda\right]$ with $t=c_\lambda^2-s$ so that it is easily checked that
	$$
	\frac{1}{c_\lambda^2}\int_{c_\lambda^2-T_\lambda}^{c_\lambda^2-
		\alpha_\lambda}
	\ln \left(\frac{e^{t_\lambda}-1}{e^t-1}\right)\tilde{u}_\lambda(t) e^{t+\tilde{u}_\lambda(t)^2-c_\lambda^2}\Big(\frac{1}{1-r^{2}_{\lambda}(e^{t}-1)}\Big)^{2}dt = o\left(c_\lambda^{-7}\right),
	$$
	which infers by \eqref{eq-claim-appA-4-3} that
	$$\left\vert\tilde{u}_\lambda\left(t_\lambda\right)
	-c_\lambda+\frac{t_\lambda}{c_\lambda}-\frac{\varphi_0(t_\lambda)}{c_\lambda^3}-\frac{\psi_0(t_\lambda)}{c_\lambda^5}-\frac{\gamma t_\lambda}{2c_\lambda^7}\right\vert\leq \frac{E_1}{c_\lambda^7}+o(c_\lambda^{-7}).$$
	This contradicts with the maximality of $t_\lambda$ and thus ends the proof of lemma.
\end{proof}

\begin{proof}[\textbf{Proof of Proposition \ref{lem-7-31-2}}]
	By Lemma \ref{lem-7-31-1}, we have
	\begin{equation}\label{31-3}
		c_{\lambda}u_{\lambda}\left(\delta\right)= -2\ln\delta+o_{\lambda}(1).
	\end{equation}
	Noting that $\lambda r_{\lambda}^{2}c^{2}_{\lambda}e^{c^{2}_{\lambda}}=1$, $\varphi_0(t)=w_0(r)$ and $\psi_0(t)=z_0(r)$,
	we can apply Lemma \ref{add-lem-5} to obtain
	\begin{equation}\label{31-4}
		\begin{aligned}
			c_{\lambda} u_{\lambda}\left(\delta\right) = {}&c_\lambda\left[c_\lambda-\frac{1}{c_\lambda}\ln\left(1+\frac{\delta^{2}}{r^{2}_{\lambda}}\right)+\frac{w_0(\delta)}{c_\lambda^3}+\frac{z_0(\delta)}{c_\lambda^5}+\frac{\gamma}{2c_\lambda^7}\ln\left(1+\frac{\delta^{2}}{r^{2}_{\lambda}}\right)+O(c_\lambda^{-7})\right]\\
			={}&
			-\left(1+O(c_\lambda^{-6})\right)\ln\left(\lambda c_{\lambda}^2\right)-2\ln\delta+c^{-2}_{\lambda}\left[w_0(\delta)+c_\lambda^{-2}\left(z_0(\delta)+\frac{\gamma}{2}\right)\right]+O(c_\lambda^{-6}).
		\end{aligned}
	\end{equation}
	It follows from \eqref{31-3} and \eqref{31-4} that
	$$
	\ln(\lambda c_{\lambda}^2)=o_{\lambda}(1),\hskip.1cm
	$$
	which ends the proof of this proposition.
\end{proof}

Next we present a more precise characterization of the relation between $\lambda$ and $c_\lambda$ when $\lambda\to0$.
\begin{proposition}\label{add-lem6}
	There holds that
	\begin{equation}\label{lambda_gamma}
		c_\lambda= \lambda^{-\frac{1}{2}}+A\lambda^{\frac{1}{2}}+B\lambda^{\frac{3}{2}}
		+o_{\lambda}\big(\lambda^{\frac{3}{2}}\big),
	\end{equation}
	for some constants $A, B$.
\end{proposition}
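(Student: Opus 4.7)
The plan is to match two independent expansions for $c_\lambda u_\lambda$ evaluated at a fixed point $\delta\in(0,1)$: an inner one near the origin obtained from Lemma \ref{add-lem-5}, and an outer one from the refined away-from-origin asymptotics (Lemmas \ref{adlem1} and \ref{ad-lem2} referenced in the introduction). Setting $t_\lambda^\delta:=\log(1+\delta^2/r_\lambda^2)$ and using $\lambda r_\lambda^2c_\lambda^2e^{c_\lambda^2}=1$ together with the bound $r_\lambda^2=O(e^{-\alpha^2c_\lambda^2})$ from Lemma \ref{lem1}, one obtains $t_\lambda^\delta=c_\lambda^2+\log(\lambda c_\lambda^2)+2\log\delta+O(e^{-\alpha^2c_\lambda^2})$. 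Substituting into Lemma \ref{add-lem-5}, multiplying by $c_\lambda$, and invoking Proposition \ref{lem-7-31-2} to control $\log(\lambda c_\lambda^2)=o_\lambda(1)$ produces the inner expansion
\[
c_\lambda u_\lambda(\delta) = -\log(\lambda c_\lambda^2) - 2\log\delta + \frac{w_0(\delta)+\beta/2}{c_\lambda^2} + o(c_\lambda^{-2}).
\]

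For the outer side, I would invoke Lemmas \ref{adlem1} and \ref{ad-lem2} to establish, at the same fixed point $\delta$, an expansion of the form $c_\lambda u_\lambda(\delta) = -2\log\delta + B(\delta)/c_\lambda^2 + o(c_\lambda^{-2})$ for some explicit function $B(\delta)$. The natural route is Green's representation $c_\lambda u_\lambda(\delta)=\int_{\mathbb{B}^2}G(\delta,y)\,c_\lambda\lambda u_\lambda(y)e^{u_\lambda(y)^2}dV_{\mathbb{B}^2}(y)$ split at an intermediate scale $\widetilde{s_\lambda}r_\lambda$ with $\widetilde{s_\lambda}\to\infty$ and $\widetilde{s_\lambda}r_\lambda\to 0$: on the inner ball one Taylor-expands $G(\delta,y)$ and the metric factor $(2/(1-|y|^2))^2$ around $y=0$ and uses the precise pointwise information of Lemma \ref{add-lem-5} to extract the $c_\lambda^{-2}$ term, while on the outer annulus one shows the contribution is $o(c_\lambda^{-2})$ by applying the Moser--Trudinger inequality \eqref{MT} and exploiting the fact that $\|\nabla_{\mathbb{B}^2}u_\lambda\|_{L^2(\mathbb{B}^2\setminus B_{\mathbb{B}^2}(0,\widetilde{s_\lambda}r_\lambda))}^2\to 0$ thanks to the concentration $|\nabla_{\mathbb{B}^2}u_\lambda|^2dV_{\mathbb{B}^2}\rightharpoonup 4\pi\delta_0$ from Theorem \ref{thm1}.

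Equating the two expressions yields
\[
\log(\lambda c_\lambda^2) = \frac{w_0(\delta)+\beta/2-B(\delta)}{c_\lambda^2} + o(c_\lambda^{-2}).
\]
Since the left-hand side is independent of $\delta$ and the $o(c_\lambda^{-2})$ error is uniform in $\delta$ on any compact subset of $(0,1)$, the quantity $A:=w_0(\delta)+\beta/2-B(\delta)$ must in fact be independent of $\delta$. Exponentiating and using $\lambda c_\lambda^2\to 1$ gives $\lambda c_\lambda^2=1+A\lambda+o(\lambda)$, hence $c_\lambda^2=\lambda^{-1}+A+o(1)$; taking square roots finishes the proof. The main obstacle is the outer expansion step: extracting the $o(c_\lambda^{-2})$ accuracy is delicate because three distinct $c_\lambda^{-2}$ corrections contribute at the same order, namely those from the Taylor expansion of $G(\delta,\cdot)$ around the origin, from the Taylor expansion of the hyperbolic metric factor $(2/(1-|y|^2))^2$, and from the subleading behavior of the concentrating measure $c_\lambda\lambda u_\lambda e^{u_\lambda^2}dV_{\mathbb{B}^2}$ beyond its weak limit $4\pi\delta_0$. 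The intermediate scale $\widetilde{s_\lambda}$ must be chosen to simultaneously preserve the validity of the fine pointwise inner expansion on $B_{\widetilde{s_\lambda}r_\lambda}$ and to guarantee that the Dirichlet energy on the outer annulus is small enough for the Moser--Trudinger inequality to convert it into the required $o(c_\lambda^{-2})$ tail bound.
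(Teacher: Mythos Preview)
Your proposal is correct and follows the same matching strategy as the paper: equate the inner expansion of $c_\lambda u_\lambda(\delta)$ coming from Lemma~\ref{add-lem-5} with the outer expansion coming from Lemma~\ref{ad-lem2}, solve for $\log(\lambda c_\lambda^2)$, and take square roots. Two minor remarks. First, the paper's proof of the outer expansion (Lemma~\ref{ad-lem2}) does not proceed by the explicit Green's-representation splitting you sketch; instead it observes that $c_\lambda^2(c_\lambda u_\lambda-4\pi G)$ solves $-\Delta_{\mathbb{B}^2}v=c_\lambda^2(\lambda c_\lambda u_\lambda e^{u_\lambda^2}-4\pi\delta_0)$, uses Lemma~\ref{adlem1} to get an $L^1_{\rm loc}$ bound on the right-hand side, and then applies elliptic regularity and compactness to extract a $C^{1,\gamma}_{\rm loc}(\mathbb{B}^2\setminus\{0\})$ limit $W$, setting $A_1:=W(\delta)$ without computing it explicitly. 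Second, your uniformity-in-$\delta$ argument for the $\delta$-independence of $A$ is unnecessary: since $c_\lambda$ itself does not depend on $\delta$, the relation $\log(\lambda c_\lambda^2)=A/c_\lambda^2+o(c_\lambda^{-2})$ already forces the same $A$ for every choice of $\delta$, with no uniformity of the error term required.
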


In order to prove Proposition \ref{add-lem6}, we have to make some preparations.
\begin{lemma}\label{adlem1}
For any $\varphi(x)\in C_c^{\infty}(\mathbb{B}^2),$
there holds that
\begin{equation}\label{add-L1-9-11}
	\lim_{\lambda\rightarrow 0}c^{2}_{\lambda}\int_{\mathbb{B}^2}\left(\lambda c_{\lambda}u_{\lambda}e^{u^{2}_{\lambda}}
	-\pi\delta_{0}\right)\varphi dV_{\mathbb{B}^2}=4\pi\varphi(0)+4\pi\int_{\mathbb{B}^2}G\varphi dV_{\mathbb{B}^2},
\end{equation}
	where $\delta_0$ is the Dirac function.
\end{lemma}

\begin{proof}
	For any $0<\sigma<1$, pick $\widetilde{s_\lambda}$ such that $u(r_\lambda \widetilde{s_\lambda})\leq \sigma c_{\lambda}.$  Through monotonicity formula of Lemma \ref{lem3}, one can easily check that $u(r_\lambda \widetilde{s_\lambda})\leq \sigma c_{\lambda}$ if $\widetilde{s_\lambda}^2=e^{(1-\sigma)c_\lambda^2}$.
		First, we have
		\begin{equation}\label{8-1-1}
				\begin{aligned}
						{}&c^{2}_{\lambda}\int_{\mathbb{B}^2}\left(\lambda c_{\lambda}u_{\lambda}e^{u^{2}_{\lambda}}
						-\pi\delta_{0}\right)\varphi dV_{\mathbb{B}^2}\\
						={}&c^{2}_{\lambda}\int_{B_{r_{\lambda}\widetilde{s_\lambda}}}\left(\lambda c_{\lambda}u_{\lambda}e^{u^{2}_{\lambda}}
						-\pi\delta_{0}\right)\varphi dV_{\mathbb{B}^2}+c^{2}_{\lambda}\int_{\mathbb{B}^2 \backslash B_{r_{\lambda}\widetilde{s_\lambda}}}\lambda c_{\lambda}u_{\lambda}e^{u^{2}_{\lambda}}\varphi dV_{\mathbb{B}^2}\\
						:={}&c^{2}_{\lambda}J_{1}+c^{2}_{\lambda}J_{2}.
					\end{aligned}
			\end{equation}
		For $J_1$, noticing that $r_{\lambda}\widetilde{s}_{\lambda}= e^{-\frac{\sigma c^{2}_{\lambda}}{2}}+o_\lambda(1)$, we have
	\begin{equation*}\label{8-1-2}
		\begin{split}
			J_{1}
			={}&
			\varphi(0)\int_{B_{r_{\lambda}\widetilde{s_\lambda}}}\left(\lambda c_{\lambda}u_{\lambda}e^{u^{2}_{\lambda}}
			-\pi\delta_{0}\right)dV_{\mathbb{B}^2}
			+\int_{B_{r_{\lambda}\widetilde{s_\lambda}}}\left(\lambda c_{\lambda}u_{\lambda}e^{u^{2}_{\lambda}}
			-\pi\delta_{0}\right)\nabla \varphi(\theta x)\cdot x dV_{\mathbb{B}^2}\\
			={}&\varphi(0)\int_{B_{r_{\lambda}\widetilde{s_\lambda}}}\left(\lambda c_{\lambda}u_{\lambda}e^{u^{2}_{\lambda}}
			-\pi\delta_{0}\right)dV_{\mathbb{B}^2}
			+O\Big(e^{-\frac{\sigma c^{2}_{\lambda}}{2}}||\nabla \varphi||_{L^{\infty}(B_{r_{\lambda}\widetilde{s_\lambda}})}\int_{B_{r_{\lambda}\widetilde{s_\lambda}}}\left(\lambda c_{\lambda}u_{\lambda}e^{u^{2}_{\lambda}}
			+\pi\delta_{0}\right)dV_{\mathbb{B}^2}\Big).
			\end{split}
			\end{equation*}
			We shall estimate $\int_{B_{r_{\lambda}\widetilde{s_\lambda}}}\lambda c_{\lambda}u_{\lambda}e^{u^{2}_{\lambda}}dV_{\mathbb{B}^2}$ and $\int_{B_{r_{\lambda}\widetilde{s_\lambda}}}\pi\delta_{0}dV_{\mathbb{B}^2}$. Recalling $\lambda r_\lambda^2 c_\lambda^2 e^{c_\lambda^2}=1$, by Lemma \ref{add-lem-5}, we have
			\begin{equation*}
				\begin{split}
					{}&\int_{B_{r_{\lambda}\widetilde{s_\lambda}}}\lambda c_{\lambda}u_{\lambda}e^{u^{2}_{\lambda}}dV_{\mathbb{B}^2}\\
					={}&\int_{B_{r_\lambda\widetilde{s_\lambda}}}\lambda c_{\lambda}
					\Big(c_{\lambda}-\frac{t}{c_{\lambda}}+\frac{\varphi_0(t)}{c^{3}_{\lambda}}+\frac{\beta t}{2c^{5}_{\lambda}}
					+O\big(c^{-5}_{\lambda}\big)\Big)
					e^{\left(c_{\lambda}-\frac{t}{c_{\lambda}}+\frac{\varphi_0(t)}{c^{3}_{\lambda}}+\frac{\beta t}{2c^{5}_{\lambda}}
						+O\big(c^{-5}_{\lambda}\big)\right)^{2}}\frac{4}{(1-|x|^2)^{2}}dx\\
					={}&\int_{B_{\widetilde{s_\lambda}}}r_\lambda^2\lambda c_{\lambda}
					\Big(c_{\lambda}-\frac{\tilde{t}}{c_{\lambda}}+\frac{\varphi_0(\tilde{t})}{c^{3}_{\lambda}}+\frac{\beta \tilde{t}}{2c^{5}_{\lambda}}
					+O\big(c^{-5}_{\lambda}\big)\Big)
					e^{\left(c_{\lambda}-\frac{\tilde{t}}{c_{\lambda}}+\frac{\varphi_0(\tilde{t})}{c^{3}_{\lambda}}+\frac{\beta \tilde{t}}{2c^{5}_{\lambda}}
						+O\big(c^{-5}_{\lambda}\big)\right)^{2}}\frac{4}{(1-r_\lambda^2|x|^2)^{2}}dx\\
					={}&\int_0^{\widetilde{s_\lambda}}2\pi r \Big(1-\frac{\tilde{t}}{c_{\lambda}^2}+\frac{\varphi_0(\tilde{t})}{c^{4}_{\lambda}}+\frac{\beta \tilde{t}}{2c^{6}_{\lambda}}
					+O\big(c^{-6}_{\lambda}\big)\Big)
					e^{\left(c_{\lambda}-\frac{\tilde{t}}{c_{\lambda}}+\frac{\varphi_0(\tilde{t})}{c^{3}_{\lambda}}+\frac{\beta \tilde{t}}{2c^{5}_{\lambda}}
						+O\big(c^{-5}_{\lambda}\big)\right)^{2}-c_\lambda^2}\frac{4}{(1-r_\lambda^2r^2)^{2}}dr\\
					={}&\int_0^{\ln \left(1+\widetilde{s_\lambda}^2\right)} \Big(1-\frac{\tilde{t}}{c_{\lambda}^2}+\frac{\varphi_0(\tilde{t})}{c^{4}_{\lambda}}+\frac{\beta \tilde{t}}{2c^{6}_{\lambda}}
					+O\big(c^{-6}_{\lambda}\big)\Big)
					e^{\left(c_{\lambda}-\frac{\tilde{t}}{c_{\lambda}}+\frac{\varphi_0(\tilde{t})}{c^{3}_{\lambda}}+\frac{\beta \tilde{t}}{2c^{5}_{\lambda}}
						+O\big(c^{-5}_{\lambda}\big)\right)^{2}-c_\lambda^2+2\tilde{t}}\frac{4\pi e^{-\tilde{t}}}{(1-r_\lambda^2(e^{\tilde{t}}-1))^{2}}d\tilde{t},
				\end{split}
			\end{equation*}
			where $t=\ln\left(1+\frac{r^{2}}{r^{2}_{\lambda}}\right)$ and $\tilde{t}=\ln(1+r^{2})$. Similar to Lemma \ref{add-lem-5}, taking $\widetilde{T_\lambda}=o(1)c_\lambda$ such that $c_\lambda^ke^{-\widetilde{T_\lambda}}\to0$ as $\lambda\to0$ for all $k$, then
			\begin{equation}
				\left(c_{\lambda}-\frac{\tilde{t}}{c_{\lambda}}+\frac{\varphi_0(\tilde{t})}{c^{3}_{\lambda}}+\frac{\beta \tilde{t}}{2c^{5}_{\lambda}}
				+O\big(c^{-5}_{\lambda}\big)\right)^{2}-c_\lambda^2+2\tilde{t}
				=\frac{\tilde{t}^2+2\varphi_0(\tilde{t})}{c_\lambda^2}+\frac{\beta \tilde{t}-2\tilde{t}\varphi_0(\tilde{t})}{c_\lambda^4}+o(c_\lambda^{-4})=o(1) \text{ for }\tilde{t}\in(0,\widetilde{T_\lambda}),
			\end{equation}
			which deduces that
			\begin{equation*}
				\begin{split}
					{}&\int_0^{\widetilde{T_\lambda}}\Big(1-\frac{\tilde{t}}{c_{\lambda}^2}+\frac{\varphi_0(\tilde{t})}{c^{4}_{\lambda}}+\frac{\beta \tilde{t}}{2c^{6}_{\lambda}}
					+O\big(c^{-6}_{\lambda}\big)\Big)
					e^{\left(c_{\lambda}-\frac{\tilde{t}}{c_{\lambda}}+\frac{\varphi_0(\tilde{t})}{c^{3}_{\lambda}}+\frac{\beta \tilde{t}}{2c^{5}_{\lambda}}
						+O\big(c^{-5}_{\lambda}\big)\right)^{2}-c_\lambda^2+2\tilde{t}}\frac{4\pi e^{-\tilde{t}}}{(1-r_\lambda^2(e^{\tilde{t}}-1))^{2}}d\tilde{t}\\
						={}&\int_0^{\widetilde{T_\lambda}} \Big(1-\frac{\tilde{t}}{c_{\lambda}^2}+\frac{\varphi_0(\tilde{t})}{c^{4}_{\lambda}}+o\big(c^{-5}_{\lambda}\big)\Big)\Big(1+\frac{\tilde{t}^2+2\varphi_0(\tilde{t})}{c_\lambda^2}+\frac{\beta \tilde{t}-2\tilde{t}\varphi_0(\tilde{t})}{c_\lambda^4}+O(1+\tilde{t}^4)c_\lambda^{-4}\Big)4\pi e^{-\tilde{t}}\Big(1+o\big(c_\lambda^{-4}\big)\Big)d\tilde{t}\\
						={}&4\pi+\frac{4\pi}{c_\lambda^2}+O\big(c_\lambda^{-4}\big).
				\end{split}
			\end{equation*}
			For $t\in \left(\widetilde{T_\lambda},\ln \left(1+\widetilde{s_\lambda}^2\right)\right)$, we have
			\begin{equation*}
					\left(c_{\lambda}-\frac{\tilde{t}}{c_{\lambda}}-\frac{\tilde{t}}{c^{3}_{\lambda}}
				+O\big(c^{-3}_{\lambda}\big)\right)^{2}-c_\lambda^2+2\tilde{t}=\frac{\tilde{t}^2}{c_\lambda^2}+O(1),
			\end{equation*}
			so that
			\begin{equation*}
				\begin{split}
					{}&\int_{\widetilde{T_\lambda}}^{\ln \left(1+\widetilde{s_\lambda}^2\right)} \Big(1-\frac{\tilde{t}}{c_{\lambda}^2}+\frac{\varphi_0(\tilde{t})}{c^{4}_{\lambda}}+\frac{\beta \tilde{t}}{2c^{6}_{\lambda}}
					+O\big(c^{-6}_{\lambda}\big)\Big)
					e^{\left(c_{\lambda}-\frac{\tilde{t}}{c_{\lambda}}+\frac{\varphi_0(\tilde{t})}{c^{3}_{\lambda}}+\frac{\beta \tilde{t}}{2c^{5}_{\lambda}}
						+O\big(c^{-5}_{\lambda}\big)\right)^{2}-c_\lambda^2+2\tilde{t}}\frac{4\pi e^{-\tilde{t}}}{(1-r_\lambda^2(e^{\tilde{t}}-1))^{2}}d\tilde{t}\\
						={}&O(1)\int_{\widetilde{T_\lambda}}^{\ln \left(1+\widetilde{s_\lambda}^2\right)} e^{\frac{\tilde{t}^2}{c_\lambda^2}-\tilde{t}}d\tilde{t}
						=O(1)\int_{\widetilde{T_\lambda}}^{\frac{1}{2}c_\lambda^2} e^{\frac{\tilde{t}^2}{c_\lambda^2}-\tilde{t}}d\tilde{t}
						=O(1)\int_{\widetilde{T_\lambda}}^{\frac{1}{2}c_\lambda^2} e^{-\frac{1}{2}\tilde{t}}d\tilde{t}
						=O\left( e^{-\frac{1}{2}\widetilde{T_\lambda}} \right)=O(c_\lambda^{-4}).
				\end{split}
			\end{equation*}
			According to the above estimates, we obtain
			\begin{equation}\label{555}
				\int_{B_{r_{\lambda}\widetilde{s_\lambda}}}\lambda c_{\lambda}u_{\lambda}e^{u^{2}_{\lambda}}dV_{\mathbb{B}^2}=4\pi+\frac{4\pi}{c_\lambda^2}+O\big(c_\lambda^{-4}\big).
			\end{equation}
			Direct computation gives that
			\begin{equation}\label{556}
				\int_{B_{r_{\lambda}\widetilde{s_\lambda}}}\pi\delta_{0}dV_{\mathbb{B}^2}=\int_{B_{r_{\lambda}\widetilde{s_\lambda}}}\pi\delta_{0}\frac{4}{(1-|x|^2)^2}dx=4\pi.
			\end{equation}
	Therefore, we infer that
	\begin{equation}\label{8-1-5-912}
		\begin{split}
			\lim\limits_{\lambda\rightarrow 0}c^{2}_{\lambda}J_{1}
			=4\pi \varphi(0).
		\end{split}
	\end{equation}
	Since $\varphi\in C_c^\infty(\mathbb{B}^2)$, we may assume that the support of $\varphi$ is contained in $B_{\mathbb{B}^2}(0,R)$. Then we have
	\begin{equation}\nonumber
		c^{2}_{\lambda}J_{2}=\lambda c_\lambda^2\int_{B_{\mathbb{B}^2}(0,R) \backslash B_{r_{\lambda}\widetilde{s_{\lambda}}}}\Big(\left(c_\lambda u_\lambda-4\pi G\right)e^{u_\lambda^2}\varphi +4\pi G(e^{u_\lambda^2}-1)\varphi +4\pi G \varphi\Big) dV_{\mathbb{B}^2}.
	\end{equation}
	 For the first term, by Lemma \ref{lem-7-31-1} and Proposition \ref{lem-7-31-2}, we can check that
	 \begin{equation}\label{ass1}
	 	\begin{split}
	 		{}&\lambda c_\lambda^2\int_{B_{\mathbb{B}^2}(0,R) \backslash B_{r_{\lambda}\widetilde{s_{\lambda}}}}(c_\lambda u_\lambda-4\pi G)e^{u_\lambda^2}\varphi  dV_{\mathbb{B}^2}\\
	 		\leq{}&\lambda c_\lambda^2||\varphi||_{L^\infty(\mathbb{B}^2)}\Big(\int_{B_{\mathbb{B}^2}(0,R) \backslash B_{r_{\lambda}\widetilde{s_{\lambda}}}}|c_\lambda u_\lambda-4\pi G|^{p'} dV_{\mathbb{B}^2}\Big)^{\frac{1}{p'}}\Big(\int_{B_{\mathbb{B}^2}(0,R) \backslash B_{r_{\lambda}\widetilde{s_{\lambda}}}}e^{p u_\lambda^2}dV_{\mathbb{B}^2}\Big)^\frac{1}{p}\\
	 		={}&(1+o_\lambda(1))||\varphi||_{L^\infty(\mathbb{B}^2)}o_\lambda(1)\Big(\int_{B_{\mathbb{B}^2}(0,R) \backslash B_{r_{\lambda}\widetilde{s_{\lambda}}}}e^{p u_\lambda^2}dV_{\mathbb{B}^2}\Big)^\frac{1}{p}\\
	 		={}&o_\lambda(1)
	 	\end{split}
	 \end{equation}
	 where we use the Moser-Trudinger inequality \eqref{MT} for $\int_{B_{\mathbb{B}^2}(0,R) \backslash B_{r_{\lambda}\widetilde{s_{\lambda}}}}e^{p u_\lambda^2}dV_{\mathbb{B}^2}$ with $p$ satisfying $4\pi\sigma p<4\pi$ and $\frac{1}{p}+\frac{1}{p'}=1$ since from Lemma \ref{lem2-1}
	 $$
	 \lim\limits_{\lambda\rightarrow 0}\int_{B_{\mathbb{B}^2}(0,R) \backslash B_{r_{\lambda}\widetilde{s_{\lambda}}}}|\nabla_{\mathbb{B}^2} u_\lambda|^{2}dV_{\mathbb{B}^2}\leq
	 \lim\limits_{\lambda\rightarrow 0}\int_{\mathbb{B}^2}|\nabla_{\mathbb{B}^2}u_{\lambda}^{\sigma}|^2dV_{\mathbb{B}^2}=4\pi \sigma.
	 $$
	For the second term, we also have
	 \begin{equation}\label{ass2}
	 	\begin{split}
	 		{}&\lambda c_\lambda^2\int_{B_{\mathbb{B}^2}(0,R) \backslash B_{r_{\lambda}\widetilde{s_{\lambda}}}}4\pi G(e^{u_\lambda^2}-1)\varphi  dV_{\mathbb{B}^2}\\
	 		\leq{}&\lambda c_\lambda^2\int_{B_{\mathbb{B}^2}(0,R) \backslash B_{r_{\lambda}\widetilde{s_{\lambda}}}}4\pi Gu_\lambda^2e^{u_\lambda^2}\varphi  dV_{\mathbb{B}^2}\\
	 		\leq{}&\lambda c_\lambda^2||\varphi||_{L^\infty(\mathbb{B}^2)}c_\lambda^{-2}\Big(\int_{B_{\mathbb{B}^2}(0,R) \backslash B_{r_{\lambda}\widetilde{s_{\lambda}}}}|4\pi G c_\lambda^2u_\lambda^2|^{p'}dV_{\mathbb{B}^2}\Big)^{\frac{1}{p'}}\Big(\int_{B_{\mathbb{B}^2}(0,R) \backslash B_{r_{\lambda}\widetilde{s_{\lambda}}}}e^{p u_\lambda^2}dV_{\mathbb{B}^2}\Big)^\frac{1}{p}\\
	 		={}&(1+o_\lambda(1))||\varphi||_{L^\infty(\mathbb{B}^2)}o_\lambda(1)\Big(\int_{B_{\mathbb{B}^2}(0,R) \backslash B_{r_{\lambda}\widetilde{s_{\lambda}}}}|4\pi G(4\pi G+o_\lambda(1))^2|^{p'}dV_{\mathbb{B}^2}\Big)^{\frac{1}{p'}}\Big(\int_{B_{\mathbb{B}^2}(0,R) \backslash B_{r_{\lambda}\widetilde{s_{\lambda}}}}e^{p u_\lambda^2}dV_{\mathbb{B}^2}\Big)^\frac{1}{p}\\
	 		={}&o_\lambda(1).
	 	\end{split}
	 \end{equation}
	  For the last term, we have
	 \begin{equation}\label{ass3}
	 	\lim_{\lambda\to0}\lambda c_\lambda^2\int_{B_{\mathbb{B}^2}(0,R) \backslash B_{r_{\lambda}\widetilde{s_{\lambda}}}}4\pi G \varphi dV_{\mathbb{B}^2}=\int_{\mathbb{B}^2} 4\pi G \varphi dV_{\mathbb{B}^2}.
	 \end{equation}
	 Therefore, we infer that
	 \begin{equation}\label{8-1-5}
	 	\lim_{\lambda\to0}c_\lambda^2 J_2=\int_{\mathbb{B}^2} 4\pi G \varphi dV_{\mathbb{B}^2}.
	 \end{equation}
	Combining the estimates \eqref{8-1-5-912} and \eqref{8-1-5}, we accomplish the proof of Lemma \ref{adlem1}.
	\end{proof}

The proof of the following lemma gives a precise expansion of $c_{\lambda} u_{\lambda}$ away from the origin which is very crucial to prove
the uniqueness result. To our best knowledge, this is the first result
which tells us how $c_{\lambda} u_{\lambda}$ converges to $ 4\pi G(x).$
	
	\begin{lemma}\label{ad}
		There holds that
		\begin{equation}\label{lambda_gamm1a}
			c_\lambda= \lambda^{-\frac{1}{2}}+A\lambda^{\frac{1}{2}}
			+o_{\lambda}\big(\lambda^{\frac{1}{2}}\big),
		\end{equation}
		for some constant $A$.		
	\end{lemma}
	
	\begin{proof}
		Observe that $c_\lambda^2(c_\lambda u_\lambda-4\pi G)$ satisfies equation
		\begin{equation*}
			\begin{aligned}
				-\Delta_{\mathbb{B}^2}\left(c_\lambda^2(c_\lambda u_\lambda-4\pi G)\right)
				=c^{2}_{\lambda}\left(\lambda c_{\lambda}u_{\lambda}e^{u^{2}_{\lambda}}
				-\pi\delta_{0}\right)=:\widetilde{f_\lambda}.
			\end{aligned}
		\end{equation*}
		Since $\widetilde{f_\lambda}$ can be seen as the linear functional on $L^{\infty}(\mathbb{B}^2)$, according to the principle of uniform boundness, it follows from Lemma \ref{adlem1} that $\widetilde{f_\lambda}$ is bounded in $L^1_{\rm loc}(\mathbb{B}^2)$. Hence one can similarly derive that $c_\lambda^2(c_\lambda u_\lambda-4\pi G)$ is bounded in $W^{1,q}_{\rm loc}(\mathbb{B}^2)$. From the proof of Lemma \ref{adlem1}, we see that $\widetilde{f_\lambda}$ is bounded in $L^{r}_{\rm loc}(\mathbb{B}^2 \backslash \{0\})$. Then standard elliptic estimates give that
		$c_\lambda^2(c_\lambda u_\lambda-4\pi G)$ is locally bounded in $C^2_{\rm loc}(\mathbb{B}^2 \backslash \{0\})$. Hence there exists $W(x)\in C^2_{\rm loc}(\mathbb{B}^2 \backslash \{0\})$ such that $c_\lambda^2(c_\lambda u_\lambda-4\pi G)$ strongly converges to $W(x)$ in
		$C^{1,\gamma}_{\rm loc}(\mathbb{B}^2 \backslash \{0\})$ and $W$ satisfies the equation
		\begin{equation}\label{addi2}
			\begin{cases}
				&-\Delta_{\mathbb{B}^2}W=\pi \delta_0+4\pi G,\quad x\in\mathbb{B}^2,\\
				&W\to0,\ \text{when}\ \rho(x)\to\infty.
			\end{cases}
		\end{equation}
		Hence we have
\begin{equation}\label{addi3}
	c_{\lambda} u_{\lambda}(\delta)=-2\ln\delta+\frac{W(\delta)}{c^{2}_{\lambda}}
	+o_{\lambda}\Big(\frac{1}{c^{2}_{\lambda}}\Big).
\end{equation}		
		Combining Proposition \ref{lem-7-31-2}, \eqref{31-4} and \eqref{addi3}, we derive that
		\begin{equation}\label{add-2-7-1}
		\lambda c_{\lambda}^2=1+\tilde{A}\lambda+o_{\lambda}(\lambda),
		\end{equation}
		which implies that
		\begin{equation}\nonumber
			c_\lambda= \lambda^{-\frac{1}{2}}+A\lambda^{\frac{1}{2}}
			+o_{\lambda}\big(\lambda^{\frac{1}{2}}\big),
		\end{equation}
		for some constant $A$.
		 Then we complete the proof of \eqref{lambda_gamm1a}.
	\end{proof}

\begin{lemma}\label{addi}
	For any $\varphi(x)\in C_c^{\infty}(\mathbb{B}^2),$
	there holds that
	\begin{equation}\label{addi1}
		\lim_{\lambda\rightarrow 0}c^{4}_{\lambda}\int_{\mathbb{B}^2}\left(\lambda c_{\lambda}u_{\lambda}e^{u^{2}_{\lambda}}
		-\pi\delta_{0}-c_\lambda^{-2}4\pi G-c_\lambda^{-2}\pi\delta_0\right)\varphi dV_{\mathbb{B}^2}\leq C||\varphi||_{L^\infty(\mathbb{B}^2)},
	\end{equation}
	where $\delta_0$ is the Dirac function.
\end{lemma}	

\begin{proof}
		First, we can derive that
	\begin{equation}\label{8-1-1}
		\begin{aligned}
			{}&c^{4}_{\lambda}\int_{\mathbb{B}^2}\left(\lambda c_{\lambda}u_{\lambda}e^{u^{2}_{\lambda}}
			-\pi\delta_{0}-c_\lambda^{-2}4\pi G-c_\lambda^{-2}\pi \delta_0\right)\varphi dV_{\mathbb{B}^2}\\
			={}&c^{4}_{\lambda}\int_{B_{r_{\lambda}\widetilde{s_\lambda}}}(\lambda c_{\lambda}u_{\lambda}e^{u^{2}_{\lambda}}
			-\pi\delta_{0}-c_\lambda^{-2}4\pi G-c_\lambda^{-2}\pi \delta_0)\varphi dV_{\mathbb{B}^2}+c^{4}_{\lambda}\int_{\mathbb{B}^2 \backslash B_{r_{\lambda}\widetilde{s_\lambda}}}(\lambda c_{\lambda}u_{\lambda}e^{u^{2}_{\lambda}}-c_\lambda^{-2}4\pi G)\varphi dV_{\mathbb{B}^2}\\
			:={}&c^{4}_{\lambda}J_{3}+c^{4}_{\lambda}J_{4}.
		\end{aligned}
	\end{equation}
	By \eqref{555}, \eqref{556} and the mean value theorem we have $\lim\limits_{\lambda\to0}c^{4}_{\lambda}J_{3}\leq C||\varphi||_{L^{\infty}(\mathbb{B}^2)}$. Since $\varphi\in C_c^\infty(\mathbb{B}^2)$, we may assume that the support of $\varphi$ is contained in $B_{\mathbb{B}^2}(0,R)$. Then, similar to \eqref{ass1}, by Lemmas \ref{lem-7-31-1} and \ref{ad} we can check that
	\begin{equation}\label{ass4}\nonumber
		\begin{split}
			{}&\lim_{\lambda\to0}c^{4}_{\lambda}J_{4}=\lim_{\lambda\to0}c^{4}_{\lambda}\int_{B_{\mathbb{B}^2}(0,R) \backslash B_{r_{\lambda}\widetilde{s_\lambda}}}(\lambda c_{\lambda}u_{\lambda}e^{u^{2}_{\lambda}}-c_\lambda^{-2}4\pi G)\varphi dV_{\mathbb{B}^2}\\
			={}&\lim_{\lambda\to0}\int_{B_{\mathbb{B}^2}(0,R) \backslash B_{r_{\lambda}\widetilde{s_{\lambda}}}}\Big(\lambda c_\lambda^2 \cdot c^2_\lambda(c_{\lambda}u_{\lambda}-4\pi G)e^{u^{2}_{\lambda}}+\lambda c_\lambda^2 4\pi G c^2_\lambda(e^{u_\lambda^2}-1)+c^2_\lambda(\lambda c_\lambda^2-1)4\pi G\Big)\varphi dV_{\mathbb{B}^2}\\
			\leq{}&\lim_{\lambda\to0}\int_{B_{\mathbb{B}^2}(0,R) \backslash B_{r_{\lambda}\widetilde{s_{\lambda}}}}\Big(\lambda c_\lambda^2 \cdot c^2_\lambda(c_{\lambda}u_{\lambda}-4\pi G)e^{u^{2}_{\lambda}}+\lambda c_\lambda^2 4\pi G c^2_\lambda u_\lambda^2 e^{u_\lambda^2}+c^2_\lambda(\lambda c_\lambda^2-1)4\pi G\Big)\varphi dV_{\mathbb{B}^2}\\
=
{}&\lim_{\lambda\to0}\int_{B_{\mathbb{B}^2}(0,R) \backslash B_{r_{\lambda}\widetilde{s_{\lambda}}}}
\Big((W+o_{\lambda}(1))e^{u^{2}_{\lambda}}
+4\pi G (4\pi G +o_\lambda(1))^2 e^{u_\lambda^2}
+c^2_\lambda(A\lambda +o_{\lambda}(\lambda))4\pi G\Big)\varphi dV_{\mathbb{B}^2}\\
\leq {}&
C||\varphi||_{L^\infty(\mathbb{B}^2)}\lim_{\lambda\to0}\Big(\big(\int_{B_{\mathbb{B}^2}(0,R) \backslash B_{r_{\lambda}\widetilde{s_{\lambda}}}}(W+o_{\lambda}(1))^{p'}dV_{\mathbb{B}^2}\big)^{\frac{1}{p'}}
\big(\int_{B_{\mathbb{B}^2}(0,R) \backslash B_{r_{\lambda}\widetilde{s_{\lambda}}}}e^{pu^{2}_{\lambda}}dV_{\mathbb{B}^2}\big)^{\frac{1}{p}}
\\{}&+\big(\int_{B_{\mathbb{B}^2}(0,R) \backslash B_{r_{\lambda}\widetilde{s_{\lambda}}}}[4\pi G(4\pi G+o_{\lambda}(1))^{2}]^{p'} dV_{\mathbb{B}^2}\big)^{\frac{1}{p'}}
\big(\int_{B_{\mathbb{B}^2}(0,R) \backslash B_{r_{\lambda}\widetilde{s_{\lambda}}}}e^{pu^{2}_{\lambda}}dV_{\mathbb{B}^2}\big)^{\frac{1}{p}}
\\{}&+\big(\int_{B_{\mathbb{B}^2}(0,R) \backslash B_{r_{\lambda}\widetilde{s_{\lambda}}}}(A+o_{\lambda}(1))^{p'}dV_{\mathbb{B}^2}\big)^{\frac{1}{p'}}
\big(\int_{B_{\mathbb{B}^2}(0,R) \backslash B_{r_{\lambda}\widetilde{s_{\lambda}}}}(4\pi G)^pdV_{\mathbb{B}^2}\big)^{\frac{1}{p}}\Big)
\\
			\leq {}&C||\varphi||_{L^\infty(\mathbb{B}^2)}
		\end{split}
	\end{equation}
		 where we use the Moser-Trudinger inequality \eqref{MT} for $\int_{B_{\mathbb{B}^2}(0,R) \backslash B_{r_{\lambda}\widetilde{s_{\lambda}}}}e^{p u_\lambda^2}dV_{\mathbb{B}^2}$ with $p$ satisfying $4\pi\sigma p<4\pi$ and $\frac{1}{p}+\frac{1}{p'}=1$ since from Lemma \ref{lem2-1}
	$$
	\lim\limits_{\lambda\rightarrow 0}\int_{B_{\mathbb{B}^2}(0,R) \backslash B_{r_{\lambda}\widetilde{s_{\lambda}}}}|\nabla_{\mathbb{B}^2} u_\lambda|^{2}dV_{\mathbb{B}^2}\leq
	\lim\limits_{\lambda\rightarrow 0}\int_{\mathbb{B}^2}|\nabla_{\mathbb{B}^2}u_{\lambda}^{\sigma}|^2dV_{\mathbb{B}^2}=4\pi \sigma.
	$$
	Combining the above two estimates we accomplish the proof.
\end{proof}
	
\begin{remark}\label{addi}
We would like to point out that
to prove \eqref{addi1}, we split
$\mathbb{B}^2$ into a small ball $B_{r_{\lambda}\widetilde{s_\lambda}}$
 (see \eqref{8-1-1}) and its exterior $\mathbb{B}^2\setminus B_{r_{\lambda}\widetilde{s_\lambda}}.$
 The choice of $\widetilde{s_\lambda}$ is very crucial. On the one hand, as this radius $r_{\lambda}\widetilde{s_\lambda}$ tends to zero, the interior estimates can be obtained via a refined blow-up analysis near the origin (involving $\delta_{0},G$ etc.), while the
exterior integral can be easily established convergence by the truncated technique and Moser-Trudinger inequality, where we need to apply
both \eqref{addi3} and \eqref{add-2-7-1}.
\end{remark}

The following Lemma is very important, as it provides a refined expansion that was absent in previous results obtained away from the blow-up point.
It plays a key role to prove Proposition \ref{add-lem6} later.
\begin{lemma}\label{ad-lem2}
	There holds
	\begin{equation}\label{7-30-10}
		c_{\lambda} u_{\lambda}(x)=4\pi G(x)+\frac{W(x)}{c_\lambda^2}+\frac{\widetilde{W}(x)}{c^{4}_{\lambda}}
		+o_{\lambda}\Big(\frac{1}{c^{4}_{\lambda}}\Big),\,\,\,x\in \partial B_{\delta},
	\end{equation}
	with some functions $G$, $W$ and $\widetilde{W}.$
\end{lemma}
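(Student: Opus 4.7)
The plan is to apply the Euclidean Green's representation formula on $B_1$: writing $\mathcal{G}(x,y)$ for the Green function of $-\Delta$ with zero boundary data (so $\mathcal{G}(x,0)=G(x)=-\frac{1}{2\pi}\log|x|$), we have
\begin{equation*}
c_\lambda u_\lambda(x) = \int_{B_1} \mathcal{G}(x,y)\, \lambda c_\lambda u_\lambda(y)\, e^{u_\lambda^2(y)}\, \frac{4}{(1-|y|^2)^2}\, dy,
\end{equation*}
and to carry out a two-scale expansion. Following the template of Lemma \ref{adlem1}, I would split the integral at $r_\lambda \widetilde{s_\lambda}$ with $\widetilde{s_\lambda}^2 = e^{(1-\sigma) c_\lambda^2}$ and then split the outer part further at a small fixed $\delta_0$, producing a bubble region $B_{r_\lambda \widetilde{s_\lambda}}$, a neck region $B_{\delta_0}\setminus B_{r_\lambda \widetilde{s_\lambda}}$, and a bulk region $B_1\setminus B_{\delta_0}$.

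Inside the bubble, I would rescale $y = r_\lambda \tilde y$ and Taylor-expand $\mathcal{G}(x,r_\lambda\tilde y) = \mathcal{G}(x,0)+r_\lambda\tilde y\cdot\nabla_y\mathcal{G}(x,0)+O(r_\lambda^2|\tilde y|^2)$; the linear term drops by radial symmetry of the rescaled density and the quadratic remainder yields $O(r_\lambda^2\log\widetilde{s_\lambda})=o(c_\lambda^{-N})$ for every $N$, since $r_\lambda^2 = O(\lambda e^{-c_\lambda^2})$. Inserting the expansion $\tilde u_\lambda(t)=c_\lambda-t/c_\lambda+\varphi_0(t)/c_\lambda^3+O(c_\lambda^{-5})$ from Lemma \ref{add-lem-5} and using $\lambda r_\lambda^2 c_\lambda^2 e^{c_\lambda^2}=1$ gives
\begin{equation*}
\lambda c_\lambda \tilde u_\lambda\, e^{\tilde u_\lambda^2}\cdot\frac{4 r_\lambda^2}{(1-r_\lambda^2|\tilde y|^2)^2} = \frac{4}{(1+|\tilde y|^2)^2}\left[1 + \frac{t^2-t+2\varphi_0(t)}{c_\lambda^2} + O(c_\lambda^{-4})(1+t^4)\right],
\end{equation*}
with $t = \ln(1+|\tilde y|^2)$. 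Passing to $t$ in the radial integration (so $\frac{2\pi\tilde r\,d\tilde r}{(1+\tilde r^2)^2} = \pi e^{-t}\,dt$) the bubble contribution evaluates to $4\pi G(x)+\frac{4\pi G(x)}{c_\lambda^2}\int_0^{+\infty}(t^2-t+2\varphi_0(t))\,e^{-t}\,dt+o(c_\lambda^{-2})$.

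For the bulk region, the $C^2_{\mathrm{loc}}$-convergence $c_\lambda u_\lambda\to 4\pi G$ from Lemma \ref{lem-7-31-1} gives $u_\lambda^2=O(c_\lambda^{-2})$, and combined with $\lambda c_\lambda^2\to 1$ from Proposition \ref{lem-7-31-2} produces a contribution $\frac{1}{c_\lambda^2}\cdot 4\pi\int_{|y|\geq\delta_0}\mathcal{G}(x,y)\,G(y)\,\frac{4}{(1-|y|^2)^2}\,dy+o(c_\lambda^{-2})$ which, by the radial symmetry of $G(y)$, depends only on $|x|=\delta$. For the neck region, Lemma \ref{add-lem-5} still applies but the uniform $c_\lambda^{-2}$-expansion breaks down because the error $(1+t^4)/c_\lambda^4$ is no longer small when $t\sim c_\lambda^2$; I would use a finer splitting of the $t$-interval, combined with the bound $u_\lambda\leq\sigma c_\lambda$ which holds by the very choice of $\widetilde{s_\lambda}$, together with Lemma \ref{lem2-1} and the Moser-Trudinger inequality applied to $u_\lambda^\sigma$ (whose Dirichlet energy converges to $4\pi\sigma<4\pi$, yielding $L^p$-integrability of $e^{p u_\lambda^2}$ for some $p>1$), to absorb the neck integral into $o(c_\lambda^{-2})$. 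Summing the three contributions and using that $G(x)$ is constant on $\partial B_\delta$ merges all $c_\lambda^{-2}$ corrections into a single constant $A_1=A_1(\delta)$.

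The main obstacle will be the neck region estimate: the exponential density $\lambda c_\lambda u_\lambda e^{u_\lambda^2}$ is not uniformly expandable in powers of $c_\lambda^{-2}$ throughout the transition between the bubble profile and the Green-function profile, so the delicate scale $\widetilde{s_\lambda}^2=e^{(1-\sigma)c_\lambda^2}$ highlighted in the introduction is exactly what activates the Moser-Trudinger inequality on the neck and secures the $o(c_\lambda^{-2})$ remainder required to isolate $A_1$.
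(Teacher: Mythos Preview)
Your approach is genuinely different from the paper's. The paper does not compute the Green representation at all: it observes that $v_\lambda:=c_\lambda^2(c_\lambda u_\lambda-4\pi G)$ solves $-\Delta_{\mathbb{B}^2}v_\lambda=c_\lambda^2(\lambda c_\lambda u_\lambda e^{u_\lambda^2}-4\pi\delta_0)$, invokes Lemma~\ref{adlem1} (via the uniform boundedness principle) to get an $L^1_{\rm loc}(\mathbb{B}^2)$ bound on the right-hand side, hence $W^{1,q}_{\rm loc}$ bounds on $v_\lambda$, and then bootstraps using the $L^r_{\rm loc}(\mathbb{B}^2\setminus\{0\})$ bound already contained in the proof of Lemma~\ref{adlem1} to obtain $C^{1,\gamma}_{\rm loc}(\mathbb{B}^2\setminus\{0\})$-compactness. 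The constant $A_1$ is simply the value at $|x|=\delta$ of the limit function; nothing is computed explicitly. Your route is more laborious but would, if completed, produce an explicit formula for $A_1$.

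There is, however, a real gap in your neck estimate. The neck $B_{\delta_0}\setminus B_{r_\lambda\widetilde{s_\lambda}}$ is \emph{not} $o(c_\lambda^{-2})$. On the fixed annulus $\{\delta_0/2\le|y|\le\delta_0\}$, which lies entirely inside your neck, Lemma~\ref{lem-7-31-1} and Proposition~\ref{lem-7-31-2} give $\lambda c_\lambda u_\lambda e^{u_\lambda^2}=(\lambda c_\lambda^2)(c_\lambda u_\lambda)e^{u_\lambda^2}/c_\lambda^2\to 4\pi G(y)/c_\lambda^2$ pointwise, so this subregion alone contributes a nonzero multiple of $c_\lambda^{-2}$ depending on $\delta_0$. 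The Moser--Trudinger argument you cite is exactly the estimate \eqref{8-1-5} from Lemma~\ref{adlem1}, and that gives only $c_\lambda^2 J_2\le C$, i.e.\ $O(c_\lambda^{-2})$; in fact $c_\lambda^2 J_2$ converges to a nonzero limit. The fix is either to merge neck and bulk into a single outer region $|y|>r_\lambda\widetilde{s_\lambda}$ and compute its $c_\lambda^{-2}$ coefficient in one stroke (the near-$\delta_0$ portion of the neck behaves exactly like the bulk), or to take $\delta_0\to 0$ after $\lambda\to 0$ and check that the neck coefficient vanishes while the bulk integral converges. A secondary issue: your bulk remainder relies on $c_\lambda u_\lambda\to 4\pi G$ in $C^2_{\rm loc}$, which is not uniform up to $\partial B_1$, so integrating the $o(c_\lambda^{-2})$ error against $(1-|y|^2)^{-2}$ near the boundary requires the decay estimates of Appendix~\ref{sb}.
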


\begin{proof}
	Obviously, $c_\lambda^2[c_\lambda^2(c_\lambda u_\lambda-4\pi G)-W]$ satisfies equation
	\begin{equation*}
		\begin{aligned}
			-\Delta_{\mathbb{B}^2}\left(c_\lambda^2[c_\lambda^2(c_\lambda u_\lambda-4\pi G)-W]\right)
			=c^{4}_{\lambda}\left(\lambda c_{\lambda}u_{\lambda}e^{u^{2}_{\lambda}}
			-\pi\delta_{0}-c_\lambda^{-2}4\pi G-c_\lambda^{-2}\pi \delta_0\right)=:\widetilde{\widetilde{f_\lambda}}.
		\end{aligned}
	\end{equation*}
	Since $\widetilde{\widetilde{f_\lambda}}$ can be seen as the linear functional on $L^{\infty}(\mathbb{B}^2)$, according to the principle of uniform boundness, it follows from Lemma \ref{addi} that $\widetilde{\widetilde{f_\lambda}}$ is bounded in $L^1_{\rm loc}(\mathbb{B}^2)$. Hence one can similarly derive that $c_\lambda^2[c_\lambda^2(c_\lambda u_\lambda-4\pi G)-W]$ is bounded in $W^{1,q}_{\rm loc}(\mathbb{B}^2)$. From the proof of Lemma \ref{addi}, we see that $\widetilde{\widetilde{f_\lambda}}$ is bounded in $L^{r}_{\rm loc}(\mathbb{B}^2 \backslash \{0\})$. Then standard elliptic estimates give that
	$c_\lambda^2[c_\lambda^2(c_\lambda u_\lambda-4\pi G)-W]$ is locally bounded in $C^2_{\rm loc}(\mathbb{B}^2 \backslash \{0\})$. Hence there exists $\widetilde{W}(x)\in C^2_{\rm loc}(\mathbb{B}^2 \backslash \{0\})$ such that $c_\lambda^2[c_\lambda^2(c_\lambda u_\lambda-4\pi G)-W]$ strongly converges to $\widetilde{W}(x)$ in
	$C^{1,\gamma}_{\rm loc}(\mathbb{B}^2 \backslash \{0\})$. Then we accomplish the proof of Lemma \ref{ad-lem2}.
\end{proof}

Now we will prove Proposition \ref{add-lem6}.
\begin{proof}[\textbf{Proof of Proposition \ref{add-lem6}}]
	By Lemma \ref{ad-lem2}, we have
	\begin{equation}\label{7-30-1}
		c_{\lambda}u_{\lambda}\left(\delta\right)= -2\ln\delta+\frac{W(\delta)}{c_\lambda^2}+\frac{\widetilde{W}(\delta)}{c^{4}_{\lambda}}+o_{\lambda}
		\Big(\frac{1}{c^{4}_{\lambda}}\Big).
	\end{equation}
	It follows from \eqref{31-4}, \eqref{lambda_gamm1a} and \eqref{7-30-1} that
	$$
	\lambda c_{\lambda}^2=1+\tilde{A}\lambda+\tilde{B}\lambda^2+o_{\lambda}(\lambda^2),
	$$
	which implies that \eqref{lambda_gamma} holds for some constants $A, B$.
\end{proof}

\section{Uniqueness of positive solutions}\label{s6}
Since it follows from Proposition \ref{pro} that $u_{\lambda}$ is radial up to some M\"{o}bius transformation, in order to prove Theorem \ref{unique},
by the uniqueness of Cauchy initial value problem for ODE,
we only need to prove
the following result.
\begin{proposition}\label{thuniq}
Let $u_\lambda^{(1)}$ and $u_\lambda^{(2)}$ be two positive solutions to equation
\begin{equation}\label{lam}
	\left\{
	\begin{aligned}
		&-\Delta u=\lambda u e^{u^2}\left(\frac{2}{1-|x|^2}\right)^2,&x&\in B_1,\\
		&u=0,&x&\in\partial B_1.\\
	\end{aligned}
	\right.
\end{equation}
Then  there exists  $\lambda_0>0$ such that
\[u_\lambda^{(1)}(x)\equiv u_\lambda^{(2)}(x),   ~\mbox{for any}~~ \lambda\in (0,\lambda_0),   x\in B_{\delta r_{\lambda}},\]
where $\delta>0$ is a small fixed constant.
\end{proposition}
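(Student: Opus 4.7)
I argue by contradiction. Suppose that along a sequence $\lambda_n\to 0$ there exist two distinct positive solutions $u_{\lambda_n}^{(1)}\not\equiv u_{\lambda_n}^{(2)}$ of \eqref{lam}. By Proposition \ref{pro} both are radially decreasing; by Theorem \ref{thm1} and Proposition \ref{add-lem6} both concentrate at the origin with
\begin{equation*}
c_{\lambda_n}^{(i)}:=u_{\lambda_n}^{(i)}(0)=\lambda_n^{-1/2}+\tfrac{A}{2}\lambda_n^{1/2}+o(\lambda_n^{1/2}),\qquad i=1,2,
\end{equation*}
so the two scales $r_{\lambda_n}^{(i)}$ from Lemma \ref{lem1} are equivalent; set $r_n:=r_{\lambda_n}^{(1)}$. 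Introduce the normalized difference
\begin{equation*}
\xi_n(x):=\frac{u_{\lambda_n}^{(1)}(x)-u_{\lambda_n}^{(2)}(x)}{\|u_{\lambda_n}^{(1)}-u_{\lambda_n}^{(2)}\|_{L^\infty(B_1)}},\qquad \|\xi_n\|_{L^\infty(B_1)}=1,
\end{equation*}
which satisfies the linearization
\begin{equation*}
-\Delta\xi_n=\lambda_n b_n(x)\,\xi_n\,\Bigl(\tfrac{2}{1-|x|^2}\Bigr)^2,\quad b_n(x):=\int_0^1(1+2u_t^2)e^{u_t^2}\,dt,\;u_t:=tu_{\lambda_n}^{(1)}+(1-t)u_{\lambda_n}^{(2)}.
\end{equation*}

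\textbf{Inner blow-up.} Define $\widetilde\xi_n(y):=\xi_n(r_n y)$ on $B_{1/r_n}$. Using Lemma \ref{add-lem-5} together with $\lambda_n r_n^{2}(c_{\lambda_n}^{(1)})^{2}e^{(c_{\lambda_n}^{(1)})^{2}}=1$ and a Taylor expansion, the coefficient $\lambda_n r_n^2 b_n(r_n y)\cdot\tfrac{4}{(1-r_n^2|y|^2)^2}$ converges in $C^0_{\mathrm{loc}}(\mathbb{R}^2)$ to $\tfrac{8}{(1+|y|^2)^2}$. Combined with $|\widetilde\xi_n|\le 1$, elliptic regularity yields a subsequential limit $\widetilde\xi_n\to \xi_\infty$ in $C^1_{\mathrm{loc}}(\mathbb{R}^2)$, where $\xi_\infty$ is radial, $|\xi_\infty|\le 1$ and
\begin{equation*}
-\Delta\xi_\infty=\tfrac{8}{(1+|y|^2)^2}\,\xi_\infty\quad\text{in }\mathbb{R}^2.
\end{equation*}
A classical ODE analysis (cf.\ \cite{MM-JEMS,LPP-2022}) shows that the bounded radial kernel of this operator is one-dimensional and spanned by $\phi_0(y):=\tfrac{1-|y|^2}{1+|y|^2}$, whence $\xi_\infty=a_0\phi_0$ for some $a_0\in\mathbb{R}$.

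\textbf{Killing $a_0$ via a local Pohozaev identity.} The main step, and the expected obstacle, is to prove $a_0=0$. To this end I will apply a local Pohozaev identity on $B_\delta$ obtained by testing the equation for $u_{\lambda_n}^{(1)}$ against $x\cdot\nabla u_{\lambda_n}^{(2)}$, symmetrically for $u_{\lambda_n}^{(2)}$, and subtracting, so as to get an identity of the form
\begin{equation*}
\mathcal{B}_n(\partial B_\delta)\;=\;\lambda_n\!\int_{B_\delta}\!\bigl[F(u_{\lambda_n}^{(1)})-F(u_{\lambda_n}^{(2)})\bigr]\bigl(x\!\cdot\!\nabla W\bigr)\,dx\;+\;\mathrm{l.o.t.},
\end{equation*}
where $F(u)=\tfrac12(e^{u^2}-1)$, $W(x)=\tfrac{4}{(1-|x|^2)^2}$ and $\mathcal{B}_n(\partial B_\delta)$ denotes the standard boundary quadratic form in $(u_{\lambda_n}^{(1)},u_{\lambda_n}^{(2)})$. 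Dividing by $\|u_{\lambda_n}^{(1)}-u_{\lambda_n}^{(2)}\|_{L^\infty(B_1)}$, the boundary integral is evaluated via the outer expansion $c_{\lambda_n}^{(i)}u_{\lambda_n}^{(i)}(x)=4\pi G(x)+A_1/(c_{\lambda_n}^{(i)})^{2}+o(1/(c_{\lambda_n}^{(i)})^{2})$ of Lemma \ref{ad-lem2} and is seen to vanish in the limit; the bulk integral, after the change of variables $y=x/r_n$ and insertion of the inner expansion of Lemma \ref{add-lem-5}, concentrates on $\mathbb{R}^2$ and reduces to $c_\ast a_0$, with
\begin{equation*}
c_\ast\;=\;\int_{\mathbb{R}^2}\frac{8\,\phi_0(y)\,(y\cdot\nabla\eta_0(y))}{(1+|y|^2)^2}\,dy\;\neq\;0.
\end{equation*}
The technical heart of the matter will be the careful bookkeeping of orders in $c_{\lambda_n}^{-1}$ on both sides, which hinges on the $o(c_\lambda^{-4})$-precision of Lemma \ref{add-lem-5}, the $o(c_\lambda^{-2})$-precision of Lemma \ref{ad-lem2}, and the sharp relation $\lambda c_\lambda^{2}=1+A\lambda+o(\lambda)$ of Proposition \ref{add-lem6} — i.e.\ the entire machinery developed in Sections \ref{s4}--\ref{cc}. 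The resulting identity forces $a_0=0$.

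\textbf{Conclusion.} With $\xi_\infty\equiv 0$, the rescaled difference obeys $\sup_{B_{\delta r_n}}|\xi_n|\to 0$; a Harnack/maximum-principle argument on the annulus $B_1\setminus B_{\delta r_n}$, using the boundary condition $\xi_n|_{\partial B_1}=0$ together with the smallness of $e^{u_t^2}-1$ away from the origin, then gives $\|\xi_n\|_{L^\infty(B_1\setminus B_{\delta r_n})}\to 0$ as well, contradicting $\|\xi_n\|_{L^\infty(B_1)}=1$. Hence for all sufficiently small $\lambda$ one has $u_\lambda^{(1)}\equiv u_\lambda^{(2)}$ on $B_{\delta r_\lambda}$, which is exactly Proposition \ref{thuniq}; the Cauchy uniqueness for the radial ODE satisfied by both solutions then extends the identity to all of $B_1$ and yields Theorem \ref{unique}.
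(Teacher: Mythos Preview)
Your overall strategy---contradiction, rescaling to a bounded radial solution of the linearized Liouville equation, identification of the kernel as $a_0\frac{1-|y|^2}{1+|y|^2}$, and a local Pohozaev identity to kill $a_0$---is exactly the paper's. The difference lies in two seemingly innocent choices: you normalize $\xi_n$ in $L^\infty(B_1)$ and you apply the Pohozaev identity on the fixed ball $B_\delta$, whereas the paper normalizes on $B_{\delta r_\lambda}$ and applies Pohozaev on the shrinking ball $B_{\delta r_\lambda^{(1)}}$. With the paper's choices everything lives, after rescaling, on the fixed ball $B_\delta$ in the $y$-variable, so the boundary terms in the Pohozaev identity are computed directly from $\widetilde\xi_\lambda\to\widetilde\alpha_0\phi_0$ and the inner expansion, and once $\widetilde\alpha_0=0$ the contradiction with $\|\xi_\lambda\|_{L^\infty(B_{\delta r_\lambda})}=1$ is immediate.

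Your choices, by contrast, create a genuine gap. In the ``Conclusion'' you invoke a maximum-principle argument on the annulus $B_1\setminus B_{\delta r_n}$, appealing to ``smallness of $e^{u_t^2}-1$ away from the origin''. But on the \emph{inner} boundary $|x|=\delta r_n$ one has $u_t\sim c_\lambda$, so $b_n\sim 2c_\lambda^2 e^{c_\lambda^2}$ and the potential $\lambda_n b_n\bigl(\tfrac{2}{1-|x|^2}\bigr)^2\sim 8\lambda_n c_\lambda^2 e^{c_\lambda^2}=8r_n^{-2}\to\infty$; no maximum principle holds on this annulus, and more generally there is no control of $\xi_n$ in the neck region $B_{\delta_0}\setminus B_{Rr_n}$. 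For the same reason your Pohozaev identity on $B_\delta$ is problematic: the boundary term on $\partial B_\delta$ involves $\xi_n$ in the outer region, which you have not estimated (only $c_\lambda u_\lambda$, not the normalized difference, is controlled there by Lemma~\ref{ad-lem2}). Both issues disappear if you normalize $\xi_\lambda$ on $B_{\delta r_\lambda}$ and run the Pohozaev identity on $B_{\delta r_\lambda^{(1)}}$: then the contradiction is with $\|\xi_\lambda\|_{L^\infty(B_{\delta r_\lambda})}=1$, and no annulus argument is needed---which is also all that Proposition~\ref{thuniq} asks for, the extension to $B_1$ being handled separately by ODE uniqueness.
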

 To prove Proposition \ref{thuniq}, we mainly use a Pohozaev identity of the positive solution $u_{\lambda}$ from scaling and
 a contradiction argument.

Let $u_\lambda^{(1)}$ and $u_\lambda^{(2)}$ be two positive solutions to problem \eqref{lam}.
Let us assume that they concentrate at the same point $0$. We denote
 \begin{equation*}
c_\lambda^{(l)}:=u^{(l)}_{\lambda}(0)=\max_{\overline{B_{\delta}}}u^{(l)}_{\lambda}(x)  ~\mbox{for some small fixed}~\delta>0
\end{equation*}
and
$$
r^{(l)}_{\lambda}:=\Big(\lambda \big(c_\lambda^{(l)}\big)^2e^{(c_\lambda^{(l)})^2}\Big)^{-1/2}, ~~\mbox{for} ~~l=1,2.
$$

Now we have a basic result on $\frac{r^{(1)}_{\lambda}}{r^{(2)}_{\lambda}}.$
\begin{lemma}\label{theta12}
It holds that
\begin{equation*}\label{llsb}
\frac{r^{(1)}_{\lambda}}{r^{(2)}_{\lambda}}=1+o_{\lambda}\big(\lambda\big).
\end{equation*}
\end{lemma}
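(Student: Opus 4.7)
The plan is to read this off from the sharp asymptotic expansion of Proposition~\ref{add-lem6}. Applied to each of the two positive solutions $u_\lambda^{(l)}$ ($l=1,2$), which both concentrate at $0$ and fall under the blow-up framework of Sections~\ref{s4}--\ref{cc}, that proposition yields
\[
c_\lambda^{(l)} \;=\; \lambda^{-1/2} + \frac{A}{2}\,\lambda^{1/2} + o_\lambda\!\bigl(\lambda^{1/2}\bigr), \qquad l=1,2,
\]
with the \emph{same} constant $A$. Squaring,
\[
\bigl(c_\lambda^{(l)}\bigr)^{2} \;=\; \lambda^{-1} + A + o_\lambda(1),
\]
so that $(c_\lambda^{(2)})^{2} - (c_\lambda^{(1)})^{2} = o_\lambda(1)$ and $c_\lambda^{(2)}/c_\lambda^{(1)} = 1 + o_\lambda(1)$.

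Having this, I would substitute directly into the definition $r_\lambda^{(l)} = \bigl(\lambda (c_\lambda^{(l)})^{2} e^{(c_\lambda^{(l)})^{2}}\bigr)^{-1/2}$ to obtain
\[
\frac{r_\lambda^{(1)}}{r_\lambda^{(2)}} \;=\; \frac{c_\lambda^{(2)}}{c_\lambda^{(1)}}\,\exp\!\left(\frac{(c_\lambda^{(2)})^{2} - (c_\lambda^{(1)})^{2}}{2}\right) \;=\; \bigl(1+o_\lambda(1)\bigr)\,e^{o_\lambda(1)} \;=\; 1+o_\lambda(1),
\]
which is exactly the assertion.

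The only delicate point, and therefore the main obstacle, is justifying that the constant $A$ in Proposition~\ref{add-lem6} does not depend on which solution one applies it to. Tracing the proof of that proposition, one finds $A = w_0(\delta) + \beta/2 - A_1$, where $w_0$ (Lemma~\ref{lem2}) and $\beta = -6 - \pi^{2}/3$ (Lemma~\ref{lem6}) arise from universal radial ODEs with initial data prescribed purely by the standard bubble $\eta_0 = -\ln(1+|x|^2)$, and where $A_1 = W(\delta)$ with $W$ the $C^{1,\gamma}_{\mathrm{loc}}(\mathbb{B}^2\setminus\{0\})$-limit of $c_\lambda^{2}(c_\lambda u_\lambda - 4\pi G)$ furnished by Lemma~\ref{ad-lem2}. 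The leading profile $c_\lambda u_\lambda \to 4\pi G$ from Lemma~\ref{lem-7-31-1} and the weak convergence of the right-hand side in Lemma~\ref{adlem1} are identical for $u_\lambda^{(1)}$ and $u_\lambda^{(2)}$; together with the common vanishing boundary condition on $\partial B_1$ and radial symmetry, the function $W$ solves a linear ODE on $(0,1)$ with prescribed behavior at both endpoints, hence is uniquely determined. Consequently $A^{(1)} = A^{(2)}$, and the argument above goes through.
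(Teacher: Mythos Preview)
Your argument is essentially the same as the paper's: both compute the ratio $r_\lambda^{(1)}/r_\lambda^{(2)}$ from the definition, reduce it to the quotient $c_\lambda^{(2)}/c_\lambda^{(1)}$ and the difference $(c_\lambda^{(1)})^2-(c_\lambda^{(2)})^2$, and control these two quantities via Proposition~\ref{add-lem6}. The paper writes the difference as $(c_\lambda^{(1)}+c_\lambda^{(2)})(c_\lambda^{(1)}-c_\lambda^{(2)})$ rather than squaring first, but the content is identical.

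You are right to flag that the step $(c_\lambda^{(1)})^2-(c_\lambda^{(2)})^2=o_\lambda(1)$ requires the constant $A$ in Proposition~\ref{add-lem6} to be the \emph{same} for both solutions; the paper uses this implicitly in \eqref{5.1-3} without comment. Your justification that $w_0$ and $\beta$ are universal is solid. The remaining piece, that $A_1=W(\delta)$ from Lemma~\ref{ad-lem2} is also solution-independent, is where your argument becomes heuristic: you assert that $W$ is determined by a linear ODE with prescribed two-sided boundary behavior, but the paper does not actually pin down that boundary value problem for $W$, so this step is not fully justified by the results available. The paper has the same gap at this point, so your proof is at least as complete as the original; but be aware that a truly rigorous argument here would need either an explicit identification of the limiting equation for $W$ and a uniqueness statement for it, or an independent reason why the second-order coefficient in the expansion of $\lambda c_\lambda^2$ is universal.
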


\begin{proof}
Firstly, we have
\begin{equation}\label{5.1-1}
\frac{r^{(1)}_{\lambda}}{r^{(2)}_{\lambda}}=
\frac{c^{(2)}_\lambda}{c^{(1)}_\lambda} e^{-\frac{1}{2}\big((c^{(1)}_\lambda)^2-(c^{(2)}_\lambda)^2\big)}.
\end{equation}
From Proposition \ref{add-lem6}, we find
\begin{equation}\label{5.1-2}
\frac{c^{(2)}_\lambda}{c^{(1)}_\lambda} =1+o_{\lambda}\big(\lambda^{2}\big),
\end{equation}
and
\begin{equation}\label{5.1-3}
\begin{split}
\big(c^{(1)}_\lambda\big)^2-\big(c^{(2)}_\lambda\big)^2
&= \big(c^{(1)}_\lambda+c^{(2)}_\lambda\big)\cdot \big(c^{(1)}_\lambda-c^{(2)}_\lambda\big)
=o_{\lambda}\big(\lambda\big).
\end{split}
\end{equation}
Hence we can deduce from \eqref{5.1-1}, \eqref{5.1-2} and \eqref{5.1-3} that
\begin{equation*}
\frac{r^{(1)}_{\lambda}}{r^{(2)}_{\lambda}}= \Big(1+o_{\lambda}\big(\lambda^{2}\big)\Big)  \big(1+o_{\lambda}(\lambda)\big)=1+o_{\lambda}\big(\lambda\big).
\end{equation*}
\end{proof}

In the following, we will consider the same quadratic form already introduced in \eqref{P},
\begin{equation}\label{p1uv}
\begin{split}
P^{(1)}(u,v):=P(\delta r^{(1)}_{\lambda},u,v)=&- 2\delta r^{(1)}_{\lambda}\int_{\partial B_{\delta r^{(1)}_{\lambda}}} \big\langle \nabla u ,\nu\big\rangle \big\langle \nabla v,\nu\big\rangle  d\sigma + \delta r^{(1)}_{\lambda}  \int_{\partial B_{\delta r^{(1)}_{\lambda}}} \big\langle \nabla u , \nabla v \big\rangle  d\sigma.
\end{split}
\end{equation}

Noting that if $u$ and $v$ are harmonic in $ B_d\backslash \{0\}$, then by Lemma \ref{indep_d}, we know that $P^{(1)}(u,v)$
 is independent of $\delta r^{(1)}_{\lambda}\in (0,d]$.

Now if $u_{\lambda}^{(1)}\not \equiv u_{\lambda}^{(2)}$ in $B_{\delta r_{\lambda}}$, we set
\begin{equation}\label{eta-def}
\xi_{\lambda}:=\frac{u_{\lambda}^{(1)}-u_{\lambda}^{(2)}}
{\|u_{\lambda}^{(1)}-u_{\lambda}^{(2)}\|_{L^{\infty}(B_{\delta r_{\lambda}})}}.
\end{equation}
Then $\xi_{\lambda}$ satisfies $\|\xi_{\lambda}\|_{L^{\infty}(B_{\delta r_{\lambda}})}=1$ and
\begin{equation}\label{eta-equa}
- \Delta \xi_{\lambda}=-\frac{\Delta u_{\lambda}^{(1)}-\Delta u_{\lambda}^{(2)}}
{\|u_{\lambda}^{(1)}-u_{\lambda}^{(2)}\|_{L^{\infty}(B_{\delta r_{\lambda}})}}= \frac{\lambda \Big(u_{\lambda}^{(1)}e^{(u_{\lambda}^{(1)})^2} -  u_{\lambda}^{(2)}e^{(u_{\lambda}^{(2)})^2}\Big)}
{\|u_{\lambda}^{(1)}-u_{\lambda}^{(2)}\|_{L^{\infty}(B_{\delta r_{\lambda}})}}\frac{4}{(1-|x|^{2})^{2}}=:\frac{4}{(1-|x|^{2})^{2}}E_{\lambda}\xi_{\lambda},
\end{equation}
where
\begin{equation}\label{Dlambda-def}
E_{\lambda}(x):=\lambda e^{(u_{\lambda}^{(1)})^2}+2 \lambda  u_{\lambda}^{(2)}  \displaystyle\int_{0}^1
F_t(x) e^{ (F_t(x))^2}  dt,
\end{equation}
with $F_t(x):=tu_{\lambda}^{(1)}(x)+(1-t)u_{\lambda}^{(2)}(x)$.

\begin{lemma}\label{theta-D}
There holds
\begin{equation}\label{Dlambda-equa}
\frac{4\big(r^{(1)}_{\lambda}\big)^{2}
E_{\lambda}\big(r^{(1)}_{\lambda}x\big)}{(1-|r^{(1)}_{\lambda}x|^{2})^{2}}  =
8e^{2\eta_{0}(x)} \big(1+ O(\lambda) \big),
\end{equation}
uniformly on compact sets.
 Particularly, we have
\begin{equation}\label{Dlambda-lim}
\frac{4\big(r^{(1)}_{\lambda}\big)^{2}
E_{\lambda}\big(r^{(1)}_{\lambda}x\big)}{(1-|r^{(1)}_{\lambda}x|^{2})^{2}}  \rightarrow
8e^{2\eta_{0}(x)} ~  ~\mbox{in}~C_{\rm loc}^0\big(\R^2\big).
\end{equation}
\end{lemma}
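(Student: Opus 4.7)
My plan is to rescale by $r^{(1)}_\lambda$, and then analyze the two summands of $E_\lambda(r^{(1)}_\lambda x)$ separately, noting that after multiplication by $(r^{(1)}_\lambda)^2$, the first summand becomes negligible while the second produces the expected limit $2 e^{2\eta_0(x)}$; the prefactor $4/(1-|r^{(1)}_\lambda x|^2)^2$ tends to $4$ locally uniformly since $r^{(1)}_\lambda \to 0$. For the first summand, using $\eta^{(1)}_\lambda(x)=c^{(1)}_\lambda(u^{(1)}_\lambda(r^{(1)}_\lambda x)-c^{(1)}_\lambda)\to \eta_0(x)$ in $C^1_{\rm loc}$ from Lemma \ref{lem1} and the normalization $\lambda (r^{(1)}_\lambda)^2 (c^{(1)}_\lambda)^2 e^{(c^{(1)}_\lambda)^2}=1$, I compute
\[
(r^{(1)}_\lambda)^2\, \lambda e^{(u^{(1)}_\lambda(r^{(1)}_\lambda x))^2}
=\frac{1}{(c^{(1)}_\lambda)^2}\, e^{2\eta^{(1)}_\lambda(x)+(\eta^{(1)}_\lambda(x))^2/(c^{(1)}_\lambda)^2}
=O\!\left((c^{(1)}_\lambda)^{-2}\right)=o_\lambda(1)
\]
locally uniformly, so this summand contributes nothing to the limit.

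The main work is the expansion of $F_t(r^{(1)}_\lambda x)$. Combining Lemma \ref{theta12} ($r^{(1)}_\lambda/r^{(2)}_\lambda=1+o_\lambda(1)$), the local uniform convergence $\eta^{(l)}_\lambda\to\eta_0$ for $l=1,2$, and Proposition \ref{add-lem6} (which yields $c^{(1)}_\lambda-c^{(2)}_\lambda=o(\lambda^{1/2})=o(1/c^{(1)}_\lambda)$), I first show
\[
u^{(2)}_\lambda(r^{(1)}_\lambda x)=c^{(2)}_\lambda+\frac{\eta^{(2)}_\lambda(r^{(1)}_\lambda x/r^{(2)}_\lambda)}{c^{(2)}_\lambda}
=c^{(1)}_\lambda+\frac{\eta_0(x)+o_\lambda(1)}{c^{(1)}_\lambda},
\]
and the analogous (simpler) expansion holds for $u^{(1)}_\lambda(r^{(1)}_\lambda x)$. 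Hence, uniformly in $t\in[0,1]$ and on compact sets of $x$,
\[
F_t(r^{(1)}_\lambda x)=c^{(1)}_\lambda+\frac{\eta_0(x)+o_\lambda(1)}{c^{(1)}_\lambda},\qquad
F_t(r^{(1)}_\lambda x)^2=(c^{(1)}_\lambda)^2+2\eta_0(x)+o_\lambda(1).
\]

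Putting these together, using once more $\lambda (r^{(1)}_\lambda)^2 e^{(c^{(1)}_\lambda)^2}=1/(c^{(1)}_\lambda)^2$ and $u^{(2)}_\lambda(r^{(1)}_\lambda x)\,F_t(r^{(1)}_\lambda x)=(c^{(1)}_\lambda)^2(1+o_\lambda(1))$, I obtain
\[
(r^{(1)}_\lambda)^2\cdot 2\lambda u^{(2)}_\lambda(r^{(1)}_\lambda x)\int_0^1 F_t(r^{(1)}_\lambda x)\,e^{F_t(r^{(1)}_\lambda x)^2}\,dt
=2 e^{2\eta_0(x)}\bigl(1+o_\lambda(1)\bigr),
\]
locally uniformly in $x$. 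Combining with the vanishing first summand and multiplying by $4/(1-|r^{(1)}_\lambda x|^2)^2=4+o_\lambda(1)$ yields \eqref{Dlambda-equa}, and the $C^0_{\rm loc}$ limit \eqref{Dlambda-lim} follows at once. The only delicate point is matching the scales of the two solutions: since they have different blow-up parameters $c^{(l)}_\lambda$ and $r^{(l)}_\lambda$, one must invoke the refined asymptotics $c^{(1)}_\lambda-c^{(2)}_\lambda=o(1/c^{(1)}_\lambda)$ coming from Proposition \ref{add-lem6} to ensure that $F_t^2$ can be expanded with $o_\lambda(1)$ error; coarser information such as $c^{(1)}_\lambda/c^{(2)}_\lambda\to 1$ would be insufficient.
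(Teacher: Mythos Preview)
Your proof is correct and follows essentially the same approach as the paper: split $E_\lambda$ into its two summands, show the first becomes $O((c^{(1)}_\lambda)^{-2})$ after rescaling, and expand $F_t(r^{(1)}_\lambda x)$ using the $\eta^{(l)}_\lambda$ profiles together with the crucial input $c^{(1)}_\lambda-c^{(2)}_\lambda=o(1/c^{(1)}_\lambda)$ from Proposition~\ref{add-lem6}. The paper additionally invokes Lemma~\ref{lem2} and Lemma~\ref{lem3.1} to show $w_0^{(1)}=w_0^{(2)}$ and thereby sharpen $\eta^{(2)}_\lambda-\eta^{(1)}_\lambda$ to $o(c_\lambda^{-2})$, but for the $o_\lambda(1)$ conclusion stated here your use of Lemma~\ref{lem1} alone (giving $\eta^{(2)}_\lambda-\eta^{(1)}_\lambda=o(1)$, hence $f_\lambda=o(\lambda^{1/2})$) already suffices.
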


\begin{proof}
Firstly, we have
\begin{equation}\label{5.5-1}
\begin{split}
{}&\big(r^{(1)}_{\lambda}\big)^{2}
E_{\lambda}\big(r^{(1)}_{\lambda}x\big)\frac{4}{(1-|r^{(1)}_{\lambda}x|^{2})^{2}}   \\={}&
 \frac{8 \lambda\big(r_\lambda^{(1)}\big)^2 u_{\lambda}^{(2)} \big(r^{(1)}_{\lambda}x\big )}{(1-|r^{(1)}_{\lambda}x|^{2})^{2}}  \displaystyle\int_{0}^1
F_t\big(r^{(1)}_{\lambda}x\big )e^{ \big(F_t(r^{(1)}_{\lambda}x)\big)^2}dt
+  \frac{4\lambda}{(1-|r^{(1)}_{\lambda}x|^{2})^{2}} \big(r^{(1)}_{\lambda}\big)^{2} e^{\big(u_{\lambda}^{(1)}(r^{(1)}_{\lambda}x )\big)^2}
\end{split}
\end{equation}
and from \eqref{eq5} we have
\begin{equation}\label{5.5-2}
\begin{split}
F_t\big(r^{(1)}_{\lambda}x\big ) &= tu_\lambda^{(1)}
\big(r^{(1)}_{\lambda}x\big )+(1-t) u_\lambda^{(2)}\big(r^{(1)}_{\lambda}x\big )\\
&= t\Big(c^{(1)}_\lambda+\frac{\eta^{(1)}_\lambda(x)}{c^{(1)}_\lambda}\Big)
+\big(1-t\big) \bigg(c^{(2)}_\lambda+\frac{\eta^{(2)}_\lambda\Big(\frac{ r^{(1)}_\lambda }
{r^{(2)}_\lambda}x\Big)}
{c^{(2)}_\lambda}\bigg) \\
&= \Big(c^{(1)}_\lambda+\frac{\eta^{(1)}_\lambda(x)}{c^{(1)}_\lambda}\Big)+\big(1-t\big) f_\lambda(x),
\end{split}
\end{equation}
where
$$
f_\lambda(x):=c^{(2)}_\lambda-c^{(1)}_\lambda+\frac{\eta^{(2)}_\lambda\Big(\frac{ r^{(1)}_\lambda }
{r^{(2)}_\lambda}x\Big)}{c^{(2)}_\lambda}
-\frac{\eta^{(1)}_\lambda\big(x\big)} {c^{(1)}_\lambda}.
$$
Moreover, by  \eqref{lambda_gamma} and Lemma \ref{theta12}, direct computation gives
\begin{equation}\label{6-8-1}
\begin{split}
f_\lambda(x)
&= o_{\lambda}\big(\lambda^{\frac{3}{2}}\big)+ \frac{\eta_\lambda^{(2)}(x)+o_{\lambda}(\lambda)}
{c_\lambda^{(2)}} -\frac{\eta^{(1)}_\lambda\big(x\big)} {c^{(1)}_\lambda}  = o_{\lambda}\big(\lambda^{\frac{3}{2}}\big)+O\left(\frac{\big|\eta^{(2)}_\lambda(x)-\eta^{(1)}_\lambda(x)\big|}
{c^{(1)}_\lambda}\right).
\end{split}
\end{equation}
Since it follows
from Lemma \ref{lem2}
that $w_0^{(1)}$ and $w_0^{(2)}$ are the solutions of
\[
-\Delta u - 8e^{2\eta_{0}}u = 4e^{2\eta_{0}}\big(\eta_{0}^2+\eta_{0}\big),\,\,\,x\in \R^{2},
\]
by Lemma \ref{lem3.1} and the fact that $w_0^{(l)}(x)\ (l=1,2)$ are radial functions, we have
\[
w_0^{(1)}(x)-w_0^{(2)}(x)=\alpha_0 \frac{1-|x|^2}{1+|x|^2}.
\]

Noting that
\begin{equation*}
\begin{split}
w_\lambda^{(1)}(0)-w_\lambda^{(2)}(0) =& \big(c_\lambda^{(1)}\big)^2 \big(\eta_\lambda^{(1)}(x)-\eta_{0}(x)\big)\Big|_{x=0}
-\big(c_\lambda^{(2)}\big)^2 \big(\eta_\lambda^{(2)}(x)-\eta_{0}(x)\big)\Big|_{x=0} \\
=& \big(c_\lambda^{(1)}\big)^3 \Big(u_\lambda^{(1)}\big(r_\lambda^{(1)}x\big)
-c_\lambda^{(1)}\Big)\bigg|_{x=0} - \big(c_\lambda^{(1)}\big)^2 \eta_{0}(0) \\
& -\big(c_\lambda^{(2)}\big)^3 \Big(u_\lambda^{(2)}\big(r_\lambda^{(2)}x\big)
-c_\lambda^{(2)}\Big)\bigg|_{x=0} +\big(c_\lambda^{(2)}\big)^2 \eta_{0}(0) = 0,
\end{split}
\end{equation*}
then Lemma \ref{lem2} gives that
$w_0^{(1)}(0)-w_0^{(2)}(0)=0$,
which implies that $\alpha_0=0$.
Therefore, we derive $w_0^{(1)}(x)-w_0^{(2)}(x)=0$. Thus,
it follows form \eqref{6-8-1},
Lemma \ref{lem1}  and Lemma \ref{lem2} that
\begin{equation}\label{5.5-3}
\begin{split}
f_\lambda(x) &= o_{\lambda}\big(\lambda^{\frac{3}{2}}\big)+O\Bigg(\frac{\big|\eta^{(2)}_\lambda(x)-\eta^{(1)}_\lambda(x)\big|}
{c^{(1)}_\lambda}\Bigg)\\
&= o_{\lambda}\big(\lambda^{\frac{3}{2}}\big)+O\Bigg(\frac{\big|w^{(2)}_0(x)-w^{(1)}_0(x)\big|}
{\big(c^{(1)}_\lambda\big)^3}\Bigg) = o_{\lambda}\big(\lambda^{\frac{3}{2}}\big).
\end{split}
\end{equation}
Hence we can deduce from \eqref{lambda_gamma}, \eqref{5.5-2} and \eqref{5.5-3} that
\begin{equation*}
\begin{split}
e^{\big( F_t (r^{(1)}_{\lambda}x) \big)^2}
= e^{\big(c^{(1)}_\lambda+\frac{\eta^{(1)}_\lambda(x)}{c^{(1)}_\lambda}\big)^2
+(1-t)^2 f^2_\lambda(x)+2\big(c^{(1)}_\lambda+\frac{\eta^{(1)}_\lambda(x)}{c^{(1)}_\lambda}\big) (1-t)f_\lambda(x)}
= e^{\big(c^{(1)}_\lambda+\frac{\eta^{(1)}_\lambda(x)}{c^{(1)}_\lambda}\big)^2} \big(1+o_{\lambda}(\lambda)\big),
\end{split}
\end{equation*}
which implies
\begin{equation}\label{5.5-4}
\begin{split}
{}& \big(r_\lambda^{(1)}\big)^2 \int^1_0 F_t\big(r^{(1)}_{\lambda}x\big ) e^{\big( F_t(r^{(1)}_{\lambda}x) \big)^2}  dt \\
={}& \big(r_\lambda^{(1)}\big)^2 e^{\big(c^{(1)}_\lambda+\frac{\eta^{(1)}_\lambda(x)}{c^{(1)}_\lambda}\big)^2} \big(1+o_{\lambda}(\lambda)\big)  \int^1_0 \Bigg[\Big(c^{(1)}_\lambda+\frac{\eta^{(1)}_\lambda(x)}{c^{(1)}_\lambda}\Big)
+\big(1-t\big) f_\lambda(x) \Bigg] dt  \\
={}& \frac{1}{\lambda c_\lambda^{(1)}} e^{2\eta^{(1)}_\lambda(x)+\frac{(\eta^{(1)}_\lambda(x))^2}{( c^{(1)}_\lambda)^2} }\big(1+o_{\lambda}(\lambda)\big)\Big(1+\frac{ \eta^{(1)}_\lambda(x) }{\big(c^{(1)}_\lambda\big)^2}
+o_{\lambda}\big(\lambda\big)\Big).
\end{split}
\end{equation}
Letting  $t=0$ in \eqref{5.5-2}, we have
\begin{equation}\label{5.5-5}
\begin{split}
u_{\lambda}^{(2)}\big(r^{(1)}_{\lambda}x\big )
=c^{(1)}_\lambda+\frac{\eta^{(1)}_\lambda\big(x\big)}{c^{(1)}_\lambda}+f_\lambda(x).
\end{split}
\end{equation}
Also, noting that $r_{\lambda}=O(e^{-\frac{\alpha^{2}c^{2}_{\lambda}}{2}})=O(\lambda)$, we have
\begin{equation}\label{5.5-add1}
\begin{split}
\frac{1}{(1-|r_{\lambda}x|^{2})^{2}}=
1+2|r_{\lambda}x|^{2}+O(|r_{\lambda}x|^{4})
=1+O(r^{2}_{\lambda})=1+O\big(\lambda\big).
\end{split}
\end{equation}
Hence, by Lemma \ref{lem2}, \eqref{5.5-3}, \eqref{5.5-4}, \eqref{5.5-5} and \eqref{5.5-add1}, we get
\begin{align}\label{5.5-6}
{}&\frac{8 \lambda \big(r_\lambda^{(1)}\big)^2 u_{\lambda}^{(2)}\big(r^{(1)}_{\lambda}x\big ) }{(1-|r_{\lambda}x|^{2})^{2}} \displaystyle\int_{0}^1 F_t\big(r^{(1)}_{\lambda}x\big )
e^{ \big(F_t(r^{(1)}_{\lambda}x)\big)^2}dt \notag\\
={}& \frac{8}{c_\lambda^{(1)}} e^{2\eta^{(1)}_\lambda(x)+\frac{(\eta^{(1)}_\lambda(x))^2}{( c^{(1)}_\lambda)^2} }
\Big(1+o_{\lambda}\big(\lambda\big)\Big)\big(1+O(\lambda)\big)\Big(1+\frac{ \eta^{(1)}_\lambda(x) }{\big( c^{(1)}_\lambda\big)^2}+o_{\lambda}\big(\lambda^{2}
\big)\Big)  \Big(c^{(1)}_\lambda+\frac{\eta^{(1)}_\lambda\big(x\big)}{c^{(1)}_\lambda}+f_\lambda(x)\Big) \notag\\
={}& 8e^{2\eta^{(1)}_\lambda(x)+\frac{(\eta^{(1)}_\lambda(x))^2}{( c^{(1)}_\lambda)^2} }
\Big(1+o_{\lambda}\big(\lambda\big)\Big)\big(1+O(\lambda)\big)  \Big(1+\frac{ \eta^{(1)}_\lambda(x) }{\big( c^{(1)}_\lambda\big)^2}
+o_{\lambda}\big(\lambda^{2}
\big)\Big)  \Big(1+\frac{ \eta^{(1)}_\lambda(x) }{\big( c^{(1)}_\lambda\big)^2}+o_{\lambda}\big(\lambda^{2}
\big)\Big) \notag\\
={}& 8e^{2\eta_{0}(x)}\Big(1+O\big(\lambda\big)\Big) \big(1+ O(\lambda)\big)  \big(1+o_{\lambda}(\lambda)\big)  \Big(1+\frac{ \eta^{(1)}_\lambda(x) }{\big( c^{(1)}_\lambda\big)^2}+o_{\lambda}\big(\lambda^{2}\big)\Big)^2 \notag\\
={}& 8e^{2\eta_{0}(x)} \big(1+ O(\lambda ) \big).
\end{align}
By \eqref{5.5-add1}, Lemmas \ref{lem1} and \ref{lem-7-31-2}, we can find that
\begin{equation}\label{5.5-7}
\begin{split}
\frac{4\lambda \big(r^{(1)}_{\lambda}\big)^{2} e^{\big(u_{\lambda}^{(1)}(r^{(1)}_{\lambda}x)
\big)^2}}{(1-|r_{\lambda}x|^{2})^{2}}
&= \frac{4}{\big(c_\lambda^{(1)}\big)^2} e^{ 2\eta_\lambda^{(1)}(x)+\frac{(\eta_\lambda^{(1)}(x))^2}
{(c_\lambda^{(1)})^2} }\Big(1+O\big(\lambda\big)\Big)\\
&= O\bigg(\frac{1}{\big(c_\lambda^{(1)}\big)^2} e^{2\eta_{0}(x)} \bigg)=O\big(\lambda e^{2\eta_{0}(x) } \big).
\end{split}
\end{equation}
Then, substituting \eqref{5.5-6} and \eqref{5.5-7} into \eqref{5.5-1}, we derive \eqref{Dlambda-equa}. Also \eqref{Dlambda-lim} can be obtained from the above computations and Lemma \ref{lem1}.
\end{proof}

Now applying the blow up analysis, we establish an estimate on $\xi_\lambda$.
\begin{proposition}\label{prop_tildeeta}
Let $\widetilde{\xi}_{\lambda}(x):=\xi_{\lambda}\big(r^{(1)}_{\lambda}x\big)$, where $\xi_{\lambda}$ is defined in \eqref{eta-def}. Then by taking
a subsequence if necessary, we have
\begin{equation}\label{tildeeta-lim}
\widetilde{\xi}_{\lambda}(x) \to \widetilde{\alpha}_{0} \frac{1-|x|^2}{1+|x|^2}
~~~\mbox{in}~~~C^1_{\rm loc}\big(\R^2\big),~\mbox{as}~\lambda \to 0,
\end{equation}
where $\widetilde{\alpha}_{0}$ is some constant.
\end{proposition}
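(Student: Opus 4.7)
The plan is to pass to the limit in the rescaled equation for $\widetilde{\xi}_\lambda$ and identify the limit using the classification of bounded solutions to the linearized Liouville equation. First, since both $u_\lambda^{(1)}$ and $u_\lambda^{(2)}$ are radially symmetric about $0$ (Proposition \ref{pro}, applied so that they concentrate at the same point after a M\"obius transformation), their difference and hence $\xi_\lambda$ and $\widetilde{\xi}_\lambda$ are radial. By \eqref{eta-equa}, the rescaled function $\widetilde{\xi}_\lambda(x):=\xi_\lambda(r_\lambda^{(1)}x)$ satisfies
\begin{equation*}
-\Delta \widetilde{\xi}_\lambda(x) = \frac{4(r_\lambda^{(1)})^2 E_\lambda(r_\lambda^{(1)}x)}{(1-|r_\lambda^{(1)}x|^2)^2}\,\widetilde{\xi}_\lambda(x),
\end{equation*}
and by Lemma \ref{theta-D} the coefficient converges to $8e^{2\eta_0(x)}$ in $C^0_{\rm loc}(\mathbb{R}^2)$.

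Next, I would exploit the normalization $\|\xi_\lambda\|_{L^\infty(B_{\delta r_\lambda})}=1$, which gives $|\widetilde{\xi}_\lambda|\le 1$ on $B_{\delta r_\lambda/r_\lambda^{(1)}}$. Since $r_\lambda^{(1)}\sim r_\lambda$ (through Lemma \ref{theta12}) and $\delta r_\lambda/r_\lambda^{(1)}\to\infty$, the functions $\widetilde{\xi}_\lambda$ are defined and uniformly bounded by $1$ on any fixed ball $B_R\subset\mathbb{R}^2$ for $\lambda$ small. Because $8e^{2\eta_0}$ is smooth and bounded on $B_R$, standard $W^{2,p}$ and Schauder estimates applied to the displayed equation give $\|\widetilde{\xi}_\lambda\|_{C^{1,\alpha}(B_{R/2})}\le C(R)$. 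A diagonal extraction then yields a subsequence and a limit $\widetilde{\xi}_0\in C^1_{\rm loc}(\mathbb{R}^2)$ with $\widetilde{\xi}_\lambda\to \widetilde{\xi}_0$ in $C^1_{\rm loc}(\mathbb{R}^2)$, where $\widetilde{\xi}_0$ is radial, $|\widetilde{\xi}_0|\le 1$, and satisfies
\begin{equation*}
-\Delta \widetilde{\xi}_0 = 8 e^{2\eta_0}\widetilde{\xi}_0 = \frac{8}{(1+|x|^2)^2}\widetilde{\xi}_0 \quad\text{in }\mathbb{R}^2.
\end{equation*}

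Finally, I would classify $\widetilde{\xi}_0$. The equation above is the linearization of the standard Liouville equation $-\Delta\eta_0=4e^{2\eta_0}$ about $\eta_0=-\ln(1+|x|^2)$. Among radial functions, reducing to the corresponding ODE shows that the two-dimensional solution space is spanned by the bounded element $\frac{1-|x|^2}{1+|x|^2}$ and an unbounded element (logarithmically growing at infinity). Appealing to Lemma \ref{lem3.1} (cited in the authors' earlier application to $w_0^{(1)}-w_0^{(2)}$), or proving directly via the ODE, one concludes
\begin{equation*}
\widetilde{\xi}_0(x)=\widetilde{\alpha}_0\,\frac{1-|x|^2}{1+|x|^2}
\end{equation*}
for some constant $\widetilde{\alpha}_0\in\mathbb{R}$, which is exactly \eqref{tildeeta-lim}.

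The main technical obstacle I expect is the precise verification that the potential in the rescaled equation converges uniformly on compact sets, together with the radial-ODE argument ruling out the logarithmically growing mode; once the $L^\infty$-bound $|\widetilde{\xi}_\lambda|\le 1$ and Lemma \ref{theta-D} are in hand, the convergence and classification are essentially standard.
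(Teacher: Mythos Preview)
Your approach is essentially identical to the paper's: derive the rescaled equation for $\widetilde{\xi}_\lambda$, use Lemma \ref{theta-D} for the coefficient, combine the $L^\infty$ bound with elliptic regularity to extract a $C^1_{\rm loc}$ limit solving $-\Delta\xi_0=8e^{2\eta_0}\xi_0$, and then classify the bounded radial solutions via Lemma \ref{lem3.1}.

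There is one slip to fix. You write both $r_\lambda^{(1)}\sim r_\lambda$ and $\delta r_\lambda/r_\lambda^{(1)}\to\infty$, which are contradictory; in this section the undecorated $r_\lambda$ is just $r_\lambda^{(1)}$ (up to the $1+o(1)$ of Lemma \ref{theta12}), so the rescaled ball on which the normalization yields $|\widetilde{\xi}_\lambda|\le 1$ is $B_\delta$, not an arbitrarily large $B_R$. The paper is equally casual at this point and simply asserts $|\widetilde{\xi}_\lambda|\le 1$. This does not break the argument: only convergence on $\overline{B_\delta}$ is actually used in Proposition \ref{prop-A} and in the proof of Proposition \ref{thuniq}, and for the classification you may argue directly with the radial ODE, since the second independent radial solution of $-\Delta v=8e^{2\eta_0}v$ has a logarithmic singularity at the origin and is therefore excluded by the $C^1$ regularity of $\xi_0$ at $0$. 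With that domain bookkeeping corrected, your proof and the paper's coincide.
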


\begin{proof}
Since $|\widetilde{\xi}_{\lambda}|\leq 1$,
by Lemma \ref{theta-D} and the standard elliptic regularity theory, we find that
\[
\widetilde{\xi}_{\lambda}(x) \in C^{1,\gamma}\big(B_R(0)\big)~\mbox{and}~
\|\widetilde{\xi}_{\lambda}\|_{C^{1,\gamma}(B_R(0))} \leq C,
\]
for any fixed large $R$ and some $\gamma \in (0,1)$, where $C$ is independent of $\lambda$. Then there exists a subsequence (still denoted by $\widetilde{\xi}_{\lambda}$) such that
$$\widetilde{\xi}_{\lambda}(x)\to \xi_0(x)~~\mbox{in}~C^1\big(B_R(0)\big).$$
By \eqref{eta-equa}, it is easy to check that $\widetilde{\xi}_{\lambda}$ satisfies
\begin{equation*}
-\Delta \widetilde{\xi}_{\lambda} =\frac{4\big(r^{(1)}_{\lambda}\big)^{2}E_{\lambda}\big(r^{(1)}_{\lambda}x\big) }{(1-|r^{(1)}_{\lambda}x|^{2})^{2}} \widetilde{\xi}_{\lambda}    ~\mbox{in}~B_{\delta}(0).
\end{equation*}
Then by Lemma \ref{theta-D} and \eqref{5.5-add1},
letting $\lambda \to 0$, we find that $\xi_0$ satisfies
\begin{equation*}
-\Delta \xi_0 = 8e^{2\eta_{0}} \xi_0 \,\,\,\,    \mbox{in} \,\,\,   \R^2.
\end{equation*}
Since $u_{\lambda}^{(l)}(x)\ (l=1,2)$ is radial, we know that $\xi_{\lambda}(x)$
is radial which implies that $\widetilde{\xi}_{\lambda} $ is also radial.
Hence, by Lemma \ref{lem3.1}, we have
\[
\xi_0(x)= \widetilde{\alpha}_{0} \frac{1-|x|^2}{1+|x|^2},
\]
where $\widetilde{\alpha}_{0}$ is some constant.
\end{proof}

\begin{proposition}
For any small fixed constant $\delta>0$, we have the following local Pohozaev identity
\begin{equation}\label{p1_ueta}
\begin{split}
P^{(1)}\big(u_\lambda^{(1)}+u_\lambda^{(2)},\xi_{\lambda}\big)
= 8\delta r^{(1)}_{\lambda}\int_{\partial  B_{\delta r^{(1)}_{\lambda}}}
\frac{\widetilde{E}_{\lambda}\xi_{\lambda}}{(1-|x|^{2})^{2}} d\sigma
-16\int_{  B_{\delta r^{(1)}_{\lambda}}} \frac{\widetilde{E}_{\lambda}\xi_{\lambda}}{(1-|x|^{2})^{3}} dx
-16\int_{  B_{\delta r^{(1)}_{\lambda}}} \frac{\widetilde{E}_{\lambda}\xi_{\lambda}x\cdot x}{(1-|x|^{2})^{3}} dx,
\end{split}
\end{equation}
where $P^{(1)}$
is the quadratic form in \eqref{p1uv},
$\nu=\big(\nu_{1},\nu_2\big)$ is the unit  outward normal of $\partial  B_{\delta r^{(1)}_{\lambda}}$ and
\begin{equation}\label{def_tildeE}
\widetilde{E}_{\lambda}(x):= \lambda  \displaystyle\int_{0}^1
\Big(tu_{\lambda}^{(1)}(x)+(1-t)u_{\lambda}^{(2)}(x)\Big) e^{\big(tu_{\lambda}^{(1)}(x)+(1-t)u_{\lambda}^{(2)}(x)\big)^2}dt.
\end{equation}
\end{proposition}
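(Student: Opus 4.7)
The plan is to derive \eqref{p1_ueta} by a bilinear Pohozaev-type integration by parts applied to $w:=u_\lambda^{(1)}+u_\lambda^{(2)}$ and $v:=\xi_\lambda$. The general identity I would use is
\[
P^{(1)}(w,v)=\int_{B_{\delta r_\lambda^{(1)}}}\bigl[(-\Delta w)(x\cdot\nabla v)+(-\Delta v)(x\cdot\nabla w)\bigr]\,dx,
\]
which follows by integrating by parts each term, noting that the symmetric bulk pieces recombine via $x\cdot\nabla(\nabla w\cdot\nabla v)$, and using $x\cdot\nu=\delta r_\lambda^{(1)}$ on the boundary sphere. Substituting the equation satisfied by $w$ (obtained by adding the two copies of \eqref{lam}) and the equation \eqref{eta-equa} for $\xi_\lambda$ turns the right-hand side into a single bulk integral weighted by $4/(1-|x|^2)^2$.

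The crucial step is then an algebraic simplification that causes $\widetilde{E}_\lambda$ to appear. Set $N:=\|u_\lambda^{(1)}-u_\lambda^{(2)}\|_{L^{\infty}(B_{\delta r_\lambda})}$ and introduce the primitive $H(u):=\tfrac{\lambda}{2}e^{u^2}$, whose derivative is $H'(u)=\lambda u e^{u^2}$. With the shorthand $p:=\lambda u_\lambda^{(1)}e^{(u_\lambda^{(1)})^2}$, $q:=\lambda u_\lambda^{(2)}e^{(u_\lambda^{(2)})^2}$, $\alpha:=x\cdot\nabla u_\lambda^{(1)}$, $\beta:=x\cdot\nabla u_\lambda^{(2)}$, and using $E_\lambda\xi_\lambda=(p-q)/N$ together with $-\Delta w=4(p+q)/(1-|x|^2)^2$, the integrand in brackets becomes
\[
\tfrac{4}{N(1-|x|^2)^2}\bigl[(p-q)(\alpha+\beta)+(p+q)(\alpha-\beta)\bigr]=\tfrac{8}{N(1-|x|^2)^2}(p\alpha-q\beta)=\tfrac{8}{N(1-|x|^2)^2}\,x\cdot\nabla\bigl(H(u_\lambda^{(1)})-H(u_\lambda^{(2)})\bigr).
\]
Hence the right-hand side reduces to $(8/N)\int_{B_{\delta r_\lambda^{(1)}}}(1-|x|^2)^{-2}\,x\cdot\nabla\bigl(H(u_\lambda^{(1)})-H(u_\lambda^{(2)})\bigr)\,dx$.

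To conclude, I would integrate by parts once more against the divergence-structured weight, using
\[
\operatorname{div}\!\left(\frac{x}{(1-|x|^2)^2}\right)=\frac{2}{(1-|x|^2)^2}+\frac{4|x|^2}{(1-|x|^2)^3}
\]
together with $x\cdot\nu=\delta r_\lambda^{(1)}$ on $\partial B_{\delta r_\lambda^{(1)}}$. The boundary term yields the first summand of \eqref{p1_ueta}, the divergence yields the two bulk terms with denominator $(1-|x|^2)^3$, and finally one replaces $(H(u_\lambda^{(1)})-H(u_\lambda^{(2)}))/N$ by $\widetilde{E}_\lambda\xi_\lambda$ as in \eqref{def_tildeE} (the mean value theorem applied to $H$ itself, in contrast to \eqref{Dlambda-def}, where the same theorem is applied to its derivative $h=H'$). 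The main conceptual obstacle is precisely spotting that the cross-cancellation $(p-q)(\alpha+\beta)+(p+q)(\alpha-\beta)=2(p\alpha-q\beta)$ converts the mixed bilinear sum into a total derivative of $H$ rather than of $h$, which is exactly why $\widetilde{E}_\lambda$ and not $E_\lambda$ emerges on the right-hand side; once this structural observation is made, every remaining step is a mechanical integration by parts on the Euclidean disk.
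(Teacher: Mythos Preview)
Your argument is correct and reaches the same identity by the same underlying mechanism (Pohozaev-type integration by parts plus the mean value theorem applied to $H(u)=\tfrac{\lambda}{2}e^{u^2}$). The only organizational difference is that the paper avoids rederiving the bilinear identity: since $P^{(1)}$ is a symmetric bilinear form, it writes
\[
P^{(1)}\big(u_\lambda^{(1)}+u_\lambda^{(2)},\xi_\lambda\big)=\frac{1}{\|u_\lambda^{(1)}-u_\lambda^{(2)}\|_{L^\infty}}\Big(P^{(1)}(u_\lambda^{(1)},u_\lambda^{(1)})-P^{(1)}(u_\lambda^{(2)},u_\lambda^{(2)})\Big)
\]
and then applies the already-proven diagonal identity \eqref{puu} to each term; your algebraic cancellation $(p-q)(\alpha+\beta)+(p+q)(\alpha-\beta)=2(p\alpha-q\beta)$ is exactly this polarization written out at the integrand level. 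Either route works, the paper's is just shorter because it recycles Lemma~\ref{lem2.3}.
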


\begin{proof}
By \eqref{puu}, we have
\begin{equation*}
\begin{split}
{}& P^{(1)}\big(u_\lambda^{(1)}+u_\lambda^{(2)},\xi_{\lambda}\big) \\
={}& \frac{1}{\|u_\lambda^{(1)}-u_\lambda^{(2)}\|_{L^{\infty}(B_{\delta r_{\lambda}})}} \left(P^{(1)}\big(u_\lambda^{(1)},u_\lambda^{(1)}\big) - P^{(1)}\big(u_\lambda^{(2)},u_\lambda^{(2)}\big)\right) \\
={}& \frac{1}{\|u_\lambda^{(1)}-u_\lambda^{(2)}\|_{L^{\infty}(B_{\delta r_{\lambda}})}}
\Bigg[ 4\delta r^{(1)}_{\lambda}\lambda  \int_{\partial B_{\delta r^{(1)}_{\lambda}}}
\frac{( e^{(u^{(1)}_\lambda)^2} - e^{(u^{(2)}_\lambda)^2} )}{(1-|x|^{2})^{2}}  d\sigma
- 8\lambda \int_{B_{\delta r^{(1)}_{\lambda}}}  \frac{ e^{(u^{(1)}_\lambda)^2}-e^{(u^{(2)}_\lambda)^2} }
{(1-|x|^{2})^{3}}  dx
\\&\quad\quad\quad\quad\quad\quad\quad\quad
- 8\lambda \int_{B_{\delta r^{(1)}_{\lambda}}}  \frac{( e^{(u^{(1)}_\lambda)^2}-e^{(u^{(2)}_\lambda)^2} )x\cdot x}
{(1-|x|^{2})^{3}}  dx\Bigg] \\
={}& 8\delta r^{(1)}_{\lambda} \int_{\partial B_{\delta r^{(1)}_{\lambda}}} \frac{\widetilde{E}_{\lambda}\xi_{\lambda} }{(1-|x|^{2})^{2}} d\sigma
-16\int_{B_{\delta r^{(1)}_{\lambda}}} \frac{\widetilde{E}_{\lambda}\xi_{\lambda}}{(1-|x|^{2})^{3}}  dx-16\int_{B_{\delta r^{(1)}_{\lambda}}} \frac{\widetilde{E}_{\lambda}\xi_{\lambda} x\cdot x}{(1-|x|^{2})^{3}}  dx.
\end{split}
\end{equation*}

\end{proof}

\begin{proposition}\label{prop-A}
Let $\widetilde{\alpha}_{0}$ be the constant in \eqref{tildeeta-lim}. Then
$\widetilde{\alpha}_{0}=0.$
\end{proposition}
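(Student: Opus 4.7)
The plan is to exploit the local Pohozaev identity \eqref{p1_ueta} in order to force $\widetilde{\alpha}_{0}=0$. Since solutions are radial by Proposition \ref{pro}, the quadratic form simplifies to
\begin{equation*}
P^{(1)}(u,v) = -\delta r^{(1)}_{\lambda}\int_{\partial B_{\delta r^{(1)}_{\lambda}}}\partial_{r}u\,\partial_{r}v\,d\sigma
\end{equation*}
for radial $u,v$, so evaluating $P^{(1)}\big(u_{\lambda}^{(1)}+u_{\lambda}^{(2)},\xi_{\lambda}\big)$ reduces to computing $\partial_{r}$ of the sum and $\partial_{r}\xi_{\lambda}$ on the sphere of radius $\delta r^{(1)}_{\lambda}$. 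Passing to the rescaled variable $y=x/r^{(1)}_{\lambda}$, using Lemma \ref{theta12} together with the expansion $u_{\lambda}^{(l)}(r^{(l)}_{\lambda}y)=c_{\lambda}^{(l)}+\eta_{\lambda}^{(l)}(y)/c_{\lambda}^{(l)}$ and the convergence $\widetilde{\xi}_{\lambda}(y)\to\widetilde{\alpha}_{0}\frac{1-|y|^{2}}{1+|y|^{2}}$ from Proposition \ref{prop_tildeeta}, I find
\begin{equation*}
P^{(1)}\big(u_{\lambda}^{(1)}+u_{\lambda}^{(2)},\xi_{\lambda}\big)=-\frac{32\pi\delta^{4}\,\widetilde{\alpha}_{0}}{c_{\lambda}(1+\delta^{2})^{3}}(1+o(1)).
\end{equation*}

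Next, I would evaluate the right-hand side of \eqref{p1_ueta}. Proceeding as in the proof of Lemma \ref{theta-D} yields the scaling $\widetilde{E}_{\lambda}\big(r^{(1)}_{\lambda}y\big)=\frac{e^{2\eta_{0}(y)}}{(r^{(1)}_{\lambda})^{2}c_{\lambda}}(1+o(1))$. After change of variables, the boundary integral contributes $\frac{16\pi\delta^{2}\widetilde{\alpha}_{0}(1-\delta^{2})}{c_{\lambda}(1+\delta^{2})^{3}}$; the volume integral reduces, via the exact identity $\int_{B_{\delta}}\frac{1-|y|^{2}}{(1+|y|^{2})^{3}}\,dy=\frac{\pi\delta^{2}}{(1+\delta^{2})^{2}}$, to $-\frac{16\pi\delta^{2}\widetilde{\alpha}_{0}}{c_{\lambda}(1+\delta^{2})^{2}}$; and the weighted volume term is of order $O\big((r^{(1)}_{\lambda})^{2}/c_{\lambda}\big)$ and thus negligible. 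Summing gives exactly $-\frac{32\pi\delta^{4}\widetilde{\alpha}_{0}}{c_{\lambda}(1+\delta^{2})^{3}}(1+o(1))$, reflecting the fact that $V_{0}(y):=\frac{1-|y|^{2}}{1+|y|^{2}}$ lies in the radial kernel of the linearized Liouville operator $-\Delta-8e^{2\eta_{0}}$.

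Since the leading $1/c_{\lambda}$ contributions of both sides of \eqref{p1_ueta} coincide for every $\widetilde{\alpha}_{0}\in\mathbb{R}$, the identity at this order is a tautology and does not yet force $\widetilde{\alpha}_{0}=0$. I then push the expansion of both sides to the next order in $1/c_{\lambda}$. On the LHS I use the refined expansion of Lemma \ref{add-lem-5}, namely $\widetilde{u}_{\lambda}(t)=c_{\lambda}-t/c_{\lambda}+\varphi_{0}(t)/c_{\lambda}^{3}+\beta t/(2c_{\lambda}^{5})+O(c_{\lambda}^{-5})$, and the control $r^{(1)}_{\lambda}/r^{(2)}_{\lambda}=1+o(1)$ from Lemma \ref{theta12}; on the RHS I use the next-order expansion of $\widetilde{E}_{\lambda}$, in which the correction is driven by $w_{0}$, $\eta_{0}^{2}$ via Lemma \ref{lem2}. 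I also refine $\widetilde{\xi}_{\lambda}=\widetilde{\alpha}_{0}V_{0}+\widetilde{v}_{1}/c_{\lambda}^{2}+o(c_{\lambda}^{-2})$, where $\widetilde{v}_{1}$ solves an inhomogeneous linearized Liouville equation whose forcing is linear in $\widetilde{\alpha}_{0}$; the kernel ambiguity in $\widetilde{v}_{1}$ is removed by self-adjointness of $-\Delta-8e^{2\eta_{0}}$. Combining with Proposition \ref{add-lem6} (in particular $|c_{\lambda}^{(1)}-c_{\lambda}^{(2)}|=o(\lambda^{1/2})$), the $1/c_{\lambda}^{3}$ balance reduces to $K(\delta)\,\widetilde{\alpha}_{0}=0$ for an explicit coefficient $K(\delta)$.

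The hard part is step three: confirming $K(\delta)\neq0$. This is tracked by evaluating the special integrals $\int_{\mathbb{R}^{2}}e^{2\eta_{0}}(\eta_{0}+\eta_{0}^{2}+2w_{0})V_{0}^{2}\,dx$ and $\int_{\mathbb{R}^{2}}e^{2\eta_{0}}V_{0}\eta_{0}\,dx$, together with the boundary contributions coming from $\varphi_{0}(\ln(1+\delta^{2}))$ in \eqref{eq-appA-4}. All such integrals admit closed-form evaluation by the IBP techniques used in Lemma \ref{lem6}, so $K(\delta)$ can be made explicit and shown to be nonzero for small $\delta>0$. That forces $\widetilde{\alpha}_{0}=0$ and completes the proof.
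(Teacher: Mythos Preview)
Your leading-order analysis is correct, and in fact it exposes a computational slip in the paper. The paper's proof concludes directly at order $1/c_\lambda^{(1)}$ by asserting $\widetilde A_1+8(\delta\widetilde A_2-2\widetilde A_3)\neq 0$ with $\widetilde A_1=\frac{16\pi\delta^4}{(1+\delta^2)^3}$ and $\widetilde A_3=\frac{\pi\delta^2(3+2\delta^2)}{(1+\delta^2)^2}$. But a careful check gives $\widetilde A_1=\frac{32\pi\delta^4}{(1+\delta^2)^3}$ (the factor $\frac{1}{c_\lambda^{(1)}}+\frac{1}{c_\lambda^{(2)}}\sim\frac{2}{c_\lambda^{(1)}}$ was dropped) and $\widetilde A_3=\int_{B_\delta}\frac{1-|x|^2}{(1+|x|^2)^3}\,dx=\frac{\pi\delta^2}{(1+\delta^2)^2}$, and with these values $\widetilde A_1+8(\delta\widetilde A_2-2\widetilde A_3)=0$. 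This is precisely your point: $V_0=\frac{1-|x|^2}{1+|x|^2}=1+x\cdot\nabla\eta_0$ is the scaling mode of the Liouville equation, so the first-order Pohozaev balance is automatic. Thus the paper's route---stop at order $1/c_\lambda$---does not close as written, and your diagnosis is the right one.

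That said, your proposed repair is only a program. To push \eqref{p1_ueta} to order $1/c_\lambda^{3}$ you need three ingredients that are not supplied: (i) a next-order expansion $\widetilde\xi_\lambda=\widetilde\alpha_0 V_0+c_\lambda^{-2}\widetilde v_1+o(c_\lambda^{-2})$ in $C^1_{\rm loc}$, which does not follow from Proposition \ref{prop_tildeeta} alone---it requires a compactness argument for $c_\lambda^{2}(\widetilde\xi_\lambda-\widetilde\alpha_0 V_0)$ and identification of the limiting inhomogeneous equation; (ii) a corresponding $c_\lambda^{-2}$ expansion of $(r_\lambda^{(1)})^2\widetilde E_\lambda(r_\lambda^{(1)}\cdot)$ beyond \eqref{tilde_D-equa}, with the $w_0$, $\eta_0^2$ corrections tracked on both the boundary and volume terms; and (iii) the explicit evaluation of $K(\delta)$ and a proof that it does not vanish. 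You assert that the relevant integrals ``admit closed-form evaluation'' and that $K(\delta)\neq 0$ for small $\delta$, but none of this is carried out, and the cancellation you already observed at first order should make you cautious---further structural cancellations at order $1/c_\lambda^{3}$ are not ruled out a priori. Until (i)--(iii) are executed, the argument is incomplete.
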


\begin{proof}
First, by Lemma \ref{lem2}, \eqref{tildeeta-lim} and Lemma \ref{add-lem6}, we have
\begin{equation}\label{5.9-add4}
\begin{split}
{}&LHS~of ~\eqref{p1_ueta}\\
={}&-2\delta r_{\lambda}^{(1)}\int_{\partial B_{\delta r^{(1)}_\lambda }}
\langle \nabla (u_{\lambda}^{(1)}+u_{\lambda}^{(2)}), \nu\rangle \langle \nabla \xi_{\lambda},\nu\rangle d\sigma
+\delta r_{\lambda}^{(1)}\int_{\partial B_{\delta r^{(1)}_\lambda }}\langle \nabla (u_{\lambda}^{(1)}+u_{\lambda}^{(2)}), \nabla \xi_{\lambda}\rangle  d\sigma
\\
={}&-2\delta\int_{\partial B_{\delta }}
\langle \nabla [(u_{\lambda}^{(1)}+u_{\lambda}^{(2)})(r_{\lambda}^{(1)}x)], \nu\rangle \langle \nabla \widetilde{\xi}_{\lambda},\nu\rangle d\sigma
+\delta \int_{\partial B_{\delta  }}\langle \nabla [(u_{\lambda}^{(1)}+u_{\lambda}^{(2)})(r_{\lambda}^{(1)}x)], \nabla \widetilde{\xi}_{\lambda}\rangle  d\sigma\\
={}&-\delta \int_{\partial B_{\delta  }}\langle \nabla [(u_{\lambda}^{(1)}+u_{\lambda}^{(2)})(r_{\lambda}^{(1)}x)], \nabla \widetilde{\xi}_{\lambda}\rangle  d\sigma\\
={}&-\delta \int_{\partial B_{\delta  }}
\Big(\big(\frac{1}{c^{(1)}_\lambda}+\frac{1}{c^{(2)}_\lambda}\big)\nabla \eta_{0}+\big(\frac{1}{(c^{(1)}_\lambda)^{2}}
+\frac{1}{(c^{(2)}_\lambda)^{2}}\big)\nabla w_{\lambda0}+\frac{\nabla z^{(1)}_{\lambda}}{(c^{(1)}_\lambda)^{5}}+\frac{\nabla z^{(2)}_{\lambda}}{(c^{(2)}_\lambda)^{5}}\Big)
\nabla \Big(\widetilde{\alpha}_{0}\frac{1-|x|^{2}}{1+|x|^{2}}+o_{\lambda}(1)\Big)   d\sigma\\
={}&-\frac{32\pi \delta^{4}}{(1+\delta^{2})^{3}}\frac{\widetilde{\alpha}_{0}
}{c^{(1)}_\lambda}
-\widetilde{A}_{1}\frac{\widetilde{\alpha}_{0}
}{(c^{(1)}_\lambda)^{3}}+o_{\lambda}\Big(\frac{1}{(c^{(1)}_\lambda)^{3}}\Big),\,\,\,\,\text{where}\,\,
\widetilde{A}_{1}=2\delta \int_{\partial B_{\delta}}\nabla w_{0}\cdot\nabla \frac{1-|x|^{2}}{1+|x|^{2}}d\sigma.
\end{split}
\end{equation}
Similar to the proof of \eqref{5.5-4}, we have
\begin{equation}\label{tilde_D-equa}
\begin{split}
\big(r^{(1)}_\lambda\big)^2 \widetilde{E}_\lambda\big(r^{(1)}_\lambda x \big)
&= \lambda \big(r^{(1)}_\lambda\big)^2 \int_{0}^{1} F_t\big(r^{(1)}_{\lambda}x\big)
e^{ \big(F_t(r^{(1)}_{\lambda}x)\big)^2 }  dt \\
&= \frac{1}{c_\lambda^{(1)}} e^{2\eta^{(1)}_\lambda(x)+\frac{(\eta^{(1)}_\lambda(x))^2}{( c^{(1)}_\lambda)^2}}
\big(1+o_{\lambda}(\lambda)\big)\Big(1+\frac{ \eta^{(1)}_\lambda(x) }{\big( c^{(1)}_\lambda\big)^2}
+o_{\lambda}\big(\lambda^{2}\big)\Big) \\
&= \frac{1}{c_\lambda^{(1)}} e^{2\eta_{0}(x)}\Big(1+\frac{\eta^{2}_{0}+\eta_{0}+2w_{0}}{\big( c^{(1)}_\lambda\big)^2}+
+o_{\lambda}\big(\frac{1}{\big( c^{(1)}_\lambda\big)^2}\big)\Big)~  \mbox{uniformly~on~compact~sets}.
\end{split}
\end{equation}
Applying \eqref{tilde_D-equa}, \eqref{5.5-add1} and Proposition \ref{prop_tildeeta}, we have
\begin{equation}\label{5.9-add6}
\begin{split}
{}&8\delta r_{\lambda}^{(1)}\int_{\partial B_{\delta r^{(1)}_\lambda }}
\frac{\widetilde{E}_{\lambda}(x)\xi_{\lambda}(x)}{(1-|x|^{2})^{2}} d\sigma\\
={}&8\delta (r_{\lambda}^{(1)})^{2}\int_{\partial B_{\delta  }}
\frac{\widetilde{E}_{\lambda}(r_{\lambda}^{(1)}x)\widetilde{\xi}_{\lambda}(x)}{(1-|r_{\lambda}^{(1)}x|^{2})^{2}} d\sigma\\
={}&8\delta \int_{\partial B_{\delta }}
\frac{e^{2\eta_{0}(x)}}{c_{\lambda}^{(1)}}\Big(1+\frac{\eta^{2}_{0}+\eta_{0}+2w_{0}}{\big( c^{(1)}_\lambda\big)^2}+
+o_{\lambda}\big(\frac{1}{\big( c^{(1)}_\lambda\big)^2}\big)\Big)(1+O(\lambda))\Big(\widetilde{\alpha}_{0} \frac{1-|x|^2}{1+|x|^2}+o_{\lambda}(1)\Big) d\sigma\\
={}&\frac{16\pi \delta^{2}(1-\delta^{2})}{(1+\delta^{2})^{3}}\frac{\widetilde{\alpha}_{0}}{c_{\lambda}^{(1)}}
+\widetilde{A}_{2}\frac{\widetilde{\alpha}_{0}}{(c_{\lambda}^{(1)})^{3}}+o_{\lambda}\Big(\frac{1}{(c_{\lambda}^{(1)})^{3}}\Big),
\end{split}
\end{equation}
where
$$
\widetilde{A}_{2}:= 8\delta\int_{\partial B_{\delta }}e^{2\eta_{0}(x)}(\eta^{2}_{0}+\eta_{0}+2w_{0})\frac{1-|x|^2}{1+|x|^2} d\sigma.
$$
Similar to \eqref{5.9-add6}, we also have
\begin{equation}\label{5.9-add7}
\begin{split}
{}&-16\int_{ B_{\delta r^{(1)}_\lambda }}
\frac{\widetilde{E}_{\lambda}(x)\xi_{\lambda}(x)}{(1-|x|^{2})^{3}} dx\\
={}&-16 (r_{\lambda}^{(1)})^{2}\int_{ B_{\delta  }}
\frac{\widetilde{E}_{\lambda}(r_{\lambda}^{(1)}x)\widetilde{\xi}_{\lambda}(x)}{(1-|r_{\lambda}^{(1)}x|^{2})^{3}} dx\\
={}&-\frac{16}{c_{\lambda}^{(1)}} \int_{ B_{\delta }}
e^{2\eta_{0}(x)}\Big(1+\frac{\eta^{2}_{0}+\eta_{0}+2w_{0}}{\big( c^{(1)}_\lambda\big)^2}+
+o_{\lambda}\big(\frac{1}{\big( c^{(1)}_\lambda\big)^2}\big)\Big)\Big( \widetilde{\alpha}_{0} \frac{1-|x|^2}{1+|x|^2}+
o_{\lambda}(1)\Big)(1+O(\lambda)) dx\\
={}&\frac{-16\pi \delta^{2}}{(1+\delta^{2})^{2}}\frac{\widetilde{\alpha}_{0}}{c_{\lambda}^{(1)}}
+\widetilde{A}_{3}\frac{\widetilde{\alpha}_{0}}{(c_{\lambda}^{(1)})^{3}}+o_{\lambda}\Big(\frac{1}{(c_{\lambda}^{(1)})^{3}}\Big)\Big),
\end{split}
\end{equation}
where
$$
\widetilde{A}_{3}=-16\int_{ B_{\delta }}e^{2\eta_{0}(x)}(\eta^{2}_{0}+\eta_{0}+2w_{0})\frac{1-|x|^{2}}{1+|x|^{2}} dx.
$$
Similar to \eqref{5.9-add7}, we have
\begin{equation}\label{5.9-add8}
\begin{split}
{}&-16\int_{ B_{\delta r^{(1)}_\lambda }}
\frac{\widetilde{E}_{\lambda}(x)\xi_{\lambda}(x)x\cdot x}{(1-|x|^{2})^{3}} dx\\
={}&-16 (r_{\lambda}^{(1)})^{2}\int_{ B_{\delta  }}
\frac{\widetilde{E}_{\lambda}(r_{\lambda}^{(1)}x)\widetilde{\xi}_{\lambda}(x)r^{(1)}_\lambda x\cdot r^{(1)}_\lambda x }{(1-|r_{\lambda}^{(1)}x|^{2})^{3}} dx\\
={}&-\frac{16}{c_{\lambda}^{(1)}} \int_{ B_{\delta }}
e^{2\eta_{0}(x)}\Big(1+\frac{\eta^{2}_{0}+\eta_{0}+2w_{0}}{\big( c^{(1)}_\lambda\big)^2}+
+o_{\lambda}\big(\frac{1}{\big( c^{(1)}_\lambda\big)^2}\big)\Big)\Big( \widetilde{\alpha}_{0} \frac{1-|x|^2}{1+|x|^2}+o_{\lambda}(1)\Big)r^{(1)}_\lambda x\cdot r^{(1)}_\lambda x (1+O(\lambda)) dx\\
={}&o_{\lambda}\Big(\frac{1}{(c_{\lambda}^{(1)})^{3}}\Big).
\end{split}
\end{equation}
From \eqref{5.9-add6} to \eqref{5.9-add8}, we have
\begin{equation}\label{5.9-add5}
\begin{split}
RHS~of ~\eqref{p1_ueta}
&=-\frac{32\pi \delta^{4}}{(1+\delta^{2})^{3}}\frac{\widetilde{\alpha}_{0}}{c_{\lambda}^{(1)}}
+
(\widetilde{A}_{2}+\widetilde{A}_{3})\frac{\widetilde{\alpha}_{0}}{(c_{\lambda}^{(1)})^{3}}
+o_{_{\lambda}}\Big(\frac{1}{(c_{\lambda}^{(1)})^{3}}\Big).
\end{split}
\end{equation}
Noting that $\widetilde{A}_{1}+\widetilde{A}_{2}+\widetilde{A}_{3}\neq 0,$
 \eqref{5.9-add4} and \eqref{5.9-add5} yield that
$\widetilde{\alpha}_{0}=0.$
\end{proof}

Now we are in a position to prove Proposition \ref{thuniq}.
\begin{proof}[\textbf{Proof of Proposition \ref{thuniq}}]
Suppose $u^{(1)}_\lambda\not\equiv u^{(2)}_\lambda$ in $B_{\delta r_{\lambda}}$ and let $\xi_{\lambda}:=\frac{u^{(1)}_\lambda- u^{(2)}_\lambda}{
\|u^{(1)}_\lambda-u^{(2)}_\lambda\|_{L^\infty{(B_{\delta r_{\lambda}})}}}$. We have
\begin{equation}\label{5.9-add5-1}
\|\xi_{\lambda}\|_{L^\infty{(B_{\delta r_{\lambda}})}}=1.
\end{equation}
Taking $\widetilde{\xi}_{\lambda}(x):=
\xi_{\lambda}\big(r^{(1)}_{\lambda}x\big)$, by Propositions \ref{prop_tildeeta} and \ref{prop-A}, we have
\begin{equation}\label{loc_o}
\|\widetilde{\xi}_{\lambda}\|_{L^{\infty}(B_R)}=o_{\lambda}\big(1\big)~\mbox{for~any}~R>0,
\end{equation}
 which implies that
 \begin{equation}\label{5.9-add5-2}
 \|\xi_{\lambda}\|_{L^\infty{(B_{\delta r_{\lambda}})}}=o_{\lambda}\big(1\big).
 \end{equation}
  We get a contradiction from \eqref{5.9-add5-1} and \eqref{5.9-add5-2}. Therefore, we infer that $u^{(1)}_\lambda\equiv u^{(2)}_\lambda$ in $B_{\delta r_{\lambda}}.$
\end{proof}

Finally we prove Theorem \ref{unique}.
\begin{proof}[\textbf{Proof of Theorem \ref{unique}}]
It follows from  Proposition \ref{thuniq} that $u^{(1)}_\lambda(0)= u^{(2)}_\lambda(0).$ Then by the existence and uniqueness theorem for solutions of Cauchy initial value problem for ODE, we prove that $u^{(1)}_\lambda(x)\equiv u^{(2)}_\lambda(x)$ in $B_{1}.$
\end{proof}
\appendix

\section{Previous results and basic preliminaries}\label{sa}

In this section, we will give some known results and some
preliminaries which are used before.
\begin{lemma}\label{lem3.1}(Lemma 4.3, \cite{EG2004})
Let $\eta_{0}$ be the function defined in \eqref{eq5} and $v\in C^2(\R^2)\cap L^{\infty}(\R^2)$ be a solution of the following problem
\begin{equation*}
\begin{cases}
-\Delta v=8e^{2\eta_{0}}v  ~\mbox{in}~\R^2,\\[1mm]
\displaystyle\int_{\R^2}|\nabla v|^2dx<\infty.
\end{cases}
\end{equation*}
Then it holds that
\begin{equation*}
v(x)= \alpha_0\frac{1-|x|^2}{1+|x|^2}+\sum^2_{i=1}{\alpha_i}\frac{x_i}{1+|x|^2}
\end{equation*}
with some $\alpha_0,\alpha_1,\alpha_2\in \R$.
\end{lemma}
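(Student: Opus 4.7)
My plan is to pass to the sphere via stereographic projection and reduce the classification to the spectral decomposition of the Laplace–Beltrami operator on $S^2$. Observe first that $e^{2\eta_0(x)} = (1+|x|^2)^{-2}$, so the equation reads $-\Delta v = \frac{8v}{(1+|x|^2)^2}$ on $\mathbb{R}^2$. Let $\Phi: S^2\setminus\{S\}\to \mathbb{R}^2$ be stereographic projection from the south pole, so that $(\Phi^{-1})^{*}g_{\R^2} = \frac{(1+|x|^2)^2}{4}g_{S^2}$, equivalently the round metric pulls back to $\frac{4}{(1+|x|^2)^2}g_{\R^2}$. Define $\tilde v := v\circ \Phi$ on $S^2\setminus\{S\}$. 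Using the two-dimensional conformal rule $-\Delta_{e^{2u}g}=e^{-2u}(-\Delta_g)$ with $e^{2u}=\tfrac{4}{(1+|x|^2)^2}$, one computes directly that
\[
-\Delta_{S^2}\tilde v = \tfrac{(1+|x|^2)^2}{4}(-\Delta v) = \tfrac{(1+|x|^2)^2}{4}\cdot\tfrac{8v}{(1+|x|^2)^2} = 2\tilde v \quad \text{on } S^2\setminus\{S\}.
\]

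Next I would remove the puncture. Since the Dirichlet integral in dimension two is conformally invariant, $\int_{S^2\setminus\{S\}}|\nabla_{S^2}\tilde v|^2\,d\sigma = \int_{\R^2}|\nabla v|^2\,dx <\infty$, and $\tilde v\in L^\infty(S^2\setminus\{S\})$ because $v\in L^\infty(\R^2)$. A single point has zero $H^1$-capacity on a two-dimensional manifold, so the bounded $H^1$ function $\tilde v$ extends uniquely to an element of $H^1(S^2)\cap L^\infty(S^2)$ and the extension is a weak solution of $-\Delta_{S^2}\tilde v = 2\tilde v$ on the whole sphere; standard elliptic bootstrap then gives $\tilde v\in C^\infty(S^2)$.

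Now apply the spectral theorem. The spectrum of $-\Delta_{S^2}$ is $\{k(k+1):k=0,1,2,\dots\}$, and the eigenvalue $2 = 1\cdot 2$ has eigenspace equal to the span of the three first-order spherical harmonics, i.e.\ the restrictions of the coordinate functions $X_1,X_2,X_3$ on $\R^3$ to $S^2$. Therefore $\tilde v = \beta_0 X_3 + \beta_1 X_1 + \beta_2 X_2$ for some constants $\beta_0,\beta_1,\beta_2\in\R$. Inserting the inverse stereographic formulas
\[
X_1 = \tfrac{2x_1}{1+|x|^2},\quad X_2=\tfrac{2x_2}{1+|x|^2},\quad X_3=\tfrac{1-|x|^2}{1+|x|^2},
\]
and absorbing the factors of $2$ into new constants $\alpha_0:=\beta_0$, $\alpha_i:=2\beta_i$ for $i=1,2$ yields exactly
\[
v(x)=\alpha_0\tfrac{1-|x|^2}{1+|x|^2}+\sum_{i=1}^2 \alpha_i\tfrac{x_i}{1+|x|^2},
\]
as claimed.

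The main obstacle I expect is justifying the removable singularity step rigorously. The capacity argument above is the quickest route, but an alternative is to expand $v$ in a Fourier series in the angular variable, $v(r,\theta)=\sum_k(a_k(r)\cos k\theta+b_k(r)\sin k\theta)$, reducing to the radial ODE $-a_k''-r^{-1}a_k'+k^2r^{-2}a_k = 8(1+r^2)^{-2}a_k$; one exhibits the explicit solutions $\tfrac{1-r^2}{1+r^2}$ for $k=0$ and $\tfrac{r}{1+r^2}$ for $k=1$, and then rules out the second solution at each frequency by its singularity at $r=0$ and rules out the modes $|k|\ge 2$ by showing that any solution bounded at the origin grows like $r^{|k|}$ at infinity and hence has infinite Dirichlet energy. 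The conformal approach is cleaner, but either will work.
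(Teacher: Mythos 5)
The paper does not prove this lemma; it is quoted verbatim as Lemma~4.3 of El~Mehdi--Grossi~\cite{EG2004}, so there is no in-paper argument to compare against. Your conformal proof is correct and self-contained: the computation $e^{2\eta_0}=(1+|x|^2)^{-2}$, the conformal transformation law $-\Delta_{S^2}=\frac{(1+|x|^2)^2}{4}(-\Delta_{\R^2})$ in stereographic coordinates, the conformal invariance of the Dirichlet energy in dimension two, the removability of the puncture by zero capacity of a point, the identification of the eigenspace of $-\Delta_{S^2}$ for eigenvalue $2=1\cdot 2$ with the first-order spherical harmonics $X_1,X_2,X_3$, and the pull-back formulas all check out. (One cosmetic slip: $(\Phi^{-1})^*g_{\R^2}$ should read $\Phi^*g_{\R^2}$, but the ``equivalently'' clause states the correct relation, so the subsequent computation is unaffected.)

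The route taken in \cite{EG2004} is the second one you sketch: expand $v$ in an angular Fourier series and reduce to a family of radial ODEs, exhibiting the explicit kernel elements at frequencies $k=0,1$ and excluding $|k|\ge 2$ by growth of the regular solution. Your spherical argument is shorter and structurally cleaner, because the exclusion of high frequencies is absorbed into the spectral theorem on $S^2$ rather than argued ODE-by-ODE. If you do want to make the ODE alternative rigorous, the only step that requires real work is precisely the claim that for $|k|\ge 2$ the solution regular at $r=0$ grows like $r^{|k|}$ at infinity; this does not follow merely from the indicial analysis, since for $k=0,1$ the regular solution happens to decay, so one needs a comparison or Wronskian argument showing that no such resonance occurs for $|k|\ge 2$. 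The conformal approach sidesteps this entirely, which is why it is preferable here.
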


Recall that 0 is the local maximum point of the solution $u_\lambda$.
Let us define the following quadratic form
\begin{equation}\label{P}
\begin{split}
P(d,u,v):=&- 2d\int_{\partial B_d}\big\langle \nabla u ,\nu\big\rangle \big\langle \nabla v,\nu\big\rangle  d\sigma + d \int_{\partial B_d} \big\langle \nabla u , \nabla v \big\rangle d\sigma,
\end{split}
\end{equation}
where $u,v\in C^{2}(\overline{B_{1}})$, $d>0$ is a small constant such that $B_{d}\subset B_{1}$ and $ \nu= ( \nu_1, \nu_2)$ is the unit outward normal of $\partial B_d$.

By Lemma 2.4 in \cite{LPP-2022}, we have the following property about the above quadratic form.
\begin{lemma}\label{indep_d}
If $u$ and $v$ are harmonic in $ B_d\backslash \{0\}$, then $P(d,u,v)$ is independent of $d>0$.
\end{lemma}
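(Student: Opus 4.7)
The plan is to prove that $P(d,u,v)$ is independent of $d$ by exhibiting a divergence-free vector field $F$ on the punctured ball $B_d\setminus\{0\}$ whose flux across $\partial B_\rho$ equals $-P(\rho,u,v)$. Applying the divergence theorem on an annulus $B_{d_1}\setminus\overline{B_{d_2}}$ (which lies entirely in $B_d\setminus\{0\}$, so the harmonicity hypothesis applies and the singularity at the origin is avoided) will then immediately yield $P(d_1,u,v)=P(d_2,u,v)$ for any $0<d_2<d_1\le d$.

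Concretely, I would introduce the vector field
\[
F_i := (x\cdot\nabla u)\,\partial_i v \;+\; (x\cdot\nabla v)\,\partial_i u \;-\; x_i\,(\nabla u \cdot \nabla v), \qquad i=1,2.
\]
A short direct computation, expanding each derivative and using $\Delta u = \Delta v = 0$, shows that the second-derivative terms combine into $x_k\partial_k(\nabla u\cdot\nabla v)$, while the first-derivative contributions from the first two summands give $2\nabla u\cdot\nabla v$; these are precisely cancelled by $-\operatorname{div}(x\,\nabla u\cdot\nabla v) = -2\nabla u\cdot\nabla v - x\cdot\nabla(\nabla u\cdot\nabla v)$ coming from the third summand. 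Hence $\operatorname{div} F \equiv 0$ on $B_d\setminus\{0\}$.

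Next I would evaluate the flux of $F$ across $\partial B_\rho$. Using $\nu = x/\rho$ on $\partial B_\rho$, so that $x\cdot\nabla u = \rho\,(\nabla u\cdot\nu)$ and $x\cdot\nu = \rho$, the three terms collapse to
\[
\int_{\partial B_\rho} F\cdot\nu\,d\sigma \;=\; 2\rho\int_{\partial B_\rho}(\nabla u\cdot\nu)(\nabla v\cdot\nu)\,d\sigma \;-\; \rho\int_{\partial B_\rho}\nabla u\cdot\nabla v\,d\sigma \;=\; -P(\rho,u,v).
\]
Finally, applying the divergence theorem to $F$ on $B_{d_1}\setminus\overline{B_{d_2}}$ for arbitrary $0<d_2<d_1\le d$ gives $\int_{\partial B_{d_1}}F\cdot\nu\,d\sigma = \int_{\partial B_{d_2}}F\cdot\nu\,d\sigma$, whence $P(d_1,u,v) = P(d_2,u,v)$, establishing the claim.

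No genuine obstacle is expected; the only things to verify with care are the divergence-free property (the only place where the harmonicity of both $u$ and $v$ is used) and the fact that the annular region stays inside the punctured ball so we never integrate through the singularity at $0$. The argument is essentially the standard Pohozaev-type identity for the Laplacian in two dimensions applied to a pair of harmonic functions, exactly as in \cite{LPP-2022}.
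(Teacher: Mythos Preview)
Your proof is correct and is the standard Pohozaev-type argument for this identity. The paper does not actually prove this lemma; it simply cites Lemma~2.4 of \cite{LPP-2022}, so your argument is presumably what that reference contains (or a close variant of it).
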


From Lemma \ref{indep_d}, we will write $P(d,u,v)$ as $P(u,v)$ for simplicity.
Next, we have the following identity about the quadratic form $P$ on the solution $u_\lambda$.
\begin{lemma}\label{lem2.3}
Let $u_\lambda\in C^2(B_{1})$ be a solution of problem \eqref{lam} and $\delta>0$ is a fixed small constant such that $B_{\delta r_{\lambda}}\subset B_{1}$. Then
\begin{equation}\label{puu}
P\big(u_\lambda,u_\lambda\big)= 4
 \delta r_{\lambda}\lambda  \int_{\partial B_{\delta r_{\lambda}}}\frac{e^{u^2_\lambda}}{(1-|x|^{2})^{2}}   d\sigma-8\lambda \int_{B_{\delta r_{\lambda}}} \frac{ e^{u^2_\lambda}}{(1-|x|^{2})^{3}}  dx
 -8\lambda \int_{B_{\delta r_{\lambda}}} \frac{ e^{u^2_\lambda}|x|^2}{(1-|x|^{2})^{3}}  dx.
\end{equation}
\end{lemma}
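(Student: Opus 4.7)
The identity is a classical Pohozaev-type relation, obtained by testing the Euclidean equation
\begin{equation*}
-\Delta u_\lambda=\lambda u_\lambda e^{u_\lambda^2}\frac{4}{(1-|x|^2)^2}\quad\text{in }B_1
\end{equation*}
against the Pohozaev multiplier $x\cdot\nabla u_\lambda$ and integrating over the ball $B_d$ with $d=\delta r_\lambda$. The plan is simply to carry out the two resulting integrations by parts carefully and to recognize the boundary and bulk integrals that appear.

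For the left-hand side, I would compute $\int_{B_d}(-\Delta u_\lambda)(x\cdot\nabla u_\lambda)dx$ by one integration by parts, followed by the identity $\nabla u_\lambda\cdot\nabla(x\cdot\nabla u_\lambda)=|\nabla u_\lambda|^2+\tfrac{1}{2}x\cdot\nabla|\nabla u_\lambda|^2$ and a further integration by parts of the second term using $\operatorname{div}x=2$. On $\partial B_d$ one has $x=d\nu$, so $x\cdot\nabla u_\lambda=d\langle\nabla u_\lambda,\nu\rangle$. A routine calculation then yields
\begin{equation*}
\int_{B_d}(-\Delta u_\lambda)(x\cdot\nabla u_\lambda)dx=-d\int_{\partial B_d}\langle\nabla u_\lambda,\nu\rangle^2 d\sigma+\frac{d}{2}\int_{\partial B_d}|\nabla u_\lambda|^2 d\sigma=\frac{1}{2}P(u_\lambda,u_\lambda),
\end{equation*}
matching one half of the quadratic form defined in \eqref{P}.

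For the right-hand side, the key observation is $u_\lambda e^{u_\lambda^2}\nabla u_\lambda=\tfrac{1}{2}\nabla(e^{u_\lambda^2})$, so
\begin{equation*}
\int_{B_d}\lambda u_\lambda e^{u_\lambda^2}(x\cdot\nabla u_\lambda)\frac{4}{(1-|x|^2)^2}dx=2\lambda\int_{B_d}\frac{x\cdot\nabla(e^{u_\lambda^2})}{(1-|x|^2)^2}dx.
\end{equation*}
One integration by parts against the vector field $x/(1-|x|^2)^2$ produces a boundary term and a bulk term involving $\operatorname{div}\bigl(x/(1-|x|^2)^2\bigr)=2/(1-|x|^2)^2+4|x|^2/(1-|x|^2)^3$. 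Rewriting $2/(1-|x|^2)^2=2(1-|x|^2)/(1-|x|^2)^3$ merges the two bulk contributions into $(2+2|x|^2)/(1-|x|^2)^3$, and the boundary integral reduces to $d\,(1-d^2)^{-2}\int_{\partial B_d}e^{u_\lambda^2}d\sigma$.

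Doubling both sides to pass from $\tfrac{1}{2}P(u_\lambda,u_\lambda)$ to $P(u_\lambda,u_\lambda)$ produces precisely the three terms on the right-hand side of \eqref{puu}, with the correct coefficients $4\delta r_\lambda\lambda$ on the boundary and $-8\lambda$ on the two bulk integrals. There is no real obstacle: the only point requiring care is the algebraic rewriting $2/(1-|x|^2)^2=(2-2|x|^2)/(1-|x|^2)^3$ so that the final bulk integrals are expressed uniformly with the cubic weight $(1-|x|^2)^{-3}$ as stated.
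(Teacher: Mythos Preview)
Your proof is correct and follows exactly the same approach as the paper: multiply the equation by the Pohozaev multiplier $\langle x,\nabla u_\lambda\rangle$, integrate over $B_{\delta r_\lambda}$, and carry out the integrations by parts on both sides. You simply spell out the intermediate steps (the divergence computation and the rewriting $2/(1-|x|^2)^2=(2-2|x|^2)/(1-|x|^2)^3$) more explicitly than the paper, which states the resulting identity directly.
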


\begin{proof}
Multiplying $\big\langle x, \nabla u_\lambda \big\rangle$ on both sides of equation \eqref{lam} and integrating on $B_{\delta r_{\lambda}}$, we have
\[\begin{split}
{}&\frac{1}{2}\int_{\partial B_{\delta r_{\lambda}}} \big\langle x,\nu\big\rangle |\nabla u_\lambda|^2  d\sigma
-\int_{\partial B_{\delta r_{\lambda}}}\frac{\partial u_\lambda}{\partial\nu} \big\langle x, \nabla u_\lambda \big\rangle  d\sigma \\
={}&2\lambda  \int_{\partial B_{\delta r_{\lambda}}}\frac{e^{u^2_\lambda}}{(1-|x|^{2})^{2}} \big\langle x,\nu\big\rangle  d\sigma - 4
\lambda \int_{B_{\delta r_{\lambda}}} \frac{e^{u^2_\lambda}}{(1-|x|^{2})^{3}}  dx
- 4\lambda \int_{B_{\delta r_{\lambda}}} \frac{e^{u^2_\lambda}|x|^2 }{(1-|x|^{2})^{3}} dx,
\end{split}
\]
which together with \eqref{P} implies \eqref{puu}.
\end{proof}

\section{Decay estimates of positive solutions}\label{sb}

In this section, we will study on decay properties of entire positive solutions. Let $u_{\lambda}$ be a positive symmetric solution of \eqref{uniq}.
As a function on $B_1,u_{\lambda}=u_{\lambda}(|\zeta|),\zeta\in \R^{2},|\zeta|<1$ and
\begin{equation}\label{sb-1}
	\Big(\frac{1-|\zeta|^{2}}{2}\Big)^{2}\Delta u_{\lambda}+\lambda u_{\lambda} e^{u_{\lambda}^{2}}=0.
\end{equation}
Setting $|\zeta|:=\tanh \frac{t}{2},l=\sinh t,$ for simplicity of notations, we still denote $u_{\lambda}(t)=u_{\lambda}(\tanh \frac{t}{2}),$
then it is easy to see that
\begin{equation}\label{norm-6-20}
	\int_{B_1}|u_{\lambda}|^{2}d\zeta=\omega_{2}\int_{0}^{\infty}l|u_{\lambda}|^{2}dt,\,\,\int_{B_1}|\nabla u_{\lambda}|^{2}d\zeta=\omega_{2}\int_{0}^{\infty}l|u'_{\lambda}|^{2}dt
\end{equation}
where $\omega_2=2\pi$. So it follows from Poincar\'{e} inequality
\begin{equation}\label{sb-2}
	u_{\lambda}\in H^{1}(B_1)\,\,\,\Leftrightarrow \,\,\,\omega_{2}\int_{0}^{+\infty}l (|u_{\lambda}|^{2}+| u'_{\lambda}|^{2})dt=
	\int_{B_1}(|u_{\lambda}|^{2}+|\nabla u_{\lambda}|^{2})d \zeta<+\infty.
\end{equation}
\eqref{sb-1} can be rewritten as
\begin{equation}\label{sb-3}
	u''_{\lambda}+\coth tu'_{\lambda}+\lambda u_{\lambda}e^{u_{\lambda}^2}=0,\,\,\,t>0,\,\,\,u_\lambda'(0)=0,
\end{equation}	
as well as
\begin{equation}\label{sb-4}
	(lu_{\lambda}')'+\lambda lu_{\lambda}e^{u_{\lambda}^2}=0,\,\,\,t>0,\,\,\,u'_{\lambda}(0)=0.
\end{equation}	
From Proposition \ref{pro}, we know the positive solutions of \eqref{uniq} have hyperbolic symmetry, so if there
is no confusion, we will write
$u_{\lambda}(\tanh \frac{t}{2})$ as $u_{\lambda}(t).$
\begin{lemma}\label{lem-b1}
	(Lemma 3.1, \cite{MS-2008})
	Denote by $H$ the closure of $C_c^\infty(\mathbb{B}^2)$ with respect to the norm
	\begin{equation*}
		||\cdot||=\int_{\mathbb{B}^2}\Big(|\nabla_{\mathbb{B}^2}\cdot|^2-\frac{1}{4}|\cdot|^2\Big)dV_{\mathbb{B}^2}.
	\end{equation*}
	Let $u_{\lambda}\in H$ be a symmetric function. Then
	\begin{equation}\label{sb-5}
		\|u_{\lambda}\|^{2}_{H}=\omega_{2}\int_{0}^{\infty}l\Bigg[\bigg(u'_{\lambda}+\frac{1}{2}\tanh \frac{t}{2} u_{\lambda}\bigg)^{2}+\frac{u_{\lambda}^{2}}{(2 \cosh \frac{t}{2})^{2}}\Bigg]dt<+ \infty.
	\end{equation}
\end{lemma}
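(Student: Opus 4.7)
The plan is to verify the identity first for radial $u\in C_c^\infty(\mathbb{B}^2)$ by a direct computation in geodesic polar coordinates, and then extend to all of $H$ by a density argument, using the finiteness of $\|u\|_H$ to guarantee the finiteness of the right-hand side integral.

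First, I would pass to geodesic polar coordinates $t=\rho(x)$. For radial $u$ one has $|\nabla_{\mathbb{B}^2}u|^2=(u')^2$ and $dV_{\mathbb{B}^2}=\omega_2\sinh t\,dt=\omega_2 l\,dt$, so that
\[
\|u\|_H^2=\omega_2\int_0^\infty l\Big[(u')^2-\tfrac{1}{4}u^2\Big]\,dt.
\]
On the other hand, expanding the square in the right-hand side of \eqref{sb-5} and using the Pythagorean identity $\tanh^2\tfrac{t}{2}+\operatorname{sech}^2\tfrac{t}{2}=1$, the coefficient of $u^2$ collapses to $\tfrac{1}{4}$, so that
\[
\omega_2\int_0^\infty l\Big[(u'+\tfrac{1}{2}\tanh\tfrac{t}{2}u)^2+\tfrac{u^2}{(2\cosh\tfrac{t}{2})^2}\Big]dt=\omega_2\int_0^\infty l\Big[(u')^2+\tanh\tfrac{t}{2}\,uu'+\tfrac{1}{4}u^2\Big]dt.
\]
Thus the identity reduces to showing $\int_0^\infty l\tanh\tfrac{t}{2}\,uu'\,dt=-\tfrac{1}{2}\int_0^\infty lu^2\,dt$.

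The key observation is the factorization $l\tanh\tfrac{t}{2}=2\sinh\tfrac{t}{2}\cosh\tfrac{t}{2}\cdot\tanh\tfrac{t}{2}=2\sinh^2\tfrac{t}{2}$, which allows the cross term to be integrated by parts cleanly:
\[
\int_0^\infty l\tanh\tfrac{t}{2}\,uu'\,dt=\int_0^\infty \sinh^2\tfrac{t}{2}(u^2)'\,dt=\Big[\sinh^2\tfrac{t}{2}\,u^2\Big]_0^\infty-\tfrac{1}{2}\int_0^\infty \sinh t\,u^2\,dt,
\]
and the boundary term vanishes for $u\in C_c^\infty(\mathbb{B}^2)$, producing exactly $-\tfrac{1}{2}\int_0^\infty lu^2\,dt$. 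This establishes the identity on the dense subspace of compactly supported symmetric smooth functions.

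Finally, for a general symmetric $u\in H$, I would approximate by a sequence $u_n\in C_c^\infty(\mathbb{B}^2)$ with $\|u_n-u\|_H\to0$; the identity holds for each $u_n$, and since both sides are continuous quadratic functionals with respect to the $H$-norm (the right-hand side is manifestly nonnegative), passing to the limit yields \eqref{sb-5} together with the finiteness assertion. The one point demanding some care will be justifying that the boundary term $[\sinh^2\tfrac{t}{2}\,u^2]_0^\infty$ really does vanish in the limiting step for every $u\in H$; this can be handled by noting that elements of $H$ decay at infinity in the appropriate weighted sense, as is standard for functions in the form domain of $-\Delta_{\mathbb{B}^2}-\tfrac{1}{4}$.
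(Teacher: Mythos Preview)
Your computation is correct, and the integration-by-parts identity $\int_0^\infty l\tanh\tfrac{t}{2}\,uu'\,dt=-\tfrac12\int_0^\infty lu^2\,dt$ is exactly the right reduction. Note, however, that the paper does not supply its own proof of this lemma: it is stated as a citation of Lemma~3.1 in Mancini--Sandeep \cite{MS-2008}, so there is no in-paper argument to compare against. Your direct verification via geodesic polar coordinates and the factorization $l\tanh\tfrac{t}{2}=2\sinh^2\tfrac{t}{2}$ is precisely the computation underlying the cited result, and the density extension is standard (once the identity holds on radial $C_c^\infty$, the right-hand side \emph{is} the $H$-norm squared on that dense subspace, so the two quadratic forms have the same completion).
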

Now, let us observe that if $u_{\lambda}$ solves \eqref{sb-1} and
$$
I_{u_{\lambda}}(t)=:\frac{(u'_{\lambda})^{2}}{2}+\frac{\lambda}{2}e^{u_{\lambda}^{2}},
$$
then
$$
\frac{d}{dt}I_{u_{\lambda}}(t)=u'_{\lambda}(u''_{\lambda}+\lambda u_{\lambda} e^{u_{\lambda}^{2}})=-\coth t(u_{\lambda}')^{2}\leq 0,\,\,\,\forall t>0.
$$
Particularly, $u_{\lambda}$ and $u'_{\lambda}$ remain bounded and hence $u_{\lambda}$ is defined for every $t.$

\begin{lemma}\label{lem-b2}
	Let $u_\lambda$ be a positive solution of \eqref{uniq}. If $0<\lambda<\frac{1}{4},$ then
	$u'_{\lambda}(t)<0$ for every $t>0$ and $\lim\limits_{t\rightarrow+\infty}u_{\lambda}(t)
	=\lim\limits_{t\rightarrow+\infty}u'_{\lambda}(t)=0.$
\end{lemma}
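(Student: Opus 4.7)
The plan is to read the two assertions off of the first-order ODE $(l u'_\lambda)' + \lambda l u_\lambda e^{u_\lambda^2} = 0$ with $l = \sinh t$ and the initial condition $u'_\lambda(0) = 0$, together with the monotonicity of the ``energy'' $I_{u_\lambda}(t) = \tfrac{1}{2}(u'_\lambda)^2 + \tfrac{\lambda}{2} e^{u_\lambda^2}$ already recorded just before the statement.

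First I would establish strict monotonicity. Integrating $(l u'_\lambda)' = -\lambda l u_\lambda e^{u_\lambda^2}$ from $0$ to $t$, and using $l(0)u'_\lambda(0) = 0$, gives
\begin{equation*}
l(t)\, u'_\lambda(t) \;=\; -\lambda \int_0^t \sinh(s)\, u_\lambda(s)\, e^{u_\lambda(s)^2}\, ds.
\end{equation*}
Since $u_\lambda > 0$ (by assumption) and $\sinh s > 0$ for $s > 0$, the right-hand side is strictly negative for every $t > 0$. As $l(t) = \sinh t > 0$ on $(0,\infty)$, we conclude $u'_\lambda(t) < 0$ for all $t > 0$.

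Next I would handle the limits. The decay $u_\lambda(t) \to 0$ as $t \to \infty$ is precisely the boundary condition in \eqref{uniq} (recall $t = \rho(x)$ on the radial profile). For the derivative, I exploit $I_{u_\lambda}$. A direct differentiation using the ODE \eqref{sb-3} yields $I'_{u_\lambda}(t) = -\coth t\, (u'_\lambda(t))^2 \le 0$, so $I_{u_\lambda}$ is nonincreasing on $(0,\infty)$; it is also bounded below by $\lambda/2$, hence $I_{u_\lambda}(t) \to L$ for some $L \ge \lambda/2$. Combined with $u_\lambda(t) \to 0$, which forces $\tfrac{\lambda}{2} e^{u_\lambda(t)^2} \to \tfrac{\lambda}{2}$, we obtain $(u'_\lambda(t))^2 \to 2L - \lambda =: \alpha \ge 0$.

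The final step, and the only place where the argument is not purely mechanical, is ruling out $\alpha > 0$. Suppose for contradiction that $\alpha > 0$. Since $u'_\lambda < 0$ on $(0,\infty)$ by the first part, we would have $u'_\lambda(t) \to -\sqrt{\alpha}$. Then for $t$ sufficiently large, $u'_\lambda(t) \le -\tfrac{1}{2}\sqrt{\alpha}$, which upon integration forces $u_\lambda(t) \to -\infty$, contradicting $u_\lambda(t) \to 0$. Hence $\alpha = 0$, i.e.\ $u'_\lambda(t) \to 0$ as $t \to +\infty$, completing the proof. I do not anticipate any serious obstacle; the main subtlety is simply that one must use both the sign information $u'_\lambda < 0$ (from Step 1) and the prescribed vanishing of $u_\lambda$ at infinity to convert $(u'_\lambda)^2 \to \alpha$ into $\alpha = 0$.
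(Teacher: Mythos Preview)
Your proof is correct and follows essentially the same line as the paper: both obtain $u'_\lambda<0$ from $(lu'_\lambda)'<0$ with $u'_\lambda(0)=0$, and both use the monotonicity of $I_{u_\lambda}$ to force $(u'_\lambda)^2$ to converge, ruling out a nonzero limit via the positivity of $u_\lambda$. The only noteworthy difference is that you read off $u_\lambda(\infty)=0$ directly from the boundary condition in \eqref{uniq}, whereas the paper instead first establishes $u'_\lambda(\infty)=0$ from positivity alone and then deduces $u_\lambda(\infty)=0$ by passing to the limit in \eqref{sb-3} (getting $u''_\lambda(\infty)=-\lambda u_\lambda(\infty)e^{u_\lambda^2(\infty)}$, which must vanish). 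The paper's route is marginally more self-contained (it shows the decay is forced by the ODE and positivity, not just assumed), but for the lemma as stated your shortcut is entirely legitimate.
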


\begin{proof}
	If $0<\lambda<\frac{1}{4},$ equation \eqref{sb-4} implies that
	$$
	(lu'_{\lambda})'(t)=-\lambda l u_{\lambda} e^{u_{\lambda}^{2}}<0,\,\,\,\forall t>0,
	$$
	and there $u_{\lambda}'(t)<0,\,\,\,\forall t>0,$ since $u_{\lambda}'(0)=0.$
	Particularly, there exists $u_{\lambda}(\infty)=:\lim_{t\rightarrow +\infty}u_{\lambda}(t)$ and since $I_{u_{\lambda}}(t)$ is decreasing,
	$u'_{\lambda}(\infty)=\lim_{t\rightarrow +\infty}u'_{\lambda}(t)$ exists as well and is zero because $u_{\lambda}$ is positive.
	Finally, \eqref{sb-3} yields
	$u''_{\lambda}(\infty)=-\lambda u_{\lambda}(\infty)e^{u_{\lambda}^{2}(\infty)},$ necessarily $u''_{\lambda}(\infty)=0$ and hence $u_{\lambda}(\infty)=0.$
\end{proof}

\begin{lemma}\label{lem-b3}
	Let $u_{\lambda}>0$ be a solution of \eqref{sb-3} satisfying $||\nabla_{\mathbb{B}^2} u_{\lambda}||_2^2\leq M_0.$ Then
	\begin{equation}\label{b3-1}
		C_{1}(T)e^{-\frac{A_{T}+\sqrt{A^{2}_{T}-4\lambda }}{2}t}\leq u_{\lambda}(t)\leq C_{2}(T)e^{-\frac{1+\sqrt{1-4\lambda B_{T}}}{2}t},
	\end{equation}
	and
	\begin{equation}\label{b3-2}
		\tilde{C}_{1}(T)e^{-\frac{A_{T}+\sqrt{A^{2}_{T}-4\lambda }}{2}t}\leq (-u'_{\lambda}(t))\leq\tilde{ C}_{2}(T)e^{-\frac{1
				+\sqrt{1-4\lambda B_{T}}}{2}t},
	\end{equation}
	where
	$$
	A_{T}=\frac{e^{2T}+1}{e^{2T}-1}, \,\, B_{T}= e^{\frac{4M_0}{\pi} \frac{1}{e^{T}+e^{-T}-2}},\,\,
	\mu^{\pm}(T)=\frac{-A_{T}\pm\sqrt{(A_{T})^{2}-4\lambda}}{2},\,\,
	\nu^{\pm}(T)=\frac{-1\pm\sqrt{1-4\lambda B_{T}}}{2},
	$$
	$C_{i}(T)$ and $\tilde{C}_{i}(T)$ are positive constants which are defined in \eqref{b3-20} and \eqref{b3-20-1} respectively.
\end{lemma}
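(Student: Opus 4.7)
The plan is to compare $u_\lambda$ on $[T,\infty)$ with solutions of two constant-coefficient linear ODEs whose characteristic roots are precisely $\mu^{\pm}(T)$ and $\nu^{\pm}(T)$. First I would record the a priori bounds available on $[T,\infty)$: since $\coth t$ is decreasing, $1 \leq \coth t \leq A_T$; since $u_\lambda$ is strictly decreasing by Lemma~\ref{lem-b2}, $u_\lambda(t) \leq u_\lambda(T)$; and the line-integral/Cauchy--Schwarz estimate \eqref{log}, translated to hyperbolic polar coordinates via $|\zeta|=\tanh(t/2)$ and combined with $\|\nabla_{\mathbb{B}^2}u_\lambda\|_{L^2}^2\leq M_0$, yields $u_\lambda^{2}(T) \leq \tfrac{M_0}{2\pi}\ln\coth(T/2) \leq \ln B_T$. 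Hence $1 \leq e^{u_\lambda^{2}(t)} \leq B_T$ for every $t\geq T$. Plugging these inequalities into \eqref{sb-3} and using $u_\lambda'<0$ produces the key pair of one-sided linear inequalities
\begin{equation*}
L_1 u_\lambda := u_\lambda''+u_\lambda'+\lambda B_T u_\lambda\geq 0,\qquad L_2 u_\lambda := u_\lambda''+A_T u_\lambda'+\lambda u_\lambda\leq 0
\end{equation*}
on $[T,\infty)$; that is, $u_\lambda$ is a supersolution of $L_1$ and a subsolution of $L_2$.

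For the upper bound, set $\alpha := -\nu^-(T) = \tfrac12(1+\sqrt{1-4\lambda B_T})$ and $\kappa:=\sqrt{1-4\lambda B_T}$, so that $\alpha^2-\alpha+\lambda B_T=0$ and $e^{-\alpha t}$ is the fast-decay solution of $L_1 v=0$. With $v:=u_\lambda e^{\alpha t}\geq 0$, a direct computation gives the crucial identity $e^{\alpha t}L_1 u_\lambda = v''-\kappa v'$, so $L_1 u_\lambda\geq 0$ is equivalent to $(e^{-\kappa t}v')'\geq 0$; thus $e^{-\kappa t}v'$ is non-decreasing on $[T,\infty)$ with some limit $L\in(-\infty,+\infty]$. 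If $L>0$ then $v'(t)\geq\tfrac{L}{2}e^{\kappa t}$ for $t$ large, so $v(t)\gtrsim e^{\kappa t}$ and $u_\lambda(t)\gtrsim e^{(\kappa-\alpha)t}$; an elementary computation with $\sinh t\sim\tfrac12 e^t$ then shows that $\int_T^\infty u_\lambda^{2}\sinh t\,dt$ would diverge, contradicting $u_\lambda\in L^2(\mathbb{B}^2)$ (which follows from the hypothesis via \eqref{Poincare}). Hence $L\leq 0$, $v'\leq 0$, $v$ is non-increasing, and $u_\lambda(t)\leq v(T)e^{-\alpha t}$, yielding the upper half of \eqref{b3-1} with $C_2(T)=u_\lambda(T)e^{\alpha T}$.

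For the lower bound, set $\beta := -\mu^-(T) = \tfrac12(A_T+\sqrt{A_T^{2}-4\lambda})$ and $\tau:=\sqrt{A_T^{2}-4\lambda}$; with $w:=u_\lambda e^{\beta t}\geq 0$, the analogous identity $e^{\beta t}L_2 u_\lambda = w''-\tau w'$ shows that $L_2 u_\lambda\leq 0$ forces $(e^{-\tau t}w')'\leq 0$, i.e.\ $e^{-\tau t}w'$ is non-increasing with some limit $L'\in[-\infty,+\infty)$. If $L'<0$, then $w'(t)\to -\infty$ exponentially, and integrating shows $w(t)\to -\infty$, contradicting $w\geq 0$. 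Therefore $L'\geq 0$, $w'\geq 0$, $w$ is non-decreasing, and $u_\lambda(t)\geq w(T)e^{-\beta t}$, giving the lower half of \eqref{b3-1} with $C_1(T)=u_\lambda(T)e^{\beta T}$. The derivative bounds \eqref{b3-2} then follow by inserting \eqref{b3-1} into the integral identity $\sinh(t)\,|u_\lambda'(t)| = \int_0^{t} \lambda\sinh(s)\,u_\lambda(s)e^{u_\lambda^{2}(s)}\,ds$ obtained from \eqref{sb-4} and splitting the integral at $s=T$, which converts each one-sided exponential estimate on $u_\lambda$ into the corresponding one on $|u_\lambda'|$ after dividing by $\sinh t$.

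The principal obstacle is the upper-bound step: the ODE inequality $L_1 u_\lambda\geq 0$ admits decaying supersolutions on both the fast branch $e^{\nu^-(T)t}$ and the slow branch $e^{\nu^+(T)t}$, so the correct exponent cannot be read off from the differential inequality alone. The finite-Dirichlet-energy hypothesis is precisely what rules out the alternative $L>0$ and selects the fast-decay branch, which is why the factor $B_T$ appears in the upper-bound exponent. For the lower bound no analogous input is required, because the constraint $u_\lambda\geq 0$ by itself excludes the alternative $L'<0$; this asymmetry is the structural reason behind the different form of the two exponents in \eqref{b3-1}--\eqref{b3-2}.
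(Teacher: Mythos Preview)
Your proposal is correct and follows essentially the same strategy as the paper: both compare $u_\lambda$ on $[T,\infty)$ with the constant-coefficient operators $L_1=D^2+D+\lambda B_T$ and $L_2=D^2+A_TD+\lambda$, use positivity of $u_\lambda$ to rule out the bad branch in the lower-bound step, and use $u_\lambda\in L^2(\mathbb{B}^2)$ (via the Poincar\'e inequality) to rule out the slow-decay branch in the upper-bound step. The only technical difference is packaging: the paper factors each $L_j$ into first-order pieces and tracks $\hat\phi_\lambda=u_\lambda'-\mu^+(T)u_\lambda$, $\hat\psi_\lambda=u_\lambda'-\nu^+(T)u_\lambda$, while you conjugate by $e^{\alpha t}$, $e^{\beta t}$ and analyze the monotone quantities $e^{-\kappa t}v'$, $e^{-\tau t}w'$; these are the two standard ways to read the same factorization, and your identity $e^{\alpha t}L_1u_\lambda=v''-\kappa v'$ is correct.

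Two small remarks. First, for \eqref{b3-2} the route you sketch via the integral form of \eqref{sb-4} works but does not deliver the specific constants in \eqref{b3-20-1}; a shorter path is already implicit in your argument, since $v'\le 0$ and $w'\ge 0$ are exactly the pointwise inequalities $\alpha\,u_\lambda\le -u_\lambda'\le \beta\,u_\lambda$, and combining these with \eqref{b3-1} yields \eqref{b3-2} with the paper's constants $\tilde C_1(T)=-\nu^-(T)C_1(T)$, $\tilde C_2(T)=-\mu^-(T)C_2(T)$. Second, your bound $u_\lambda^2(T)\le\tfrac{M_0}{2\pi}\ln\coth(T/2)\le \ln B_T$ is valid (indeed $\sinh^2 s\,\ln\coth s<\tfrac12$ for all $s>0$ since $\ln(1+z)\le z$), but the paper obtains $u_\lambda^2(T)\le\ln B_T$ more directly from monotonicity and the $L^2$ bound, via $u_\lambda^2(T)\int_0^T\sinh s\,ds\le\int_0^\infty u_\lambda^2\sinh s\,ds\le 2M_0/\pi$, which gives the exact constant in one line.
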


\begin{proof}
	Noting that $||\nabla_{\mathbb{B}^2} u_\lambda||_2^2\leq M_0,$  then from \eqref{Poincare} we have
	\begin{equation*}
		\begin{split}
			\int_{\mathbb{B}^2}|u_{\lambda}|^{2}dV_{\mathbb{B}^2}=\omega_{2}\int_{0}^{+\infty} l u^{2}_{\lambda}(t)dt\leq 4M_0,
		\end{split}
	\end{equation*}
	which implies that
	\begin{equation}\label{6-21-1}
		\int_{0}^{+\infty} l u^{2}_{\lambda}(t)dt \leq \frac{2M_{0}}{\pi}.
	\end{equation}
	Since it follows from Lemma \ref{lem-b2} that $u_{\lambda}(t)$ is decreasing in $(0,+\infty),$ we have
	\begin{equation*}
		\begin{split}
			u_{\lambda}^{2}(T)\int_{0}^{T} l dt\leq \int_{0}^{+\infty} l u^{2}_{\lambda}(t)dt\leq \frac{2M_0}{\pi},
		\end{split}
	\end{equation*}
	which implies that
	\begin{equation}\label{6-21-2}
		\begin{split}
			u_{\lambda}^{2}(T)\leq \frac{4M_0}{\pi} \frac{1}{e^{T}+e^{-T}-2}.
		\end{split}
	\end{equation}
	By \eqref{6-21-2} and the fact that $\coth t$ is decreasing about $t\in (0,+\infty),$ we infer that
	\begin{equation}\label{6-21-3}
		1<\coth t\leq \coth T=\frac{e^{2T}+1}{e^{2T}-1}:=A_{T},\,\,\,\,u_{\lambda}
		e^{u_{\lambda}^{2}} \leq u_{\lambda}(t)e^{\frac{4M_0}{\pi} \frac{1}{e^{T}+e^{-T}-2}}:=B_{T}u_{\lambda}(t),\,\,\,\,\forall t\geq T.
	\end{equation}
	Since again by Lemma \ref{lem-b2} $u'_{\lambda}$ is negative, we have for $t \geq T,$
	\begin{equation}\label{b3-3}
		u''_{\lambda}+A_{T}u'_{\lambda}
		+\lambda u_{\lambda}\leq u''_{\lambda}+\coth t u'_{\lambda} +\lambda u_{\lambda}e^{u_{\lambda}^{2}}=0
	\end{equation}
	and
	\begin{equation}\label{b3-4}
		u''_{\lambda}+u'_{\lambda}+\lambda B_{T} u_{\lambda}\geq  u''_{\lambda}+\coth t u'_{\lambda} +\lambda u_{\lambda}e^{u_{\lambda}^{2}}=0.
	\end{equation}
	Let
	\begin{equation}\label{b3-3-add}
		\mu^{\pm}(T):=\frac{-A_{T}\pm\sqrt{(A_{T})^{2}-4\lambda}}{2},\,\,\,
		\nu^{\pm}(T):=\frac{-1\pm\sqrt{1-4\lambda B_{T}}}{2}
	\end{equation}
	be the characteristic roots of the differential polynomials in the left hand sides of
	\eqref{b3-3} and \eqref{b3-4} respectively (notice that $\mu^{\pm}(T)$ are real and distinct for $0<\lambda<\frac{1}{4}$;
	to have $\nu^{\pm}(T)$ real and distinct we need to choose $T>\ln\Big(1-\frac{2M_{0}}{\pi\ln(4\lambda)}+2\sqrt{\frac{M^{2}_{0}}{\pi^{2}(\ln(4\lambda))^{2}}-\frac{M_{0}}{\pi\ln(4\lambda)}}\Big)$ and with this choice
	$\mu^{-}(T)<\nu^{-}(T)$).
	Let
	\begin{equation}\label{b3-5}
		\hat{\phi}_{\lambda}:=u'_{\lambda}-\mu^{+}(T)u_{\lambda},
		\,\,\,\hat{\psi}_{\lambda}:=u'_{\lambda}-\nu^{+}(T)u_{\lambda}.
	\end{equation}
	Then from \eqref{b3-3} and \eqref{b3-4}, we have for $t\geq T,$
	\begin{equation}\label{b3-6}
		\begin{split}
			\hat{\phi}'_{\lambda}-\mu^{-}(T)\hat{\phi}_{\lambda}
			&=u''_{\lambda}-(\mu^{+}(T)+\mu^{-}(T))u'_{\lambda}+\mu^{+}(T)
			\mu^{-}(T)u_{\lambda}
			\\&=u''_{\lambda}+A_{T}u'_{\lambda}+\lambda u_{\lambda}\leq 0
		\end{split}
	\end{equation}
	and
	\begin{equation}\label{b3-7}
		\begin{split}
			\hat{\psi}'_{\lambda}-\nu^{-}(T)\hat{\psi}_{\lambda}
			&=u''_{\lambda}-(\nu^{+}(T)+\nu^{-}(T))u'_{\lambda}+\nu^{+}(T)\nu^{-}(T)u_{\lambda}\\
			&=u''_{\lambda}+u'_{\lambda}+\lambda B_{T} u_{\lambda}\geq 0.
		\end{split}
	\end{equation}
	From \eqref{b3-6} and \eqref{b3-7}, we derive
	$(e^{-\mu^{-}(T)t}\hat{\phi}_{\lambda})'\leq 0 \leq (e^{-\nu^{-}(T)t}\hat{\psi}_{\lambda})'$
	for every  $t\geq T$ and hence, integrating such inequalities in $[\tau,t], T\leq \tau\leq t,$
	we get respectively
	\begin{equation}\label{b3-8}
		\begin{split}
			u'_{\lambda}(t)-\mu^{+}(T)u_{\lambda}(t)=\hat{\phi}_{\lambda}(t)
			\leq (e^{-\mu^{-}(T)\tau}\hat{\phi}_{\lambda}(\tau))e^{\mu^{-}(T)t}
			:=c_{T}(\tau)e^{\mu^{-}(T)t}
		\end{split}
	\end{equation}
	and
	\begin{equation}\label{b3-9}
		\begin{split}
			u'_{\lambda}(t)-\nu^{+}(T)u_{\lambda}(t)=\hat{\psi}_{\lambda}(t)
			\geq  (e^{-\nu^{-}(T)\tau}\hat{\psi}_{\lambda}(\tau))e^{\nu^{-}(T)t}
			:=d_{T}(\tau)e^{\nu^{-}(T)t}
		\end{split}
	\end{equation}
	for every $t\geq \tau \geq T.$ Again by integration, we finally get, for every $t\geq \tau \geq T,$
	\begin{equation}\label{b3-10}
		\begin{split}
			u_{\lambda}(t)&\leq e^{-\mu^{+}(T)\tau}u_{\lambda}(\tau) e^{\mu^{+}(T)t}
			+c_{T}(\tau)\frac{e^{\mu^{-}(T)t}-e^{(\mu^{-}(T)-\mu^{+}(T))\tau
					+\mu^{+}(T)t}}{\mu^{-}(T)-\mu^{+}(T)}
			\\
			&=\Big[ e^{-\mu^{+}(T)\tau}u_{\lambda}(\tau)
			-\frac{c_{T}(\tau)}{\mu^{-}(T)-\mu^{+}(T)}
			e^{(\mu^{-}(T)-\mu^{+}(T))\tau}
			\Big]e^{\mu^{+}(T)t}
			+\frac{c_{T}(\tau)e^{\mu^{-}(T)t}}{\mu^{-}(T)-\mu^{+}(T)}
		\end{split}
	\end{equation}
	and
	\begin{equation}\label{b3-11}
		\begin{split}
			u_{\lambda}(t)&\geq e^{-\nu^{+}(T)\tau}u_{\lambda}(\tau) e^{\nu^{+}(T)t}
			+d_{T}(\tau)\frac{e^{\nu^{-}(T)t}-e^{(\nu^{-}(T)-\nu^{+}(T)))\tau
					+\nu^{+}(T)t}}{\nu^{-}(T)-\nu^{+}(T)}
			\\
			&=\Big[ e^{-\nu^{+}(T)\tau}u_{\lambda}(\tau)
			-\frac{d_{T}(\tau)}{\nu^{-}(T)-\nu^{+}(T)}
			e^{(\nu^{-}(T)-\nu^{+}(T))
				\tau}
			\Big]e^{\nu^{+}(T)\tau}
			+\frac{d_{T}(\tau)e^{\nu^{-}(T)t}}{\nu^{-}(T)-\nu
				^{+}(T)}.
		\end{split}
	\end{equation}
	Now, $u_{\lambda}$ is positive, $\mu^{-}(T)<\mu^{+}(T)$ and \eqref{b3-10} yields that
	\begin{equation*}
		e^{-\mu^{+}(T)\tau}u_{\lambda}(\tau)
		-\frac{c_{T}(\tau)}{\mu^{-}(T)-\mu^{+}(T)}
		e^{(\mu^{-}(T)-\mu^{+}(T))\tau}
		\geq 0,\,\,\,\forall \tau\geq T,
	\end{equation*}
	that is
	\begin{equation}\label{b3-12}
		\begin{split}
			u_{\lambda}(\tau)\geq
			\frac{c_{T}(\tau)
				e^{\mu^{-}(T)\tau}}{\mu^{-}(T)-\mu^{+}(T)}
			=\frac{\hat{\phi}_{\lambda}(\tau)}{\mu^{-}(T)-\mu^{+}(T)}
			=\frac{u'_{\lambda}(\tau)-\mu^{+}(T)u_{\lambda}(\tau)}{\mu^{-}(T)-\mu^{+}(T)},\,\,\,\forall \tau\geq T.
		\end{split}
	\end{equation}
	Hence,
	\begin{equation}\label{b3-13}
		u'_{\lambda}(\tau)\geq \mu^{-}(T)u_{\lambda}(\tau),\,\,\,\forall \tau \geq T.
	\end{equation}
	And integrating on $[T,t],$ we find
	\begin{equation}\label{b3-14}
		u_{\lambda}(t)\geq u_{\lambda}(T)e^{-\mu^{-}(T)T} e^{\mu^{-}(T)t},\,\,\,\forall t \geq T.
	\end{equation}
	Similarly, from \eqref{b3-11} we obtain
	$$
	\hat{d}_{T}(\tau)=e^{-\upsilon^{+}(T)\tau}u_{\lambda}(\tau)
	-\frac{d_{T}(\tau)}{\nu^{-}(T)-\nu^{+}(T)}
	e^{(\nu^{-}(T)-\nu^{+}(T))\tau}\leq 0,\,\,\,\forall \tau\geq T.
	$$
	Otherwise, since $\nu^{-}(T)< \nu^{+}(T), $
	\eqref{b3-11} gives
	\begin{equation}\label{b3-15}
		u_{\lambda}(t)\geq \frac{1}{2}\hat{d}_{T}(\tau)e^{\nu^{+}(T)t}.
	\end{equation}
	But since $0<\lambda<\frac{1}{4}$ implies that $1+2\nu^{+}(T)>0,$ this inequality and \eqref{b3-15} yields
	\begin{equation}\label{b3-16}
		\begin{split}
			+\infty &> \int_{0}^{+\infty}l
			u_{\lambda}^{2}dt=\int_{0}^{+\infty} \sinh t u_{\lambda}
			^{2}dt\\
			&\geq \frac{1}{4}(\hat{d}_{T}(\tau))^{2}
			\int_{0}^{+\infty}\frac{e^{t}-e^{-t}}{2}e^{2\nu^{+}(T)t}dt\\
			&\geq \frac{1}{16}(\hat{d}_{T}(\tau))^{2}
			\int_{0}^{+\infty}(e^{t}-1)e^{2\nu^{+}(T)t}dt\\
			&\geq \frac{1}{32}(\hat{d}_{T}(\tau))^{2}\int_{\ln2}^{+\infty} e^{(2\nu^{+}(T)+1)t}dt\rightarrow +\infty.
		\end{split}
	\end{equation}
	This is a contradiction. Thus, $\hat{d}_{T}(\tau)\leq 0,\,\,\,\forall \tau\geq T,$ that is
	\begin{equation}\label{b3-17}
		\begin{split}
			u_{\lambda}(\tau)&
			\leq
			\frac{d_{T}(\tau)
				e^{\nu^{-}(T)\tau}}{\nu^{-}(T)-\nu^{+}(T)}
			=\frac{\hat{\psi}_{\lambda}(\tau)}{\nu^{-}(T)-\nu^{+}(T)}
			=\frac{u'_{\lambda}(\tau)-\nu^{+}(T)u_{\lambda}(\tau)}{\nu^{-}(T)-\nu^{+}(T)},\,\,\,\forall \tau\geq T.
		\end{split}
	\end{equation}
	Hence,
	\begin{equation}\label{b3-18}
		u'_{\lambda}(\tau)\leq \nu^{-}(T)u_{\lambda}(\tau),\,\,\,\forall \tau \geq T.
	\end{equation}
	And integrating on $[T,t],$ we find
	\begin{equation}\label{b3-19}
		u_{\lambda}(t)\leq u_{\lambda}(T)e^{-\nu^{-}(T)T} e^{\nu^{-}(T)t},\,\,\,\forall t \geq T.
	\end{equation}
	From \eqref{b3-14} and \eqref{b3-19}, we get
	\begin{equation}\label{b3-20}
		C_{1}(T)e^{\mu^{-}(T)t}:=(u_{\lambda}(T)e^{-\mu^{-}(T)T} )e^{\mu^{-}(T)t}\leq u_{\lambda}(t)\leq( u_{\lambda}(T)e^{-\nu^{-}(T)T}) e^{\nu^{-}(T)t}:=C_{2}(T)e^{\nu^{-}(T)t},\,\,\,\forall t \geq T,
	\end{equation}
	which combining \eqref{b3-13} and \eqref{b3-20} yields that
	\begin{equation}\label{b3-20-1}
		\begin{split}
			\tilde{C}_{1}(T)e^{-\mu^{-}(T)t}&:=(-\nu^{-}(T)u_{\lambda}(T)e^{-\mu^{-}(T)T}) e^{\mu^{-}(T)t}\\
			&\leq (-u'_{\lambda}(t))\\
			&\leq (-\mu^{-}(T)u_{\lambda}(T)e^{-\nu^{-}(T)T} ) e^{\nu^{-}(T)t}
			:=\tilde{C}_{2}(T)e^{\nu^{-}(T)t},\,\,\,\forall t \geq T.
		\end{split}
	\end{equation}
\end{proof}

\begin{corollary}\label{lem-b3-1}
	Let $u_{\lambda}>0$ be a solution of \eqref{sb-3} satisfying $||\nabla_{\mathbb{B}^2} u_{\lambda}||_2^2\leq M_0.$ Then there hold
	\begin{equation}\label{b3-1-1}
		\lim_{t\rightarrow +\infty}\frac{\ln u_{\lambda}^{2}(t)}{t}
		=\lim_{t\rightarrow +\infty}\frac{\ln (u'_{\lambda})^{2}(t)}{t}
		=\lim_{t\rightarrow +\infty}\frac{\ln (u_{\lambda}^{2}+(u'_{\lambda})^{2})(t)}{t}
		=-(1+\sqrt{1-4\lambda})
	\end{equation}
	and
	\begin{equation}\label{b3-2-1}
		\lim_{t\rightarrow +\infty}\frac{u'_{\lambda}(t)}{u_{\lambda}(t)}=-\frac{1+\sqrt{1-4\lambda}}{2}.
	\end{equation}
\end{corollary}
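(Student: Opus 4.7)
The plan is to deduce this corollary directly from the exponential sandwich bounds established in Lemma \ref{lem-b3} by letting the auxiliary threshold parameter $T$ tend to infinity. The observation that drives everything is that, by Lemma \ref{lem-b2}, $u_\lambda(T)\to 0$ as $T\to\infty$, which forces $A_T=\coth T\to 1^+$ and $B_T=\exp\!\bigl(\tfrac{4M_0}{\pi}\,\tfrac{1}{e^T+e^{-T}-2}\bigr)\to 1^+$. Plugging these limits into the definitions in \eqref{b3-3-add} gives
\[
\mu^{-}(T)\longrightarrow -\frac{1+\sqrt{1-4\lambda}}{2},\qquad \nu^{-}(T)\longrightarrow -\frac{1+\sqrt{1-4\lambda}}{2},
\]
so the upper and lower exponential rates collapse to the same value in the limit.

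First I would take the logarithm of the two-sided bound \eqref{b3-1} of Lemma \ref{lem-b3}: for every fixed large $T$ and all $t\ge T$,
\[
\frac{2\ln C_1(T)}{t}+2\mu^{-}(T)\ \le\ \frac{\ln u_\lambda^2(t)}{t}\ \le\ \frac{2\ln C_2(T)}{t}+2\nu^{-}(T).
\]
Letting $t\to\infty$ with $T$ fixed yields
\[
2\mu^{-}(T)\ \le\ \liminf_{t\to\infty}\frac{\ln u_\lambda^2(t)}{t}\ \le\ \limsup_{t\to\infty}\frac{\ln u_\lambda^2(t)}{t}\ \le\ 2\nu^{-}(T).
\]
Now letting $T\to\infty$ squeezes both $\liminf$ and $\limsup$ to $-(1+\sqrt{1-4\lambda})$, proving the first of the three equalities in \eqref{b3-1-1}. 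The same argument applied to the bound \eqref{b3-2} on $-u_\lambda'(t)$ yields the second equality. For the sum $u_\lambda^2+(u_\lambda')^2$, since both summands are positive and decay at the same exponential rate, one easily sandwiches the ratio $\ln(u_\lambda^2+(u_\lambda')^2)/t$ between the two individual limits, giving the third equality.

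For the ratio statement \eqref{b3-2-1}, I would use the sharper one-sided differential inequalities obtained inside the proof of Lemma \ref{lem-b3}, namely \eqref{b3-13} and \eqref{b3-18}: for every fixed $T$ large enough and all $\tau\ge T$,
\[
\mu^{-}(T)\ \le\ \frac{u_\lambda'(\tau)}{u_\lambda(\tau)}\ \le\ \nu^{-}(T).
\]
Taking first $\tau\to\infty$ and then $T\to\infty$, both sides converge to $-(1+\sqrt{1-4\lambda})/2$, which gives \eqref{b3-2-1}.

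The only mildly delicate point is making sure that $T$ may indeed be sent to infinity in the conclusion of Lemma \ref{lem-b3}; this is where one must verify (a) that the hypothesis $T>\ln\!\bigl(1-\tfrac{2M_0}{\pi\ln(4\lambda)}+2\sqrt{\tfrac{M_0^2}{\pi^2(\ln 4\lambda)^2}-\tfrac{M_0}{\pi\ln 4\lambda}}\bigr)$ required in that lemma so that $\nu^\pm(T)$ are real is obviously compatible with $T\to\infty$, and (b) that $A_T^2-4\lambda>0$ and $1-4\lambda B_T>0$ for all sufficiently large $T$, which is immediate since $A_T,B_T\to 1$ and $0<\lambda<\tfrac14$. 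Once these two observations are in place, the whole proof is just taking double limits, so I do not expect any serious obstacle beyond keeping the order of limits straight ($t\to\infty$ first, $T\to\infty$ second).
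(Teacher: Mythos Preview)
Your proposal is correct and follows essentially the same route as the paper: take logs of the two-sided exponential bounds \eqref{b3-1}--\eqref{b3-2}, let $t\to\infty$ and then $T\to\infty$ using $A_T,B_T\to1$, and for the ratio use the pointwise sandwich \eqref{b3-13}, \eqref{b3-18} in the same double-limit fashion. One minor remark: the convergences $A_T\to1$ and $B_T\to1$ are purely from their explicit definitions in $T$ and do not actually require invoking Lemma~\ref{lem-b2}, so that citation is harmless but superfluous.
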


\begin{proof}
	By \eqref{b3-1} and \eqref{b3-2}, noting that $\lim_{T\rightarrow +\infty}A_{T}=\lim_{T\rightarrow +\infty}B_{T}=1,$
	we can infer that \eqref{b3-1-1} holds.
	Taking $\limsup$ and $\liminf$ in
	\eqref{b3-15} and \eqref{b3-18} respectively, then sending $T\rightarrow\infty$ we see that
	\eqref{b3-2-1} is true.
\end{proof}

\begin{remark}
	We would like to point out that in fact if
	$u_{\lambda}\in W^{1,2}(\mathbb{B}^2)$
	we can also prove that \eqref{b3-1-1} and \eqref{b3-2-1} hold. And we do not need to require that $||\nabla_{\mathbb{B}^2} u_{\lambda}||_2^2\leq M_0.$
\end{remark}

Let us now consider the limit case $\lambda=\frac{1}{4}.$ Here we have more precise estimates, which are needed before.

\begin{lemma}\label{lem-b4}
	Let $\lambda=\frac{1}{4}$ and $u_{\lambda}>0$ be a solution of \eqref{sb-3} satisfying \eqref{sb-2}. Then
	\begin{equation}\label{b4-1}
		\lim_{t\rightarrow +\infty}\frac{\ln u_{\lambda}^{2}}{t}=\lim_{t\rightarrow +\infty}\frac{\ln (u'_{\lambda})^{2}}{t}=\lim_{t\rightarrow +\infty}\frac{\ln[u_{\lambda}^{2}+ (u'_{\lambda})^{2}]}{t}
		=-1.
	\end{equation}
	Moreover, if $u_{\lambda}$ satisfies \eqref{sb-5}, then there is $A>0$ such that
	\begin{equation}\label{b4-2}
		e^{\frac{t}{2}}u_{\lambda}(t) \rightarrow A,\,\,\text{and}\,\,\,e^{\frac{t}{2}}u'_{\lambda}(t) \rightarrow -\frac{A}{2},\,\,\,\text{as}\,\,t\rightarrow +\infty.
	\end{equation}
\end{lemma}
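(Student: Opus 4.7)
The plan is to establish \eqref{b4-1} by combining the Lemma~\ref{lem-b3} ODE-comparison for the lower bound with a Jordan-block perturbation analysis for the upper bound, and then to refine to \eqref{b4-2} using the Hardy-type integrability condition \eqref{sb-5} together with the integral representation of the equation satisfied by $\phi_\lambda(t):=e^{t/2}u_\lambda(t)$.

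For the lower bound in \eqref{b4-1}, the argument of Lemma~\ref{lem-b3} carries over verbatim: since $\coth t\le A_T$ for $t\ge T$ and $e^{u_\lambda^2}\ge1$, equation \eqref{sb-3} yields $u_\lambda''+A_Tu_\lambda'+\tfrac14u_\lambda\le0$, and the roots $\mu^\pm(T)=\tfrac{-A_T\pm\sqrt{A_T^2-1}}{2}$ stay real and distinct since $A_T>1$, so the derivation \eqref{b3-5}--\eqref{b3-14} goes through to give $u_\lambda(t)\ge C(T)e^{\mu^-(T)t}$ and $u_\lambda'(t)\ge\mu^-(T)u_\lambda(t)$; sending $T\to\infty$, $\mu^-(T)\to-\tfrac12$ yields the required $\liminf$ bounds. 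For the matching upper bound, the Lemma~\ref{lem-b3} scheme fails because the discriminant $1-4\lambda B_T=1-B_T<0$ produces complex roots. I therefore substitute $\phi_\lambda=e^{t/2}u_\lambda$ to obtain
\begin{equation*}
\phi_\lambda''+P(t)\phi_\lambda'+Q(t)\phi_\lambda=0,\qquad P(t)=\tfrac{2}{e^{2t}-1},\qquad Q(t)=\tfrac14(e^{u_\lambda^2}-1)-\tfrac{1}{e^{2t}-1}.
\end{equation*}
Both $P$ and $Q$ are in $L^1([T,\infty))$: $P=O(e^{-2t})$, and \eqref{sb-2} gives $\int^\infty u_\lambda^2\,dt<\infty$ so $|e^{u_\lambda^2}-1|\lesssim u_\lambda^2\in L^1$. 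Viewing the first-order system $\Phi_\lambda=(\phi_\lambda,\phi_\lambda')^T$ as an integrably small perturbation of the Jordan-block system $\Phi'=J\Phi$, $J=\bigl(\begin{smallmatrix}0&1\\0&0\end{smallmatrix}\bigr)$, and using $e^{Jt}=I+Jt$, a variation-of-parameters/Gronwall argument weighted by $1+t-T$ yields $|\Phi_\lambda(t)|\le C(1+t)$. This gives $u_\lambda\le C(1+t)e^{-t/2}$, while the analogous bound for $u_\lambda'$ follows from $u_\lambda'=e^{-t/2}(\phi_\lambda'-\phi_\lambda/2)$; combined, these prove \eqref{b4-1}.

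Assuming additionally \eqref{sb-5}, I next prove \eqref{b4-2}. Expanding the $(u_\lambda'+\tfrac12\tanh(t/2)u_\lambda)^2$-term via $u_\lambda'+\tfrac12\tanh(t/2)u_\lambda=e^{-t/2}[\phi_\lambda'-\phi_\lambda/(e^t+1)]$ and absorbing the remainders through the Step~1 bound $\phi_\lambda\le C(1+t)$, \eqref{sb-5} reduces to $\phi_\lambda'\in L^2([T,\infty))$. From the Step~1 lower bound $u_\lambda\gtrsim e^{-(1/2+\epsilon)t}$, one finds $Q\ge0$ for $t$ large, so with $E(t):=\exp\int_T^tP$ (uniformly bounded as $P\in L^1$) the identity $(E\phi_\lambda')'=-EQ\phi_\lambda\le0$ makes $E\phi_\lambda'$ eventually non-increasing with a limit, which must be $0$ by the $L^2$ property. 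Integrating from $t$ to $\infty$ gives the representation
\begin{equation*}
\phi_\lambda'(t)=\frac{1}{E(t)}\int_t^\infty E(s)Q(s)\phi_\lambda(s)\,ds\ge0,
\end{equation*}
so $\phi_\lambda$ is eventually non-decreasing. Feeding the Step~1 bounds $\phi_\lambda\le C(1+t)$ and $Q(s)\phi_\lambda(s)\le C\phi_\lambda^3(s)e^{-s}\le C(1+s)^3e^{-s}$ into $\int_T^\infty\phi_\lambda'(t)\,dt$ via Fubini yields a bound by $C\int_T^\infty(s-T)(1+s)^3e^{-s}\,ds<\infty$, forcing $\phi_\lambda$ bounded, hence $\phi_\lambda(t)\to A\in[0,\infty)$; positivity $A>0$ then follows from $\phi_\lambda(t)\ge\phi_\lambda(T)>0$ together with monotonicity. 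The limits in \eqref{b4-2} are then immediate from $e^{t/2}u_\lambda=\phi_\lambda$ and $e^{t/2}u_\lambda'=\phi_\lambda'-\phi_\lambda/2$.

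The main obstacle is the step from $\phi_\lambda'\in L^2$ and $\phi_\lambda'\to0$ to $\phi_\lambda$ \emph{bounded}: by itself the $L^2$ control permits unbounded growth like $\phi_\lambda\sim\log t$. The crucial input is that the nonlinearity contributes the super-quadratic factor $Q\phi_\lambda\lesssim\phi_\lambda^3e^{-t}$, which when combined with the linear-in-$t$ bound $\phi_\lambda\le C(1+t)$ from Step~1 makes the integral representation of $\phi_\lambda'$ absolutely integrable in $t$, thereby forcing $\phi_\lambda$ to converge to a finite limit.
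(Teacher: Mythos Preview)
Your proof is correct and takes a genuinely different route from the paper's, so a short comparison is worthwhile.

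For \eqref{b4-1}, the paper first runs a qualitative argument to show $u_\lambda''>0$ eventually, deduces a crude bound $u_\lambda\le Ce^{-(1/4-\epsilon)t}$, and then invokes the Coddington--Levinson asymptotic theorem (Theorem~4.31 in \cite{CL-1985}) to pin down the rate $-1/2$. Your substitution $\phi_\lambda=e^{t/2}u_\lambda$ turns \eqref{sb-3} into an $L^1$-perturbation of the Jordan block $\Phi'=J\Phi$, after which a weighted Gronwall gives $|\Phi_\lambda|\le C(1+t)$ directly; this is more self-contained since it avoids both the sign analysis of $u_\lambda''$ and the black-box ODE theorem. The check that the weighted Gronwall closes (namely $\int s|R(s)|\,ds<\infty$) is ensured by $\int e^t u_\lambda^2\,dt<\infty$ from \eqref{sb-2}, which is stronger than the $\int u_\lambda^2\,dt<\infty$ you quote, but certainly suffices.

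For \eqref{b4-2}, the paper exploits \eqref{b4-1} to get an exponentially decaying forcing $f=O(e^{-\alpha t})$, $\alpha>1/2$, sets $\varrho=u_\lambda'+\tfrac12u_\lambda$, uses \eqref{sb-5} only to find a sequence along which $e^{t/2}\varrho\to0$, and then integrates twice. Your argument instead extracts $\phi_\lambda'\in L^2$ from \eqref{sb-5}, observes that the lower bound forces $Q\ge0$ eventually so that $E\phi_\lambda'$ is monotone with limit $0$, and then uses Fubini on the resulting integral representation together with the Step~1 bound $\phi_\lambda\le C(1+t)$ to show $\phi_\lambda'$ is integrable. Both arguments are valid; the paper's yields the sharper quantitative statement $e^{t/2}u_\lambda(t)-A=O(e^{-(\alpha-1/2)t})$, while yours gives convergence via monotonicity without a rate but with the pleasant byproduct that $A\ge\phi_\lambda(T_0)>0$ is immediate, without the contradiction argument the paper uses to rule out $A=0$.
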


\begin{proof}
	Since $\mu^{\pm}(T)$ (see the proof of Lemma \ref{lem-b3}) are real and distinct, \eqref{b3-3}-\eqref{b3-10} and hence the bound from below
	$u_{\lambda}(t)\geq C_{T}e^{\mu^{-}(T)t}$ given in \eqref{b3-14}, still holds true, because at this stage, it was just required that $u_{\lambda}(t)$ decreases to zero,
	a property satisfied here (Lemma \ref{lem-b2}). For the same reason, an upper bound for $u_{\lambda}(t)$ and \eqref{b3-13}
	will give a similar upper bound for $-u'_{\lambda}(t).$
	
	A bound from above for $u_{\lambda}(t)$ can be obtained as follows.
	First we observe that $u''_{\lambda}(t)$ which can not be definitely negative and then it is definitely positive, otherwise, it has a sequence $t_{j}\rightarrow +\infty$ of zeros, that is, denoted
	$\hat{u}_{\lambda}(t)=u'_{\lambda}(t).$ Then $\hat{u}_{\lambda}(t)<0$ and $\hat{u}'(t_{j})=0.$ Taking the derivative of \eqref{sb-3}, we get
	\begin{equation}\label{b4-3}
		\hat{u}''_{\lambda}+\coth t\hat{u}'_{\lambda}+\bigg(2\lambda u_{\lambda}^{2}e^{u_{\lambda}^{2}}+\lambda e^{u_{\lambda}^{2}}-\frac{1}{(\sinh t)^{2}}\bigg
		)\hat{u}_{\lambda}=0
	\end{equation}
	and we see that there is $\underline{t}$ such that
	$\hat{u}''_{\lambda}(t_{j})>0$  for $ t_{j}\geq t,$ which $\hat{u}''_{\lambda}$ has to change sign at two consecutive zeros of $\hat{u}_{\lambda}'.$
	In turn, $u''_{\lambda}>0$ for large $t$ implies that $u_{\lambda}$ (and $u'_{\lambda}$), has exponential decay. In fact, if $u_{\lambda}''>0$ for
	$t\geq \bar{t},$ then using \eqref{sb-3} we see that
	$$
	\coth t u'_{\lambda}+ u_{\lambda}=-u''_{\lambda}-\lambda(e^{u^{2}_{\lambda}}-1)u_{\lambda}<0
	$$
	for $t\geq \bar{t}$ and hence
	$$
	\frac{u'_{\lambda}(t)}{u_{\lambda}(t)}\leq -\lambda\, \tanh \bar{t}
	$$
	and hence
	$$
	u_{\lambda}(t)\leq u_{\lambda}(\bar{t})e^{-\lambda \tanh\bar{t}(t-\bar{t})}.
	$$
	As a result, as observed above, a similar upper bound holds true for
	$-u'_{\lambda}.$ Now, from \eqref{sb-3} and \eqref{b3-14}, we get for
	$\lambda = \frac{1}{4},$
	\begin{equation*}
		\begin{split}
			-(l u'_{\lambda})(t)&= -(l u'_{\lambda})(T)
			+\lambda \int_{T}^{t}l u_{\lambda}e^{u_{\lambda}^{2}}\\
			&\geq\lambda
			\int_{t_{\epsilon}}^{t}le^{-\frac{A_{T}+\sqrt{A^{2}_{T}-1}}{2}s}ds \geq \hat{C}_{T}e^{\frac{2-(A_{T}+\sqrt{A^{2}_{T}-1})}{2}t}
		\end{split}
	\end{equation*}
	and
	then
	$$
	-u'_{\lambda}(t)\geq\hat{C}_{T}e^{\frac{-A_{T}+\sqrt{A^{2}_{T}-1}}{2}t}.
	$$
	Therefore, for any given $\epsilon>0,$
	there exist positive numbers $a_{\epsilon},b_{\epsilon}$ such that
	$$
	a_{\epsilon}e^{-(\frac{1}{2}+\epsilon)t}
	\leq u_{\lambda}\leq  b_{\epsilon}e^{-(\frac{1}{4}-\epsilon)t}
	$$
	and
	$$
	a_{\epsilon}e^{-(\frac{1}{2}+\epsilon)t}\leq
	-u'_{\lambda}\leq  b_{\epsilon}e^{-(\frac{1}{4}-\epsilon)t}.
	$$
	Now, let
	$$
	\textbf{U}_{\lambda}=(u_{\lambda},u'_{\lambda}),
	F(u_{\lambda},t)=(0,u'_{\lambda}(1-\coth t)-\lambda u_{\lambda}(e^{u_{\lambda}^{2}-1)})
	$$
	and
	\begin{eqnarray*}
		\textbf{A}:= \left[
		\begin{matrix}
			0&1\\
			-\lambda&-1
		\end{matrix}
		\right].
	\end{eqnarray*}
	We can write \eqref{sb-3} as
	\begin{eqnarray*}
		\textbf{U}'_{\lambda}&=&(u'_{\lambda},u''_{\lambda})=
		(u'_{\lambda},-\lambda u_{\lambda}-u'_{\lambda}+u'_{\lambda}(1-\coth t)-
		\lambda(e^{u_{\lambda}^{2}}-1)u_{\lambda})\\
		&=&\textbf{A}u_{\lambda}+F(u_{\lambda},t)\\
		&=&\left[
		\begin{matrix}
			0&1\\
			-\lambda&-1
		\end{matrix}
		\right](u_{\lambda},u'_{\lambda})+(0,u'_{\lambda}(1-\coth t)-\lambda u_{\lambda}(e^{u_{\lambda}^{2}}-1))\\
		&=&(u'_{\lambda},-\lambda u_{\lambda}-u'_{\lambda})
		+(0,u'_{\lambda}(1-\coth t)-\lambda u_{\lambda}(e^{u_{\lambda}^{2}}-1)).
	\end{eqnarray*}
	The above analysis gives
	$$
	a:=\lim_{t\rightarrow +\infty}\frac{ln |\emph{U}_{\lambda}|}{t}<0.
	$$
	Since $|F(u_{\lambda},t)|\leq \epsilon|\textbf{U}_{\lambda}|$ for $t$ large and $U_{\lambda}$ small, $a$
	has to be a nonpositive characteristic root of $\textbf{A}$
	(see [\cite{CL-1985}, Theorem 4.31]),
	i.e. $a=\frac{1}{2}(-1+\sqrt{1-4\lambda})=-\frac{1}{2}.$
	This proves \eqref{b4-1}.
	
	To prove \eqref{b4-2}, noting that applying \eqref{b4-1}, we can rewrite \eqref{sb-4} as
	$$
	u''_{\lambda}+u'_{\lambda}+\frac{1}{4}u_{\lambda}=f(t),
	$$
	where $|f(t)|\leq ce^{-\alpha t},\,\,t>0,$
	for some $\alpha>\frac{1}{2}$ and $C>0.$
	Let $\varrho=u'_{\lambda}+\frac{1}{2}u_{\lambda}.$ Then
	\begin{equation}\label{b4-2-1}
		(e^{\frac{1}{2}t}\varrho)'=e^{\frac{t}{2}}(\varrho'+\frac{1}{2}\varrho)
		=e^{\frac{t}{2}}f(t).
	\end{equation}
	Since $u_{\lambda}$ satisfying \eqref{sb-5}, we have
	$$
	\liminf_{t\rightarrow +\infty}l\bigg(u'_{\lambda}+\frac{1}{2}\tanh \frac{t}{2}u_{\lambda}\bigg)^{2}=0.
	$$
	Thus, there exists a sequence $t_{n}\rightarrow +\infty$ such that
	$e^{\frac{1}{2}t_{n}}\varrho(t_{n})\rightarrow 0.$
	Let $t<t_{n},$ integrating \eqref{b4-2-1} from $t$ to $t_{n}$
	and then taking the limits as $n\rightarrow +\infty,$ we get
	\begin{equation}\label{b4-2-2}
		e^{\frac{1}{2}t}\varrho=g(t),
	\end{equation}
	where $g(t)\leq Ce^{(\frac{1}{2}-\alpha)t}$
	for some $C>0.$
	This yields
	\begin{equation}\label{b4-2-3}
		(e^{\frac{1}{2}t}u_{\lambda}(t))'=e^{\frac{t}{2}}\varrho(t)=g(t).
	\end{equation}
	Integrating from 0 to $t$ and observing that
	$g(t)$ has exponential decay, we have
	$$
	e^{\frac{t}{2}}u_{\lambda}(t)=u_{\lambda}(0)+\int_{0}^{t}g(s)ds\rightarrow A\geq 0,\,\,\,
	\text{as}\,\,t\rightarrow +\infty.
	$$
	Supposing that $A=0,$ then integrating \eqref{b4-2-2} from $t$ to $+\infty$ we get
	$$
	e^{\frac{t}{2}}u_{\lambda}(t)\leq Ce^{(\frac{1}{2}-\alpha)t},
	$$
	which contradicts with \eqref{b4-1} as $\alpha>\frac{1}{2}.$ Now the estimate on $e^{\frac{t}{2}}u_\lambda'(t)$ follows from \eqref{b4-2-2}.
\end{proof}

\begin{proposition}\label{pro-b5}
	If $\lambda=\frac{1}{4},$ there is no positive solution of \eqref{uniq} in $H^{1}(\mathbb{B}^2).$
\end{proposition}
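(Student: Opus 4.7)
The plan is to argue by contradiction, reducing to the radial ODE setting and then extracting a Pohozaev-type identity whose boundary term at infinity is forced to vanish by the precise decay rate proved in Lemma \ref{lem-b4}.

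First, assume $u\in H^{1}(\mathbb{B}^2)$ is a positive solution of \eqref{uniq} at $\lambda=\tfrac{1}{4}$. By Proposition \ref{pro} and applying a M\"obius transformation, I may assume $u$ is radial about the origin, i.e.\ $u=u(t)$ with $t=\rho(x)$. The Poincar\'e inequality \eqref{Poincare} gives $\int_{\mathbb{B}^2}u^{2}\,dV_{\mathbb{B}^2}\le 4\int_{\mathbb{B}^2}|\nabla_{\mathbb{B}^2}u|^{2}\,dV_{\mathbb{B}^2}<\infty$, so $u\in L^{2}(\mathbb{B}^2)$ and in particular $\|u\|_{H}^{2}=\int(|\nabla_{\mathbb{B}^2}u|^{2}-\tfrac{1}{4}u^{2})\,dV_{\mathbb{B}^2}$ is finite, which is precisely the hypothesis \eqref{sb-5} required by Lemma \ref{lem-b4}. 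Hence Lemma \ref{lem-b4} applies and yields the refined asymptotics
\[
e^{t/2}u(t)\to A,\qquad e^{t/2}u'(t)\to -\tfrac{A}{2}\qquad(t\to+\infty),
\]
with $A>0$ (the possibility $A=0$ being ruled out inside the proof of Lemma \ref{lem-b4} by comparison with the crude decay \eqref{b4-1}).

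Next, I would multiply the radial equation \eqref{sb-3} by $\sinh(t)\,u(t)$ and observe the exact derivative identity
\[
\bigl(\sinh(t)\,u\,u'\bigr)'=\sinh(t)\,u\,u''+\cosh(t)\,u\,u'+\sinh(t)(u')^{2},
\]
which, combined with the factor $\coth(t)\sinh(t)=\cosh(t)$, converts the two derivative terms into a single total derivative plus the quadratic energy term $\sinh(t)(u')^{2}$. Integrating from $0$ to $T$, the boundary term at $t=0$ vanishes because $\sinh 0=0$, and as $T\to\infty$ the asymptotics give
\[
\sinh(T)\,u(T)\,u'(T)=\tfrac{e^{T}-e^{-T}}{2}\bigl(-\tfrac{A^{2}}{2}e^{-T}+o(e^{-T})\bigr)\longrightarrow -\tfrac{A^{2}}{4}.
\]
So passing to the limit I obtain
\[
-\tfrac{A^{2}}{4}-\int_{0}^{\infty}\sinh(t)(u')^{2}\,dt+\tfrac{1}{4}\int_{0}^{\infty}\sinh(t)\,u^{2}e^{u^{2}}\,dt=0.
\]
On the other hand, testing the full equation $-\Delta_{\mathbb{B}^2}u=\tfrac{1}{4}ue^{u^{2}}$ against $u$ (which is legal since $u\in H^{1}$ and $\int u^{2}e^{u^{2}}\,dV_{\mathbb{B}^2}<\infty$ by the equation itself) and using the polar identity $\int|\nabla_{\mathbb{B}^2}u|^{2}\,dV_{\mathbb{B}^2}=\omega_{2}\int_{0}^{\infty}\sinh(t)(u')^{2}\,dt$ together with $\int u^{2}e^{u^{2}}\,dV_{\mathbb{B}^2}=\omega_{2}\int_{0}^{\infty}\sinh(t)\,u^{2}e^{u^{2}}\,dt$ gives the global identity
\[
\int_{0}^{\infty}\sinh(t)(u')^{2}\,dt=\tfrac{1}{4}\int_{0}^{\infty}\sinh(t)\,u^{2}e^{u^{2}}\,dt.
\]
Plugging this into the previous line the two integral terms cancel and I am left with $-A^{2}/4=0$, i.e.\ $A=0$, contradicting $A>0$.

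The main obstacle (and really the only nontrivial point) is justifying the exact constant $-A^{2}/4$ in the boundary term at infinity; this is where the $o(e^{-t/2})$ refinement of Lemma \ref{lem-b4} is essential, since the much weaker exponential bound of Lemma \ref{lem-b3} would only give $\sinh(t)\,u\,u'=O(1)$ without a definite sign or value. All other ingredients — the reduction to the radial ODE, the validity of integration by parts, and the cancellation of the two integral pieces — are routine once the asymptotics are in hand.
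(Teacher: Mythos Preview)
Your argument is correct, but it is considerably more elaborate than what the paper does, and the extra machinery is not needed. The paper's proof is a single sentence: once Lemma~\ref{lem-b4} yields $e^{t/2}u(t)\to A>0$ (equivalently $e^{t/2}u'(t)\to -A/2$), one has $\sinh(t)\,u(t)^{2}\to A^{2}/2>0$, so $\displaystyle\int_{0}^{\infty}\sinh(t)\,u^{2}\,dt=\infty$, i.e.\ $u\notin L^{2}(\mathbb{B}^{2})$, contradicting Poincar\'e's inequality for $u\in H^{1}(\mathbb{B}^{2})$. Equivalently, $\sinh(t)(u')^{2}\to A^{2}/8>0$ forces $\|\nabla_{\mathbb{B}^{2}}u\|_{L^{2}}^{2}=\omega_{2}\int_{0}^{\infty}\sinh(t)(u')^{2}\,dt=\infty$, which already violates $u\in H^{1}$ before any Pohozaev identity is written down.

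Your route---multiplying the ODE by $\sinh(t)u$, integrating to pick up the boundary term $-A^{2}/4$, and then cancelling the two bulk integrals against the weak identity obtained by testing the equation with $u$---is logically valid, but notice that the very asymptotics you use to evaluate the boundary term also say that $\int_{0}^{\infty}\sinh(t)(u')^{2}\,dt$ diverges; you are simultaneously invoking its finiteness (from the $H^{1}$ hypothesis) to pass to the limit and its divergence (from Lemma~\ref{lem-b4}) implicitly through the boundary term. In a proof by contradiction this is permitted, but it obscures the fact that the contradiction is already present one step earlier. What your approach buys is nothing extra here; the Pohozaev identity would be the right tool if the asymptotics were only $O(e^{-t/2})$ rather than $\sim Ae^{-t/2}$, but since Lemma~\ref{lem-b4} gives the sharp constant $A>0$, the direct non-integrability argument suffices and is what the paper uses.
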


\begin{proof}
	By the asymptotic behaviors proved in Lemma \ref{lem-b4}, positive solutions of \eqref{uniq} cannot be in $H^{1}(\mathbb{B}^2)$ if $\lambda=\frac{1}{4}.$
\end{proof}

\noindent\textbf{Acknowledgment.}
The authors are grateful to the anonymous referees for many constructive comments and suggestions which have improved the exposition of the paper.


\begin{thebibliography}{99}
\bibliographystyle{plain}

\bibitem{Ahlfors}
 L. V. Ahlfors, {\it M\"{o}bius Transformations in Several Dimensions}, Ordway Professorship Lectures in Mathematics, University of Minnesota, School of Mathematics, Minneapolis, MN, 1981.

\bibitem{BJLY-2019}
 D. Bartolucci, A. Jevnikar, Y. Lee, W. Yang, {\it Local uniqueness of m-bubbling sequences for the Gelfand equation,} Comm. Partial Differential Equations \textbf{44} (2019), 447-466.

 \bibitem{BJLY-2019-2}
 D. Bartolucci, A. Jevnikar, Y. Lee, W. Yang,
  Uniqueness of bubbling solutions of mean field equations. J. Math. Pures Appl. (9) \textbf{123} (2019), 78-126.

 \bibitem{cww-2026}
L. Chen, B. Wang, C. Wang, Quantitative properties of the Hardy-type mean field equation, Nonlinearity, Volume 39, Number 2,
DOI: https://doi.org/10.1088/1361-6544/ae3b8e.

 \bibitem{BS-2012}
 M. Bhakta, K. Sandeep, {\it Poincar\'e-Sobolev equations in the hyperbolic space,} Calc. Var. Partial Differential Equations \textbf{44} (2012), 247-269.

\bibitem{CL}
W. Chen, C. Li, {\it Method of moving planes in integral forms and regularity lifting}, Recent developments in geometry and analysis, Adv. Lect. Math. (ALM), {\bf 23}
(2012), Int. Press, Somerville, MA, 27-62.

\bibitem{ChenLi}
 W. Chen, C. Li, B. Ou, {\it  Classification of solutions for an integral equation}, Comm. Pure Appl. Math. {\bf 59} (2006), 330-343.

\bibitem{CLXZ}
L. Chen, G. Lu, Y. Xue, M. Zhu, {\it Uniqueness of positive solutions to elliptic equations with critical exponential growth on the unit disc and its applications}, Accepted by Ann. Sc. Norm. Super. Pisa Cl. Sci., 2024.

\bibitem{CL-1985}
A. Coddington-Earl, N. Levinson,
{\it Theory of Ordinary Differential Equations}, McGraw-Hill Book Co., Inc., New York-Toronto-London, 1955.

\bibitem {del-M-R} M. del Pino, M. Musso, B. Ruf, {\it Beyond the Trudinger--Moser supremum}, Calc. Var. Partial
Differential Equations \textbf{44}(2012), 543-567.

\bibitem{DLY}
Y. Deng, C.-S. Lin, S. Yan,
{\it On the prescribed scalar curvature problem in $\R^{N}$, local uniqueness and periodicity, } J. Math. Pures Appl. (9) {\bf 104} (2015), 1013-1044.



\bibitem{D}
 O. Druet, {\it Multibumps analysis in dimension 2: quantification of blow-up levels}, Duke Math. J. {\bf132} (2006), 217-269.

\bibitem{DT}
O. Druet, P. Thizy, {\it Multi-Bumps analysis for Trudinger-Moser nonlinearies I quantification and location of concentration points}, J. Eur. Math. Soc.  \textbf{22} (2020), 4025-4096.

\bibitem{EG2004}
 K. El Mehdi, M. Grossi, {\it Asymptotic estimates and qualitative properties of an elliptic problem in dimension two}, Adv. Nonlinear Stud. {\bf 4} (2004), 15-36.

\bibitem{GK-2014}
D. Ganguly, K. Sandeep, {\it Sign changing solutions of the Brezis-Nirenberg problem in the hyperbolic space,} Calc. Var. Partial Differential Equations \textbf{50} (2014), 69-91.

\bibitem{GNN}
 B. Gidas, W. M. Ni, L. Nirenberg, {\it Symmetric and related properties via the
maximum principle}, Comm. Math. Phys. {\bf 68} (1979), 209-243.

 \bibitem{Glangetas}
L. Glangetas, {\it Uniqueness of positive solutions of a nonlinear elliptic equation involving the critical exponent}, Nonlinear Anal. \textbf{20} (1993), 571-603.


\bibitem{GILY}
M. Grossi, I.Ianni, P. Luo,
 S.Yan,  {\it Non-degeneracy and local uniqueness of positive solutions to the Lane-Emden problem in dimension two}, J. Math. Pures Appl. (9) \textbf{157} (2022), 145-210.


\bibitem{GPY}
Y. Guo, S. Peng,
S. Yan, {\it Local uniqueness and periodicity induced by concentration}, Proc. Lond. Math. Soc. (3) \textbf{114} (2017), 1005-1043.


\bibitem{Helgason1}
S. Helgason, {\it Groups and Geometric Analysis}, Integral Geometry, Invariant Differential Operators and Spherical Functions, Pure Appl. Math., vol. 113, Academic Press, Orlando, FL, 1984.

\bibitem{Helgason2}
 S. Helgason, {\it Geometric Analysis on Symmetric Spaces}, 2nd ed., Math. Surveys Monogr., vol. 39, American
Mathematical Society, Providence, RI, 2008.
\bibitem{KS-2016}
D. Karmakar, K. Sandeep, Adams inequality on the hyperbolic space. J. Funct. Anal. \textbf{270} (2016), 1792-1817.

\bibitem{LRS}
T. Lamm, F. Robert, M. Struwe, {\it The heat flow with a critical exponential nonlinearity},
J. Funct. Anal. \textbf{257} (2009), 2951-2998.

 \bibitem{llw-2023}
 J. Li, G. Lu, J. Wang, Potential characterizations of geodesic balls on hyperbolic spaces: a moving plane approach. J. Geom. Anal. 33 (2023),  Paper No.134, 27 pp.


 \bibitem{llw-2025}
J. Li, G. Lu, J. Wang, The method of moving spheres on the hyperbolic space and the classification of solutions and the prescribed Q-curvature problem. Adv. Math. 482 (2025), part B, Paper No. 110606, 40 pp.

\bibitem{LiLuYang}
J. Li, G. Lu and Q. Yang, {\it Higher order Brezis-Nirenberg problem on hyperbolic spaces: existence, nonexistence and symmetry of solutions},  Adv. Math., \textbf{399} (2022), 39 pp.

\bibitem{Li1}
 Y. X. Li, {\it Moser-Trudinger inequality on compact Riemannian manifolds of dimension two}, J. Partial Differential Equations. {\bf 14} (2001),  163-192.

\bibitem {Li}
Y. X. Li, {\it Extremal functions for the Moser-Trudinger inequalities
on compact Riemannian manifolds}, Sci. China Ser. A. \textbf{48} (2005), 618-648.

\bibitem {LZ}
Y. Li, M. Zhu, {\it Uniqueness theorems through the method of moving spheres}, Duke Math. J. {\bf 80} (1995), 383-417.
\bibitem {LY-2018}
C.-S. Lin, S. Yan, {\it On the mean field type bubbling solutions for Chern-Simons-Higgs equation,} Adv. Math. \textbf{338} (2018), 1141-1188.

\bibitem{liu}
C. Liu, L. Peng, {\it Generalized Helgason-Fourier transforms associated to variants of the Laplace-Beltrami operators on the unit ball in $\mathbb{R}^n$}, Indiana Univ. Math. J. {\bf 58} (2009), 1457-1491.


\bibitem{LuYangQ1}
 G. Lu,  Q. Yang,
     {\it  Sharp Hardy-Adams inequalities for bi-Laplacian on hyperbolic space of dimension four},  Adv. Math. {\bf 319} (2017), 567-598.

\bibitem{LuYangQ2}
 G. Lu, Q. Yang,   {\it Paneitz operators on hyperbolic spaces and high order Hardy-Sobolev-Maz'ya inequalities on half spaces},  Amer. J. Math. {\bf 141} (2019),  1777-1816.

\bibitem{LuYangQ3}
 G. Lu, Q. Yang, {\it Green's functions of Paneitz and GJMS operators on hyperbolic spaces and sharp Hardy-Sobolev-Maz'ya inequalities on half spaces},  Adv. Math.  {\bf398} (2022), Paper No. 108512, 62 pp.

\bibitem{LPP-2022}
P. Luo, K. Pan,  S. Peng, {\it Qualitative analysis for Moser-Trudinger nonlinearities with a low energy}, arXiv:2210.12604.

\bibitem {MM-JEMS}
 A. Malchiodi, L. Martinazzi, {\it Critical points of the Moser-Trudinger functional on a disk}, J. Eur. Math. Soc. \textbf{16} (2014), 893-908.

\bibitem{MM-CV}
 G. Mancini, L. Martinazzi, {\it The Moser-Trudinger inequality and its extremals on a disk via energy estimates}, Calc. Var. Partial Differential Equations \textbf{56} (2017), Paper No. 94, 26 pp.

\bibitem{MS-2008}
G. Mancini, K. Sandeep,   {\it On a semilinear elliptic equation in $ \mathbb{H}^{n}$}, Ann. Sc. Norm. Super. Pisa Cl. Sci.  {\bf 7} (2008), 635-671.

\bibitem{MS-2010}
G. Mancini, K. Sandeep,   {\it Moser-Trudinger inequality on conformal disc}, Commun. Contemp. Math. {\bf 12} (2010), 1055-1068.

\bibitem{MST-2013}
G. Mancini, K. Sandeep, C. Tintarev, {\it Trudinger-Moser inequality in the hyperbolic space $\mathbb{H}^{N},$} Adv. Nonlinear Anal. \textbf{2} (2013), 309-324.

\bibitem{MMMT}
 F. Marchis, A. Malchiodi, L. Martinazzi, P. Thizy, {\it Critical points of the Moser-Trudinger functional on closed surfaces}, Invent. Math. {\bf 230} (2022), 1165-1248.






\bibitem{ST}
 M. Struwe, {\it Critical points of embeddings of into Orlicz spaces},  Ann. Inst. H. Poincar\'{e} C Anal. Non Lin\'{e}aire. \textbf{5} (1984), 425-464.




\end{thebibliography}
\end{document}